\numberwithin{theorem}{section}
\numberwithin{proposition}{section}
\numberwithin{lemma}{section}
\numberwithin{definition}{section}
\numberwithin{equation}{section}
\newtheorem{assumption}[theorem]{Assumption}
\newcommand{\rhoT}{\rho_T}
\newcommand{\rhsF}{\mbf{f}}
\newcommand{\vertiii}[1]{{\left\vert\kern-0.25ex\left\vert\kern-0.25ex\left\vert #1 
    \right\vert\kern-0.25ex\right\vert\kern-0.25ex\right\vert}}
\newcommand{\verti}[1]{{\left\vert #1\right\vert}}
\newcommand{\rmref}[1]{{\rm\ref{#1}}}
\newcommand{\mbf}[1]{\boldsymbol{#1}}
\newcommand{\innerp}[2]{\langle #1,#2 \rangle}
\newcommand{\dbinnerp}[1]{\langle\hspace{-1mm}\langle{#1}\rangle\hspace{-1mm}\rangle}
\newcommand{\norm}[1]{\left\| #1 \right\|}
\newcommand{\realR}[1]{\mathbb{R}^{#1}}
\newcommand{\real}{\mathbb{R}}
\newcommand{\br}{\mbf{r}}
\newcommand{\bx}{\mbf{x}}
\newcommand{\bB}{\mbf{B}}
\newcommand{\bX}{\mbf{X}}
\newcommand{\bY}{\mbf{Y}} % new
\newcommand{\ba}{\mbf{a}}
\newcommand{\R}{\real}
\newcommand{\intkernel}{\phi}
\newcommand{\intkernelvar}{\varphi}
\newcommand{\rhsfo}{\mathbf{f}}
\newcommand{\hypspace}{\mathcal{H}}
\newcommand{\E}{\mathbb{E}}
\newcommand{\Prob}{\mathbb{P}}
\newcommand{\ProbIC}{\mu_0}
\newcommand{\argmin}[1]{\underset{#1}{\operatorname{arg}\operatorname{min}}\;}
\newcommand{\dimsp}{\mathrm{dim}}
\DeclareMathAlphabet{\mathpzc}{OT1}{pzc}{m}{it}
\DeclareMathOperator*{\esssup}{ess\,sup}
\begin{document}

\title{Learning interaction kernels in stochastic systems of interacting particles from multiple trajectories
\thanks{FL and MM are grateful for partial support from NSF-1913243, FL for NSF-1821211; MM for NSF-1837991, NSF-1546392, AFOSR-FA9550-17-1-0280 and the Simons Fellowship; ST for NSF-1821211, AFOSR-FA9550-17-1-0280 and AMS Simons travel grant.}
%\thanks{Grants or other notes
%about the article that should go on the front page should be
%placed here. General acknowledgments should be placed at the end of the article.}
}
%\subtitle{Do you have a subtitle?\\ If so, write it here}

\titlerunning{Learning interaction kernels in stochastic interacting particle systems}        % if too long for running head

\author{	Fei Lu         \and
        		Mauro Maggioni \and
        		Sui Tang %etc.
}

%\authorrunning{Short form of author list} % if too long for running head

\institute{Fei Lu \at
              Department of Mathematics, Johns Hopkins University,
              \email{feilu@math.jhu.edu}           % \\
           \and
           Mauro Maggioni \at
              Department of Mathematics,
              Department of Applied Mathematics and Statistics,
              Johns Hopkins University,            
              \email{mauromaggionijhu@icloud.com}           
              \and
	  Sui Tang \at
              Department of Mathematics,
              University of California, Santa Barbara,
              \email{suitang@math.ucsb.edu}           %  \\              
}

\date{Received: date / Accepted: date}
% The correct dates will be entered by the edIt\^or

\maketitle

\begin{abstract}
We consider stochastic systems of interacting particles or agents, with dynamics determined by an interaction kernel which only depends on pairwise distances. We study the problem of inferring this interaction kernel from observations of the positions of the particles, in either continuous or discrete time, along multiple independent trajectories. We introduce a nonparametric inference approach to this inverse problem, based on a regularized maximum likelihood estimator constrained to suitable hypothesis spaces adaptive to data. We show that a coercivity condition enables us to control the condition number of this problem and prove the consistency of our estimator, and that in fact it converges at a near-optimal learning rate, equal to the min-max rate of $1$-dimensional non-parametric regression. In particular, this rate is independent of the dimension of the state space, which is typically very high. We also analyze the discretization errors in the case of discrete-time observations, showing that it is of order $1/2$ in terms of the time gaps between observations. This term, when large, dominates the sampling error and the approximation error, preventing convergence of the estimator. Finally, we exhibit an efficient parallel algorithm to construct the estimator from data, and we demonstrate the effectiveness of our algorithm with numerical tests on prototype systems including stochastic opinion dynamics and a Lennard-Jones model. 

\keywords{Inverse problems \and Interacting Particle Systems \and Statistical and Machine Learning}
% \PACS{PACS code1 \and PACS code2 \and more}
 \subclass{ 70F17 \and  	62G05  \and	62M05 }
 \end{abstract}

% Include the date command, but leave its argument blank.
\date{}

%%%%%%%%%%%%%%%%% END OF PREAMBLE %%%%%%%%%%%%%%%%

\tableofcontents

\section{Introduction}
We consider a system of particles or agents interacting in a random environment, with their motion described by a first-order Stochastic Differential Equation in the form
\begin{equation}\label{eq:sod}
  d{\bx_{i,t}} =\frac{1}{N} \sum_{i'= 1}^N \intkernel(\| \bx_{j,t} - \bx_{i,t}\|)(\bx_{j,t} - \bx_{i,t} )dt+ \sigma d\bB_{i,t}\,, \quad \text{for $i = 1, \ldots, N$},
\end{equation}
where $\bx_{i,t} \in \realR{d}$ represents the position of particle $i$ at time $t$,  $\intkernel:\R^+\to \R$ is an {\em{interaction kernel}} dependent on the pairwise distance between particles, and $\bB_t$ is a standard Brownian motion in $\mathbb{R}^{Nd}$, with $\sigma>0$ representing the scale of the random noise.
This is a gradient system, with the energy potential $V_{\intkernel}:\R^{Nd} \to \R$
\begin{align}\label{potential}
V_{\intkernel}(\bX_t)=\frac{1}{2N}\sum_{i,i'}\Phi(\|\bx_{i,t}-\bx_{i',t}\|)\quad \text{ with }\ \ \  \Phi'(r)=\intkernel(r)r\,,
\end{align}
where $\bX_t=(\bx_{i,t})_{i=1,\dots,N}\in\mathbb{R}^{dN}$ is the state of the system.
Letting
\begin{equation}\label{eq:rhsF}
\rhsF_{\intkernel}:= -\nabla V_{\intkernel}\,,
\end{equation}
we can write Eq.\eqref{eq:sod} in vector format as 
\begin{align} \label{eq:sys_grad}
d\bX_t&=\rhsF_{\intkernel}(\bX_t)dt+\sigma d\bB_t\,.
\end{align}
The particles interact with each other based on their pairwise distance, with dissipation of the total energy, with the system tending to a stable point of the energy potential, while the random noise injects energy to the system. 

Such systems of interacting particles arise in a wide variety of disciplines, including interacting physical particles \cite{skorokhod1996_RegularityManyparticle,DOCBC2006} or granular media \cite{bell2005_ParticlebasedSimulation,baumgarten2019_GeneralConstitutive,benachour1998_NonlinearSelfstabilizing,bolley2013_UniformConvergence,carrillo2003_KineticEquilibration,cattiaux2007_ProbabilisticApproach} in Physics,  opinion aggregation on interacting networks in Social Science  \cite{hegselmann2002_OpinionDynamics,olfati2004consensus,MT2014}, and Monte Carlo sampling \cite{liu2019_SteinVariational,li2020_StochasticVersion}, to name just a few.

Motivated by these applications, the inference of such systems from data gains increasing attention. For deterministic multi-particles systems, various types of learning techniques have been developed (see e.g. \cite{BFHM17,LZTM19,LMT19,MMM19,almi2019datadriven,chen2019inferring} and the reference therein). When it comes to stochastic multi-particle systems, only a few efforts have been made, e.g. learning reduced Langevin equations on manifolds in \cite{CM:ATLAS} (without however assuming nor exploiting the structure of pairwise interactions), learning the parametric potential function in \cite{brillinger2012learning,chen2020maximum} from single trajectory data, and estimating the diffusion parameter in \cite{CM:ATLAS,huang2018_LearningInteracting}.

%%%%%%%%%%%%%% Our major contribution
\vskip0.25cm
Our goal is to estimate the interaction kernel $\intkernel$ given discrete-time observation data from trajectories $\{\bX^{(m)}_{t_0:t_L}\}_{m=1}^M$, where the initial conditions $\{\bX^{(m)}_{t_0}\}_{m=1}^M$ are independent samples drawn from a distribution $\mu_0$ on $\mathbb{R}^{dN}$, and $t_0:t_L$ indicates times $0=t_0<t_1<\dots<t_l<\dots<t_L=T$, with with $t_l = l\Delta t$. 
\vskip0.25cm

Since in general, little information about the analytical form of the kernel is available, we infer it in a nonparametric fashion  (e.g. \cite{CS02,binev2005universal,Gyorfi06}). We note that the problem we consider is to learn a latent function in the drift term given observations from multiple trajectories, which is different from the ample literature on the inference of stochastic differential equations (see e.g. \cite{kutoyants_StatisticalInference2004,kaipio_StatisticalComputational2005}), focusing either on parameter estimation or inference for ergodic system. In particular, our learning approach is close in spirit to the nonparametric regression of the drift studied in \cite{nickl2019_NonparametricStatistical} for ergodic system and in \cite{comte_NonparametricDrift} from i.i.d paths. However, for systems of interacting particles one faces the curse of dimensionality when learning the high-dimensional drift directly as a general function on the high-dimensional state space $\mathbb{R}^{dN}$. Instead, we will exploit the structure of the system and learn the latent interaction kernel in the drift, which only depends on pairwise distances, and show that the curse of dimensionality may be avoided, when such inverse problem is well-conditioned.

We introduce a maximum likelihood estimator (MLE), along with an efficient algorithm that can be implemented in parallel over trajectories, with an hypothesis space adaptive to data to reach optimal accuracy. Under a coercivity condition, we prove that the MLE is consistent, and converges at the min-max rate for one-dimensional nonparametric regression. We also analyze the discretization errors due to discrete-time observations: we show it leads to an error in the estimator that is of order $\Delta t^{1/2}$ (with $\Delta t=T/L=t_{l+1}-t_l$), and as a result, it prevents us from obtaining the min-max learning rate in sample size. We demonstrate the effectiveness of our algorithm by numerical tests on prototype systems including opinion dynamics and a stochastic Lennard-Jones model (see Section \ref{main:numericalexamples}). Numerical results verify our learning theory in the sense that that the min-max rate of convergence is achieved, and the bias due to the numerical error is close to the order $\Delta t^{1/2}$.

%%%%%%%% 
\subsection{Overview of the main results}
We consider an approximate maximum likelihood estimator (MLE), 

which is the maximizer of the approximate likelihood of the observed trajectories, over a suitable hypothesis space $\hypspace$:
\begin{equation*}
\widehat \intkernel_{L,T,M,\hypspace}= \argmin{\intkernelvar \in \mathcal{H}}\mathcal{E}_{L,T,M}(\intkernelvar),
\end{equation*}
where $\mathcal{E}_{L,T,M}(\intkernelvar)$ is an approximation of the negative log-likelihood of the discrete data $\{\bX^{(m)}_{t_0:t_L}\}_{m=1}^M$. Using the fact that the drift term $\rhsF_{\intkernel}$ is linear in $\phi$ and hence $\mathcal{E}_{L,T,M}(\intkernelvar)$ is a quadratic functional, we propose an algorithm (see Algorithm \ref{alg:main}) that efficiently computes this MLE by least squares. With a data-adaptive choice of the basis functions $\{\psi_p\}_{p=1}^n$ for the hypothesis space $\hypspace$, we obtain the MLE 
\begin{equation}\label{eq:a_in_intro}
\widehat \intkernel_{L,T,M,\hypspace} = \sum_{p=1}^n \widehat a_{L,T,M, \hypspace}(p) \psi_p
\end{equation}
by computing the coefficients $\widehat a_{L,T,M, \hypspace}\in \R^n$ from normal equations. The  algorithm may be implemented by building in parallel the equations for each trajectory. 

We develop a systematic learning theory on the performance of this MLE. We propose first a coercivity condition that ensures the robust identifiability of the kernel $\intkernel$, in the sense that the derivative of the pairwise potential defined in \eqref{potential}, $\Phi'(r) = \intkernel(r)r$, can be uniquely identified in the function space $L^2(\R^+,\rho_T)$, where $\rho_T$ is the measure of all pairwise distances between particles. Then, we consider the convergence of the estimator, from both continuous-time and discrete-time observations, under the norm 
\begin{equation}
\vertiii{\varphi}:= \|\varphi(\cdot)\cdot\|_{L^2(\rhoT)}= \left( \int_{\R^+} |\varphi(r)r|^2\rhoT(dr) \right)^{1/2}\,. 
\label{e:triplenorm}
\end{equation} 

\noindent\textbf{The case of continuous-time observations} (Section \ref{sec:theory_cts_traj}). We consider the MLE
\begin{equation*}
\widehat \intkernel_{T,M,\hypspace}= \argmin{\intkernelvar \in \mathcal{H}}\mathcal{E}_{T,M}(\intkernelvar),
\end{equation*}
where $\mathcal{E}_{T,M}(\intkernelvar)$ is the exact negative log-likelihood of the continuous-time trajectories $\{\bX^{(m)}_{[0,T]}\}_{m=1}^M$. 
We show that the MLE is consistent, that is, converges in probability to the true kernel under the norm $\vertiii{\cdot}$. Furthermore, we show that the MLE converges at a rate which is independent of the dimension of the state space of the system, and corresponds to the minmax rate for one-dimensional non-parametric regression (\cite{CS02,cohen2013stability,binev2005universal,Gyorfi06}), when choosing the hypothesis space adaptively according to data, the H\"older continuity $s$ of the true kernel, and with dimension increasing with the amount of observed data.  With $\mathrm{dim}(\hypspace_{n}) \asymp (\frac{M}{\log M})^{\frac{1}{2s+1}}$, and assuming that the coercivity condition holds on $\hypspace_{n}$  with a constant $c_{\hypspace_{n}}>0$,  we have, with high probability and in expectation,
$$\vertiii {\widehat \intkernel_{T,M,\mathcal{H}_{n}}-\intkernel}^2 \lesssim \frac1{c^2_{\hypspace_{n}}} \left(\frac{\log M}{ M}\right)^{\frac{2s}{2s+1}} $$

\noindent\textbf{The case of discrete-time observations } (Section \ref{sec:theory_dis_traj}). In this case derivatives and statistics of the trajectories in-between observations need to be approximated, while keeping the estimator efficiently computable: this leads to further approximations of the likelihood, and consequently of the MLE. This discretization error of the approximations we use is of order $1/2$ in the observation time gap $\Delta t = T/L$, using an approximation of the likelihood based on the Euler-Maruyama integration scheme. We show that for some $C>0$, for any $\epsilon>0$, with high probability
\begin{align*}
\vertiii{\widehat \intkernel_{L,T,M,\hypspace} - \intkernel}  \leq  \vertiii{\widehat\intkernel_{T, \infty, \hypspace} - \intkernel}  + C\left(\sqrt\frac nM\epsilon+\Delta t ^{\frac{1}{2}}\right)\,,
\end{align*}
where $\widehat\intkernel_{T, \infty, \hypspace}  $ is the projection of the true kernel to $\hypspace$. The discretization error will flatten the learning curve when the sample size is large, overshadowing the sampling error and the approximation error cause by working within the hypothesis space.      
for some positive constants $c_2 $ and $c_3$, where $\widehat\intkernel_{T, \infty, \hypspace}  $ is the projection of the true kernel to $\hypspace$. The numerical error may overshadow the sampling error and the approximation error of the hypothesis space.

\begin{figure}[tbp]
\centering   
\includegraphics[width=0.8\textwidth]{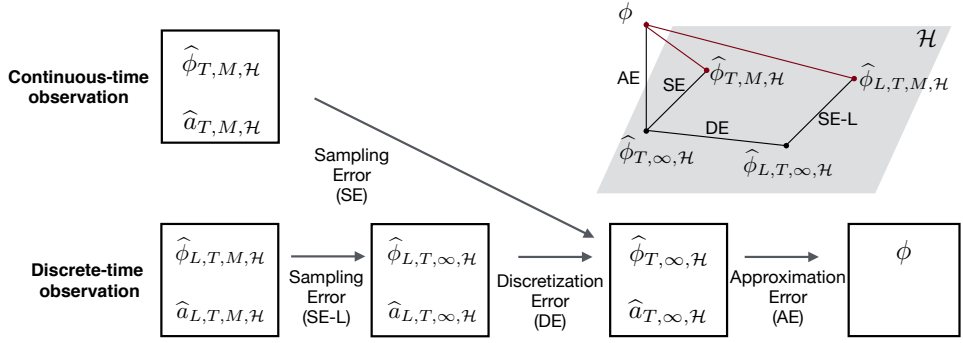}
\caption{Diagram of the (regularized) MLE error analysis and convergence. For continuous-time observations, we refer to Proposition \ref{covering} for analysis of SE: $\hat \phi_{T,M,\mathcal{H}}-\hat \phi_{T,\infty,\mathcal{H}}$ and Theorem \ref{maintheorem} for bounding the total estimation error $\hat \phi_{T,M,\mathcal{H}}-\phi$.  For discrete-time observations, we refer to Proposition \ref{prop:CI} for SE-L: $\hat \phi_{L,T,M,\mathcal{H}}-\hat \phi_{L,T,\infty,\mathcal{H}}$, Proposition \ref{prop:numError} for DE: $\hat \phi_{L,T,\infty,\mathcal{H}}-\hat \phi_{T,\infty,\mathcal{H}}$, and Theorem \ref{thm:error_discreteTime} for $ \hat \phi_{L,T,M,\mathcal{H}}-\phi$. 
}
\label{f:diagram}
\end{figure}
In both cases, we decompose the error in the MLE into sampling error from the trajectory data, and approximation error from the hypothesis space, as illustrated in the diagram in Figure \ref{f:diagram}. 
 In the case of continuous-time observations, the sampling error is the error between $\widehat \intkernel_{T,M,\hypspace} $  and  the MLE from infinitely many trajectories (denoted by $\widehat \intkernel_{T,\infty,\hypspace}$): this will be controlled with concentration equalities. The approximation error $\widehat \intkernel_{T,\infty,\hypspace} -\phi$ is adaptively controlled by a proper choice of hypothesis space. The analysis is carried out in the infinite-dimensional space $L^2(\rho_T)$. 
In the case of discrete-time observations, we provide a finite-dimensional analysis to study directly the MLE in our proposed algorithm, that is, analyzing the error of $\widehat a_{L,T,M, \hypspace}$ in \eqref{eq:a_in_intro}  with proper conditions on the basis functions. The sampling error $\widehat \intkernel_{L,T,M,\hypspace} - \widehat \intkernel_{L,T,\infty,\hypspace} $ is analyzed through $\widehat a_{L,T,M, \hypspace} -\widehat a_{L,T,\infty, \hypspace}$, and the discretization error between $\widehat \intkernel_{L,T,\infty,\hypspace} $ and $\widehat \intkernel_{T,\infty,\hypspace} $ is analyzed through $\widehat a_{L,T,M, \hypspace} -\widehat a_{T,\infty, \hypspace}$. The discretization error comes from the discrete-time approximation of the likelihood, and it vanishes when the observation time gap $\Delta t$ reduces to zero, recovering the convergence of the MLE as in the case of the continuous-time observations.

\subsection{Notation and outline} 

Throughout this paper, we use bold letters to denote vectors or vector-valued functions.  We use the notation in Table \ref{tab:1stOrder_def} for variables in the system of interacting particles. 
\begin{table}[t]
\centering
\small{
\small{\begin{tabular}{ c | l  }
\hline
Variable                    & Definition \\
\hline\hline
$\bx_{i,t}\in \R^d$ & position or opinion of particle $i$ at time $t$, see \eqref{eq:sod} \\
\hline
$\bX_t=(\bx_{1,t},\ldots, \bx_{N,t}) \in \R^{dN}$ & state vector: position of the $N$ particles, see \eqref{eq:sys_grad} \\
\hline
$\|\cdot\|$ & Euclidean norm in $\mathbb{R}^d$ or operator norm of a matrix\\
\hline
$\br_{ii'}(t), \br_{ii''}(t)\in \R^d$ & $\bx_{i',t}-\bx_{i,t}$ and $\bx_{i'',t}-\bx_{i,t}$, \\
\hline
$r_{ii'}(t), r_{ii''}(t)\in \R^+$ & $r_{ii'}(t)=\|\br_{ii'}(t) \|$ and $r_{ii''}(t)=\|\br_{ii'}(t)\|$,  \\
\hline
$\intkernel$  &  interaction kernel, see \eqref{eq:sod} \\
\hline
$\rhsfo_\intkernel$  & drift function of the system, see \eqref{eq:sys_grad}\\
\hline
$V_{\intkernel}=\frac{1}{2N}\sum_{i,i'}\Phi(\|\bx_{i,t}-\bx_{i',t}\|)$  & energy potential with $ \Phi'(r)=\intkernel(r)r$, see \eqref{potential} \\
\hline
\end{tabular}}  
}
\caption{Notation for the system of interacting particles driven by equation \eqref{eq:sod}.}
\label{tab:1stOrder_def} 
\end{table}

We restrict our attention to interaction kernels $\phi$ in the admissible set
\begin{align}\label{def_AddmissibleSet}
\mathcal{K}_{R,S}:=\{\varphi \in C^1(\mathbb{R}_+) :  \text{Supp}(\varphi) \subset [0,R], \|\varphi\|_{\infty}+\|\varphi'\|_{\infty} \leq S\}.
\end{align}
Let $\Omega$ be an arbitrary compact (or precompact) set of a Euclidean space (which may be $\R^{+}$, $\R^{d}$ or $\R^{dN}$), with the Lebesgue measure unless otherwise specified. We consider the following function spaces 
\begin{itemize}
\item $L^{\infty}(\Omega)$: the space of bounded functions on $\Omega$ with norm 
$\|g\|_{\infty}=\esssup_{x\in \Omega}|g(x)|$; 
\item $C(\Omega):$ the closed subspace of $L^{\infty}(\Omega)$ consisting of  continuous functions;
\item $C_c(\Omega):$ the set of functions in $C(\Omega)$ with compact support;

\item $C^{k,\alpha}(\Omega)$ with $k \in \mathbb{N}, 0<\alpha \leq 1$: the space of functions whose $k$-th derivative is  H\"older continuous of order $\alpha$.  In the special case of $k=0$ and $\alpha=1$, $g \in  C^{0,\alpha}(\Omega)$ is called  Lipchitz continuous  on $\Omega$; the Lipschitz constant of $g \in \text{Lip}(\Omega)$ is defined as
$\text{Lip}[g]:=\sup_{x\neq y} \frac{|g(x)-g(y)|}{\|x-y\|}.$

\end{itemize}

We summarize the notation for the inference of the interaction kernel in Table \ref{tab:1stOrderlearning_def}. 
\begin{table}[t]
\centering
\small{ 
\small{\begin{tabular}{ c | l }
\hline
Notation                    & Definition \\
\hline\hline
$M$   & number of observed trajectories \\
\hline
$t_0:t_L=\{t_l\}_{l=1}^{L}$ &  observation times in $[0,T]$, $0=t_0<\dots<t_L=T$, $t_l=l\Delta t=l T/L$ \\
\hline 
$\mu_0$ & probability distribution in $\mathbb{R}^{dN}$ for initial configurations $\bX_0$\\
\hline
$\hypspace$ and $\{\psi_{p}\}_{p=1}^{n}$  &  the hypothesis space of learning, and a basis for it \\
\hline
$\mathcal{E}_{T,M}(\cdot)$ and $\mathcal{E}_{L,T,M}(\cdot)$ &  empirical error functionals from continuous/discrete data, see \eqref{e:ETM} and \eqref{eq:errFunl} \\
\hline  $\widehat \intkernel_{T, M,\hypspace}$ and $\widehat \intkernel_{L, T, M,\hypspace}$ & minimizers, over $\hypspace$, of $\mathcal{E}_{T,M}(\intkernelvar)$ and $\mathcal{E}_{L,T,M}(\intkernelvar)$, see \eqref{MLE_cts} and \eqref{MLE} \\
\hline  
$\widehat a_{L, T, M,\hypspace}$  & coefficient vectors of  $\widehat \intkernel_{L, T, M,\hypspace}$ w.r.t. basis $\{\psi_{p}\}_{p=1}^{n}$, see \eqref{eq:phiEst} \\
\hline  $\vertiii{\cdot}$ & $L^2(\rho_T)$-based norm: $\vertiii{\phi} = \|\phi(\cdot)\cdot\|_{L^2(\rho_T)}$, see \eqref{e:triplenorm} \\
\hline\hline
\end{tabular}}  
}
\caption{Notation used for the estimator of the interaction kernel $\intkernel$.}
\label{tab:1stOrderlearning_def} 
\end{table}
The function space in which we perform the estimation is the space of functions $\varphi$ such that $\varphi(\cdot)\cdot \in L^2(\R^+, \rho_T)$, where $\rho_T$ is the measure of pairwise distances between all particles on the time interval $[0,T]$ (see \eqref{rhoT}). 
We will focus on learning on the compact (finite- or infinite-dimensional) subset of $L^{\infty}([0,R])$ (where $[0,R]$ is the support of the functions in the admissible set $\mathcal{K}_{R,S}$) in the theoretical analysis, however in the numerical implementation we will use finite-dimensional linear subspaces $L^2([0,R], \rho_T)$ spanned by piecewise polynomials functions. While these linear subspaces are not compact, it is shown that the minimizers over the whole linear space are bounded and thus the compactness requirements are not essential (e.g., see Theorem 11.3 in \cite{Gyorfi06}). We shall therefore assume the compactness of the hypothesis space in the theoretical analysis.

The remainder of the paper is organized as follows. We first provide an overview of our learning theory. In Section \ref{sec:algorithm}, we present a practical learning algorithm with theory-guided optimal settings on the choice of hypothesis spaces and with a practical assessment of the learning results. We then demonstrate the effectiveness of the algorithm on prototype systems including a stochastic model for opinion dynamics, and a stochastic Lennard-Jones model in Section \ref{main:numericalexamples}.  We establish a systematic learning theory analyzing the performance of the MLE, considering continuous-time observations in Section \ref{sec:theory_cts_traj} and discrete-time observations in Section \ref{sec:theory_dis_traj}. We present in the appendix detailed proofs.

%%%%%%%%%%
\section{Nonparametric inference of the interaction kernel}\label{sec:algorithm}
We present in this section the nonparametric technique we study for the inference of the interaction kernel, and corresponding algorithms. We discuss assessment of the performance of the estimator and its performance in trajectory prediction.  The proposed estimator is based on maximum likelihood estimation on data-adaptive hypothesis spaces so as to achieve optimal rate of convergence, guided by our learning theory in Section \ref{sec:theory_cts_traj} -\ref{sec:theory_dis_traj}.  
 
\subsection{The maximum likelihood estimator}
As a variational approach, we set the error functional to be the negative log-likelihood of the data $\{\bX^{(m)}_{t_0:t_L}\}_{m=1}^M$, and compute the maximum likelihood estimator (MLE). 

\paragraph{The error functional.} Recall that by the Girsanov theorem, for a continuous trajectory $\bX_{[0,T]}$, its negative log-likelihood ratio between the measure induced by system \eqref{eq:sod}, with an admissible kernel $\intkernel$, and the Wiener measure is  
\begin{align} \label{lkhd_cts}
\mathcal{E}_{\bX_{[0,T]}}(\intkernel)&=\frac{1}{2\sigma^2TN}\int_{0}^T  \left(\|\rhsF_{\intkernel}(\bX_t)\|^2 - 2 \langle \rhsF_{\intkernel}(\bX_t), d\bX_t \rangle\, dt\right). 
\end{align} 
As we do not know the interaction kernel $\intkernel$ that generated the trajectory $\bX_{[0,T]}$, we can let $\intkernelvar$ be any possible admissible interaction kernel, and upon replacing $\intkernel$ by $\intkernelvar$ in \eqref{lkhd_cts}, observe that $\mathcal{E}_{\bX_{[0,T]}}(\intkernelvar)$ is the log-likelihood of seeing the trajectory $\bX_{[0,T]}$ if the system \eqref{eq:sod} were driven by the interaction kernel $\intkernelvar$. In this case $\mathcal{E}_{\bX_{[0,T]}}(\intkernelvar)$ may be interpreted as a error functional, which we wish to minimize over $\intkernelvar$, in order to obtain an estimator for $\intkernel$.

Given only discrete-time observations $\bX_{t_0:t_L}$, where $(t_l = l\Delta t, l=0,\dots,L)$ with $\Delta t = T/L$ (the case of non-equispaced-in-time observation is a straightforward generalization), the error functional $\mathcal{E}_{\bX_{[0,T]}}(\intkernelvar)$ may be approximated as
\begin{align} \label{lkhd_discrete}
\mathcal{E}_{ \bX_{t_1:t_L}}(\intkernelvar)  &:= \frac{ 1}{2\sigma^2 TN} \sum_{l=0}^{L-1}  \left( \|\rhsF_{\intkernelvar}(\bX_{t_l})\|^2  \Delta t-2 \langle \rhsF_{\intkernelvar}(\bX_{t_l}), \bX_{t_l}- \bX_{t_{l-1}} \rangle  \right). 
\end{align} 
 The corresponding approximate likelihood is equivalent to the likelihood based on the Euler-Maruyama (EM) scheme (whose transition probability density is Gaussian): 
\begin{equation}\label{EM}
\bX_{t_{l+1}} = \bX_{t_{l}} + \rhsF_\intkernelvar(\bX_{t_{l}}) \Delta t + \sigma \sqrt{\Delta t} \mathbf{W}_l\,,\quad \mathbf{W}_l\sim \mathcal{N}(0, I_{Nd\times Nd})
\end{equation}
Note that while higher-order approximations of the stochastic integral (or, equivalently, approximations based on higher order numerical schemes) may be more accurate than the EM scheme, they lead to nonlinear optimization problems in the computation of the MLE defined below, and we shall therefore avoid them. The EM-based approximation preserves the quadratic form of the error functional, and leads to an optimization problem that can may be solved  by least squares. As we show in Theorem \ref{thm:error_discreteTime}, this discrete-time approximation leads to an error term of order $\Delta t^{1/2}$ in the MLE, which will be small in the regime on which we focus in this work.

Since the observed discrete-time trajectories $\{\bX^{(m)}_{t_0:t_L}\}_{m=1}^M$ are independent, since $\bX_0$ is drawn i.i.d. from $\mu_0$, the joint likelihood of the trajectories is the product of the likelihood of each trajectory. Therefore, the corresponding \emph{empirical error functional} is defined to be
\begin{align}\label{eq:errFunl}
\mathcal{E}_{L,T,M}(\intkernelvar):=\frac{1}{M}\sum_{m=1}^{M}\mathcal{E}_{ \bX_{t_1:t_L}^{(m)}}(\intkernelvar). 
\end{align} 

\paragraph{A regularized Maximum Likelihood Estimator.} The regularized MLE we consider is a minimizer of the above empirical error functional over a suitable hypothesis space $\mathcal{H}$: 
\begin{equation}\label{MLE}
\widehat \intkernel_{L,T,M,\hypspace}= \argmin{\intkernelvar \in \mathcal{H}}\mathcal{E}_{L,T,M}(\intkernelvar),
\end{equation}
This regularized MLE is well-defined when the minimizer exists and is unique over $\hypspace$. We shall discuss the uniqueness of the minimizer in Section \ref{sec:coercivty}, where we show it is guaranteed by a coercivity condition.  
When the hypothesis space $\hypspace$ is a finite dimensional linear space, say, $\hypspace=\mathrm{span} \{\psi_i\}_{i=1}^n$ with basis functions $\psi_i: \R^+\to \R$, the regularized MLE is the solution of a least squares problem. To see this, letting $\intkernelvar = \sum_{i=1}^{n} a(i) \psi_i$ and $a :=(a(1),\dots,a(n))\in \mathbb{R}^n$, we have $\rhsF_{\intkernelvar}(\bX) =\sum_{i= 1}^n a(i) \rhsF_{\psi_i}(\bX) $, due to the linear dependence of $\rhsF_\varphi$ on $\varphi$. Then, we can write the error functional in Eq.\eqref{lkhd_discrete} for each trajectory as 
\begin{align*}
\mathcal{E}_{\bX^{(m)}_{t_1:t_L}}(\ba) := \mathcal{E}_{\bX^{(m)}_{t_1:t_L}}(\intkernelvar)  = a^T A^{(m)} a +a^T b^{(m)} ,
\end{align*} 
where the matrix $A^{(m)} \in \R^{n\times n}$ and the vector $b^{(m)}\in \R^{n}$ are given by 
\begin{equation}\label{eq:normal_Ab_1traj} 
\begin{aligned}
A^{(m)}(i,i')   &= \frac{ 1}{2\sigma^2LN} \sum_{l=0}^{L-1} \langle \rhsF_{\psi_i}(\bX_{t_{l}}^{(m)} ), \rhsF_{\psi_{i'}}(\bX_{t_{l}}^{(m)})\rangle, \quad  \\
b^{(m)} (i)    &= - \frac{ 1}{\sigma^2 L\Delta t N} \sum_{l=0}^{L-1} \langle \rhsF_{\psi_i}(\bX_{t_l}^{(m)}), \bX_{t_{l+1}}^{(m)}- \bX_{t_{l}}^{(m)} \rangle.
\end{aligned}
\end{equation} 
Hence,  corresponding to $\nabla \mathcal{E}_{L,T,M}=0$ for the error functional in \eqref{eq:errFunl}, we solve the normal equations for $a$ to obtain the solution $\widehat a_{L,T,M,\hypspace}$:
\begin{equation}\label{e:ALM}
 A_{M,L} \widehat a_{L,T,M,\hypspace} =b_{M,L}, \text{ where }\, A_{M,L}:=\frac{1}{M} \sum_{m=1}^{M}  A^{(m)},\, b_{M,L}:=\frac{1}{M} \sum_{m=1}^{M}  b^{(m)} 
\end{equation}
and corresponding desired MLE for the interaction kernel:
\begin{align}\label{eq:phiEst}
\widehat \intkernel_{L,T,M,\hypspace} = \sum_{i=1}^{n}  \widehat a_{L,T,M,\hypspace}(i) \psi_i.
\end{align}
The normal equations \eqref{e:ALM} are solved by least squares, so the solution always exists. We will show in Section \ref{sec:theory_dis_traj} that assuming a coercivity condition, the matrix $A_{M,L}\in\mathbb{R}^{n\times n}$ is invertible with high probability when $M$ and $L$ are large, so the least squares estimator is the unique solution to the normal equations, and the regularized MLE is the unique minimizer of the empirical error functional over $\hypspace$.

\subsection{Dynamics-adapted measures and function spaces}\label{sec:hypspace}

We will assess the estimation error in a suitable function space: $L^2(\R^+,\rho_T)$.  Here $\rho_T$ is the distribution of pairwise distances between all particles: 
\begin{align}\label{rhoT}
\rho_T (dr) &:= \frac{1}{\binom N2 T}\int_{t = 0}^T\bigg[\sum_{i,i'=1, i< i' }^N \E[\delta_{r_{ii'}(t)}(dr)] \, dt\bigg],
\end{align}% 
where $\delta$ is the Dirac $\delta$ distribution, so that $\E[\delta_{r_{ii'}(t)}(dr)]$ is the distribution of the random variable $r_{ii'}(t)= || \bx_{i,t} -\bx_{i',t} ||$, with $\bx_{i,t}$ being the position of particle $i$ at time $t$. 
  
The probability measure $\rho_T$ depends on both the distribution of initial conditions $\mu_0$ and the measure determining the random noise on the path space, while it is independent of the observed data. The measure $\rho_T$ encodes the information about the dynamics marginalized to pairwise distances; regions with large $\rho_T$-measure correspond to pairwise distances between particles that are often encountered during the dynamics.

With observations of $M$ trajectories at $L$ discrete-times each, we introduce a corresponding measure
\begin{align} \label{e:rhoLM}
\rho^{L,M}_T(dr) &:= \frac{1}{\binom N2 L M}\sum_{l = 0, m=1}^{L - 1, M} \bigg[\sum_{i,i'=1, i< i' }^N\delta_{r^{(m)}_{ii'}(t_l)}(dr) \bigg]\,, 
\end{align}
where $r^{(m)}_{ii'}(t)= || \bx^{(m)}_{i,t} -\bx^{(m)}_{i',t} ||$ is from the $m$-th observed trajectory. 
We think of this as an approximation to $\rho_T$, in two significantly different aspects. In $L$, because as $L\rightarrow+\infty$ our observations tends to be continuous in time, and in $M$, as $\rho^{L,M}_T$ can be thought of, after letting $M\rightarrow+\infty$, as an empirical approximation to $\rho_T$ performed from data on the $M$ independent trajectories.

\paragraph{Accuracy of the estimator.}  
We measure the accuracy of our estimator $\widehat\intkernel_{L,T,M,\hypspace}$ 
by the quantity 
$$\|(\widehat \intkernel_{L,T,M,\hypspace}-\intkernel)(\cdot)\cdot\|_{L^2(\R^+, \rhoT)}.$$ 
The function $\intkernel(\cdot)\cdot$, instead of $\intkernel$, which at $r\in\mathbb{R}_+$ takes value $\intkernel(r)r$, appears naturally in our learning theory in Section \ref{sec:theory_cts_traj}, fundamentally because it is the derivative of the pairwise distance potential $\Phi$ in \eqref{potential}. For simplicity of notation, for a function $\varphi$ in the hypothesis space, we let
\begin{equation}\label{eq:newnorm}
\vertiii{\varphi}:= \|\varphi(\cdot)\cdot\|_{L^2(\rhoT)}= \left( \int_{\R^+} |\varphi(r)r|^2\rhoT(dr) \right)^{1/2}. 
\end{equation} Then the mean square error of the estimator is 
\begin{align}
\E\vertiii{\widehat\intkernel_{L,T,M,\hypspace} -\intkernel}^2.
\end{align}

\subsection{Hypothesis spaces and nonparametric estimators}
\label{s:hypspacesnonparamestimators}
As standard in nonparametric estimation, we let the hypothesis space $\hypspace$ grow in dimension with the number of observations, avoiding under- or over-parametrization, and leading to consistent estimators, that in fact reach an optimal minimax rate of convergence (see e.g. \cite{CS02,Gyorfi06,cucker2007learning}).

Similar to \cite{LZTM19,LMT19}, we set the basis functions  $\{\psi_i\}_{i=1}^n$ to be piecewise polynomials on a  partition of the support of the density function of the empirical probability measure $\rho^{L,M}_T$. 

Guided by the optimal rate convergence results in Section \ref{sec:theory_cts_traj}, we will set the dimension of the hypothesis space $\mathcal{H}$ to be 
\begin{equation}\label{eq:H_dim}
n=C ( M/\log{M})^{1/(2s+1)},
\end{equation}
 where the number $s$ is the H\"older index of continuity of the basis functions, and it is be chosen according the regularity of the true kernel. When $T$ is large and when the system is ergodic, we set 
 \begin{equation*}
n=C ( N_{ess}/\log{N_{ess}})^{1/(2s+1)},
\end{equation*}
where $N_{ess}:= M \frac{T}{\tau}$, with $\tau$ denoting the auto-correlation time of the system, is the effective sample size of the data. Here the auto-correlation time $\tau$ is the equivalent of the mixing time for a reversible ergodic Markov chain \cite{levin2017markov}. 

We estimate the auto-correlation time by the sum of the temporal auto-correlation function of a pairwise distance $r_{i,j}$ (we refer to \cite{thompson2010comparison} for detailed discussion on the estimation of auto-correlation time, which is a whole subject by itself). 

We will prove bounds, that hold with high probability, on the Mean Squared error (MSE) of the MLE $\intkernel_{L,T,M,\mathcal{H}_{n_M}}$ in \eqref{eq:phiEst} for fixed and large $M$, for a fixed time $T$ and for suitable hypothesis spaces $\mathcal{H}_{n_M}$ with dimension $n_M$ as in \eqref{eq:H_dim}.  When continuous-time trajectories are observed, the MSE is of the order $(\frac{\log M}{ M})^{\frac{2s}{2s+1}}$ with high probability, according to Theorem \ref{maintheorem}, and so is its expectation. In particular, this avoids the curse of dimensionality of the state space ($dN$). When the observations are discrete-time trajectories with observation gap $\Delta t$, the error is of the order $(\frac{\log M}{ M})^{\frac{2s}{2s+1}} + \Delta t^{1/2}$ with a high probability according to Theorem \ref{thm:error_discreteTime}.

%%%%%%%%%%%================================= 
\subsection{Algorithmic and computational considerations}
\begin{algorithm}
\caption{Learning interaction kernels from many trajectories}\label{alg:main}
\small{
\begin{algorithmic}[1]
\State {\bf Input:} Data consisting of $M$ independent  trajectories $\{\bX^m_{t_0:t_L}\}_{m=1}^M$; H\"older regularity $s$ of the true kernel. 
\State {\bf Output:} An estimator $\widehat \intkernel_{L,T,M,\hypspace_{n_M}} $ for the interaction kernel. 
  \State Compute the pairwise distances and the empirical measure $\rho^{L,M}_T$ in \eqref{e:rhoLM}.  
  \State Construct the basis $\{\psi_{p}\}_{p=1}^{n_M}$ with adaptive partition based on $\rho^{L,M}_T$, and with $n_M$ given by \eqref{eq:H_dim}.
  \State Assemble the normal equations (\ref{e:ALM}) (in parallel).
  \State Solve the normal equation and return $\widehat \intkernel_{L,T,M,\mathcal{H}_{n_M}}$ as in \eqref{eq:phiEst}. 
\end{algorithmic}}
\label{f:algo}
\end{algorithm}
 
The algorithm is summarized in Algorithm \ref{f:algo}. Note that the normal matrices $\{A^{(m)} \}$  and vectors $\{b^{(m)} \}$ are defined trajectory-wise and therefore may be computed in parallel. When the size of the system is large (i.e. $dN$ is large), this allows one to accelerate the computation of the estimator, by assembling these normal matrices and vectors for each trajectory in parallel, and updating the normal matrix $A_{M,L}$ and vector $b_{M,L}$. The total computational cost of constructing our estimator, given $P$ CPU's, is $O(L\frac{N^2d}PMn^2+n^3)$. This becomes $O(L\frac{N^2d}PM^{1+\frac{1}{2s+1}}+CM^{\frac{3}{2s+1}})$ when $n$ is chosen optimally according to Theorem \ref{maintheorem} and $\phi$ is at least in $C^{1,1}$ (corresponding to the index of regularity $s\ge 2$ in the theorem).

%%%%%%%%%%%================================= 
\subsection{Accurracy of trajectory prediction}% \section{Learning theory}
One application of estimating the interaction kernel from data is to preform predictions of the dynamics. Given an estimator, the following Proposition bounds its accuracy in predicting the trajectories of the system driven by the true interaction kernel:

\begin{proposition}\label{Trajdiff}
 Let $\widehat\intkernel \in \mathcal{K}_{R,S}$ be an estimator of the true kernel $\intkernel$ {, where $ \mathcal{K}_{R,S}$ is the admissible set defined in \eqref{def_AddmissibleSet}.} Denote by $\widehat{\bX}_t$ and $\bX_t$ the solutions of the systems with kernels $\widehat\intkernel$ and $\intkernel$ respectively, starting from the same initial condition and with the same random noise. Then we have
\[
\sup_{t\in[0,T]} \E\left[\frac{1}{N} \|\widehat{\bX}_t- \bX_t\|^2\right] \leq   2T^2 e^{8T^2 (R+1)^2S^2} \vertiii{\widehat{\intkernel}-\intkernel}^2
\]
where the measure $\rho_T$ is defined by \eqref{rhoT}. 
\end{proposition}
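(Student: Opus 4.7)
The plan is to run a pathwise coupling argument: write the two processes with the same Brownian motion and same initial condition, so the difference $\Delta_t := \widehat\bX_t - \bX_t$ satisfies a purely deterministic-looking integral equation
\[
\Delta_t = \int_0^t\!\big(\rhsF_{\widehat\intkernel}(\widehat\bX_s) - \rhsF_{\intkernel}(\bX_s)\big)\,ds,
\]
with the noise completely cancelled. This reduces the statement to a Gr\"onwall-type estimate.

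Next I would perform the standard add-and-subtract, writing the integrand as $\big(\rhsF_{\widehat\intkernel}(\widehat\bX_s) - \rhsF_{\widehat\intkernel}(\bX_s)\big) + \big(\rhsF_{\widehat\intkernel}(\bX_s) - \rhsF_{\intkernel}(\bX_s)\big)$. For the first piece I need a Lipschitz estimate for $\rhsF_{\widehat\intkernel}$: since $\widehat\intkernel\in\mathcal{K}_{R,S}$, the map $\br\mapsto \widehat\intkernel(\|\br\|)\br$ is Lipschitz with constant at most $(R+1)S$ (bounding $\|\widehat\intkernel(\|\br_1\|) - \widehat\intkernel(\|\br_2\|)\|\cdot \|\br_1\|$ via $\|\widehat\intkernel'\|_\infty$ on the support $[0,R]$, plus $\|\widehat\intkernel\|_\infty\|\br_1 -\br_2\|$). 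After assembling the $N$ component sums with the $1/N$ normalization and one Cauchy--Schwarz step, this yields $\|\rhsF_{\widehat\intkernel}(\widehat\bX) - \rhsF_{\widehat\intkernel}(\bX)\|^2 \le 4(R+1)^2S^2 \|\widehat\bX - \bX\|^2$.

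For the second piece, expand per particle $i$: $\rhsF_{\widehat\intkernel,i}(\bX_s)-\rhsF_{\intkernel,i}(\bX_s) = \frac{1}{N}\sum_{i'}(\widehat\intkernel-\intkernel)(r_{ii'}(s))\,(\bx_{i',s}-\bx_{i,s})$. Using Cauchy--Schwarz on the inner sum and summing over $i$ produces
\[
\tfrac{1}{N}\|\rhsF_{\widehat\intkernel}(\bX_s)-\rhsF_{\intkernel}(\bX_s)\|^2 \le \tfrac{1}{N^2}\sum_{i,i'}|(\widehat\intkernel-\intkernel)(r_{ii'}(s))|^2 r_{ii'}(s)^2.
\]
Taking expectation, time-averaging over $[0,T]$, and recognizing the right-hand side as exactly $2\binom{N}{2}/N^2 = (N-1)/N$ times $\vertiii{\widehat\intkernel-\intkernel}^2$ by the definition of $\rho_T$ in \eqref{rhoT} gives the key bound $\int_0^T \tfrac{1}{N}\E\|\rhsF_{\widehat\intkernel}(\bX_s)-\rhsF_{\intkernel}(\bX_s)\|^2\,ds \le T\vertiii{\widehat\intkernel-\intkernel}^2$.

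Finally I would square the integral equation using $\|a+b\|^2\le 2\|a\|^2+2\|b\|^2$ and Cauchy--Schwarz in time (picking up a factor $t\le T$), take expectation, divide by $N$, and let $e_t := \E[\tfrac{1}{N}\|\Delta_t\|^2]$. This produces
\[
e_t \le 8T(R+1)^2S^2\int_0^t e_s\,ds + 2T^2\vertiii{\widehat\intkernel-\intkernel}^2,
\]
and Gr\"onwall's inequality immediately yields $e_t \le 2T^2 e^{8T^2(R+1)^2S^2}\vertiii{\widehat\intkernel-\intkernel}^2$, which is the desired bound uniformly in $t\in[0,T]$. The only real care-point is bookkeeping the $1/N$ factors so that the pairwise sum aligns with $\rho_T$ and no spurious $N$-dependence survives in the final constant; everything else is routine.
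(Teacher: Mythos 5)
Your proposal is correct and follows essentially the same route as the paper's proof: cancel the noise by coupling, split via add-and-subtract, bound the Lipschitz piece of $\rhsF_{\widehat\intkernel}$ by $4(R+1)^2S^2\|\widehat\bX-\bX\|^2$, control the kernel-difference piece by $\vertiii{\widehat\intkernel-\intkernel}^2$ through the definition of $\rho_T$, and close with Gr\"onwall. The only difference is cosmetic: you spell out the Cauchy--Schwarz/$\rho_T$ bookkeeping that the paper compresses into an appeal to Jensen's inequality, and you arrange the Gr\"onwall inequality with the constant term isolated; the constants match.
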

We postpone the proof to Section \ref{s:appendixproofs}.

\section{Learning theory: continuous-time observations}\label{sec:theory_cts_traj}
We analyze first the regularized MLE in the case of continuous-time observations $\{\bX^{(m)}_{[0,T]}\}_{m=1}^M$. We show that under a coercivity condition, the regularized  MLE is consistent, and that with proper choice of the hypothesis spaces, we can achieve an optimal learning rate $(\frac{\log M}{ M})^{\frac{2s}{2s+1}}$. 

Recall from Eq.\eqref{lkhd_cts} that
\begin{align*}
\mathcal{E}_{\bX_{[0,T]}}(\intkernelvar)&:=\frac{1}{2\sigma^2TN}\int_{0}^T  \left(\|\rhsF_{\intkernelvar}(\bX_t)\|^2 - 2 \langle \rhsF_{\intkernelvar}(\bX_t), d\bX_t \rangle  dt\right)
\end{align*} 
is the negative log-likelihood of a trajectory $\bX_{[0,T]}$, with respect to the measure induced by the system with interaction kernel $\varphi$. Then, the negative log-likelihood of independent trajectories $\{\bX^{(m)}_{[0,T]}\}_{m=1}^M$ is 
\begin{equation}
\mathcal{E}_{T,M}(\intkernelvar):=\frac{1}{M}\sum_{m=1}^{M}\mathcal{E}_{ \bX_{[0,T]}^{(m)}}(\intkernelvar)\,,
\label{e:ETM}
\end{equation}
and the regularized MLE over a hypothesis space $\mathcal{H}$ is 
\begin{equation}\label{MLE_cts}
\widehat \intkernel_{T,M,\mathcal{H}} \in \argmin{\intkernelvar \in \mathcal{H}}\mathcal{E}_{T,M}(\intkernelvar)\,.
\end{equation}
The existence of the minimizer follows from the fact that the error functional $\mathcal{E}_{T,M}(\intkernelvar)$ is quadratic in $\varphi$, which in turn is a consequence of the linearity of $\rhsF_\varphi$ in $\varphi$.  The uniqueness of the minimizer, however, requires a coercivity condition and is related to the learnability of the kernel, which we discuss in the next section.

\subsection{Identifiability and learnability: a coercivity condition}\label{sec:coercivty}

The uniqueness of the minimizer of the error functional $\mathcal{E}_{T,M}(\intkernelvar)$ over the hypothesis space ensures that the kernel is identifiable. This is not granted, even when the number of observed trajectories is infinite: denote
 \begin{align}\label{errfunctional}
 \mathcal{E}_{T,\infty}(\intkernelvar):=\mathbb{E} \mathcal{E}_{\mbf{X}_{[0,T]}}(\intkernelvar) = \lim_{M\to\infty} \mathcal{E}_{T,M}(\intkernelvar)\quad a.s.~,
 \end{align}
where $\mathbb{E}$ here, and in all that follows unless otherwise indicated, is the expectation over initial conditions, independently sampled from $\mu_0$, and over the Wiener measure underlying the random noise, and observe that 
  \begin{align}\label{difference}
 \mathcal{E}_{T,\infty}(\intkernelvar)-\mathcal{E}_{T,\infty}(\intkernel)&=\mathbb{E}\bigg[ \mathcal{E}_{\mbf{X}_{[0,T]}}(\intkernelvar)-\mathcal{E}_{T,\infty}(\intkernel) \bigg] =\frac{1}{2\sigma^2NT}\mathbb{E}\int_{0}^{T}\| \rhsfo_{\intkernelvar-\intkernel}(\bX_t)\|^2dt\,.
 \end{align}
Only when $\E \int_{0}^{T} \| \rhsfo_{\intkernelvar-\intkernel}(\bX_t)\|^2 dt > 0$ for any $\intkernelvar-\intkernel\neq 0$ can one ensure the uniqueness of minimizer. This motivates us to propose the following coercivity condition, introduced in \cite{BFHM17} in the case of non-stochastic systems: 

\begin{definition}[Coercivity condition] \label{def_coercivity}
We say that the stochastic system defined in \eqref{eq:sod} satisfies a coercivity condition on a set $\hypspace$ of functions on $\mathbb{R}_+$, with a constant $0<c_{\hypspace}$, if 
\begin{align}\label{gencoer}
  c_{\hypspace} \vertiii{\intkernelvar}^2  \!\!\leq   \!\!\frac{1}{2\sigma^2NT}\int_0^T\sum_{i=1}^{N}\E \bigg[ \big\| \frac{1}{N}\sum_{i'= 1}^{N}  \intkernelvar(r_{ii'}(t))\br_{ii'}(t) \big\|^2\bigg]dt
 \end{align} 
for all $\intkernelvar \in \hypspace$ such that $\intkernelvar(\cdot)\cdot\in L^2(\rhoT)$. Here $\vertiii{\cdot}$ denotes the norm defined in \eqref{eq:newnorm}. We will denote by $c_{\hypspace}$ the largest constant for which the inequality holds, and call it the coercivity constant.
 \end{definition}

The coercivity condition ensures identifiability of the kernel. We emphasize that the kernel is latent, in the sense that its values at $\{r_{ii'}=\|\bx_{i'}-\bx_i\|\}$ are undeterminable from data. In fact, to recover $(\intkernel(r_{ii'}) ) \in \R^{\frac{N(N-1)}{2}}$ from the observed trajectories, even if we ignore the stochastic noise in the system and assume to have access to $\rhsfo_\phi(\bx)\in\R^{dN}$, which consists of a linear combination of $(\intkernel(r_{ii'}) )$ with coefficients $\br_{ii'}=\bx_{i'}-\bx_i$, we face a linear system that is underdetermined as soon as $dN$ (=number of known quantities) $\leq \frac{N(N-1)}{2}$ (=number of unknowns), i.e. for $d<(N-1)/2$. Thus, in general the exact values of $\intkernel$ at locations $\{r_{ii'}\}_{i,i'}$ can not be determined. Furthermore, we have  stochastic noise in the system. This suggests that the inverse problem of estimating the interaction kernel in a space of continuous functions is ill-posed. We will see that the coercivity condition ensures well-posedness in $L^2(\rhoT)$, both in the sense of uniqueness and in the sense of stability.
  
 The coercivity condition plays a key role in the learning of the kernel. Beyond ensuring learnability of kernels by ensuring the uniqueness of minimizer over any compact convex sets, it also enables us to control the error of the estimator by the discrepancy  between the expectation of error functionals, as is shown in Proposition \ref{convexity}.  We will use this property to establish the convergence of the estimators in later sections.  
 
 To simplify notation, we define a bilinear functional product over $\mathcal{H}$ by 
\begin{align} \label{def:double_Innerp}
\dbinnerp{\intkernelvar_1, \intkernelvar_2}:=\frac{1}{2\sigma^2NT}\E\int_{0}^{T}\langle \rhsfo_{\intkernelvar_1}(\bX_t),   \rhsfo_{\intkernelvar_2}(\bX_t) \rangle dt, \quad\forall \intkernelvar_1, \intkernelvar_2\in \mathcal{H}. 
\end{align}

\begin{proposition}\label{convexity} 
 Let $\hypspace$ be a compact convex subset of $L^{2}(\R^+,\rhoT)$ and assume the coercivity condition \eqref{gencoer} holds true on $\hypspace$ with constant $c_{\hypspace}$.  Then, the error functional $\mathcal{E}_{T,\infty}$ defined in \eqref{errfunctional} has a unique minimizer over $\mathcal{H}$ in $L^2(\rhoT)$:
 \begin{equation}\label{eq:phi_infty_def}
 \widehat{\phi}_{T,\infty, \mathcal{H}}=\argmin{\intkernelvar \in \mathcal{H}} \mathcal{E}_{T,\infty}(\intkernelvar)\,.
 \end{equation}
Moreover,  for all $\intkernelvar \in \mathcal{H}$, 
 \begin{equation}\label{eq_minH}
 \mathcal{E}_{T,\infty}(\intkernelvar)- \mathcal{E}_{T,\infty}(\widehat{\phi}_{T,\infty, \hypspace}) \geq c_{\hypspace} \vertiii{\intkernelvar-\widehat{\phi}_{T,\infty, \hypspace}  }^2.
 \end{equation}
\end{proposition}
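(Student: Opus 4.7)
The plan is to leverage the quadratic structure of $\mathcal{E}_{T,\infty}$ in $\varphi$ together with the coercivity hypothesis. First, combining \eqref{difference} with the definition of $\dbinnerp{\cdot,\cdot}$ in \eqref{def:double_Innerp} gives the clean identity
\begin{equation*}
\mathcal{E}_{T,\infty}(\intkernelvar)-\mathcal{E}_{T,\infty}(\intkernel) = \dbinnerp{\intkernelvar-\intkernel,\,\intkernelvar-\intkernel},
\end{equation*}
which, using the linearity of $\rhsF_{\cdot}$ in its argument, exhibits $\mathcal{E}_{T,\infty}$ as a shift of a nonnegative quadratic form. This identity will drive everything that follows.

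For existence I would verify that $\varphi\mapsto \dbinnerp{\varphi,\varphi}$ is continuous in the $\vertiii{\cdot}$-seminorm: applying Cauchy-Schwarz to $\rhsF_\varphi(\bX_t) = \tfrac{1}{N}\sum_{i,i'}\varphi(r_{ii'}(t))\br_{ii'}(t)$ bounds $\|\rhsF_\varphi(\bX_t)\|^2$ by a constant multiple of $\sum_{i,i'}|\varphi(r_{ii'}(t))r_{ii'}(t)|^2$, and integrating against $\rho_T$ gives $\dbinnerp{\varphi,\varphi}\lesssim \vertiii{\varphi}^2$. Since $\varphi$ is supported in $[0,R]$, this in turn dominates the $L^2(\rho_T)$-norm on $\hypspace$, so $\mathcal{E}_{T,\infty}$ is continuous on the compact set $\hypspace$ and a minimizer $\widehat{\phi}:=\widehat{\phi}_{T,\infty,\hypspace}$ exists.

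For the quadratic inequality \eqref{eq_minH} I would invoke the first-order variational condition on a convex set: differentiating $s\mapsto \mathcal{E}_{T,\infty}((1-s)\widehat{\phi}+s\intkernelvar)$ at $s=0^+$ yields $\dbinnerp{\widehat{\phi}-\intkernel,\,\intkernelvar-\widehat{\phi}}\geq 0$ for every $\intkernelvar \in \hypspace$. Writing $\intkernelvar-\intkernel = (\widehat{\phi}-\intkernel)+(\intkernelvar-\widehat{\phi})$ and expanding the bilinear form gives
\begin{equation*}
\mathcal{E}_{T,\infty}(\intkernelvar)-\mathcal{E}_{T,\infty}(\widehat{\phi}) = 2\dbinnerp{\widehat{\phi}-\intkernel,\,\intkernelvar-\widehat{\phi}} + \dbinnerp{\intkernelvar-\widehat{\phi},\,\intkernelvar-\widehat{\phi}} \geq \dbinnerp{\intkernelvar-\widehat{\phi},\,\intkernelvar-\widehat{\phi}}.
\end{equation*}
Applying the coercivity hypothesis to $\intkernelvar-\widehat{\phi}$ produces $\dbinnerp{\intkernelvar-\widehat{\phi},\,\intkernelvar-\widehat{\phi}} \geq c_\hypspace \vertiii{\intkernelvar-\widehat{\phi}}^2$, which is exactly \eqref{eq_minH}. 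Uniqueness in $L^2(\rho_T)$ is then immediate: two minimizers both saturate the left side at zero, so $\vertiii{\widehat\phi_1-\widehat\phi_2}=0$, meaning $\widehat\phi_1(\cdot)\cdot = \widehat\phi_2(\cdot)\cdot$ in $L^2(\rho_T)$.

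The one delicate point is that the coercivity condition, as stated for $\intkernelvar\in\hypspace$, must be applied to a difference $\intkernelvar-\widehat{\phi}$ that in general lies in $\hypspace-\hypspace$ rather than in $\hypspace$. I would handle this either by reading the hypothesis as holding on $\mathrm{span}(\hypspace)$ (the tacit convention consistent with subsequent uses of the condition in the paper), or by observing that for the concrete piecewise-polynomial hypothesis spaces used in Section \ref{sec:algorithm}, $\hypspace$ is itself a linear subspace, so $\hypspace-\hypspace=\hypspace$ and no extension is needed.
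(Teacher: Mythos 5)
Your proof is correct and follows essentially the same route as the paper's: the quadratic identity $\mathcal{E}_{T,\infty}(\intkernelvar)-\mathcal{E}_{T,\infty}(\intkernel)=\dbinnerp{\intkernelvar-\intkernel,\intkernelvar-\intkernel}$, the first-order optimality condition $\dbinnerp{\widehat{\phi}-\intkernel,\intkernelvar-\widehat{\phi}}\geq 0$ obtained from convexity by sending the interpolation parameter to $0^+$, the same bilinear expansion, and coercivity applied to $\intkernelvar-\widehat{\phi}$. You are in fact slightly more careful than the paper on two points it leaves implicit — existence of the minimizer via continuity on the compact set, and the fact that coercivity is being invoked on a difference of elements of $\hypspace$ rather than an element of $\hypspace$ — and your resolutions of both are the intended ones.
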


\begin{proof} From Equation \eqref{difference}, we have $\mathcal{E}_{T,\infty}(\intkernelvar)-\mathcal{E}_{T,\infty}(\intkernel)=\dbinnerp{\intkernelvar-\intkernel, \intkernelvar-\intkernel}$. Then,  
\begin{align*}
\mathcal{E}_{T,\infty}(\intkernelvar)&- \mathcal{E}_{T,\infty}(\widehat{\phi}_{T,\infty,\hypspace}) \\
=& \dbinnerp{\intkernelvar-\intkernel, \intkernelvar-\intkernel}-\dbinnerp{\widehat{\phi}_{T,\infty,\hypspace}-\intkernel, \widehat{\phi}_{T,\infty,\hypspace}-\intkernel}\\ 
=& \dbinnerp{ \intkernelvar-\widehat{\phi}_{T,\infty,\hypspace}, \intkernelvar+\widehat{\phi}_{T,\infty,\hypspace}-2\intkernel}\\ 
=&\dbinnerp{\intkernelvar-\widehat{\phi}_{T,\infty,\hypspace}, \intkernelvar-\widehat{\phi}_{T,\infty,\hypspace} }+ 2\dbinnerp{\intkernelvar-\widehat{\phi}_{T,\infty,\hypspace}, \widehat{\phi}_{T,\infty,\hypspace}-\intkernel} \\
\geq &  c_{\hypspace} \vertiii{\intkernelvar-\widehat{\phi}_{T,\infty, \hypspace} }^2 + 2\dbinnerp{\intkernelvar-\widehat{\phi}_{T,\infty,\hypspace}, \widehat{\phi}_{T,\infty,\hypspace}-\intkernel} ,
\end{align*} 
where the inequality follows from the coercivity condition. Then, Eq.\eqref{eq_minH} follows once we notice that  $$\dbinnerp{\intkernelvar-\widehat{\phi}_{T, \infty,\hypspace}, \widehat{\phi}_{T,\infty,\hypspace}-\intkernel} \geq 0$$ by the convexity of $\hypspace$. In fact,  since $t\intkernelvar+(1-t) \widehat{\phi}_{L,\infty,\mathcal{H}} \in \mathcal{H}$, $\forall t \in [0, 1]$, we have $\mathcal{E}_{T,\infty}(t\intkernelvar+(1-t) \widehat{\phi}_{T,\infty,\mathcal{H}} )- \mathcal{E}_{T,\infty}(\widehat{\phi}_{T,\infty,\mathcal{H}}) \geq 0$ since $\widehat{\phi}_{T,\infty,\mathcal{H}}$ is a minimizer, and so, equivalently,
\begin{align*}
& t\dbinnerp{ \intkernelvar-\widehat{\phi}_{T,\infty, \mathcal{H}}, t\intkernelvar+(2-t)\widehat{\phi}_{T,\infty,\mathcal{H}}-2\intkernel} \geq 0 \\
 \Leftrightarrow & \dbinnerp{\intkernel-\widehat{\phi}_{T,\infty,\mathcal{H}}, t\intkernel+(2-t)\widehat{\phi}_{T,\infty, \mathcal{H}}-2\intkernel}\geq 0.
\end{align*} 

Sending $t \rightarrow 0^+$, we obtain $\dbinnerp{\intkernelvar-\widehat{\phi}_{T,\infty, \mathcal{H}}, 2\widehat{\phi}_{T,\infty, \mathcal{H}}-2\intkernel} \geq 0.$ 
\end{proof}

 \paragraph{Well-conditioning from coercivity.} When the hypothesis space $\hypspace$ is a finite-dimensional linear space, the coercivity constant provides a lower bound for the smallest eigenvalue of the limit of the normal equations matrix $A_{M,L}$ in Eq.\eqref{e:ALM} as $M,L\rightarrow+\infty$. Therefore, when the sample size $M$ is large and when the observation frequency $L$ is high, the matrix $A_{M,L}$ is invertible with a high probability (see Corollary \ref{corollary:eignvalue} for details), and thus the coercivity condition ensures the uniqueness of the regularized MLE in Eq.\eqref{eq:phiEst}:
\begin{proposition} \label{prop:A_infty}
Suppose that the coercivity condition holds on $\hypspace = \mathrm{span}\{\psi_1,\cdots, \psi_n\}$, where the basis functions satisfy  
$\langle \psi_p(\cdot)\cdot,\psi_{p'}(\cdot)\cdot \rangle_{L^2{(\rhoT)}}=\delta_{p,p'}$. 
Let $A_{\infty}=\big(\dbinnerp{\psi_{p},\psi_{p'}}\big)_{p, p'} \in \mathbb{R}^{n \times n}$ with the bilinear functional $\dbinnerp{\cdot,\cdot}$ defined in \eqref{def:double_Innerp}. Then the smallest singular value of $A_{\infty}$ is 
$\lambda_{\min}(A_{\infty}) = c_{\hypspace}\,.$

\end{proposition}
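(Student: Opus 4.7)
The plan is to translate both the coercivity inequality and the bilinear form $\dbinnerp{\cdot,\cdot}$ into coordinates with respect to the orthonormal basis $\{\psi_p\}_{p=1}^n$, turning the statement into a standard min-Rayleigh-quotient identity for the symmetric matrix $A_\infty$.

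First, I would expand an arbitrary $\intkernelvar \in \hypspace$ as $\intkernelvar = \sum_{p=1}^n a_p \psi_p$ with $a = (a_1,\ldots,a_n)^\top \in \R^n$. Because $\rhsfo_\intkernelvar$ is linear in $\intkernelvar$, the bilinear functional defined in \eqref{def:double_Innerp} satisfies
\begin{equation*}
\dbinnerp{\intkernelvar,\intkernelvar} = \sum_{p,p'=1}^n a_p a_{p'}\, \dbinnerp{\psi_p,\psi_{p'}} = a^\top A_\infty a.
\end{equation*}
At the same time, the orthonormality hypothesis $\langle \psi_p(\cdot)\cdot,\psi_{p'}(\cdot)\cdot\rangle_{L^2(\rhoT)} = \delta_{p,p'}$ gives, directly from the definition \eqref{eq:newnorm},
\begin{equation*}
\vertiii{\intkernelvar}^2 = \sum_{p,p'=1}^n a_p a_{p'} \langle \psi_p(\cdot)\cdot,\psi_{p'}(\cdot)\cdot\rangle_{L^2(\rhoT)} = \|a\|^2.
\end{equation*}

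Next, I would rewrite the coercivity condition on $\hypspace$. By Definition \ref{def_coercivity}, the right-hand side of \eqref{gencoer} is exactly $\dbinnerp{\intkernelvar,\intkernelvar}$, so coercivity on $\hypspace$ is equivalent to the statement
\begin{equation*}
c_{\hypspace}\, \|a\|^2 \leq a^\top A_\infty a \quad \text{for all } a \in \R^n,
\end{equation*}
and $c_{\hypspace}$ is, by definition, the largest constant for which this holds. Since $A_\infty$ is symmetric positive semidefinite (it is the Gram matrix of $\{\rhsfo_{\psi_p}\}$ under the nonnegative form induced by $\dbinnerp{\cdot,\cdot}$), the sharp constant in this quadratic inequality is precisely $\lambda_{\min}(A_\infty)$ by the Courant--Fischer characterization. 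This yields $c_{\hypspace} = \lambda_{\min}(A_\infty)$.

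There is essentially no obstacle here: the entire content is the identification of the $L^2(\rhoT)$-based norm on $\hypspace$ with the Euclidean norm on $\R^n$ through the orthonormal basis, plus the fact that $\dbinnerp{\cdot,\cdot}$ has $A_\infty$ as its matrix representation. The one point that merits a line of care is verifying that $\vertiii{\cdot}$ is the correct norm appearing in the coercivity inequality -- this is immediate from comparing the right-hand side of \eqref{gencoer} with \eqref{def:double_Innerp}, so that the constant $c_\hypspace$ in \eqref{gencoer} is indeed the optimal coercivity constant and not a strictly smaller one.
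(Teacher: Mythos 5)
Your proposal is correct and follows essentially the same route as the paper: both identify $a^{\top}A_\infty a=\dbinnerp{\intkernelvar,\intkernelvar}$ and $\|a\|^2=\vertiii{\intkernelvar}^2$ via bilinearity and orthonormality, and then use the fact that $c_\hypspace$ is the \emph{largest} admissible constant together with the variational characterization of $\lambda_{\min}$ (attained since $\hypspace$ is finite dimensional) to upgrade the inequality to an equality. No gaps.
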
 
\begin{proof} For an arbitrary $a\in \R^n$, denoting $\psi= \sum_{p=1}^n a_p \psi_p$, we have
\begin{equation}\label{eq:A-eigen}
a^T A_\infty a = \dbinnerp{\psi,\psi} \geq c_{\hypspace}\vertiii{\psi}^2 = c_{\hypspace}\|a\|^2
\end{equation}
where the first equality follows from that the functional $\dbinnerp{\cdot,\cdot}$ is bilinear, and the inequality follows from the coercivity condition. Note that by the definition of the coercivity constant in \eqref{gencoer}, we have 
\[ 
c_\hypspace = \sup_{\psi\in \hypspace}\frac{\dbinnerp{\psi,\psi} }{\vertiii{\psi}^2} = \sup_{\psi\in \hypspace, \vertiii{\psi} =1} \dbinnerp{\psi,\psi} ,
\]
 which is attained at some $\psi_* \in \hypspace$ since $\hypspace$ is finite dimensional.  Hence, the inequality in \eqref{eq:A-eigen} becomes inequality for  $\psi_*$ and the smallest eigenvalue of of $A_{\infty}$ is $c_\hypspace $.
 \end{proof}

Proposition \ref{prop:A_infty} suggests that for the hypothesis space $\hypspace$, it is important to choose a basis that is orthonormal in $L^2(\rhoT)$, so as to make the matrix in the normal equations as well-conditioned as possible given the dynamics. In practice, the unknown $\smash{\rhoT}$ is approximated by the empirical density $\smash{\rhoT^{L,M}}$. Therefore, when using local basis functions, it is natural to use a partition of the support of $\smash{\rho_T^{ M}}$. 

\paragraph{The coercivity condition and positive integral operators.} The coercivity condition introduces constraints on the hypothesis spaces and on the distribution of the solutions of the system, and it is therefore natural that it depends on the distribution $\mu_0$ of the initial condition $\bX_0$, the true interaction kernel $\intkernel$, and the random noise.  We review below briefly the recent developments in \cite{li2019identifiability}, where the coercivity condition is proved to hold on any compact sets of $L^2(\rhoT)$ for special classes of systems, such as linear systems and nonlinear systems with a stationary distribution.    As discussed in \cite{BFHM17,LZTM19,LMT19} for the deterministic cases, we believe that the coercivity condition is ``generally'' satisfied for ``relevant'' hypothesis spaces, with a constant independent of the number of particles $N$, thanks to the exchangeability of both the distribution of the initial conditions and that of the particles at any time $t$. 

The coercivity condition is equivalent to the positiveness of integral operators that arise in the expectation in Eq.\eqref{gencoer}. More precisely, by the exchangeability of the distribution of $\bX_t$, one can rewrite Eq.\eqref{gencoer} as
\begin{align*}
c_{\hypspace} \|\varphi(\cdot)\cdot\|_{L^2(\rhoT)}^2  \! & \leq 
\frac{1}{T}\int_0^T \E[\varphi(|\br_{12}^t|)\varphi(|\br_{13}^t|)\innerp{\br_{12}^t}{\br_{13}^t}]dt  \\
& =\int_0^\infty \int_0^\infty \varphi(r)\varphi(s)  \overline{K}_T(r,s)drds,
\end{align*}
where the integral kernel  $\overline{K}_T:\R^+\times \R^+ \to \R$ is defined as
 \begin{equation} \label{kernelK}
\overline{K}_T(r,s) := (rs)^{d}  \int_{S^{d-1}}\int_{S^{d-1}}\innerp{\xi}{\eta} \frac{1}{T}\int_0^T p_t(r\xi,s\eta)dt d\xi d\eta,
 \end{equation}
with $p_t(u,v)$ denoting the joint density function of the random vector $(\br_{12}^t, \br_{13}^t)$ and $S^{d-1}$ denoting the unit sphere in $\R^d$. The integral kernel $\overline{K}$ is symmetric semi-positive definite and leads to a semi-positive self-adjoint integral operator $L_{\overline{K}}$. Then, the coercivity condition holds on $\hypspace$ if and only if $\hypspace$ is contained in the eigen-space of $L_{\overline{K}}$ with eigenvalues no less than $c_{\hypspace}$. In particular, if $L_{\overline{K}}$ is strictly positive, then the coercivity condition holds true for any compact $\hypspace\subset L^2(\rho_T)$. Using M\"untz-type theorems, it is shown in \cite{li2019identifiability} that $L_{\overline{K}}$ is strictly positive definite, and therefore the coercivity condition holds, for a large class of systems with interaction kernels in form of $\phi(r)= (a+r^\theta)^{\gamma-1}r^{\theta-2}$ with $a\geq 0$ and $\{ (\theta, \gamma) \in (0,1] \times (1,2]: \theta\gamma >1 \}$.

%%%% %%%%%%%% ============= section: Examples ============= 

\subsection{Consistency and rate of convergence of the estimator} %Analysis of learning approach}
In this section, we consider using a family of finite dimensional linear spaces $\{\mathcal{L}_n: n\in \mathbb{N}^+\} \subset C^{1,1}[0,R]$ as hypothesis spaces and establish the consistency and rate of convergence of our estimators.  We assume  the spaces $\{\mathcal{L}_n: n\in \mathbb{N}^+\} \subset C^{1,1}[0,R]$ satisfying Markov-Bernstein type inequality: there exist $c_1, \gamma>0$ s.t. for all $\intkernelvar \in \mathcal{L}_n$
\begin{align}\label{markovbernstein}
{\|\intkernelvar'\|_{\infty}}\leq c_1\dimsp(\mathcal{L}_n)^\gamma {\|\intkernelvar\|_{\infty} }\,.
\end{align}
This condition has a long history and rich literature in classical approximation theory, where it is studied when function spaces satisfy \eqref{markovbernstein} (e.g. see the survey paper \cite{ward2012lp}), which is an important step in establishing inverse approximation theorems.  This  kind of inequality holds true on many function spaces that are commonly used as approximation spaces in practice, including:
\begin{itemize}
\item  $\mathcal{L}_n:$ trigonometric polynomials of degree $n$ on $[0, 2\pi]$ (similarly on $[0,R]$), for which
$\|\phi'\|_{\infty}\leq \frac12({\dimsp(\mathcal{L}_n)-1})\|\phi\|_{\infty}$. This result dates back to Bernstein \cite{bernstein1912ordre}.  
\item $\mathcal{L}_n$: the polynomial space consisting of all polynomials with  degree less than $n-1$ on $[0, R]$ (see Theorem 3.3 in \cite{schumaker2007spline}), for which
$\|\intkernelvar'\|_{\infty}\leq \frac2R{(\dimsp(\mathcal{L}_n)+1)^2}\|\intkernelvar\|_{\infty}$.
As a result, \eqref{markovbernstein} also holds true for polynomial splines; other extensions include rational functions. We refer to the reader to \cite{kalmykov2017bernstein} for details. 
\end{itemize}

If we choose a compact convex hypothesis set $\hypspace_M$ contained in some $\mathcal{L}_n$, with a suitable correspondence between $n$ and $M$, such that the distance between $\hypspace_M$ and the true kernel $\intkernel$ vanishes as $M$ increases, the following consistency result holds:

\begin{theorem}[Strong consistency of estimators]\label{main:consistency} Suppose $\phi \in \mathcal{K}_{R,S}$, the admissible set defined in \eqref{def_AddmissibleSet}. Let $\{\mathcal{L}_n: n\in \mathbb{N}^+\} \subset C^{1,1}[0,R]$ satisfying \eqref{markovbernstein} and
$$\inf_{\intkernelvar \in \mathcal{L}_{n}}\|\intkernelvar-\intkernel\|_{\infty} \xrightarrow{n\rightarrow \infty} 0.$$
 Let $S_0 \geq S$ and $\hypspace_M=\mathcal{B}_{2S_0}^{\infty}(\mathcal{L}_{n_M})=\{\varphi\in\mathcal{L}_{n_M}\,:\,||\varphi||_\infty<2S_0\}$ with $\mathrm{dim}(\mathcal{L}_{n_M})=n_M$ and $\lim_{M\rightarrow \infty}\frac{n_M\log n_M}{M}  =0$.  Finally, suppose the coercivity condition holds true on $ \cup_{n}\mathcal{L}_{n}$. Then we have 
$$\lim_{M\rightarrow \infty}\vertiii{\widehat \intkernel_{T,M,\hypspace_M}-\intkernel}  =0 \quad\text{ with probability one.}$$
\end{theorem}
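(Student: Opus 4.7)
The plan is to split the total error by the triangle inequality,
$$\vertiii{\widehat\intkernel_{T,M,\hypspace_M} - \intkernel} \leq \vertiii{\widehat\intkernel_{T,M,\hypspace_M} - \widehat\intkernel_{T,\infty,\hypspace_M}} + \vertiii{\widehat\intkernel_{T,\infty,\hypspace_M} - \intkernel},$$
into a \emph{sampling error} and an \emph{approximation error}, and prove each tends to zero almost surely. Throughout, set $c_\ast := c_{\cup_n\mathcal{L}_n}>0$; by monotonicity of coercivity constants, $c_{\hypspace_M}\geq c_\ast$ for every $M$, so all uses of Proposition~\ref{convexity} below yield a uniform-in-$M$ lower bound.

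For the approximation error, Proposition~\ref{convexity} gives
$$c_\ast\vertiii{\widehat\intkernel_{T,\infty,\hypspace_M}-\intkernel}^2 \leq \inf_{\varphi\in\hypspace_M}\dbinnerp{\varphi-\intkernel,\varphi-\intkernel}.$$
A Cauchy--Schwarz estimate on $\rhsF_\psi$, combined with the support condition $\mathrm{supp}\,\psi\subset[0,R]$, yields $\dbinnerp{\psi,\psi}\lesssim\|\psi\|_\infty^2$. For any near-best $L^\infty$ approximant $\varphi_M^\ast\in\mathcal{L}_{n_M}$ to $\intkernel$, we have $\|\varphi_M^\ast\|_\infty\leq\|\intkernel\|_\infty+o(1)\leq S_0$ for $M$ large, hence $\varphi_M^\ast\in\hypspace_M=\mathcal{B}^\infty_{2S_0}(\mathcal{L}_{n_M})$, and the approximation error is bounded by a constant multiple of $\inf_{\varphi\in\mathcal{L}_{n_M}}\|\varphi-\intkernel\|_\infty^2$, which tends to $0$ by hypothesis.

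For the sampling error, Proposition~\ref{convexity} applied around $\widehat\intkernel_{T,\infty,\hypspace_M}$, together with the minimality of $\widehat\intkernel_{T,M,\hypspace_M}$ for $\mathcal{E}_{T,M}$, gives
$$c_\ast\vertiii{\widehat\intkernel_{T,M,\hypspace_M}-\widehat\intkernel_{T,\infty,\hypspace_M}}^2 \leq \mathcal{E}_{T,\infty}(\widehat\intkernel_{T,M,\hypspace_M})-\mathcal{E}_{T,\infty}(\widehat\intkernel_{T,\infty,\hypspace_M}) \leq 2\sup_{\varphi\in\hypspace_M}|\mathcal{E}_{T,M}(\varphi)-\mathcal{E}_{T,\infty}(\varphi)|,$$
so it suffices to establish an almost-sure uniform law of large numbers on $\hypspace_M$. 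I would decompose $\mathcal{E}_{T,M}-\mathcal{E}_{T,\infty}$, via Girsanov and the substitution $d\bX_t=\rhsF_\intkernel(\bX_t)\,dt+\sigma\,d\bB_t$, into (i)~an i.i.d.~average of $\int_0^T\|\rhsF_\varphi(\bX_t)\|^2\,dt$ minus its expectation---each summand bounded by $O(\|\varphi\|_\infty^2)$---and (ii)~a martingale contribution $\tfrac{1}{M}\sum_m\int_0^T\langle\rhsF_\varphi(\bX_t^{(m)}),\,d\bB_t^{(m)}\rangle$ of variance $O(\|\varphi\|_\infty^2/M)$ by It\^o isometry. Since $\hypspace_M$ is a radius-$2S_0$ ball in the $n_M$-dimensional space $\mathcal{L}_{n_M}$, its covering numbers satisfy $N(\varepsilon,\hypspace_M,\|\cdot\|_\infty)\leq (C/\varepsilon)^{n_M}$. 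A Bernstein/martingale concentration bound combined with a union bound over a suitable $\|\cdot\|_\infty$-net then yields, for $\delta_M>0$,
$$\Prob\!\left(\sup_{\varphi\in\hypspace_M}|\mathcal{E}_{T,M}(\varphi)-\mathcal{E}_{T,\infty}(\varphi)|>\delta_M\right) \leq C\exp\!\bigl(-cM\delta_M^2+Cn_M\log(n_M/\delta_M)\bigr).$$
Choosing $\delta_M\to 0$ slowly enough that $M\delta_M^2/(n_M\log n_M)\to\infty$---possible precisely because $n_M\log n_M/M\to 0$---makes these probabilities summable in $M$, and Borel--Cantelli delivers the required almost-sure convergence.

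The delicate step, and expected main obstacle, is this uniform control of the stochastic-integral term over the growing family $\hypspace_M$. The Markov--Bernstein inequality~\eqref{markovbernstein} is exactly what makes it work: it upgrades $L^\infty$ control to Lipschitz control ($\|\varphi'\|_\infty\leq c_1 n_M^\gamma\|\varphi\|_\infty$), furnishing the regularity of $\varphi\mapsto\rhsF_\varphi$ needed to transform a coarse $\|\cdot\|_\infty$-net on $\hypspace_M$ into a usable discretization of the suprema in (i) and (ii). The scaling assumption $n_M\log n_M/M\to 0$ is precisely the balance between martingale concentration (gain $\sim M$) and metric-entropy cost (loss $\sim n_M\log n_M$) required for summability.
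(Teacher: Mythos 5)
Your proposal is correct and follows essentially the same route as the paper's proof: the same sampling/approximation-error decomposition, coercivity to convert excess risk into the $\vertiii{\cdot}$ metric, a Bernstein-plus-covering argument over $\hypspace_M$ with the Markov--Bernstein inequality supplying the almost-sure modulus of continuity needed to pass from the net to the full ball, and Borel--Cantelli under $n_M\log n_M/M\to 0$. The only cosmetic difference is that you bound the sampling error by a plain uniform deviation $\sup_{\varphi\in\hypspace_M}|\mathcal{E}_{T,M}(\varphi)-\mathcal{E}_{T,\infty}(\varphi)|$, whereas the paper uses the ratio-type bound $\mathcal{D}_{\varphi,\infty}-\mathcal{D}_{\varphi,M}\le\epsilon+\tfrac12\mathcal{D}_{\varphi,\infty}$ of Proposition \ref{covering}; both suffice for consistency.
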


If we know the explicit approximation rate of the family $\{\mathcal{L}_n: n\in \mathbb{N}^+\}$, then by carefully choosing the dimension of hypothesis spaces as a function of $M$, we can obtain a near-optimal rate of convergence of our estimators. 
 
\begin{theorem}[Convergence rate of estimators]\label{maintheorem}
Suppose $\phi \in \mathcal{K}_{R,S}$, the admissible set defined in \eqref{def_AddmissibleSet}. Assume that there exits a sequence of linear  spaces $\{\mathcal{L}_n: n\in \mathbb{N}^+\} \subset C^{1,1}[0,R]$ satisfying \eqref{markovbernstein} with the properties
\begin{itemize}
\item[(i)] $\mathrm{dim}(\mathcal{L}_n)\leq c_0 n$ for $n \in \mathbb{N}^+$,
\item[(ii)] $\inf_{\intkernelvar \in \mathcal{L}_n}\|\intkernelvar-\intkernel\|_{\infty}\leq c_2n^{-s}$.
\end{itemize}
For example, when $\phi \in C^{k,\alpha}$ with $s=k+\alpha \geq 2$, we may choose $\mathcal{L}_n$ to consist of polynomial splines of degree $\lfloor s-1\rfloor$ with uniform knots on $[0, R]$. 
Let $\hypspace_n=\mathcal{B}_{S_0}^{\infty}(\mathcal{L}_{n})$ with $S_0=c_2+S$ and $n \asymp \left({M}/{\log M}\right)^{{1}/{(2s+1)}}$, and assume that the coercivity condition holds on $\mathcal{L}:=\cup_{n}\mathcal{L}_n$  with a constant $c_{\mathcal{L}}>0$. Then we have  
$$
\E\left[\vertiii {\widehat \intkernel_{T,M,\mathcal{H}_{n}}-\intkernel}^2\right] \leq \frac{C}{c^2_{{\mathcal{L}}}} \left(\frac{\log M}{ M}\right)^{\frac{2s}{2s+1}}\,,
$$ 
where $C$ is a constant depending only on $\sigma,N, T, R, S_0$. 
\end{theorem}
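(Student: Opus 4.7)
The plan is to combine a classical bias--variance decomposition with the coercivity condition to reduce the problem to two independent estimates, which we will then balance in $n$. Writing $\widehat\phi_n := \widehat\intkernel_{T,M,\mathcal{H}_n}$ and $\widehat\phi_{\infty,n} := \widehat\phi_{T,\infty,\mathcal{H}_n}$ (the population minimizer from Proposition \ref{convexity}), the natural starting point is
\begin{equation*}
\vertiii{\widehat\phi_n - \phi}^2 \;\leq\; 2\vertiii{\widehat\phi_n - \widehat\phi_{\infty,n}}^2 + 2\vertiii{\widehat\phi_{\infty,n}-\phi}^2,
\end{equation*}
where the first (sampling) term is controlled via concentration and the second (approximation) term via classical approximation theory on $\mathcal{L}_n$.

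\paragraph{Approximation error.} From hypothesis (ii), choose $\phi_n \in \mathcal{L}_n$ with $\|\phi_n-\phi\|_\infty \leq c_2 n^{-s}$; since $\|\phi_n\|_\infty \leq S + c_2 n^{-s} \leq S_0$, we have $\phi_n \in \hypspace_n$. Optimality of $\widehat\phi_{\infty,n}$ on $\hypspace_n$ combined with \eqref{difference} gives
\begin{equation*}
\dbinnerp{\widehat\phi_{\infty,n}-\phi,\widehat\phi_{\infty,n}-\phi} \;\leq\; \dbinnerp{\phi_n-\phi,\phi_n-\phi} \;\leq\; C_1 \vertiii{\phi_n-\phi}^2 \;\leq\; C_2 n^{-2s},
\end{equation*}
where $C_1$ comes from a straightforward Cauchy--Schwarz expansion of $\|\rhsF_{\varphi}\|^2$ (bounding $\dbinnerp{\cdot,\cdot} \lesssim \vertiii{\cdot}^2$ uniformly), and we use $\vertiii{\phi_n-\phi}^2 \leq R^2 \|\phi_n-\phi\|_\infty^2$. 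Since $\widehat\phi_{\infty,n}-\phi_n \in \mathcal{L}_n$, applying the coercivity hypothesis on $\mathcal{L}$ and the triangle inequality yields $\vertiii{\widehat\phi_{\infty,n}-\phi}^2 \lesssim c_{\mathcal{L}}^{-1} n^{-2s}$ (the $c_\mathcal{L}^{-1}$ is absorbed into the $c_\mathcal{L}^{-2}$ in the final statement together with the variance term).

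\paragraph{Sampling error.} This is where most of the work is, and the rough strategy is a standard empirical-process argument tailored to the likelihood functional. Define the defect $Z_M(\varphi) := [\mathcal{E}_{T,M}(\varphi)-\mathcal{E}_{T,M}(\widehat\phi_{\infty,n})] - [\mathcal{E}_{T,\infty}(\varphi)-\mathcal{E}_{T,\infty}(\widehat\phi_{\infty,n})]$. Using \eqref{lkhd_cts} and the linearity of $\rhsF_\varphi$ in $\varphi$, each trajectory's contribution to $Z_M$ is a sum of a centered quadratic term in $\varphi-\widehat\phi_{\infty,n}$ and a centered Itô stochastic integral against $\sigma d\bB_t$. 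Control both via (a) Bernstein-type concentration for the quadratic piece and (b) martingale concentration (BDG or exponential inequalities) for the stochastic integral, and then lift to a uniform bound over $\hypspace_n$ via a covering-number argument (this is essentially Proposition \ref{covering}). Here the Markov--Bernstein inequality \eqref{markovbernstein} is essential: it implies $\log \mathcal{N}(\eta,\hypspace_n,\|\cdot\|_\infty) \lesssim n \log(S_0/\eta)$ and converts sup-norm control of the cover into control in the $\vertiii{\cdot}$ metric used in coercivity. The optimality of $\widehat\phi_n$ on $\hypspace_n$ (so $\mathcal{E}_{T,M}(\widehat\phi_n) \leq \mathcal{E}_{T,M}(\widehat\phi_{\infty,n})$) combined with coercivity turns the uniform defect bound into
\begin{equation*}
c_{\mathcal{L}}\,\vertiii{\widehat\phi_n-\widehat\phi_{\infty,n}}^2 \;\leq\; \mathcal{E}_{T,\infty}(\widehat\phi_n) - \mathcal{E}_{T,\infty}(\widehat\phi_{\infty,n}) \;\leq\; |Z_M(\widehat\phi_n)| \;\lesssim\; \frac{n \log M}{M} + \varepsilon \vertiii{\widehat\phi_n-\widehat\phi_{\infty,n}}^2,
\end{equation*}
with high probability, and absorbing the $\varepsilon$-term into the left side gives $\vertiii{\widehat\phi_n-\widehat\phi_{\infty,n}}^2 \lesssim (c_{\mathcal{L}}^2)^{-1} n\log M / M$. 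Integrating the tail yields the same bound in expectation (up to a constant).

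\paragraph{Balancing.} Combining the two estimates,
\begin{equation*}
\E\vertiii{\widehat\phi_n-\phi}^2 \;\lesssim\; \frac{1}{c_{\mathcal{L}}^2}\Bigl(n^{-2s} + \frac{n \log M}{M}\Bigr),
\end{equation*}
and the choice $n \asymp (M/\log M)^{1/(2s+1)}$ equalizes the two terms, giving the claimed rate $(\log M/M)^{2s/(2s+1)}$. The main obstacle is the uniform concentration step: the quadratic structure of $\mathcal{E}_{T,M}$ is pleasant, but the stochastic-integral cross terms are continuous-time martingales whose variances must be bounded in a way compatible with the $\vertiii{\cdot}$ norm (so that coercivity can convert them back into error estimates), and the covering-number transfer from $\|\cdot\|_\infty$ to $\vertiii{\cdot}$ via \eqref{markovbernstein} must be carried out carefully to avoid losing powers of $n$ that would spoil the minimax rate.
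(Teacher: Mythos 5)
Your proposal is correct and follows essentially the same route as the paper: the same sampling/approximation decomposition, coercivity to convert excess risk into $\vertiii{\cdot}^2$ error, an $\eta$-net with the relative (Bernstein-type) concentration bound whose $\tfrac12\mathcal{D}_{\varphi,\infty}$ term is absorbed into the left-hand side, and the same balancing $n\asymp(M/\log M)^{1/(2s+1)}$ followed by tail integration for the expectation bound. The only slight imprecision is the stated role of \eqref{markovbernstein}: in the paper it is used (via the It\^o-formula rewriting of the stochastic integral) to make the empirical error functional almost surely Lipschitz in $\|\cdot\|_\infty+\|\cdot'\|_\infty$ and then to bound the derivative term by $\dim(\mathcal{L}_n)^\gamma\|\cdot\|_\infty$ on the net, compensated by taking $\eta=n^{-2s-\gamma}$ — rather than to transfer the cover into the $\vertiii{\cdot}$ metric — but you correctly flag this step as the delicate one.
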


It is fruitful to compare (up to log terms) the rate ${2s}/({2s+1})$ to that for nonparametric $1$-dimensional regression, where one can observe directly noisy values of the target function $\intkernel$ at sample points drawn i.i.d from $\rhoT$. For the function space $C^{k,\alpha}$, this rate is min-max optimal. Our numerical examples in Section \ref{main:numericalexamples} empirically validate the desired convergence rate for $s=1,2$ where we use piecewise constant and linear polynomials.  Note that in our setting, learning $\phi$ is an inverse problem, as we do not directly observe the values $\{\intkernel(\|\bx_{i',t}^{(m)} - \bx_{i,t}^{(m)}\|)\}_{ i, i' =1,\ m=1}^{N,N,M}$.  We also do not require the underlying stochastic process satisfies certain mixing properties and starts from a stationary distribution. Obtaining this optimal convergence rate in $M$ for short time trajectory observations is therefore satisfactory.  For long trajectories and under ergodicity assumptions, rates in terms of $MT$ are likely to be obtainable: in Section \ref{main:numericalexamples} we present numerical evidence that suggests that the error does decrease with $MT$ at a near optimal rate.

 %\cite{lu2019nonparametric} suggest that the error does decrease with $MT$ at a near optimal rate.

\subsection{Proof of the main theorems}
In the following part, we present the proof for Theorem \ref{maintheorem}, which also yields the proof for Theorem \ref{main:consistency}. The main techniques includes the It\^o formula, concentration inequalities of unbounded random variables, and a generalization of a novel covering argument in \cite{wang2011optimal} that enables us to deal efficiently with the fluctuations in the data due to the stochastic noise in the dynamics of the system.   \\

One major obstacle in the non-asymptotic analysis of our regularized MLE estimators is the unboundness of stochastic integral of the form $\frac{1}{T}\int_{0}^{T}\langle \rhsfo_{\intkernelvar}(\bX_t), d\bX_t\rangle dt$ appearing in the empirical error functional. Unlike the deterministic case $\sigma=0$,  our empirical error functional $\mathcal{E}_{T,M}(\cdot)$ is  in general  not continuous over  $\mathcal{H}$ with respect to the $\|\cdot\|_{\infty}$ norm. In the following, we first leverage {the general It\^o formula} described in Theorem \ref{Itoformula}, to obtain a form of the empirical error functional that does not involve a stochastic integral and is amenable to analysis; we then show that it is continuous on $C^{1,1}([0,R])$ with respect to the $\|\cdot\|_{1,1}$ norm. {Therefore, in the following preliminary results for the proofs of the main theorems, we consider the following generic hypothesis space:  
\begin{assumption}  The hypothesis space $\mathcal{H}$ is a compact convex subset of $C^{1,1}([0,R])$ with respect to the uniform norm $\|\cdot\|_\infty$ and bounded above by $S_0\geq S$. % The default norm for $\mathcal{H}$ is the uniform norm unless we specify it in the context.
 \end{assumption}
}

\begin{lemma}\label{keylemma}Suppose $\intkernelvar \in \mathcal{K}_{R,S}$, the admissible set defined in \eqref{def_AddmissibleSet}. Let 
\begin{align}\label{potentialfunction}
V_{\intkernelvar}(\bX_t)=\frac{1}{2N}\sum_{i,i'}\Psi(\|\bx_{i,t}-\bx_{i',t}\|)\, \text{ with }\, \Psi'(r)=\intkernelvar(r)r\,;
\end{align} then{, we have, almost surely}
$$-(dV_{\intkernelvar})(\bX_t)=\langle \rhsfo_{\intkernelvar}(\bX_t), d\bX_t\rangle+\frac{\sigma^2}{2N}\sum_{i=1}^N\sum_{i'\neq i}\left(\intkernelvar'(\|\bx_{ii'}\|)\|\bx_{ii'}\|+\intkernelvar(\|\bx_{ii'}\|)d \right)dt$$ 
\end{lemma}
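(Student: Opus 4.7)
The plan is to apply the It\^o formula to $V_{\intkernelvar}(\bX_t)$, where $\bX_t$ is the diffusion satisfying the SDE \eqref{eq:sys_grad}. Because $\intkernelvar \in \mathcal{K}_{R,S}$, the radial function $\Psi'(r)=\intkernelvar(r)r$ is Lipschitz and compactly supported, so $V_{\intkernelvar}$ belongs to $C^{1,1}(\mathbb{R}^{dN})$. In particular standard (or, for the boundary regularity only $C^{1,1}$, the generalized) It\^o formula of Theorem~\ref{Itoformula} applies and yields
\[
 dV_{\intkernelvar}(\bX_t)=\langle \nabla V_{\intkernelvar}(\bX_t), d\bX_t\rangle+\frac{1}{2}\,\mathrm{tr}\!\left[D^2 V_{\intkernelvar}(\bX_t)\,\sigma^2 I_{dN}\right]dt.
\]

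The first-order piece is immediate from the definition \eqref{eq:rhsF}, which gives $\nabla V_{\intkernelvar}=-\rhsfo_{\intkernelvar}$ and therefore $\langle\nabla V_{\intkernelvar},d\bX_t\rangle=-\langle \rhsfo_{\intkernelvar}(\bX_t),d\bX_t\rangle$. The entire remaining task is to evaluate the It\^o correction $\tfrac{\sigma^2}{2}\Delta V_{\intkernelvar}$, where $\Delta$ is the full Laplacian on $\mathbb{R}^{dN}$.

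I would do this block by block. A direct differentiation of $V_{\intkernelvar}=\frac{1}{2N}\sum_{i,i'}\Psi(\|\bx_i-\bx_{i'}\|)$ using $\Psi'(r)=\intkernelvar(r)r$ gives
\[
 \nabla_{\bx_j}V_{\intkernelvar}(\bX)=\frac{1}{N}\sum_{i'\neq j}\intkernelvar(\|\bx_j-\bx_{i'}\|)(\bx_j-\bx_{i'}),
\]
which, incidentally, reproduces $\nabla V_{\intkernelvar}=-\rhsfo_{\intkernelvar}$. Taking the $\bx_j$-divergence of each summand uses the elementary identity (in $\mathbb{R}^d$, with $\bu=\bx_j-\bx_{i'}$ and $r=\|\bu\|$)
\[
 \mathrm{div}_{\bu}\!\left(\intkernelvar(r)\bu\right)=\intkernelvar'(r)\frac{\bu\cdot\bu}{r}+\intkernelvar(r)\,d=\intkernelvar'(r)\,r+\intkernelvar(r)\,d.
\]
Summing over $i'\neq j$ and then over $j$ yields
\[
 \Delta V_{\intkernelvar}(\bX)=\frac{1}{N}\sum_{i=1}^{N}\sum_{i'\neq i}\bigl[\intkernelvar'(\|\bx_{ii'}\|)\|\bx_{ii'}\|+\intkernelvar(\|\bx_{ii'}\|)\,d\bigr].
\]

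Plugging these into the It\^o expansion and moving the sign gives the stated identity, valid almost surely because It\^o's formula holds almost surely pathwise. The only subtle point is justifying It\^o's formula when $V_{\intkernelvar}$ is merely $C^{1,1}$ rather than $C^2$: this is handled by invoking Theorem~\ref{Itoformula}, which can be proved by standard mollification $V_{\intkernelvar}\ast\eta_\varepsilon\to V_{\intkernelvar}$, using dominated convergence for the Lebesgue integral term (since $D^2V_{\intkernelvar}\in L^\infty$) and the BDG/It\^o isometry for the stochastic integral term, and passing to the limit $\varepsilon\to 0$. This regularity subtlety is really the only obstacle; the rest is bookkeeping with the chain rule.
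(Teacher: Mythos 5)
Your proof is correct and follows essentially the same route as the paper: apply It\^o's formula to $V_{\intkernelvar}(\bX_t)$, use $\nabla V_{\intkernelvar}=-\rhsfo_{\intkernelvar}$ for the drift/martingale part, and compute the Laplacian blockwise via $\mathrm{div}_{\bu}(\intkernelvar(r)\bu)=\intkernelvar'(r)r+\intkernelvar(r)d$. The only difference is your mollification caveat, which is unnecessary: since $\intkernelvar\in C^1$ and $\Psi'(r)=\intkernelvar(r)r$, the Hessian of $\bx\mapsto\Psi(\|\bx\|)$ equals $\intkernelvar(r)\mathbf{I}_d+\intkernelvar'(r)r\,\frac{\bx\otimes\bx}{r^2}$, which extends continuously to $\bx=0$, so $V_{\intkernelvar}$ is genuinely $C^2$ and the standard It\^o formula applies directly, as the paper asserts.
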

\begin{proof}  Let $g(\bX)=V_{\intkernelvar}(\bX)$.
Note that $g$ is $C^2$, with derivatives

\begin{align*}
\frac{\partial{g(\bX)}}{\partial \bx_i}&=\frac{\partial{V_{\intkernelvar}(\bX)}}{\partial \bx_i}=\frac{1}{N}\sum_{i'\neq i}\varphi(\|\bx_i-\bx_{i'}\|)(\bx_{i}-\bx_{i'})\\ \frac{\partial^2{g(\bX)}}{\partial \bx_i\partial \bx_{k}} &=-\delta_{ki}\frac{1}{N}\left(\sum_{i'\neq i}\varphi(\|\bx_i-\bx_{i'}\|)\mathbf{I}_d+\frac{\varphi'(\|\bx_i-\bx_{i'}\|)}{\|\bx_i-\bx_{i'}\|}(\bx_i-\bx_{i'})\otimes(\bx_i-\bx_{i'})\right) \\
& \quad  -\delta_{k\neq i} \frac{1}{N}\left( \varphi(\|\bx_i-\bx_{k}\|)\mathbf{I}_d+\frac{\varphi'(\|\bx_i-\bx_{k}\|)}{\|\bx_i-\bx_{k}\|}(\bx_i-\bx_{k})\otimes(\bx_i-\bx_{k})\right)\,.
\end{align*}
Using It\^o's formula (Theorem \ref{Itoformula}) for the It\^o process $g(\bX_t)$,  the conclusion follows. 
\end{proof}

\begin{proposition}\label{randomdifference}Suppose $\intkernelvar_1, \intkernelvar_2 \in \mathcal{H}$, then {it holds almost surely that }
\begin{align}
|\mathcal{E}_{\bX_{[0,T]}}(\intkernelvar_1)-\mathcal{E}_{\bX_{[0,T]}}(\intkernelvar_2)|\nonumber \leq C_1 \|\intkernelvar_1-\intkernelvar_2\|_{\infty}+C_2\|\intkernelvar_1'-\intkernelvar_2'\|_{\infty}, \end{align} 
where $C_1=\frac{ R^2S_0}{\sigma^2}+\frac{ R^2 }{2\sigma^2T}+\frac{d}{2}$ and $C_2=\frac{R}{2}$.
\end{proposition}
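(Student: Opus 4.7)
The strategy is to use the linearity of $\rhsF_{\varphi}$ in $\varphi$ to split the difference $\mathcal{E}_{\bX_{[0,T]}}(\varphi_1)-\mathcal{E}_{\bX_{[0,T]}}(\varphi_2)$ into a quadratic (pathwise-deterministic) term and a stochastic integral term, then use Lemma \ref{keylemma} to eliminate the stochastic integral, leaving only quantities that can be bounded in $\|\cdot\|_\infty$ and $\|(\cdot)'\|_\infty$. Concretely, by the bilinearity $\|\rhsF_{\varphi_1}\|^2-\|\rhsF_{\varphi_2}\|^2=\langle\rhsF_{\varphi_1+\varphi_2},\rhsF_{\varphi_1-\varphi_2}\rangle$ and linearity of $\rhsF_\varphi$ in $\varphi$,
\[
\mathcal{E}_{\bX_{[0,T]}}(\varphi_1)-\mathcal{E}_{\bX_{[0,T]}}(\varphi_2)
=\frac{1}{2\sigma^2TN}\int_0^T\!\langle \rhsF_{\varphi_1+\varphi_2}(\bX_t),\rhsF_{\varphi_1-\varphi_2}(\bX_t)\rangle\,dt
-\frac{1}{\sigma^2TN}\int_0^T\!\langle \rhsF_{\varphi_1-\varphi_2}(\bX_t),d\bX_t\rangle.
\]

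\textbf{Quadratic term.} The key observation is that since any $\varphi\in\mathcal{H}\subset\mathcal{K}_{R,S_0}$ vanishes outside $[0,R]$, we have $|\varphi(\|\br_{ii'}\|)|\|\br_{ii'}\|\leq R\|\varphi\|_\infty$ pointwise, hence $\|\rhsF_\varphi(\bX_t)\|\leq\sqrt{N}\,R\,\|\varphi\|_\infty$. Applying Cauchy--Schwarz with $\|\varphi_1+\varphi_2\|_\infty\leq 2S_0$ yields a bound of the first integral by $\frac{R^2S_0}{\sigma^2}\|\varphi_1-\varphi_2\|_\infty$, which produces the $\frac{R^2S_0}{\sigma^2}$ contribution to $C_1$.

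\textbf{Stochastic integral term.} Here I apply Lemma \ref{keylemma} to $\varphi:=\varphi_1-\varphi_2$ and integrate from $0$ to $T$, obtaining
\[
\int_0^T\!\langle\rhsF_{\varphi_1-\varphi_2}(\bX_t),d\bX_t\rangle
=V_{\varphi_1-\varphi_2}(\bX_0)-V_{\varphi_1-\varphi_2}(\bX_T)
-\frac{\sigma^2}{2N}\int_0^T\!\sum_{i,\,i'\neq i}\!\!\bigl[(\varphi_1-\varphi_2)'(\|\br_{ii'}\|)\|\br_{ii'}\|+d(\varphi_1-\varphi_2)(\|\br_{ii'}\|)\bigr]\,dt.
\]
For the boundary term, $\Psi(r)=\int_0^r(\varphi_1-\varphi_2)(s)s\,ds$ satisfies $|\Psi|\leq \tfrac{R^2}{2}\|\varphi_1-\varphi_2\|_\infty$ (and is constant for $r\geq R$), so $|V_{\varphi_1-\varphi_2}(\bX)|\leq \tfrac{(N-1)R^2}{4}\|\varphi_1-\varphi_2\|_\infty$, producing after the prefactor $\tfrac{1}{\sigma^2TN}$ the $\frac{R^2}{2\sigma^2T}$ contribution to $C_1$. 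For the remaining Lebesgue integral, the two summands are bounded pointwise by $R\|\varphi_1'-\varphi_2'\|_\infty$ and $d\|\varphi_1-\varphi_2\|_\infty$ respectively, and the sum over $i,i'\neq i$ has $N(N-1)$ terms, so this contributes $\frac{R}{2}\|\varphi_1'-\varphi_2'\|_\infty$ and $\frac{d}{2}\|\varphi_1-\varphi_2\|_\infty$, accounting for $C_2$ and the last piece of $C_1$.

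\textbf{Main obstacle.} The only genuine subtlety is passing from the stochastic integral $\int_0^T\langle\rhsF_{\varphi_1-\varphi_2},d\bX_t\rangle$, which is unbounded and discontinuous in $\|\cdot\|_\infty$, to a pathwise expression amenable to a uniform continuity estimate; this is precisely the role of Lemma \ref{keylemma} (It\^o's formula applied to the potential $V_{\varphi_1-\varphi_2}$). Once that substitution is made, the rest is careful bookkeeping using the compact-support and uniform bounds on the admissible set. Summing the four contributions yields the stated $C_1,C_2$ and gives the inequality almost surely (as an equality modulo a null set coming from the almost-sure validity of It\^o's formula).
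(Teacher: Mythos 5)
Your proof is correct and follows essentially the same route as the paper's: the identical splitting into the quadratic term (bounded via $\|\rhsF_\varphi(\bX_t)\|\le\sqrt{N}R\|\varphi\|_\infty$ and $\|\varphi_1+\varphi_2\|_\infty\le 2S_0$) and the stochastic integral, with Lemma \ref{keylemma} converting the latter into the boundary potential term plus a Lebesgue integral, yielding exactly the stated $C_1$ and $C_2$. The bookkeeping of the $N(N-1)$ pair terms and the prefactors checks out, so nothing further is needed.
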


\begin{proof} Note that 
\begin{align*}
&\mathcal{E}_{\bX_{[0,T]}}(\intkernelvar_1)-\mathcal{E}_{\bX_{[0,T]}}(\intkernelvar_2)\\&=\frac{1}{2\sigma^2NT}\int_{0}^{T}\underbrace{\langle \rhsfo_{\intkernelvar_1-\intkernelvar_2}(\bX_t), \rhsfo_{\intkernelvar_1+\intkernelvar_2}(\bX_t)\rangle}_{I_1}dt-2\underbrace{\langle \rhsfo_{\intkernelvar_1-\intkernelvar_2}(\bX_t), d\bX_t\rangle}_{I_2}.
\end{align*} 

$I_1$ satisfies  
\begin{align}
 |I_1|&\leq  \|\rhsfo_{\intkernelvar_1+\intkernelvar_2}(\bX_t)\|\|\rhsfo_{\intkernelvar_1-\intkernelvar_2}(\bX_t)\|\nonumber \leq N\|\intkernelvar_1-\intkernelvar_2\|_{\infty}\|\intkernelvar_1+\intkernelvar_2\|_{\infty}R^2,
 \end{align}
since $\|\rhsfo_{\intkernelvar}(\bX_t)\| \leq \sqrt{N}R\|\intkernelvar\|_{\infty}$.  For $I_2$,  Lemma \ref{keylemma} yields 
\begin{align*}
|I_2|&\leq|V_{\intkernelvar_1-\intkernelvar_2}(\bX(0))-V_{\intkernelvar_1-\intkernelvar_2}(\bX_t)|\\&\quad\quad\quad+\frac{\sigma^2}{2N}\left|\int_{0}^{T}\sum_{i=1}^N\sum_{i'\neq i}((\intkernelvar_1-\intkernelvar_2)'(r_{ii'})r_{ii'}+(\intkernelvar_1-\intkernelvar_2)(r_{ii'})d )dt\right| \nonumber\\&\leq \frac{(N-1)}{2}\|\intkernelvar_1-\intkernelvar_2\|_{\infty}R^2+\frac{(N-1)\sigma^2T}{2}(\|\intkernelvar_1'-\intkernelvar_2'\|_{\infty}R+\|\intkernelvar_1-\intkernelvar_2\|_{\infty}d),
\end{align*} where we used 
\begin{align*}
|V_{\intkernelvar}(\bX_t)-V_{\intkernelvar}(\bX(0))|&\leq \frac{1}{2N}\sum_{ii'}|\int_{r_{ii',0}}^{r_{ii',T}} \intkernelvar(r)r dr |  \leq \frac{(N-1)\|\intkernelvar\|_{\infty}R^2}{2}, 
\end{align*} 
which follows from its definition in \eqref{potentialfunction}.  Combining the estimates for $I_1$ and $I_2$, and using $\|\intkernelvar_1 +\intkernelvar_2\|_{\infty}\leq 2S_0$, we obtain
\begin{align}
&|\mathcal{E}_{\bX_{[0,T]}}(\intkernelvar_1)-\mathcal{E}_{\bX_{[0,T]}}(\intkernelvar_2)|\nonumber\\
&\leq \frac{ R^2}{2\sigma^2}\|\intkernelvar_1-\intkernelvar_2\|_{\infty} \left( \|\intkernelvar_1+\intkernelvar_2\|_{\infty} +\frac{1}{T}\right)+\frac{1}{2}(d\|\intkernelvar_1-\intkernelvar_2\|_{\infty}+\|\intkernelvar_1'-\intkernelvar_2'\|_{\infty}R)\nonumber\\
&\leq C_1\|\intkernelvar_1-\intkernelvar_2\|_{\infty}+C_2\|\intkernelvar_1'-\intkernelvar_2'\|_{\infty}, 
\end{align}where  $C_1=\frac{ R^2S_0}{\sigma^2}+\frac{ R^2 }{2\sigma^2T}+\frac{d}{2}$ and $C_2=\frac{R}{2}$.
\end{proof}

When $M=\infty$, i.e. we observe infinitely many trajectories, the expectation of our error functional $\mathcal{E}_{T,\infty}$, as in \eqref{errfunctional}, does not involve the stochastic integral term. From the proof of Proposition \ref{randomdifference}, we see that it is  continuous over $\mathcal{H}$ with respect to the $\|\cdot\|_{\infty}$ norm:

\begin{corollary}\label{continuousproperty}Suppose $\intkernelvar_1, \intkernelvar_2 \in \mathcal{H}$, then, with  $C_1=\frac{2R^2S_0}{\sigma^2}$, we have 
\begin{align}
|\mathcal{E}_{T,\infty}(\intkernelvar_1)-\mathcal{E}_{T,\infty}(\intkernelvar_2)|\nonumber \leq C_1 \|\intkernelvar_1-\intkernelvar_2\|_{\infty}. \end{align}
\end{corollary}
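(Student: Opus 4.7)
\textbf{Proof proposal for Corollary \ref{continuousproperty}.}

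The plan is to avoid the Itô-based decomposition used in Proposition \ref{randomdifference}, since the $\|\intkernelvar_1'-\intkernelvar_2'\|_\infty$ term we obtained there came from integrating by parts against $d\bX_t$. When taking expectation with respect to the path measure generated by the true SDE, the stochastic-integral contribution can be handled more directly, and the derivative term should not appear.

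First I would expand
\[
\mathcal{E}_{T,\infty}(\intkernelvar_1) - \mathcal{E}_{T,\infty}(\intkernelvar_2) = \frac{1}{2\sigma^2NT}\,\mathbb{E}\!\int_0^T\!\!\bigl(\|\rhsfo_{\intkernelvar_1}(\bX_t)\|^2 - \|\rhsfo_{\intkernelvar_2}(\bX_t)\|^2\bigr)dt - \frac{1}{\sigma^2NT}\,\mathbb{E}\!\int_0^T\!\!\langle \rhsfo_{\intkernelvar_1-\intkernelvar_2}(\bX_t),d\bX_t\rangle.
\]
Now substitute the SDE $d\bX_t=\rhsfo_\intkernel(\bX_t)dt+\sigma\, d\bB_t$ into the second term. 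Since $\rhsfo_{\intkernelvar_1-\intkernelvar_2}(\bX_t)$ is adapted and bounded (as $\intkernelvar_1,\intkernelvar_2$ are admissible and $\bX_t$ has finite second moments), the stochastic integral against $d\bB_t$ is a true martingale with zero expectation. This yields
\[
\mathbb{E}\int_0^T\langle \rhsfo_{\intkernelvar_1-\intkernelvar_2}(\bX_t),d\bX_t\rangle \;=\; \mathbb{E}\int_0^T\langle \rhsfo_{\intkernelvar_1-\intkernelvar_2}(\bX_t),\rhsfo_\intkernel(\bX_t)\rangle\,dt.
\]
Combining with the factorization $\|a\|^2-\|b\|^2=\langle a-b,a+b\rangle$ applied to $a=\rhsfo_{\intkernelvar_1}$, $b=\rhsfo_{\intkernelvar_2}$, and using the linearity of $\rhsfo_{(\cdot)}$ in its kernel argument, the difference collapses to
\[
\mathcal{E}_{T,\infty}(\intkernelvar_1) - \mathcal{E}_{T,\infty}(\intkernelvar_2) = \frac{1}{2\sigma^2NT}\,\mathbb{E}\!\int_0^T\langle \rhsfo_{\intkernelvar_1-\intkernelvar_2}(\bX_t),\,\rhsfo_{\intkernelvar_1+\intkernelvar_2-2\intkernel}(\bX_t)\rangle\,dt,
\]
which is free of any derivative of the kernels.

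To conclude, I would apply Cauchy--Schwarz pointwise in $t$ together with the uniform bound $\|\rhsfo_\intkernelvar(\bX_t)\|\le \sqrt{N}\,R\,\|\intkernelvar\|_\infty$ (this follows from the definition of $\rhsfo_\intkernelvar$ and $\mathrm{supp}(\intkernelvar)\subset[0,R]$, as already used in the proof of Proposition \ref{randomdifference}). Using the triangle inequality with $\|\intkernelvar_j\|_\infty\le S_0$ for $j=1,2$ and $\|\intkernel\|_\infty\le S\le S_0$ gives $\|\intkernelvar_1+\intkernelvar_2-2\intkernel\|_\infty\le 4S_0$, hence
\[
|\mathcal{E}_{T,\infty}(\intkernelvar_1) - \mathcal{E}_{T,\infty}(\intkernelvar_2)| \;\le\; \frac{1}{2\sigma^2N T}\cdot T\cdot NR^2\,\|\intkernelvar_1-\intkernelvar_2\|_\infty\cdot 4S_0 \;=\; \frac{2R^2S_0}{\sigma^2}\,\|\intkernelvar_1-\intkernelvar_2\|_\infty,
\]
matching the constant $C_1$ in the statement.

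There is no real obstacle here: the only subtle point is recognizing that one should \emph{not} re-use the pathwise bound from Proposition \ref{randomdifference} (which introduces $\|\intkernelvar_1'-\intkernelvar_2'\|_\infty$ via Itô), but instead exploit the vanishing expectation of the martingale part before bounding. Verifying that the stochastic integral is a genuine martingale (so its expectation vanishes) requires only the boundedness of $\rhsfo_{\intkernelvar_1-\intkernelvar_2}$ on the support of $\bX_t$, which is immediate from $\intkernelvar_1,\intkernelvar_2\in\mathcal{H}\subset\mathcal{K}_{R,S_0}$.
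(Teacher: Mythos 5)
Your proof is correct and follows essentially the same route as the paper: the paper invokes identity \eqref{difference} (which is itself the statement that the martingale part of the log-likelihood has vanishing expectation) to reduce the difference to $\frac{1}{2\sigma^2NT}\,\mathbb{E}\int_0^T\langle \rhsfo_{\intkernelvar_1-\intkernelvar_2}(\bX_t),\rhsfo_{\intkernelvar_1+\intkernelvar_2-2\intkernel}(\bX_t)\rangle\,dt$, and then applies the same Cauchy--Schwarz bound $\|\rhsfo_\intkernelvar(\bX_t)\|\le\sqrt{N}R\|\intkernelvar\|_\infty$ with $\|\intkernelvar_1+\intkernelvar_2-2\intkernel\|_\infty\le 4S_0$ to obtain the constant $\frac{2R^2S_0}{\sigma^2}$. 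The only cosmetic difference is that you re-derive the vanishing-martingale identity from the SDE rather than citing \eqref{difference} directly.
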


\begin{proof} Using \eqref{difference}, we obtain that

  \begin{align*}
 |\mathcal{E}_{T,\infty}(\intkernelvar_1)-\mathcal{E}_{T,\infty}(\intkernelvar_2)|&=\frac{1}{2\sigma^2NT}\mathbb{E}\int_{0}^{T}\big|  \langle \rhsfo_{\intkernelvar_1-\intkernelvar_2}(\bX_t), \rhsfo_{\intkernelvar_1+\intkernelvar_2-2\intkernel}(\bX_t) \rangle \big|dt \\ &\leq \frac{R^2}{2\sigma^2}\| \intkernelvar_1-\intkernelvar_2\|_{\infty} \| \intkernelvar_1+\intkernelvar_2-2\intkernel \|_{\infty} \leq \frac{2R^2S_0}{\sigma^2}\| \intkernelvar_1-\intkernelvar_2\|_{\infty} 
 \end{align*}
 
\end{proof}

Recall the definition
 $$\widehat{\phi}_{T,\infty, \mathcal{H}}:=\argmin{\intkernelvar \in \mathcal{H}} \mathcal{E}_{T,\infty}(\intkernelvar).$$ 
 We now analyze the discrepancy between the empirical minimizer $\widehat{\phi}_{T,M, \mathcal{H}}$  and $\widehat{\phi}_{T,\infty, \mathcal{H}}$, which we called Sampling Error (SE) in the diagram in Figure \ref{f:diagram}. We introduce a measurable function on the path space by 
 \begin{equation}
 D_{\intkernelvar}: =\mathcal{E}_{\mbf{X}_{[0,T]}}(\intkernelvar)-\mathcal{E}_{\mbf{X}_{[0,T]}}(\widehat{\phi}_{T,\infty, \hypspace})
 \end{equation} for any $\intkernelvar \in \hypspace$. From  Proposition \ref{convexity} we have
\begin{align}\label{coercivityinequality}
\E D_{\intkernelvar} &=\mathcal{E}_{T,\infty}(\intkernelvar)-\mathcal{E}_{T,\infty}(\widehat{\phi}_{T,\infty, \mathcal{H}})\geq c_{\hypspace}\vertiii{\intkernelvar-\widehat\intkernel_{T,\infty,\mathcal{H}}}, 
\end{align} so $D_{\intkernelvar}$ in fact bounds (in expectation) the distance between $\intkernelvar$ and $\widehat{\phi}_{ T,\infty, \hypspace}$ w.r.t. the $|||\cdot|||$-norm. 
We now perform a non-asymptotic analysis of $D_{\intkernelvar}$. We shall show that the random variable $D_{\intkernelvar}$ satisfies moment conditions, sufficient to guarantee strong concentration about its expectation (Proposition \ref{covering}).  To do this, we decompose $D_{\intkernelvar}$ as the sum of a  bounded component only involving  time integrals and  an unbounded component involving  stochastic integrals: 
\begin{align*}
D_{\intkernelvar}:&=D_{\intkernelvar}^{\mathrm{bd}}-D_{\intkernelvar}^{\mathrm{ubd}}\\D_{\intkernelvar}^{\mathrm{bd}} :&=\frac{1}{2\sigma^2NT}\int_{0}^{T}\langle \rhsfo_{\intkernelvar-\widehat{\phi}_{T,\infty, \hypspace}}(\bX_t),\rhsfo_{\intkernelvar+\widehat{\phi}_{T,\infty, \hypspace}-2\intkernel}(\bX_t)\rangle dt\\D_{\intkernelvar}^{\mathrm{ubd}}:&=\frac{1}{\sigma^2 NT}\int_{0}^{T}\langle \rhsfo_{\intkernelvar-\widehat{\phi}_{T,\infty, \hypspace}}(\bX_t),d\bB(t)\rangle
\end{align*}
We  prove moment conditions independently for each of these components in the next two Lemmata.

\begin{lemma}[Bounds on $D_{\intkernelvar}^{\mathrm{ubd}}$]\label{unboundedpart} For $\intkernelvar\in \mathcal{H}$ and $p=2,3,4,\dots$,
\begin{align*}\E \big| D_{\intkernelvar}^{\mathrm{ubd}}|^p  \leq C \big(\big\|\intkernelvar-\widehat{\phi}_{T,\infty, \hypspace}\big\|_{\infty}\big)^{p-2}\vertiii{\intkernelvar-\widehat{\phi}_{T,\infty, \hypspace}}^2
\end{align*} where $C=(\frac{p(p-1)}{2})^{\frac{p}{2}}\frac{R^{p-2}}{\sigma^{2p}(NT)^{\frac{p+2}{2}}}$.
\end{lemma}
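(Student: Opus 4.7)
The plan is to apply the Burkholder-Davis-Gundy (BDG) inequality to the scalar continuous martingale
\[
M_t := \int_{0}^{t}\langle \rhsfo_{\psi}(\bX_s), d\bB_s\rangle, \qquad \psi := \intkernelvar - \widehat{\phi}_{T,\infty,\hypspace},
\]
so that $D_{\intkernelvar}^{\mathrm{ubd}} = (\sigma^2 NT)^{-1} M_T$ and its quadratic variation is $[M]_T = \int_0^T \|\rhsfo_\psi(\bX_s)\|^2 ds$. The first step is to apply It\^o's formula to $|M_t|^p$ (which is $C^2$ since $p \geq 2$), followed by a localization argument and H\"older's inequality, in order to obtain the BDG-type estimate with explicit constant:
\[
\E |M_T|^p \;\leq\; \left(\tfrac{p(p-1)}{2}\right)^{p/2}\, \E [M]_T^{p/2}\,.
\]

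Next, I would extract the factor $\|\psi\|_\infty^{p-2}$ via a deterministic pointwise control of the integrand. Since $\psi \in \hypspace$ is supported on $[0,R]$ and bounded, the Cauchy-Schwarz inequality applied coordinate-wise on $\rhsfo_\psi(\bX)=\big(\tfrac{1}{N}\sum_{i'\neq i}\psi(r_{ii'})\br_{ii'}\big)_{i=1}^N$ yields the a.s. bound $\|\rhsfo_\psi(\bX_s)\|^2 \leq N R^2 \|\psi\|_\infty^2$, and hence $[M]_T \leq NTR^2\|\psi\|_\infty^2$ almost surely. Writing $[M]_T^{p/2} = [M]_T^{p/2 - 1}\cdot [M]_T$ and bounding the first factor with this estimate gives
\[
\E [M]_T^{p/2} \;\leq\; (NTR^2)^{p/2-1}\,\|\psi\|_\infty^{p-2}\;\E [M]_T\,.
\]

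The third step relates $\E [M]_T$ to $\vertiii{\psi}^2$. Expanding $\|\rhsfo_\psi\|^2$ and applying Cauchy-Schwarz on the inner sum of $N-1$ vectors,
\[
\|\rhsfo_\psi(\bX_s)\|^2 \;\leq\; \frac{2(N-1)}{N^2}\sum_{i<i'}\psi(r_{ii'}(s))^2\, r_{ii'}(s)^2,
\]
so that, taking expectations, the definition \eqref{rhoT} of $\rhoT$ combined with the exchangeability of the particles produces $\E[M]_T \leq \frac{(N-1)^2}{N}T\,\vertiii{\psi}^2$. Combining the three steps together with the prefactor $(\sigma^2 NT)^{-p}$ and collecting constants yields the claimed bound.

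The main technical subtlety is Step 1: the explicit constant $\big(\tfrac{p(p-1)}{2}\big)^{p/2}$ does not come out of the usual BDG inequality stated with a generic constant, and has to be derived from It\^o's formula on $|M_t|^p$ together with Doob's inequality and H\"older (the localization argument guarantees that the local martingale term has vanishing expectation, which here is easy because $[M]_T$ is deterministically bounded by Step 2). Steps 2 and 3 are then routine algebraic manipulations exploiting the admissibility of $\psi$ and the structure of the measure $\rhoT$.
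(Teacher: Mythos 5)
Your proposal is correct and follows essentially the same route as the paper: there, one bounds $\E\big|D^{\mathrm{ubd}}_{\intkernelvar}\big|^p$ by applying the $p$-th moment inequality for stochastic integrals (Theorem \ref{momentinequality}, quoted from Mao) to $\rhsfo_{\psi}(\bX_t)$ with $\psi=\intkernelvar-\widehat{\phi}_{T,\infty,\hypspace}$, then peels off $\|\rhsfo_{\psi}(\bX_t)\|^{p-2}\le(\sqrt{N}R\|\psi\|_{\infty})^{p-2}$, and finally uses $\E\int_0^T\|\rhsfo_{\psi}(\bX_t)\|^2\,dt\le NT\vertiii{\psi}^2$, which are precisely your Steps 2 and 3. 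The only substantive difference is that the paper takes the moment inequality in the form $\E\big|\int_0^T\langle f,d\bB\rangle\big|^p\le\big(\tfrac{p(p-1)}{2}\big)^{p/2}T^{(p-2)/2}\E\int_0^T\|f\|^p\,ds$ as a citable black box, whereas you propose to rederive a BDG-type bound with $\E[M]_T^{p/2}$ on the right; be aware that the It\^o--Doob--H\"older route you sketch actually produces an extra factor $\big(\tfrac{p}{p-1}\big)^{p(p-2)/2}$, and the clean way to obtain the stated constant is to apply H\"older inside the time integral against $\E|M_s|^p\le\E|M_T|^p$ (the submartingale property), which is exactly how the cited inequality is proved. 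Since the precise value of the constant is immaterial for the downstream use in Lemma \ref{momentcondition}, this is a cosmetic discrepancy rather than a gap.
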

\begin{proof}First of all, note that 
\begin{align}\label{boundfphi}
\big\|\rhsfo_{\intkernelvar-\widehat{\phi}_{T,\infty, \hypspace}}(\bX_t)\big\|\leq \sqrt{N}R \big\|\intkernelvar-\widehat{\phi}_{T,\infty, \hypspace}\big\|_{\infty}\,.
\end{align}
Therefore  $\rhsfo_{\intkernelvar-\widehat{\phi}_{T,\infty, \hypspace}}(\bX_t)$ is a $L^2$-integrable process.  Applying Theorem \ref{momentinequality}, we obtain  

\begin{align*}
&\mathbb{E}  \bigg| \int_{0}^{T}\langle \rhsfo_{\intkernelvar-\widehat{\phi}_{T,\infty, \hypspace}}(\bX_t), d\bB(t)\rangle\bigg|^p \leq C_{p,T} \mathbb{E}\int_{0}^{T}\big\|\rhsfo_{\intkernelvar-\widehat{\phi}_{T,\infty, \hypspace}}(\bX_t) \big\|^pdt\\
&\leq C_{p,T}\big(\sqrt{N}R\|{\intkernelvar-\widehat{\phi}_{T,\infty, \hypspace}}\|_{\infty} \big)^{p-2}\mathbb{E}\int_{0}^T\big\|\rhsfo_{\intkernelvar-\widehat{\phi}_{T,\infty, \hypspace}}(\bX_t)\big\|^2dt\\
&\leq C_{p,T}\big(\sqrt{N}R\|{\intkernelvar-\widehat{\phi}_{T,\infty, \hypspace}}\|_{\infty} \big)^{p-2} NT\vertiii{\intkernelvar-\widehat{\phi}_{T,\infty, \hypspace}}^2\,,
\end{align*}  with $C_{p,T}=\big(\frac{p(p-1)}{2}\big)^{\frac{p}{2}}T^{\frac{p-2}{2}}$.   
The conclusion then follows by adding in the scaling factor $\frac{1}{\sigma^2 NT}$.
\end{proof}

\begin{lemma}[Bounds on $D_{\intkernelvar}^{\mathrm{bd}}$]\label{boundedpart}For $\intkernelvar\in \mathcal{H}$ and $p = 2,3,4,\dots$, 
\begin{align*}
\E \big| D_{\intkernelvar}^{\mathrm{bd}} \big|^p \leq \left(\frac{2S_0}{\sigma^2}\right)^pR^{2p-2}\big\|\intkernelvar-\widehat{\phi}_{T,\infty, \hypspace}\big\|_{\infty}^{p-2} \vertiii{\intkernelvar-\widehat{\phi}_{T,\infty, \hypspace}}^2. 
\end{align*}
\label{boundedpart}
\end{lemma}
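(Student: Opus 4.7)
The plan is to reduce the $p$-th moment bound to a second-moment bound by pulling out $L^\infty$-factors, then to relate the remaining second-moment to the weighted $L^2(\rho_T)$-norm $\vertiii{\,\cdot\,}$. Concretely, let $g_1:=\intkernelvar-\widehat{\phi}_{T,\infty,\hypspace}$ and $g_2:=\intkernelvar+\widehat{\phi}_{T,\infty,\hypspace}-2\intkernel$, so that
\[
D_{\intkernelvar}^{\mathrm{bd}}=\frac{1}{2\sigma^2NT}\int_0^T \langle \rhsfo_{g_1}(\bX_t),\rhsfo_{g_2}(\bX_t)\rangle\,dt.
\]
Since $\intkernelvar,\widehat{\phi}_{T,\infty,\hypspace}\in\hypspace$ are bounded by $S_0$ and $\intkernel\in\mathcal{K}_{R,S}$ with $S\leq S_0$, we have $\|g_2\|_\infty\leq 4S_0$.

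\textbf{First step: pointwise Cauchy--Schwarz and the easy $L^\infty$ factor.} Apply Cauchy--Schwarz in $\mathbb{R}^{dN}$ inside the integral, and use $\|\rhsfo_{g_2}(\bX_t)\|\leq \sqrt{N}R\,\|g_2\|_\infty\leq 4\sqrt{N}R\,S_0$ (which follows from $\|g_2(r_{ii'})\br_{ii'}\|\leq R\|g_2\|_\infty$). This yields
\[
|D_{\intkernelvar}^{\mathrm{bd}}|\;\leq\;\frac{R\,\|g_2\|_\infty}{2\sigma^{2}\sqrt{N}\,T}\int_0^T \|\rhsfo_{g_1}(\bX_t)\|\,dt.
\]

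\textbf{Second step: Jensen in time and the split $p=(p-2)+2$.} By Jensen's inequality applied to the probability measure $dt/T$ and the convex map $x\mapsto x^p$,
\[
\Bigl(\tfrac{1}{T}\int_0^T \|\rhsfo_{g_1}(\bX_t)\|\,dt\Bigr)^p \;\leq\; \tfrac{1}{T}\int_0^T \|\rhsfo_{g_1}(\bX_t)\|^p\,dt.
\]
Next, bound $p-2$ of the factors by the uniform bound $\|\rhsfo_{g_1}(\bX_t)\|\leq \sqrt{N}R\,\|g_1\|_\infty$, keeping two factors intact:
\[
\|\rhsfo_{g_1}(\bX_t)\|^p \;\leq\; (\sqrt{N}R\,\|g_1\|_\infty)^{\,p-2}\,\|\rhsfo_{g_1}(\bX_t)\|^2.
\]

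\textbf{Third step: convert the second moment to $\vertiii{g_1}^2$.} This is the core of the argument: applying Cauchy--Schwarz on the inner sum $\sum_{i'\neq i} g_1(r_{ii'})\br_{ii'}$ in the definition of $\rhsfo_{g_1}$ gives
\[
\|\rhsfo_{g_1}(\bX_t)\|^2 \;\leq\; \frac{N-1}{N^2}\sum_{i\neq i'}|g_1(r_{ii'}(t))\,r_{ii'}(t)|^2,
\]
and unfolding the definition of $\rho_T$ in \eqref{rhoT} yields
\[
\E\!\int_0^T \|\rhsfo_{g_1}(\bX_t)\|^2\,dt \;\leq\; \frac{(N-1)^2}{N}\,T\,\vertiii{g_1}^2 \;\leq\; N T\,\vertiii{g_1}^2.
\]

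\textbf{Assembling.} Taking expectation and combining the three steps, all the $N$-powers cancel exactly ($N^{-p/2}\cdot N^{(p-2)/2}\cdot N = 1$), one factor of $T$ cancels, and the remaining constants collapse to $\bigl(\tfrac{4S_0}{2\sigma^2}\bigr)^{p}R^{2p-2}=\bigl(\tfrac{2S_0}{\sigma^2}\bigr)^{p}R^{2p-2}$, which produces the claimed bound.

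The only nontrivial ingredient is the third step, i.e.\ the upper bound $\E\int_0^T\|\rhsfo_{g_1}\|^2dt\lesssim NT\,\vertiii{g_1}^2$, which is the reverse-direction companion of the coercivity condition \eqref{gencoer} and holds unconditionally by Cauchy--Schwarz and exchangeability; everything else is bookkeeping with $L^\infty$-bounds and Jensen.
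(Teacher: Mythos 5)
Your proposal is correct and follows essentially the same route as the paper's proof: Cauchy--Schwarz on the inner product with the uniform bound $\|\rhsfo_{g_2}\|\le 4\sqrt{N}RS_0$, Jensen in time, extraction of $p-2$ sup-norm factors, and reduction of the remaining second moment to $\vertiii{g_1}^2$ via the definition of $\rho_T$. The only (cosmetic) difference is that the paper applies Cauchy--Schwarz in time before raising to the $p$-th power, whereas you apply Jensen with $x\mapsto x^p$ directly; your third step also spells out the $\E\int_0^T\|\rhsfo_{g_1}\|^2\,dt\le NT\vertiii{g_1}^2$ bound that the paper leaves implicit.
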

\begin{proof} From the inequality \eqref{boundfphi} and the linear dependence of $\rhsfo_{\intkernelvar}$ on $\intkernelvar$, we have
\begin{align*}
\big| D_{\intkernelvar}^{\mathrm{bd}}\big|&\leq \frac{2S_0R}{\sigma^2\sqrt{N}T}\int_0^T\big\|\rhsfo_{\intkernelvar-\widehat{\phi}_{T,\infty, \hypspace}}(\bX_t)\big\|dt \nonumber \leq \frac{2S_0R}{\sigma^2}\sqrt{\frac{1}{NT}\int_{0}^{T}\big\|\rhsfo_{\intkernelvar-\widehat{\phi}_{T,\infty, \hypspace}}(\bX_t)\big\|^2dt}.
\end{align*}Therefore, 
$$\E|D_{\intkernelvar}^{\mathrm{bd}}|^p \leq ( \frac{2S_0}{\sigma^2})^pR^{2p-2}\big\|\intkernelvar-\widehat{\phi}_{T,\infty, \hypspace}\big\|_{\infty}^{p-2}\vertiii{\intkernelvar-\widehat{\phi}_{T,\infty, \hypspace}}^2.$$
\end{proof}

Now we combine  Lemma \ref{unboundedpart} and \ref{boundedpart} to prove the moment condition for $D_{\intkernelvar}$.

\begin{lemma}[{Moment conditions}]\label{momentcondition} For $\intkernelvar \in \mathcal{H}$, and every $p=2,3,\cdots$, we have
\begin{align*}
&\E\bigg[ \big| D_{\intkernelvar}-\E D_{\intkernelvar} \big|^p \bigg] \leq \frac{1}{2} p! K_{\intkernelvar,\mathcal{H}}^{p-2} C_{\intkernelvar,\mathcal{H}}, \end{align*}
where \begin{align}\label{constant2}
K_{\intkernelvar,\mathcal{H}}:= C_0\big\|\intkernelvar-\widehat{\phi}_{T,\infty, \hypspace}\big\|_{\infty}\,,\quad  C_{\intkernelvar,\mathcal{H}}:=C_0^2 C_1\vertiii{\intkernelvar-\widehat{\phi}_{T,\infty, \hypspace}}^2
\end{align} with $C_0=\sqrt{\frac{2 e^2R^2}{\sigma^4NT}}, $ 
$C_1 =\max_{p\geq 2} \frac{1}{\sqrt{2\pi p}NTR^2}\bigg(1+ \frac{c_{\sigma,S_0,R}}{C_0^p} \bigg),$ and 
$ 
c_{\sigma,S_0,R}=\max_{p\geq 2} (\frac{8S_0}{\sigma^2e})^p \frac{R^{2p-2}}{\sqrt{2\pi}p^{p+\frac{1}{2}}}.
$
\end{lemma}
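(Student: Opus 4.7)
The plan is to exploit the decomposition $D_\varphi = D_\varphi^{\mathrm{bd}} - D_\varphi^{\mathrm{ubd}}$, combine the moment estimates of Lemmata~\ref{unboundedpart} and~\ref{boundedpart} on each piece, and then repackage the result as the Bernstein-type form $\tfrac12 p!\, K_{\varphi,\hypspace}^{p-2}\, C_{\varphi,\hypspace}$ via Stirling's inequality. Since $\rhsfo_{\varphi-\widehat\phi}(\bX_t)$ is adapted and a.s.\ bounded by \eqref{boundfphi}, the stochastic integral $D_\varphi^{\mathrm{ubd}}$ is a square-integrable martingale with $\E D_\varphi^{\mathrm{ubd}} = 0$; hence $D_\varphi - \E D_\varphi = (D_\varphi^{\mathrm{bd}} - \E D_\varphi^{\mathrm{bd}}) - D_\varphi^{\mathrm{ubd}}$. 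Applying $(a+b)^p \le 2^{p-1}(a^p+b^p)$ together with Jensen's inequality $\E|X-\E X|^p \le 2^p\E|X|^p$ reduces the problem to bounding $\E|D_\varphi^{\mathrm{bd}}|^p$ and $\E|D_\varphi^{\mathrm{ubd}}|^p$ separately; both have already been shown to factor as $\|\varphi-\widehat\phi\|_\infty^{p-2}\vertiii{\varphi-\widehat\phi}^2$ multiplied by a $p$-dependent constant, so the remaining task is to combine those constants into the claimed form.

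To cast the combined bound in Bernstein form, I would apply Stirling's inequality $p^p \le e^p p!/\sqrt{2\pi p}$ to the stochastic-integral piece, whose prefactor $(p(p-1)/2)^{p/2}$ is bounded by $2^{-p/2}p^p$. After this conversion the geometric part arranges itself as $(C_0/2)^p$ with $C_0 = eR\sqrt 2/(\sigma^2\sqrt{NT})$, thereby identifying the Bernstein scale $K_{\varphi,\hypspace} = C_0\|\varphi-\widehat\phi\|_\infty$ and leaving a residual $p$-factor bounded by $1/(\sqrt{2\pi p}\,NT R^2)$ once one divides by $\tfrac12 p!\, C_0^p\, \|\varphi-\widehat\phi\|_\infty^{p-2}\vertiii{\varphi-\widehat\phi}^2$. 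The bounded-part contribution, after accounting for the $2^{2p-1}$ from the centering step, equals $\tfrac12(8S_0/\sigma^2)^p R^{2p-2}\|\varphi-\widehat\phi\|_\infty^{p-2}\vertiii{\varphi-\widehat\phi}^2$; dividing by the same common factor and using $1/p!\le e^p/(\sqrt{2\pi}\,p^{p+1/2})$ yields a remainder of order $(8S_0/(\sigma^2 e))^p R^{2p-2}/(\sqrt{2\pi}\,p^{p+1/2}\,C_0^p)$. Summing the two contributions leaves a $p$-dependent factor of exactly the form $(\sqrt{2\pi p}\,NT R^2)^{-1}(1 + c_{\sigma,S_0,R}/C_0^p)$, so passing to the supremum over $p\ge 2$ produces $C_1$ as defined, and the claim follows with $C_{\varphi,\hypspace} = C_0^2 C_1 \vertiii{\varphi-\widehat\phi}^2$.

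The main difficulty lies in the bookkeeping: the two pieces have different natural $p$-scales (polynomial growth for the bounded part, sub-exponential of order $p^{p/2}$ for the It\^o part), and forcing them under a single Bernstein scale $K_{\varphi,\hypspace} = C_0\|\varphi-\widehat\phi\|_\infty$ requires that every mismatch be absorbed into the variance-like constant $C_{\varphi,\hypspace}$. The supremum over $p$ in the definitions of $C_1$ and $c_{\sigma,S_0,R}$ is precisely the device that makes this absorption possible uniformly in $p$, and Stirling's inequality—used in both directions to relate $p^p$ and $p!$—is what guarantees that the resulting suprema are finite and depend only on $\sigma, S_0, R$ (and the physical parameters $N,T$).
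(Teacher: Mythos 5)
Your proposal is correct and follows essentially the same route as the paper's proof: the same decomposition $D_\varphi = D_\varphi^{\mathrm{bd}}-D_\varphi^{\mathrm{ubd}}$ with $\E D_\varphi^{\mathrm{ubd}}=0$, the same $2^{p-1}$/Jensen centering step invoking Lemmas \ref{boundedpart} and \ref{unboundedpart}, and the same two applications of Stirling's inequality to recast the resulting $p$-dependent prefactors in Bernstein form with scale $K_{\intkernelvar,\mathcal{H}}=C_0\|\intkernelvar-\widehat{\phi}_{T,\infty,\hypspace}\|_\infty$. Your bookkeeping of the constants is in fact more explicit than the paper's.
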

\begin{proof} The proof is based on the Jensen's inequality, Lemma \ref{unboundedpart} and Lemma \ref{boundedpart}.
\begin{align*}
\E \big| D_{\intkernelvar}-\E D_{\intkernelvar}\big|^p  &\leq 2^{p-1}\E  \big| D_{\intkernelvar}^{\mathrm{bd}}-\E D_{\intkernelvar}^{\mathrm{bd}} \big|^p+2^{p-1}\E\big|D_{\intkernelvar}^{\mathrm{ubd}} \big|^p \\&\leq 2^{2p-1}\E \big|D_{\intkernelvar}^{\mathrm{bd}} \big|^p +2^{p-1}\E[\big|D_{\intkernelvar}^{\mathrm{ubd}} \big|^p\nonumber\\ &\leq \frac{1}{2}C_0 \big\|\intkernelvar-\widehat{\phi}_{T,\infty, \hypspace}\big\|_{\infty}^{p-2}\vertiii{\intkernelvar-\widehat{\phi}_{T,\infty, \hypspace}}^2 \\& \leq  \frac{1}{2} p! (C_1\|\intkernelvar-\widehat{\phi}_{T,\infty, \hypspace}\|_{\infty})^{p-2} C_1^2C_2\vertiii{\intkernelvar-\widehat{\phi}_{T,\infty, \hypspace}}^2\nonumber, \end{align*}
where  the constants are \begin{align*}
C_0&:=(\frac{8S_0}{\sigma^2})^pR^{2p-2}+(2p(p-1))^{\frac{p}{2}}\frac{R^{p-2}}{\sigma^{2p}(NT)^{\frac{p+2}{2}}},\\
C_1&:=\sqrt{\frac{2 e^2R^2}{\sigma^4NT}},
\quad\quad C_2:=\max_{p\geq 2} \bigg(\frac{1}{\sqrt{2\pi p}NTR^2}+ \frac{c_{\sigma,S_0,R}}{C_1^p} \bigg),\\
c_{\sigma,S_0,R}&:=\max_{p\geq 2} (\frac{8S_0}{\sigma^2e})^p \frac{R^{2p-2}}{\sqrt{2\pi}p^{p+\frac{1}{2}}}, 
\end{align*}  and the last inequality is derived from the Stirling's lemma. 

\end{proof}

%%%%%%%%%%%%%%%%%%%%%%%%%%%%%%%%%%%%%%%
%%%%%%%%%%%%%%%%%%%%%%%%%%%%%%%%%%%%%%%
We now tie the discrepancy functionals for finite and infinite $M$:
\begin{proposition}[Sampling Error bound]\label{covering}
 Let $0 <\delta<1$ and $\{\intkernel_j\}_{j=1}^{\mathcal{N}}$ be an $\eta$ net of functions in a compact convex hypothesis space $\mathcal{H} \subset Ball_{S_0}(L^{\infty}[0,R])$. Denote 
\begin{align*}
\mathcal{D}_{\intkernelvar_j,\infty}:&= \E D_{\intkernelvar_j}=\mathcal{E}_{T,\infty}(\intkernelvar_j)-\mathcal{E}_{T,\infty}(\widehat\intkernel_{T,\infty,\mathcal{H}})\\
\mathcal{D}_{\intkernelvar_j,M}:&=\mathcal{E}_{T,M}(\intkernelvar_j)-\mathcal{E}_{T,M}(\widehat\intkernel_{T,\infty,\mathcal{H}})\,. 
\end{align*}Then with probability at least $1-\frac{\delta}{2}$, we have 
\begin{align}\label{coveringargument}
\mathcal{D}_{\intkernelvar_j,\infty}- \mathcal{D}_{\intkernelvar_j,M}\leq \epsilon_{M,\delta, \mathcal{N}}+ \frac{1}{2}\mathcal{D}_{\intkernelvar_j,\infty}
\end{align}
for all $j$, where $\epsilon_{M,\delta,\mathcal{N}}=\frac{C}{M}\log(\frac{2\mathcal{N}}{\delta})$, $C=2\frac{C_0^2C_1}{c_\mathcal{H}}+4C_0S_0$, and $C_0,C_1$ as in \eqref{constant2}, with $c_{\hypspace}$ the coercivity constant defined in \eqref{gencoer}.
\end{proposition}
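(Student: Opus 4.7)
The plan is to recognize that $\mathcal{D}_{\varphi_j,\infty}-\mathcal{D}_{\varphi_j,M}$ is, for each fixed $j$, the centered empirical mean of $M$ i.i.d.\ copies $D_{\varphi_j}^{(m)}$ of the trajectory-wise random variable $D_{\varphi_j}$; to apply a one-sided Bernstein inequality using the moment estimates supplied by Lemma~\ref{momentcondition}; then to use the coercivity condition of Proposition~\ref{convexity} to turn the resulting variance proxy into a fraction of $\mathcal{D}_{\varphi_j,\infty}$; and finally to union-bound over the $\eta$-net.

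First I would fix $j$ and invoke the Bernstein moment bound
\[
\E|D_{\varphi_j}-\E D_{\varphi_j}|^{p}\le \tfrac12\, p!\, K_{\varphi_j,\hypspace}^{p-2}\,C_{\varphi_j,\hypspace}\qquad (p\ge 2),
\]
proved in Lemma~\ref{momentcondition}, and apply the classical one-sided Bernstein inequality for i.i.d.\ sums to get
\[
\Pr\!\Big(\mathcal{D}_{\varphi_j,\infty}-\mathcal{D}_{\varphi_j,M}\ge t\Big)\;\le\;\exp\!\left(-\frac{M\,t^{2}}{2\bigl(C_{\varphi_j,\hypspace}+K_{\varphi_j,\hypspace}\,t\bigr)}\right).
\]
Equating the right-hand side to $\delta/(2\mathcal{N})$, solving the resulting quadratic and using $\sqrt{a^{2}+b}\le a+\sqrt{b}$ for $a,b\ge 0$, I obtain that with probability at least $1-\delta/(2\mathcal{N})$,
\[
\mathcal{D}_{\varphi_j,\infty}-\mathcal{D}_{\varphi_j,M}\;\le\;\frac{2K_{\varphi_j,\hypspace}}{M}\log\!\frac{2\mathcal{N}}{\delta}\;+\;\sqrt{\frac{2C_{\varphi_j,\hypspace}}{M}\log\!\frac{2\mathcal{N}}{\delta}}.
\]

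Next I would convert both $K_{\varphi_j,\hypspace}$ and $C_{\varphi_j,\hypspace}$ into quantities controllable uniformly in $j$. Since $\hypspace\subset\mathrm{Ball}_{S_{0}}(L^{\infty}[0,R])$, the definition of $K_{\varphi_j,\hypspace}$ in \eqref{constant2} gives $K_{\varphi_j,\hypspace}\le 2C_{0}S_{0}$. The coercivity inequality \eqref{coercivityinequality} yields
\[
\vertiii{\varphi_j-\widehat{\phi}_{T,\infty,\hypspace}}^{2}\;\le\;\frac{1}{c_{\hypspace}}\,\mathcal{D}_{\varphi_j,\infty},
\]
hence $C_{\varphi_j,\hypspace}=C_{0}^{2}C_{1}\vertiii{\varphi_j-\widehat\phi_{T,\infty,\hypspace}}^{2}\le (C_{0}^{2}C_{1}/c_{\hypspace})\,\mathcal{D}_{\varphi_j,\infty}$. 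The peeling step is then AM--GM in the form $\sqrt{uv}\le\tfrac12(u+v)$ applied with $u=\mathcal{D}_{\varphi_j,\infty}$ and $v=(2C_{0}^{2}C_{1}/c_{\hypspace})\log(2\mathcal{N}/\delta)/M$, which absorbs the square-root term as $\tfrac12\mathcal{D}_{\varphi_j,\infty}$ plus a deterministic remainder of order $\log(2\mathcal{N}/\delta)/M$. Combining the two resulting deterministic terms gives the desired inequality for the fixed $j$, with the constant $C=2C_{0}^{2}C_{1}/c_{\hypspace}+4C_{0}S_{0}$ stated in the proposition. A final union bound over the $\mathcal{N}$ elements of the net produces the simultaneous statement with probability at least $1-\delta/2$.

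The main obstacle, and the reason a naive Bernstein argument would fail, is that the variance-type bound $C_{\varphi_j,\hypspace}$ is local: it is proportional to $\vertiii{\varphi_j-\widehat\phi_{T,\infty,\hypspace}}^{2}$ and therefore varies across the net. The coercivity condition is what makes the argument go through, because it converts this local variance into a multiple of $\mathcal{D}_{\varphi_j,\infty}$ itself, enabling the peeling trick that yields the factor $\tfrac12\mathcal{D}_{\varphi_j,\infty}$ on the right-hand side and explains the appearance of $1/c_{\hypspace}$ in the constant $C$. The remaining care is purely bookkeeping: choosing $u$ versus $v$ in AM--GM so that the absorbed coefficient is exactly $\tfrac12$, and tracking constants from Lemmata~\ref{unboundedpart}--\ref{boundedpart} through Lemma~\ref{momentcondition}.
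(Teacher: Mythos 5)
Your proposal is correct and follows essentially the same route as the paper: the moment bounds of Lemma \ref{momentcondition}, coercivity via \eqref{coercivityinequality} to turn the variance proxy $C_{\intkernelvar_j,\mathcal{H}}$ into a multiple of $\mathcal{D}_{\intkernelvar_j,\infty}$, a Bernstein-type tail bound, absorption of $\tfrac12\mathcal{D}_{\intkernelvar_j,\infty}$, the bound $K_{\intkernelvar_j,\mathcal{H}}\le 2C_0S_0$, and a union bound over the net. The only cosmetic difference is that the paper packages the self-bounding Bernstein inequality into Corollary \ref{berstein2} and then inverts, whereas you invert the plain Bernstein bound and apply AM--GM directly; the two are algebraically equivalent (your bookkeeping in fact yields a slightly smaller constant, which still implies the stated bound).
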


\begin{proof}For each $\intkernelvar_j \in \mathcal{H}$, recall that in Eq.\eqref{coercivityinequality}, the coercivity condition on $\mathcal{H}$ implies that 
$$\mathcal{D}_{\intkernelvar_j,\infty} \geq c_{\hypspace}  \vertiii{\intkernelvar_j- \widehat{\phi}_{T,\infty, \hypspace}}^2. 
$$ 
Then, Eq.\eqref{constant2} in Lemma \ref{momentcondition} yields that 
\begin{align}
&\E|{D}_{\intkernelvar_j}-\mathbb{E}{D}_{\intkernelvar_j}|^p \leq \frac{1}{2} p! K_{\intkernelvar_j,\mathcal{H}}^{p-2}\frac{C_0^2C_1}{c_{\mathcal{H}}} \mathcal{D}_{\intkernelvar_j,\infty}.\end{align}
Therefore the random variable   ${D}_{\intkernelvar_j}$ satisfies the moment condition in Corollary \ref{berstein2}, and so $\forall \epsilon>0$
$$\Prob{\mathcal{D}_{\intkernelvar_j,\infty} -\mathcal{D}_{\intkernelvar_j,M} \geq \sqrt{\epsilon(\epsilon+ \mathcal{D}_{\intkernelvar_j,\infty})}} \leq \exp \bigg(\frac{-M\epsilon}{\frac{2C_0^2C_1}{c_{\mathcal{H}}}+2K_{\intkernelvar_j,\mathcal{H}}}\bigg). $$
 We have $K_{\intkernelvar_j,\mathcal{H}}\leq 2C_0S_0$ where $C_0$ is defined in \eqref{constant2}.  Taking a union bound on all these events, over $j \in \{1,2,\cdots,\mathcal{N}\}$,  we obtain that
\begin{align}\label{maxprob}
\Prob{ \max_{1\leq j \leq \mathcal{N}}\frac{\mathcal{D}_{\intkernelvar_j,\infty} -\mathcal{D}_{\intkernelvar_j,M}}{\sqrt{\mathcal{D}_{\intkernelvar_j,\infty}+\epsilon}} \geq \sqrt{\epsilon}} \leq \mathcal{N}\exp\bigg( \frac{-M\epsilon}{\frac{2C_0^2C_1}{c_\mathcal{H}}+4C_0S_0}\bigg)\,.
\end{align}
Setting the right-hand side to be $\frac{\delta}{2}$, we get $\epsilon_{M,\delta,\mathcal{N}}=\frac{C}{M}\log(\frac{2\mathcal{N}}{\delta})$, where 
$
C:=2\frac{C_0^2C_1}{c_\mathcal{H}}+4C_0S_0
$.
Using the inequality 
$
\sqrt{\epsilon_{M,\delta,\mathcal{N}}( \epsilon_{M,\delta,\mathcal{N}}+D_{\varphi_j,\infty})}\leq \epsilon_{M,\delta,\mathcal{N}} +\frac{1}{2}D_{\varphi_j,\infty},
$
we conclude that with probability at least $1-\frac{\delta}{2}$
$$
\mathcal{D}_{\intkernelvar_j,\infty}  -\mathcal{D}_{\intkernelvar_j,M} \leq \epsilon_{M,\delta,\mathcal{N}}+ \frac{1}{2}\mathcal{D}_{\intkernelvar_j,\infty}.
$$
\end{proof}

\begin{proof}[of Theorem \ref{maintheorem}]
For $\hypspace_n=\mathcal{B}_{S_0}^{\infty}(\mathcal{L}_n)$, let $\{\intkernelvar_j: j=1,\cdots, \mathcal{N}\}$ be an $\eta$-net of $\hypspace_n$. 
Let $$\widehat \intkernel_{T,M,\hypspace_n}=\mathrm{argmin}_{\intkernelvar \in \hypspace_n}\mathcal{E}_{T,M}(\intkernelvar).$$ Then there exists $\intkernelvar_{j_M}$ in the net such that 
$\|\intkernelvar_{j_M}-\widehat \intkernel_{T,M,\hypspace_n}\|_{\infty} \leq \eta$;
by Corollary \ref{continuousproperty}
\begin{align}\label{equation1}
\big|\mathcal{D}_{\intkernelvar_{j_M},\infty}- \mathcal{D}_{\widehat \intkernel_{T,M,\hypspace_n},\infty}\big| &= \big|\mathcal{E}_{T,\infty}(\intkernelvar_{j_M})- \mathcal{E}_{T,\infty}(\widehat \intkernel_{T,M,\hypspace_n})\big| \leq \eta \frac{2S_0R^2}{\sigma^2}.
\end{align}
{On the other hand, since $\hypspace_n \subset  \mathcal{L}_n \subset C^{1,1}([0,R])$, thanks to the almost sure control in Proposition \ref{randomdifference} and the uniformly bound $\sup_{\intkernelvar \in \mathcal{L}_n}\frac{\|\intkernelvar'\|_{\infty}}{\|\intkernelvar\|_{\infty} }\leq c_1(\dimsp(\mathcal{L}_n))^\gamma$ from the assumption \eqref{markovbernstein}, we have, almost surely, \\  }

\begin{equation}
\begin{aligned}
\big|\mathcal{D}_{\intkernelvar_{j_M},M}- \mathcal{D}_{\widehat \intkernel_{T,M,\hypspace_n},M}\big|
&=\big| \mathcal{E}_{T, M}(\intkernelvar_{j_M})- \mathcal{E}_{T,M}(\widehat \intkernel_{T,M,\hypspace_n})\big|\\
& \leq  \eta(C_1+c_1C_2 \mathrm{dim}(\mathcal{L}_n)^\gamma), 
\end{aligned}
\label{equation2}
\end{equation}  where $C_1 = \frac{ R^2S_0}{\sigma^2}+\frac{ R^2 }{2\sigma^2T}+\frac{d}{2}$, $C_2 =\frac{R}{2}$.

By Lemma \ref{covering}, for each $\eta>0$, with probability at least $1-\frac{\delta}{2}$, \eqref{coveringargument} holds for this $\eta$-net $\{\intkernelvar_j: j=1,\cdots,\mathcal{N}\}$. Combining \eqref{coveringargument} with \eqref{equation1} and \eqref{equation2}, we conclude that, with probability at least $1-\frac{\delta}{2}$, 
\begin{align*}
& \mathcal{D}_{\widehat \phi_{T,M,\mathcal{H}_n},\infty}- \mathcal{D}_{\widehat \phi_{T,M,\mathcal{H}_n},M} \\
\leq  & | \mathcal{D}_{\widehat \phi_{T,M,\mathcal{H}_n},\infty}- \mathcal{D}_{\intkernelvar_{j_M},\infty} | + |\mathcal{D}_{\intkernelvar_{j_M},\infty}-\mathcal{D}_{\phi_{j_M},M}|+ |\mathcal{D}_{\phi_{j_M},M}- \mathcal{D}_{\widehat \phi_{T,M,\mathcal{H}_n},M} |   \\
\leq &  \eta (C_0+ C_1+c_1C_2\mathrm{dim}(\mathcal{L}_n)^{\gamma} )+\mathcal{D}_{\phi_{j_M},\infty}- \mathcal{D}_{\phi_{j_M},M}\\
\leq &  \eta (C_0+ C_1+c_1C_2\mathrm{dim}(\mathcal{L}_n)^{\gamma} )+ \epsilon_{M,\delta,\mathcal{N}}+ \frac{1}{2} \mathcal{D}_{ \phi_{j_M}, \infty} \\ 
\leq & \eta (\frac{3C_0}{2}+ C_1+c_1C_2\mathrm{dim}(\mathcal{L}_n)^{\gamma} )+ \epsilon_{M,\delta,\mathcal{N}}+ \frac{1}{2} \mathcal{D}_{\widehat \phi_{T,M,\mathcal{H}_n},\infty}.
\end{align*} 
Notice that $\mathcal{D}_{\smash{\widehat \phi_{T,M,\mathcal{H}_n},M}}\leq 0$, so the above inequality  implies that 
\begin{align}\label{estimator1}
 \mathcal{D}_{\widehat \phi_{T,M,\mathcal{H}_n},\infty} \leq  \eta (3C_0+ 2C_1+2c_1C_2\mathrm{dim}(\mathcal{L}_n)^{\gamma} )+ 2\epsilon_{M,\delta,\mathcal{N}}. 
 \end{align} 
 The covering number of $\hypspace_n$ satisfies $\mathcal{N}(\hypspace_n, \eta) \leq \left(\frac{4S_0}{\eta}\right)^{c_0n}$(e.g. Proposition 5 in \cite{CS02}). 
By the triangle inequality, we split the error we want to control into Sampling Error (SE) and Approximation Error (AE) (see Figure \ref{f:diagram})
\begin{align} \label{errordecomposition}
\vertiii{\widehat \intkernel_{T,M,\hypspace_n}-\intkernel}^2\leq 2\vertiii{ \widehat \intkernel_{T,M,\hypspace_n}- \widehat\intkernel_{T,\infty,\hypspace_n}}^2+2\vertiii{\widehat \intkernel_{T,\infty,\hypspace_n}- \intkernel}^2. 
\end{align}
From  \eqref{estimator1} and the coercivity condition \eqref{coercivityinequality}, we obtain that, with probability at least $1-\frac{\delta}{2}$, 
\begin{align}\label{sampleerror}
&\vertiii{ \widehat \intkernel_{T,M,\hypspace_n}- \widehat\intkernel_{T,\infty,\hypspace_n}}^2 \leq \frac{1}{c_{\mathcal{H}_n}}\mathcal{D}_{\widehat \intkernel_{T,M,\hypspace_n}, \infty}\nonumber\\ &\leq \frac{\eta}{c_{\mathcal{H}_n}} (3C_0+ 2C_1+2c_1C_2\mathrm{dim}(\mathcal{L}_n)^{\gamma} )+ \frac{2}{c_{\mathcal{H}_n}}\epsilon_{M,\delta,\mathcal{N}}. 
\end{align}
Let $\phi_{\mathcal{H}_n}:=\mathrm{argmin}_{\psi\in \mathcal{H}_n}\|\psi-\intkernel\|_{\infty}$. By coercivity condition \eqref{coercivityinequality}, we have 
\begin{align*}
\vertiii{\widehat \intkernel_{T,\infty,\hypspace_n}- \intkernel_{\mathcal{H}_n}}^2&\leq \frac{1}{c_{\mathcal{H}_n}}(\mathcal{E}_{T,\infty}(\phi_{\mathcal{H}_n})-\mathcal{E}_{T,\infty}(\widehat \intkernel_{T,\infty,\hypspace_n}))\\
&\leq  \frac{1}{c_{\mathcal{H}_n}}(\mathcal{E}_{T,\infty}(\phi_{\mathcal{H}_n})-\mathcal{E}_{T,\infty}(\intkernel)) \leq \frac{1}{c_{\mathcal{H}_n}} \vertiii{\phi_{\mathcal{H}_n}-\intkernel}^2,
\end{align*} where we used
\begin{align*}
\mathcal{E}_{T,\infty}(\intkernel)&\leq \mathcal{E}_{T,\infty}(\widehat \intkernel_{T,\infty,\hypspace_n})\,,\quad
\big| \mathcal{E}_{T,\infty}(\intkernel)- \mathcal{E}_{T,\infty}(\intkernelvar)\big| &\leq \vertiii{\intkernel-\intkernelvar}^2\,, \quad\forall \intkernelvar \in \mathcal{H}_n.
\end{align*}
Therefore, we have
  \begin{align}\label{approxerror}
  \vertiii{\widehat \intkernel_{T,\infty,\hypspace_n}- \intkernel}^2
 \leq (2+\frac{2}{c_{\mathcal{H}_n}})\vertiii{\intkernel_{\hypspace_n}- \intkernel}^2 \leq \frac{4R^2}{c_{\mathcal{H}_n}}n^{-2s}.
\end{align}

Now we combine the estimates \eqref{errordecomposition}, \eqref{sampleerror} and \eqref{approxerror} together, and  let  $\eta=n^{-2s-\gamma}$ and $n=(\frac{M}{\log M})^{\frac{1}{2s+1}}$,  and  note that $c_{\mathcal{L}}=c_{\cup_n\mathcal{L}_n}=c_{\cup_n\mathcal{H}_n}\leq  c_{\mathcal{H}_n}=c_{\mathcal{L}_n}\leq 1$ for all $n$. We obtain that, with probability at least $1-\frac{\delta}{2}$, the following estimate holds true: 
\begin{align}\label{probestimates}
 \vertiii{\widehat \intkernel_{T,M,\hypspace_n}-\intkernel}^2& \leq \frac{2\eta}{c_{\mathcal{L}}}(3C_0+2C_1+2c_1C_2\mathrm{dim}(\mathcal{L}_n)^{\gamma})+\frac{8R^2}{c_{\mathcal{L}}}n^{-2s}+\frac{4}{c_{\mathcal{L}}}\epsilon_{M,\delta,N}\nonumber\\
  &\leq  \frac{C_3}{c_{\mathcal{L}}}n^{-2s}+\frac{C_4}{c_{\mathcal{L}}}\frac{n\log n}{M}+  \frac{4C}{c_{\mathcal{L}}M}\log(\frac{2}{\delta})\nonumber\\ &\leq \frac{C_5}{c_{\mathcal{L}}} \left(\frac{\log M}{M}\right)^{\frac{2s}{2s+1}}+\frac{4C}{c_{\mathcal{L}}M}\log({2}/{\delta}),
\end{align}
 where we used \eqref{coveringargument} to get $\epsilon_{M,\delta,N}$,  $C$, and $\{C_i\}_{i=0}^2$ is defined in  \eqref{coveringargument}, \eqref{equation1}, and \eqref{equation2} respectively, and 
 \begin{align*}
 C_3 &=6C_0+4C_1+4c_0^{\gamma}c_1C_2+8R^2\\
 C_4&=4c_0C|\log(4S_0)|+4c_0(2s+\gamma)C\\
 C_5&=C_3+\frac{C_4}{2s+1}. 
 \end{align*}
 
The bound in expectation is obtained by standard techniques, writing
\begin{align*}
\E \vertiii{\widehat \intkernel_{T,M,\hypspace_n}-\intkernel}^2=\int_{0}^{\infty}\Prob{ \vertiii{\widehat \intkernel_{T,M,\hypspace_n}-\intkernel}^2 >\epsilon}d\epsilon\,,
\end{align*}
and splitting the integration interval into $[0, \frac{C_5}{c_{\mathcal{L}}} (\frac{\log M}{M})^{\frac{2s}{2s+1}}]$ and  $[\frac{C_5}{c_{\mathcal{L}}} (\frac{\log M}{M})^{\frac{2s}{2s+1}}, \infty]$. On the first interval, we use $\Prob{ \vertiii{\widehat \intkernel_{T,M,\hypspace_n}-\intkernel}^2 >\epsilon}\le1$.  On the second one, we use a change of variables and the probability estimate \eqref{probestimates}. We obtain 
\begin{align}
\E \vertiii{\widehat \intkernel_{T,M,\hypspace_n}-\intkernel}^2 &\leq \frac{C_5}{c_{\cup_n\hypspace_n}}\left(\frac{\log M}{ M}\right)^{\frac{2s}{2s+1}}+\frac{4C}{c_{\cup_n\mathcal{H}_n}}\frac{1}{M}\nonumber\\ &\leq \frac{C_6}{c_{\cup_n\hypspace_n}^2}\left(\frac{\log M}{ M}\right)^{\frac{2s}{2s+1}}
\end{align} where $C_6$ is an absolute constant only depending on $\sigma, N, T, S_0,R$. 
\end{proof}
 
\begin{proof}[of Theorem \ref{main:consistency}] In this proof, $a \lesssim b$  means there there exist a constant $c$ such that $a\leq cb$. 
 For any $\epsilon>0$, we claim 
 $$\sum_{M=1}^{\infty}\Prob{\vertiii{\widehat{\intkernel}_{T,M,\mathcal{H}_M}-\intkernel}^2 \geq \epsilon} < \infty.$$
Strong consistency will then follow from the Borel-Cantelli Lemma. Notice that 
\begin{align*}
\Prob{ \vertiii{\widehat{\intkernel}_{T,M,\mathcal{H}_M}-\intkernel}^2 \geq \epsilon } 
&\leq \Prob { \vertiii{\widehat{\intkernel}_{T,M,\mathcal{H}_M}-\widehat{\intkernel}_{T,\infty,\mathcal{H}_M} }^2 \geq \frac{\epsilon}{2} }\\ 
&+\Prob{\vertiii{\widehat{\intkernel}_{T,\infty,\mathcal{H}_M}-\intkernel}^2 \geq \frac{\epsilon}{2} }\,, 
\end{align*} and 
$\Prob{\vertiii{\widehat{\intkernel}_{T,\infty,\mathcal{H}_M}-\intkernel}^2 \geq \frac{\epsilon}{2}}=0$ when $M$ is large enough (see \eqref{approxerror}). It suffices to prove 
$$
\sum_{M=1}^{\infty}\Prob {\vertiii{\widehat{\intkernel}_{T,M,\mathcal{H}_M}-\widehat{\intkernel}_{T,\infty,\mathcal{H}_M}}^2 \geq \epsilon } < \infty\,.
$$ 
Let $\{\intkernelvar_j\}_{j=1}^{\mathcal{N}}$ be an $\eta$ net for $\mathcal{H}_M$. Consider the event 
$$\Lambda_{\eta,M,\epsilon}=\{\max_{1\leq j \leq \mathcal{N}} \frac{1}{2}\mathcal{D}_{\intkernelvar_j,\infty} -\mathcal{D}_{\intkernelvar_j,M}\geq \epsilon  \}\,.$$  
The bound \eqref{maxprob} in Proposition \ref{covering} yields 
\begin{align}
\Prob{\Lambda_{\eta,M,\epsilon}} \lesssim \mathcal{N}\exp\big( -c_{\mathcal{H}_M} M\epsilon \big).
\end{align} Using the fact that there exists $j_M \in \{1,2\cdots,\mathcal{N}\}$ such that $\|\intkernel-\intkernelvar_{j_M}\|_{\infty}\leq \eta$, and following the same argument as in \eqref{equation1} and \eqref{equation2},  we obtain,
$$ 
\Prob{\frac{1}{2} \mathcal{D}_{\widehat{\intkernel}_{T,M,\mathcal{H}_M},\infty} -\mathcal{D}_{\widehat{\intkernel}_{T,M,\mathcal{H}_M},M}\gtrsim \eta n_{M}^{\gamma}+\epsilon} 
\leq  \Prob {\Lambda_{\eta,M,\epsilon}}  
\lesssim \mathcal{N}\exp\big( -c_{\mathcal{H}_M} M\epsilon \big)\,. 
$$ 
Notice that $\mathcal{D}_{\widehat{\intkernel}_{T,M,\mathcal{H}_M},M}\leq 0$ and 
$ \mathcal{D}_{\widehat{\intkernel}_{T,M,\mathcal{H}_M},\infty} \geq c_{\mathcal{H}_M} 
\vertiii{\widehat{\intkernel}_{T,M,\mathcal{H}_M}-\widehat{\intkernel}_{T,\infty,\mathcal{H}_M}}^2$, so that we have 
 $$ 
\Prob{c_{\mathcal{H}_M} \vertiii{\widehat{\intkernel}_{T,M,\mathcal{H}_M}-\widehat{\intkernel}_{T,\infty,\mathcal{H}_M}}^2 \gtrsim \eta n_{M}^{\gamma}+\epsilon } \lesssim \mathcal{N}\exp\big( -c_{\mathcal{H}_M} M\epsilon \big)\,. 
$$ 
Let $\eta n_{M}^{\gamma}=\epsilon$, i.e.,  $\eta= n_{M}^{-\gamma}\epsilon$,  by assumption, we have $c_{\mathcal{H}_M}\geq c_{\cup_M \mathcal{H}_M}>0$ and $\lim_{M\rightarrow \infty}\frac{n_M\log n_M}{M}= 0$, we have
\begin{align*}
\Prob{\vertiii{\widehat{\intkernel}_{T,M,\mathcal{H}_M}-\widehat{\intkernel}_{T,\infty,\mathcal{H}_M}}^2 \gtrsim \epsilon} 
& \lesssim \exp\big( n_{M} \log\frac{n_M}{\epsilon}-c_{\cup_M\mathcal{H}_M} M\epsilon \big)\\ 
&=\exp \bigg(-c_{\cup_M\mathcal{H}_M}M\epsilon\big(1- \frac{n_{M}}{M\epsilon} \log\frac{n_M}{\epsilon}\big)\bigg)\\ 
&\lesssim \exp \bigg(-\frac{1}{2}c_{\cup_M\mathcal{H}_M}M\epsilon \bigg)
\end{align*} 
when $M$ is large enough. By the comparison Theorem, the series 
$$
\sum_{M=1}^{\infty}\Prob {\vertiii{\widehat{\intkernel}_{T,M,\mathcal{H}_M}-\widehat{\intkernel}_{T,\infty,\mathcal{H}_M}}^2 \gtrsim \epsilon}$$ 
converges. The claim is proved. 
\end{proof}

%%%%section 5
\section{Learning theory: discrete-time observations}\label{sec:theory_dis_traj}
In this section, we analyze the estimation error of the (regularized) MLE $\widehat \intkernel_{L,T,M,\hypspace}$, defined in \eqref{MLE} for finite dimensional linear space $\hypspace$ and for discrete-time observations. We show that it is of order $\sqrt\frac nM + \Delta t^{1/2}$ with high probability, where $n$ is the dimension of $\hypspace$ and $\Delta t $ is the observation gap. As a consequence, the MLE is consistent when $M\to \infty$ and $\Delta t\to 0$; and the MLE converges at an optimal rate as when $n$ is optimally chosen as in \eqref{eq:H_dim}. 

We shall first prove the main theorems on the error of the MLE in Section \ref{sec:errBdMLE}, postponing  technical details, including concentration inequalities and discretization error bounds, to later subsections. 

Recall that we denote $\bX_{[0,T]}$ the solution to system \eqref{eq:sod} with the true interaction kernel $\phi$, and denote $\{\bX^{(m)}_{t_0:t_L}\}_{m=1}^M$ independent trajectories observed at discrete times $t_l= l\Delta t$ with $\Delta t = T/L$.  Recall that when $\hypspace = \mathrm{span}\{\psi_p\}_{p=1}^n$, the MLE $\widehat \intkernel_{L,T,M,\hypspace}$ is given in \eqref{eq:phiEst}. 

Throughout this section, we assume
\begin{assumption}[Basis functions]\label{assump_basis} Assume that the basis functions $\{ \psi_p \}_{p=1}^n \subset C^1_b(\R^+,\R) $ satisfy the following conditions: 
\begin{itemize}
\item[(a)] $\{\psi_p(\cdot)(\cdot)\}_{p=1}^n$ are orthonormal in $L^2(\rhoT)$;
\item[(b)] $\max_k \|\psi_p\|_\infty \leq b_0$, $\max_k \|\psi'_k(\cdot)(\cdot)\|_\infty \leq b_1$; 
\item[(c)] there exists a constant $c_{\rhoT}$ such that   $n\leq c_{\rhoT} \min(b_0^2R,b_1R^{3/2})$.
\end{itemize}
\end{assumption} 
Item $(a)$ aims to make the normal equations matrix nonsingular, as discussed in  Proposition \ref{prop:A_infty}. In item $(b)$, the uniform bound for the derivatives aims to control the discretization error due to discrete-time observations; the uniform boundless of the functions will be used for concentration inequalities. Item $(c)$ states that the number of such orthonormal basis functions are bounded by the measure $\rho_T$ and the uniform bounds of the functions and their derivatives.  Item $(c)$ often follows from $(a)-(b)$, with an intuition from examples including polynomials and trigonometric polynomials, and smoothed piecewise polynomials, if $r^2\rhoT(dr)$ is equivalent to the Lebesgue measure on an interval $[R_0,R]\subset \R^+$. Such an interval is where pairwise distances explores with a noticeable probability (see for example, in Figure \ref{f:SODH1_kernel} and Figure \ref{t:LJH1_kernel}). It exists in general when the initial distribution spreads out the pairwise distances or when the system is ergodic, since the density of $\rhoT$ is smooth and nonnegative on $\R^+$.

\subsection{Error bounds for the MLE}\label{sec:errBdMLE}
We show first that the $L^2(\rho_T)$ error of the estimator $\widehat \intkernel_{L,T,M,\hypspace}$ in \eqref{eq:phiEst} converges as $M\to \infty$ and $\Delta t : = T/L\to 0$, with high probability. 
\begin{theorem}[Error bounds for the MLE] \label{thm:error_discreteTime}
Let the hypothesis space be $\hypspace=\mathrm{span} \{\psi_i\}_{i=1}^n  $, where the set of functions $\{\psi_i\}_{i=1}^n$, satisfying Assumption \rmref{assump_basis}, are orthonormal in $L^2(\rhoT)$ with respect to the norm $\vertiii{\cdot}$ defined in Eq.\eqref{eq:newnorm}. 

Suppose that the coercivity condition holds on $\hypspace$ with a constant $c_\hypspace>0$.  Then, with a probability at least $1-{(4n+2n^2) }\exp{\left(-\frac{\epsilon^2 }{8c_1^2} \right)}$, the error of the estimator $\widehat \intkernel_{L,T,M,\hypspace}$ in \eqref{eq:phiEst} satisfies
\begin{align}
\vertiii{\widehat \intkernel_{L,T,M,\hypspace} - \intkernel}  \leq  \vertiii{\widehat\intkernel_{T, \infty, \hypspace} - \intkernel}  + c_2\left(\sqrt\frac nM\epsilon+ c_3\Delta t ^{\frac{1}{2}}\right),
\end{align}
where $\widehat\intkernel_{T, \infty, \hypspace}  $ is the projection of the true kernel to $\hypspace$, $\Delta t = T/L \leq c_\hypspace/(2c_3)$, and the constants are 
\begin{equation} \label{eq:constants_c123}
\begin{aligned}
c_1 &= R b_0 (R b_0+2\sigma/\sqrt{T}),\quad \\
 c_2 & = 4c_\hypspace^{-1} (1+  \vertiii{\intkernel_{T, \infty, \hypspace}} ),\\
c_3 &=  dN(b_1+ b_0) Rb_0 (Rb_0  \Delta t ^{\frac{1}{2}} + \sqrt{d}\sigma)\sqrt{c_{\rhoT} \min(b_0^2R,b_1R^{3/2})}.\\
\end{aligned}
\end{equation}  
\end{theorem}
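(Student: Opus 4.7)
The plan is to decompose the total error along the diagram in Figure \ref{f:diagram} and invoke the already-referenced auxiliary propositions. By the triangle inequality,
\begin{equation*}
\vertiii{\widehat\intkernel_{L,T,M,\hypspace}-\intkernel}
\leq
\underbrace{\vertiii{\widehat\intkernel_{L,T,M,\hypspace}-\widehat\intkernel_{L,T,\infty,\hypspace}}}_{\text{SE-L}}
+\underbrace{\vertiii{\widehat\intkernel_{L,T,\infty,\hypspace}-\widehat\intkernel_{T,\infty,\hypspace}}}_{\text{DE}}
+\underbrace{\vertiii{\widehat\intkernel_{T,\infty,\hypspace}-\intkernel}}_{\text{AE}}.
\end{equation*}
The AE term appears verbatim in the final bound. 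Since $\{\psi_p(\cdot)\cdot\}$ is orthonormal in $L^2(\rho_T)$ (Assumption \ref{assump_basis}(a)), $\vertiii{\sum a(p)\psi_p} = \|a\|_2$, so the SE-L and DE terms reduce to controlling the coefficient vectors $\widehat a_{L,T,M,\hypspace}$, $\widehat a_{L,T,\infty,\hypspace}$, and $\widehat a_{T,\infty,\hypspace}$, which solve the normal equations with matrices $A_{M,L}, A_{\infty,L}, A_\infty$ and right-hand sides $b_{M,L}, b_{\infty,L}, b_\infty$ respectively (where the $\infty$ subscript denotes the $M \to \infty$ expectation).

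For SE-L I would use Proposition \ref{prop:CI}. The identity
\begin{equation*}
\widehat a_{L,T,M,\hypspace}-\widehat a_{L,T,\infty,\hypspace}
= A_{M,L}^{-1}\bigl[(b_{M,L}-b_{\infty,L})-(A_{M,L}-A_{\infty,L})\widehat a_{L,T,\infty,\hypspace}\bigr]
\end{equation*}
reduces the task to (i) a tail bound on the entrywise deviations of $A_{M,L}$ and $b_{M,L}$ from their means, and (ii) a lower bound on $\lambda_{\min}(A_{M,L})$. For (i), each entry of $A^{(m)}$ and $b^{(m)}$ is a bounded (or subgaussian, after Ito analysis using Assumption \ref{assump_basis}(b)) average; Hoeffding's (or a Bernstein-type) inequality gives a deviation of order $\sqrt{1/M}\epsilon$ per entry with tail $\exp(-\epsilon^2/(8c_1^2))$, where $c_1=Rb_0(Rb_0+2\sigma/\sqrt{T})$ arises from bounding $\langle \rhsF_{\psi_p}, \rhsF_{\psi_q}\rangle$ and $\langle \rhsF_{\psi_p}, d\bX\rangle$. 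Union bound over the $n^2$ entries of $A$ and $n$ entries of $b$ (with two-sided tails) produces the prefactor $4n+2n^2$ and an $\ell^2$ perturbation of order $\sqrt{n/M}\epsilon$. For (ii), Proposition \ref{prop:A_infty} yields $\lambda_{\min}(A_\infty)=c_\hypspace$; combined with the $\Delta t^{1/2}$ discretization bound of the normal matrix from Proposition \ref{prop:numError} and the hypothesis $\Delta t\leq c_\hypspace/(2c_3)$, a Weyl/Neumann-series argument gives $\lambda_{\min}(A_{M,L})\geq c_\hypspace/2$ on the same high-probability event. This yields the SE-L bound $\leq c_\hypspace^{-1}\sqrt{n/M}\epsilon(1+\vertiii{\widehat\intkernel_{L,T,\infty,\hypspace}})$, which after absorbing $\vertiii{\widehat\intkernel_{L,T,\infty,\hypspace}}\leq \vertiii{\widehat\intkernel_{T,\infty,\hypspace}}+O(\Delta t^{1/2})$ contributes to the $c_2\sqrt{n/M}\epsilon$ piece.

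For DE I would directly invoke Proposition \ref{prop:numError}, which quantifies $\|A_{\infty,L}-A_\infty\|$ and $\|b_{\infty,L}-b_\infty\|$ via an Euler--Maruyama analysis: replacing time integrals by Riemann sums contributes terms controlled by the $C^1$-norm of the basis (the $b_1$ in $c_3$), while replacing stochastic integrals contributes terms of Ito-isometry type ($\sqrt{d}\sigma$ in $c_3$), each of order $\Delta t^{1/2}$. Propagating through the inverse $A_\infty^{-1}$ (whose norm is $c_\hypspace^{-1}$) gives DE $\leq c_2 c_3 \Delta t^{1/2}/2$, again picking up the factor $(1+\vertiii{\widehat\intkernel_{T,\infty,\hypspace}})$ from the right-hand side perturbation. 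Summing SE-L, DE and AE yields the claim.

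The main obstacle is the interaction between the two sources of perturbation in bounding $\lambda_{\min}(A_{M,L})$ from below: one must first use the DE estimate (essentially deterministic, of order $\Delta t^{1/2}$, via Proposition \ref{prop:numError}) to see that $A_{\infty,L}$ retains $\lambda_{\min}\geq c_\hypspace/2$ whenever $\Delta t\leq c_\hypspace/(2c_3)$, and then use the sampling perturbation (random, of order $\sqrt{n/M}\epsilon$ with the $(4n+2n^2)$ union bound) to further ensure $\lambda_{\min}(A_{M,L})\geq c_\hypspace/4$ on a high-probability event, which is where Assumption \ref{assump_basis}(c) enters to keep $n$ compatible with the $\sqrt{n/M}$ scaling. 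The rest is careful bookkeeping of the constants $c_1, c_2, c_3$ declared in \eqref{eq:constants_c123}.
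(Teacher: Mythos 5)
Your proposal is correct and follows essentially the same route as the paper: the identical three-term decomposition into approximation, discretization, and sampling errors, reduction to coefficient vectors via orthonormality, the resolvent perturbation identity through $A_{\infty,L}^{-1}$ and $A_{M,L}^{-1}$, Propositions \ref{prop:numError} and \ref{prop:CI} for the $\Delta t^{1/2}$ and $\sqrt{n/M}\,\epsilon$ bounds, and the two-stage lower bound on $\lambda_{\min}(A_{M,L})$ (deterministic discretization perturbation of $A_\infty$ first, then the random sampling perturbation with the $4n+2n^2$ union bound). The only deviations are cosmetic constant-tracking (e.g.\ $c_\hypspace/4$ versus the paper's $c_\hypspace/2$ threshold).
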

 \begin{remark}[The discretization error may dominate the statistical error] \label{rmk_flatLearningCurve} When the observation gap $\Delta t=0$, we recover the min-max learning rate $M^{-\frac{s}{2s+1}}$ proved in the previous section, if we choose the optimal dimension $n=C ( M/\log{M})^{1/(2s+1)}$ for the hypothesis space. However, when $\Delta t>0$, once $M^{-\frac{s}{2s+1}}(\log{M})^{-\frac{1}{2(2s+1)}} \lesssim \Delta t ^{\frac{1}{2}}$, the discretization error will dominate the error of the estimator, preventing us from observing the min-max learning rate. This phenomenon is well-illustrated by the left plots in Figure \ref{t:ODH1_Convergence_Plot} and \ref{t:LJH1_Convergence_Plot} in our numerical experiments. 
 \end{remark}
 \begin{remark} We assumed $C^1_b$ regularity for the basis functions $\{\psi_p\}$ for  the above numerical error analysis, stronger than that of piecewise polynomials (which may be discontinuous) used in the numerical tests. Such a difference between the regularity requirements stems from the numerical representation, and we can view the piecewise polynomials as numerical approximations of regular functions. This view is supported by the numerical experiments: the estimator has only small jumps at the discontinuities in the high probability region.   
 \end{remark}
\begin{remark}
A smaller coercivity constant $c_\hypspace$ corresponds to a worse conditioner problem (Proposition \ref{prop:A_infty}), and so the condition $L\gtrsim 1/c_\hypspace$ that requires $L$ to be larger for small $c_\hypspace$ makes sense.
\end{remark} 
 
The error of the MLE $\widehat \intkernel_{L,T,M,\hypspace} $ consists of three parts:  approximation error, discretization error and sampling error: 
\begin{align}\label{eq:error_decomposition}
& \, \vertiii{\widehat \intkernel_{L,T,M,\hypspace} - \intkernel} \notag \\
 \leq &  \underbrace{ \vertiii{ \widehat\intkernel_{T, \infty, \hypspace}  - \intkernel} }_{\text{ Approximation Error} }  + \underbrace{ \vertiii {  \widehat\intkernel_{L,T,\infty,\mathcal{H}} - \widehat\intkernel_{T,\infty,\hypspace} } }_{\text{Discretization Error}} 
  + \underbrace{ \vertiii {\widehat \intkernel_{L,T,M,\hypspace} - \widehat\intkernel_{L,T,\infty,\hypspace} } }_{\text{Sampling Error}},
 \end{align}
where $  \widehat\intkernel_{L,T,\infty,\mathcal{H}} $ is the infinite-data estimator. We shall study the discretization error and the sampling error by analyzing the differences between their coefficient vectors. 

All these coefficients are solutions to the corresponding normal equations (e.g. Eq.\eqref{e:ALM}). To facilitate the study of these normal matrices and vectors, we introduce the following notions.
For any $f, g\in C^1_b(\R^{Nd},\R^{Nd})$, we define the following functionals of the observation paths:
\begin{equation}\label{eq:xi_eta}
\begin{aligned}
\xi(f, \bX_{t_0:t_L}) &=  \frac{1}{TN} \sum_{l=1}^{L-1} \innerp{f(\bX_{t_l})}{ \bX_{t_{l+1} } -\bX_{t_{l} }}, \\
 \eta(f, g,X_{t_0:t_L}) &=  \frac{1}{LN} \sum_{l=1}^{L-1} \innerp{f(\bX_{t_l})}{g(\bX_{t_l}) }, \\
 \xi(f, \bX_{[0,T]}) &=  \frac{1}{TN} \int_0^T \innerp{f(\bX_t)}{d\bX_t},  \\ 
  \eta(f, g, \bX_{[0,T]}) &=  \frac{1}{TN} \int_0^T \innerp{f(\bX_t)}{g(\bX_t)}dt. 
\end{aligned}
\end{equation}
Correspondingly, we define the empirical functionals
\begin{equation}\label{eq:xi_etaML}
\begin{aligned}
\xi_{M,L}(f)  &=  \frac{1}{M}\sum_{m=0}^M\xi(f, \bX_{t_0:t_L}^{(m)}),   & \eta_{M,L}(f, g)  &=  \frac{1}{M}\sum_{m=0}^M\eta(f,g, \bX_{t_0:t_L}^{(m)}), \\
\xi_{M,\infty}(f)  &=  \frac{1}{M}\sum_{m=0}^M\xi(f, \bX_{[0,T]}^{(m)}),   & \eta_{M,\infty}(f, g) &=  \frac{1}{M}\sum_{m=0}^M\eta(f,g, \bX_{[0,T]}^{(m)}),  
\end{aligned}
\end{equation}
Using the notation of empirical functional introduced in \eqref{eq:xi_eta}-\eqref{eq:xi_etaML}, we consider the following normal matrixes and vectors: 
\begin{equation}\label{eq:Ab_ML}
\begin{aligned}
b_{M,L}(k) & = \xi_{M,L}(f_{\psi_p})  ,   & A_{M,L}(k,k') &=  \eta_{M,L}(f_{\psi_p}, f_{\psi_{k'} }) , \\
b_{\infty,L}(k) & = \E[ \xi(f_{\psi_p}, \bX_{t_0:t_L})] ,  & A_{\infty,L}(k,k') & = \E[\eta(f_{\psi_p}, f_{\psi_{k'} }, \bX_{t_0:t_L})],  \\
b_{\infty}(k) & = \E[ \xi(f_{\psi_p}, \bX_{[0,T]})] ,  & A_{\infty}(k,k') & = \E[\eta(f_{\psi_p}, f_{\psi_{k'} }, \bX_{[0,T]})].
\end{aligned}
\end{equation}
It is clear that  (here, to ease the notation, we denote the coefficient $ \widehat a_{L,T,M,\hypspace}$ in Figure \ref{f:diagram} as $a_{M,L}$, and similarly for others)
\begin{equation}\label{eq:Ainvb's}
\begin{aligned}
\widehat\intkernel_{L,T,M,\hypspace} &=\sum_{i=1}^n a_{M,L}(i)\psi_i, &\text{ with } a_{M,L} & = A_{M,L}^{-1} b_{M,L} ,  \\ 
 \widehat\intkernel_{L,T,\infty,\mathcal{H}} &=\sum_{i=1}^n a_{\infty,L}(i)\psi_i, &\text{ with } a_{\infty,L} & = A_{\infty,L}^{-1} b_{\infty,L} ,\\  
 \widehat\intkernel_{T,\infty,\hypspace} &=\sum_{i=1}^n a_\infty(i)\psi_i,                      &\text{ with } a_{\infty} &= A_{\infty}^{-1}b_{\infty}. 
\end{aligned}
\end{equation}
Here the matrix $A_\infty$ is invertible due the coercivity condition: its smallest eigenvalue is the coercivity constant $c_\hypspace$ (see  Proposition \ref{prop:A_infty}). The matrix $A_{\infty,L}$ is invertible when $L= T/(\Delta t)$ is large, with its smallest eigenvalue bounded below by $c_\hypspace- c_3 \Delta t^{1/2}$, see Corollary \ref{cor_AmatL}.  Furthermore,  Corollary \ref{corollary:eignvalue} shows that, with probability at $1-\delta$, the matrix $A_{M,L}$ is invertible with its smallest eigenvalue bounded blow by $c_\hypspace - \left(\sqrt\frac nM\epsilon+ c_3\Delta t ^{\frac{1}{2}}\right)$.

\begin{proof}[of Theorem \rmref{thm:error_discreteTime} ] 
By Eq.\eqref{eq:error_decomposition}, it suffices to prove upper bounds for the discretization error and the sampling error separately:
\begin{align*}\label{def:err_Kernel_est}
&\text{discretization error:} &  \vertiii {  \widehat\intkernel_{L,T,\infty,\mathcal{H}} - \widehat\intkernel_{T,\infty,\hypspace} } & \leq  c_2c_3\Delta t^{1/2} , \\
& \text{sampling error:}  & \vertiii {\widehat \intkernel_{L,T,M,\hypspace} - \widehat\intkernel_{L,T,\infty,\hypspace}   } &\leq  c_2\sqrt\frac nM\epsilon, 
 \end{align*}
 where the second inequality holds  with probability at least $1-\delta$. 
 
For the discretization error, since $\{\psi_i(\cdot)(\cdot)\}$ are orthonormal in $L^2(\rhoT)$, we have, by Eq.\eqref{eq:Ainvb's}: 
 \begin{align*}  
\vertiii{  \widehat\intkernel_{L,T,\infty,\mathcal{H}} - \widehat\intkernel_{T,\infty,\hypspace} }^2 & =   \vertiii{ \sum_{i=1}^n [ a_{\infty,L}(i)  - a_\infty(i ) ] \psi_i }^2   \\
 &  =  \norm{a_{\infty,L} - a_\infty}^2 = \|A_{\infty,L}^{-1} b_{\infty,L} - A_\infty^{-1}b_{\infty}\|^2. 
  \end{align*}
Using the formula $A_{\infty,L}^{-1} -A_{\infty}^{-1}  = A_{\infty,L}^{-1}(A_\infty - A_{\infty,L})A_\infty^{-1} $ (see e.g. \cite[Appendix B9]{higham2008functions}), we have 
\begin{align*}
& \norm{A_{\infty,L}^{-1}b_{\infty,L} - A_\infty^{-1} b_\infty} = \norm{A_{\infty,L}^{-1}(b_{\infty,L}-b_\infty) + (A_{\infty,L}^{-1} -A_\infty^{-1}) b_\infty}  \nonumber \\
 \leq & \norm{A_{\infty,L}^{-1}} \left( \norm{b_{\infty,L}-b_\infty} + \norm{A_\infty - A_{\infty,L}} \norm{A_\infty^{-1} b_\infty} \right). 
\end{align*}
Note that (i) $\norm{A_{\infty,L}^{-1}}\leq 2 c_\hypspace^{-1}$, since $c_3\Delta t ^{1/2} < \frac{1}{2}c_\hypspace$; (ii) by Proposition \ref{prop:numError} in combination of Assumption \ref{assump_basis},  
\begin{align*}
\norm{b_{\infty,L}-b_\infty}  \leq c_3  \Delta t ^{1/2}, \quad  \norm{A_{\infty,L} - A_\infty } \leq c_3  \Delta t ^{1/2};
\end{align*}
and (iii) $\norm{A_\infty^{-1} b_\infty} = \vertiii{ \widehat\intkernel_{T,\infty,\hypspace}}$.   Then, 
\begin{equation}\label{eq:numErrorAb}
 \norm{A_{\infty,L}^{-1}b_{\infty,L} - A_\infty^{-1} b_\infty}  \leq 2c_\hypspace^{-1}(1+\vertiii{ \widehat\intkernel_{T,\infty,\hypspace}}) c_3 \Delta t ^{1/2},
\end{equation}
and the inequality for the discretization error follows.

Similarly, for the sampling error, we have 
\begin{align*}
  & \vertiii{ \widehat \intkernel_{L,T,M,\hypspace} - \widehat\intkernel_{L,T,\infty,\hypspace}  }            =  \norm{a_{M,L} - a_{\infty,L} }  
    =  \|A_{M,L}^{-1}b_{M,L} - A_{\infty,L}^{-1} b_{\infty,L} \|  \\
    = &\norm{A_{M,L}^{-1}(b_{M,L}-b_{\infty,L}) + (A_{M,L}^{-1} -A_{\infty,L}^{-1}) b_{\infty,L} }  \\
 \leq & \norm{A_{M,L}^{-1}} \left( \norm{b_{M,L}-b_{\infty,L}} + \norm{A_{M,L}^{-1} -A_{\infty,L}^{-1}} \norm{A_{\infty,L}^{-1} b_{\infty,L}} \right). 
\end{align*}
Note that (i) $\norm{A_{M,L}^{-1}}\leq 2 c_\hypspace^{-1}$ when $M$ and $L$ are large enough such that  $\sqrt\frac nM\epsilon+ c_3\Delta t ^{\frac{1}{2}} < \frac{1}{2}c_\hypspace $; (ii) we have, by Proposition \ref{prop:CI},  that
\begin{align*}
\norm{b_{M,L}-b_{\infty,L}}     \leq \sqrt\frac nM\epsilon,  \quad  \norm{ A_{M,L} - A_{\infty, L} }  \leq \sqrt\frac nM\epsilon 
\end{align*}
hold with probability at least $1-\delta$; (iii)  since $ c_3\Delta t ^{\frac{1}{2}} < \frac{1}{2}c_\hypspace $ and $\norm{A_{\infty}^{-1} b_{\infty}} = \vertiii{ \widehat\intkernel_{T,\infty,\hypspace}}$, we have
\[ 
\norm{A_{\infty,L}^{-1} b_{\infty,L}} \leq  2 (1+\vertiii{ \widehat\intkernel_{T,\infty,\hypspace}}). 
\]
Then,  the inequality for the sampling error follows.
\end{proof}

\subsection{Concentration and discretization error of empirical functionals}
We introduce concentration inequalities for the above empirical functionals on the path space of diffusion processes and  a bound on the discretization error of the estimator due to discrete-time approximations. 
 Our first lemma studies concentration of the discrete-time empirical functionals $\xi_{M,L}$ and $\eta_{M,L}$. 

\begin{lemma}[Concentration of empirical functionals] \label{lemma:CI}
Let $\{\bX^{(m)}_{t_0:t_L}\}_{m=1}^M$ be discrete-time observations, with $t_l= l\Delta t$ and $L= T/\Delta t$, of the system \eqref{eq:sod} with $\phi$. Then for any $f,g\in C_b(\R^{dN}, \R^{dN} )$, the error functionals defined in \eqref{eq:xi_etaML} satisfy the concentration inequalities:  
\begin{equation} \label{ConcIneq_xi_eta} 
\begin{aligned}
\Prob{|\xi_{M,L}(f) - \E[\xi_{M,L}(f)]|> \epsilon} &\leq 4 e^{-\frac{M\epsilon^2 }{8C_1^2}} \\
\Prob{|\eta_{M,L}(f,g) - \E[\eta_{M,L}(f,g)]|> \epsilon} &\leq 2 e^{-\frac{M\epsilon^2 }{2C_2^2}}, 
\end{aligned}
\end{equation}
for any $\epsilon >0$, where $C_1 = \frac{1}{N}\|f\|_\infty \max(\frac{2\sigma \sqrt{N}}{\sqrt{T}}, \, \|f_\phi\|_\infty)$, and $C_2 =\frac{1}{N} \|f\|_\infty \|g\|_\infty$. 
 Furthermore, 
\begin{align}\label{ineq_xi7eta}
 \Prob{|\xi_{M,L}(f) - \E[\xi_{M,L}(f)]| < \epsilon,\, |\eta_{M,L}(f,g) - \E[\eta_{M,L}(f,g)]|< \epsilon} &\geq 1- 8 e^{-\frac{M\epsilon^2 }{8C^2}}, 
 \end{align}
 where $C=  \frac{1}{N}\|f\|_\infty \max(\frac{2\sigma }{\sqrt{T/N}},  \|f_\phi\|_\infty,\, \|g\|_\infty)$. 
\end{lemma}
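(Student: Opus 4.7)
The plan is to treat the two statements separately and then combine them for the joint bound at the end.

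For the inequality on $\eta_{M,L}(f,g)$, the variables $\eta(f,g,\bX^{(m)}_{t_0:t_L})$ are i.i.d.\ across $m$ (because the trajectories are independent and identically distributed), and each is uniformly bounded by
\[
\bigl|\eta(f,g,\bX_{t_0:t_L})\bigr| \le \frac{1}{LN}\sum_{l=0}^{L-1}\|f(\bX_{t_l})\|\,\|g(\bX_{t_l})\| \le \frac{\|f\|_\infty\|g\|_\infty}{N} = C_2.
\]
So I would apply the classical Hoeffding inequality for bounded i.i.d.\ random variables to the centered sum $\frac{1}{M}\sum_m(\eta - \E\eta)$, which yields the stated bound $2e^{-M\epsilon^2/(2C_2^2)}$.

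The harder step is $\xi_{M,L}(f)$, because each increment $\bX^{(m)}_{t_{l+1}}-\bX^{(m)}_{t_l}$ is unbounded due to the Brownian part of \eqref{eq:sod}. The key idea is to split $\xi$ using the SDE: write
\[
\xi(f, \bX_{t_0:t_L}) = \underbrace{\frac{1}{TN}\sum_{l=0}^{L-1}\Bigl\langle f(\bX_{t_l}),\int_{t_l}^{t_{l+1}} \rhsfo_\phi(\bX_s)\,ds\Bigr\rangle}_{=:A(\bX_{[0,T]})} + \underbrace{\frac{\sigma}{TN}\sum_{l=0}^{L-1}\langle f(\bX_{t_l}), \bB_{t_{l+1}}-\bB_{t_l}\rangle}_{=:B(\bX_{[0,T]},\bB)}.
\]
The drift part $A$ satisfies $|A| \le \|f\|_\infty \|\rhsfo_\phi\|_\infty/N$, a deterministic bound, so i.i.d.\ Hoeffding applied to the $M$ trajectory realizations of $A$ gives a subGaussian tail of the form $2e^{-M\epsilon^2/(8 C_1^2)}$ (with $\epsilon/2$ absorbed into $C_1$). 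The martingale part $B$ is, conditionally on the $\sigma$-algebra generated by $\{\bX_{t_l}\}_l$, a centered Gaussian with variance at most
\[
\frac{\sigma^2}{T^2 N^2}\sum_{l=0}^{L-1}\|f(\bX_{t_l})\|^2 \Delta t \le \frac{\sigma^2\|f\|_\infty^2}{T N^2},
\]
so $B$ is unconditionally $\bigl(\sigma\|f\|_\infty/(N\sqrt{T})\bigr)$-subGaussian with mean zero. Since the $M$ trajectories are independent, the average $\frac1M\sum_m B^{(m)}$ is subGaussian with parameter divided by $\sqrt{M}$, giving a tail bound of the same exponential form.

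Combining the two tail estimates for $A$ and $B$ by a union bound (splitting the deviation $\epsilon$ as $\epsilon/2+\epsilon/2$ and using the definition $C_1 = \frac{\|f\|_\infty}{N}\max(2\sigma\sqrt{N}/\sqrt{T}, \|\rhsfo_\phi\|_\infty)$, chosen so that both pieces are dominated by $C_1$ after the $\epsilon/2$ split) yields the factor $4$ in $4e^{-M\epsilon^2/(8C_1^2)}$. The joint inequality \eqref{ineq_xi7eta} then follows from one more union bound on the two events, using a unified constant $C$ that dominates both $C_1$ and $C_2$, producing the $8e^{-M\epsilon^2/(8C^2)}$ stated. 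The only nontrivial point in the whole argument is the conditional Gaussian analysis for $B$, so the main obstacle is checking the subGaussian parameter carefully and matching constants against the statement; everything else reduces to Hoeffding-type concentration applied to i.i.d.\ trajectories.
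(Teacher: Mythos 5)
Your overall strategy coincides with the paper's: Hoeffding for the bounded $\eta$-part, and the same decomposition of $\xi$ into a bounded drift term plus a martingale term (the paper writes your sum $\sum_l\langle f(\bX_{t_l}),\cdot\rangle$ as a stochastic integral against the piecewise-constant predictable process $f^L(s)=\sum_l f(\bX_{t_l})\mathbf{1}_{[t_l,t_{l+1}]}(s)$, but it is the same object), followed by an $\epsilon/2$ split and union bounds. The one step whose justification would fail as written is your claim that $B$ is, \emph{conditionally on the $\sigma$-algebra generated by the whole discrete skeleton $\{\bX_{t_l}\}_l$}, a centered Gaussian. It is not: since $\bX_{t_{l+1}}-\bX_{t_l}=\int_{t_l}^{t_{l+1}}\rhsfo_\phi(\bX_s)\,ds+\sigma(\bB_{t_{l+1}}-\bB_{t_l})$, conditioning on the future observation $\bX_{t_{l+1}}$ essentially pins down the increment $\bB_{t_{l+1}}-\bB_{t_l}$, so given that $\sigma$-algebra $B$ is nearly determined and certainly not centered. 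The correct argument conditions sequentially on the filtration $\mathcal{F}_{t_l}$ of the past only: $\E[\exp(\lambda\langle f(\bX_{t_l}),\bB_{t_{l+1}}-\bB_{t_l}\rangle)\mid\mathcal{F}_{t_l}]=\exp(\tfrac{\lambda^2}{2}\|f(\bX_{t_l})\|^2\Delta t)$, and then iterates via the tower property; this yields exactly the subGaussian parameter $\sigma\|f\|_\infty/(N\sqrt{T})$ you state. The paper accomplishes the same thing in continuous-time language by applying the Novikov/exponential-martingale theorem to $\int_0^T\langle f^L(s),\sigma\,d\bB_s\rangle$ and then a Chernoff bound. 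With that repair your argument matches the paper's proof, constants included.
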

\begin{proof}
Note that $\verti{\eta(f, g,\bX_{[t_0:t_L]})} \leq \|f\|_\infty\|g\|_\infty$. Then, the exponential inequality for $\eta_{ML}$ follows from the Hoeffding inequality, which states that, for i.i.d.~random variables $\{Z_i\}$ bounded above by $K$, one has $$\Prob{\verti{\frac{1}{M}\sum_{m=1}^M (Z_i-\E Z_i) } > \epsilon} \leq 2 \exp{(-\frac{M\epsilon^2}{2K^2}) }.$$

To study $\xi_{ML}$,  we decompose $\xi(f,\bX_{[t_0:t_L]})$ into two parts, a bounded part and a martingale part:
\begin{align*}
\xi(f,\bX_{[t_0:t_L]})  = & \frac{1}{TN} \sum_{l=0}^{L-1} \innerp{f(\bX_{t_l}) \mathbf{1}_{[t_l,t_{l+1}]}(s)}{\bX_{t_{l+1}} - \bX_{t_l}} \\ 
= & \frac{1}{TN} \int_0^T\innerp{f^L(s)}{f_\phi(\bX_s)}ds  +  \frac{1}{TN} \int_0^T\innerp{f^L(s)}{\sigma d\bB_s} =: Z_T + Y_T,
\end{align*}
where we denote $f^L(s) :=\sum_{l=0}^{L-1}  f(\bX_{t_l}) \mathbf{1}_{[t_l,t_{l+1}]}(s) $. We call $Z_T$ a bounded part because 
\[ \verti{Z_T}=\verti{ \frac{1}{TN} \int_0^T\innerp{f^L(s)}{f_\phi(\bX_s)}ds}\leq \frac{1}{N} \|f\|_\infty  \|f\|_\infty.\] 
We call $Y_T$ a martingale part since $TNY_T= \int_0^T\innerp{f^L(s)}{\sigma d\bB_s}$ is a martingale. Correspondingly, we can write 
\[
\xi_{ML} = \frac{1}{M}\sum_{m=1} Z_T^{(m)} + Y_T^{(m)}.
\]
Then, denoting $K_1:=\frac{1}{N} \|f\|_\infty  \|f_\phi\|_\infty$ and $K_2= 2\sigma \|f\|_{\infty}$, and noticing that $C_1= K_1 + K_2 /\sqrt{TN}$, 
we can conclude the first concentration inequality in \eqref{ConcIneq_xi_eta} from 
\begin{align*}
  & \Prob{|\xi_{M,L}(f) - \E[\xi_{M,L}(f)]|> \epsilon} \\
\leq \, &  \Prob{\verti{\frac{1}{M}\sum_{m=1} Z_T^{(m) }- \E Z_T }\geq \frac{\epsilon}{2} } + \Prob{ \verti{\frac{1}{M}\sum_{m=1} Y_T^{(m)} }\geq \frac{\epsilon}{2}}\\
 \leq \,& 2 e^{-\frac{M\epsilon^2}{2K_1^2}} + e^{-\frac{TNM\epsilon^2}{8K_2^2}} ,
\end{align*}
where the first exponential bound follows directly from Hoeffding inequality applied to $\{Z_T^{(m)}\}$, and the second exponential bound $\Prob{ \verti{\frac{1}{M}\sum_{m=1} Y_T^{(m)} }\geq \frac{\epsilon}{2}} \leq e^{-\frac{M\epsilon^2}{8K_2^2}}$ is proved as follows. 

Note that $\E Y_T=0$ and $TNY_T= \int_0^T\innerp{f^L(s)}{\sigma d\bB_s}$ is a martingale 
satisfying  $\E [e^{\sigma^2\int_0^T\verti{ f^L(s)}^2d_s}] < \infty$ because $\verti{ f^L(s)}^2\leq \|f\|_\infty$. 
 By the Novikov theorem, the process $( e^{\alpha TNY_T - \frac{\alpha^2}{2} \sigma^2 \int_0^T \verti{ f^L(s)}^2d_s }, T\geq 0)$ is a martingale for any $\alpha\in \R$ (see e.g. \cite[Corollary 5.13]{karatzas1998brownian}). Therefore, with $\alpha = \frac{\lambda}{MTN}$, we have
\[
\E\left[e^{\frac{\lambda}{M}Y_T} \right] = \E\left[e^{ \sigma^2\frac{\lambda^2}{(MTN)^2} \int_0^T \verti{ f^L(s)}^2d_s} 
 \right] \leq e^{\sigma^2\frac{\lambda^2}{M^2TN} \|f\|_\infty^2}
\]
for any $\lambda>0$. As a consequence, we have 
\[
\Prob{ \verti{\frac{1}{M}\sum_{m=1} Y_T^{(m)} }\geq \frac{\epsilon}{2} } 
\leq  \inf_{\lambda>0} e^{-\frac{\lambda \epsilon}{2}} \E\left[ e^{\frac{\lambda}{M}Y_T} \right] \leq  \inf_{\lambda>0}  e^{- \frac{\lambda \epsilon}{2} + \lambda^2\frac{\sigma^2\|f\|_\infty}{TNM}} \leq e^{-\frac{TNM\epsilon^2}{8K_2^2}}. 
\]

Lastly, Eq.\eqref{ineq_xi7eta} follows directly from Eq.\eqref{ConcIneq_xi_eta}. 
\end{proof}
We remark that here we focus on the case $M\to \infty$ with finite time $T$. If the system is ergodic, one may extends the concentration inequalities to the case when $T\to \infty$.

The next lemma shows that the discretization error of the empirical functionals, as discrete-time approximation of the integrals, is at order $\Delta t^ \frac{1}{2}$. 
\begin{lemma}[Discretization error of empirical functionals] \label{lemma:xi_numError}
Let $f,g\in C^1_b(\R^{dN}, \R^{dN}) $.  Let $\bX_{t_0:t_L}$ be a discrete-time trajectory, with $t_l=l\Delta t$ and $L= T/\Delta t$, of the system \eqref{eq:sod} with $\phi$. Then, the error functionals defined in \eqref{eq:xi_eta} satisfy
\begin{equation} \label{eq:xi_etaBD}
\begin{aligned}
\verti{  \E[ \xi(f, \bX_{t_0:t_L})]  - \E[\xi(f, \bX_{[0,T]})]          }  & \leq  C_1  \Delta t^\frac{1}{2}; \\
\verti{ \E[\eta(f,g, \bX_{t_0:t_L})] - \E[\eta(f,g, \bX_{[0,T]})]   }& \leq  C_2  \Delta t^\frac{1}{2},
\end{aligned}
\end{equation}
where the constants are 
\begin{align*}
C_1&= \|\nabla f\|_\infty  \|f_\phi\|_\infty  \left( \|f_\phi\|_\infty \Delta t^{\frac{1}{2}}/N  + \sqrt{d/N}\sigma\right), \\
C_2&=( \|\nabla f\|_\infty \|g\|_\infty + \|\nabla g\|_\infty \|f\|_\infty )  \left( \|f_\phi\|_\infty \Delta t^{\frac{1}{2}}/N  + \sqrt{d/N}\sigma\right). 
\end{align*}
\end{lemma}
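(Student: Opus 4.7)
\textbf{Proof plan for Lemma \ref{lemma:xi_numError}.} The plan is to rewrite each difference as a sum of integrals over small intervals $[t_l,t_{l+1}]$, use the $C^1_b$-regularity of $f,g$ together with a one-step moment bound $\E\|\bX_t-\bX_{t_l}\| \lesssim \Delta t^{1/2}$, and finally kill the stochastic-integral contribution in $\xi$ using the martingale property. For the $\eta$-bound, I would begin by writing, since $L\Delta t=T$,
\begin{align*}
\E[\eta(f,g,\bX_{[0,T]})] - \E[\eta(f,g,\bX_{t_0:t_L})]
 = \frac{1}{TN}\sum_{l=0}^{L-1}\int_{t_l}^{t_{l+1}} \E\bigl[\langle f(\bX_t),g(\bX_t)\rangle - \langle f(\bX_{t_l}),g(\bX_{t_l})\rangle\bigr]\,dt,
\end{align*}
(absorbing any boundary term, which is harmless of the same order) and then bound the integrand pointwise by
$(\|\nabla f\|_\infty\|g\|_\infty + \|\nabla g\|_\infty\|f\|_\infty)\,\E\|\bX_t-\bX_{t_l}\|$.

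For the $\xi$-bound, I would set $f^L(t) := \sum_{l=0}^{L-1} f(\bX_{t_l})\mathbf{1}_{[t_l,t_{l+1}]}(t)$ and substitute $d\bX_t = f_\phi(\bX_t)\,dt + \sigma\, d\bB_t$ to get
\begin{align*}
\xi(f,\bX_{[0,T]}) - \xi(f,\bX_{t_0:t_L})
 = \frac{1}{TN}\!\int_0^T \langle f(\bX_t)-f^L(t), f_\phi(\bX_t)\rangle\,dt
 + \frac{\sigma}{TN}\!\int_0^T \langle f(\bX_t)-f^L(t), d\bB_t\rangle.
\end{align*}
The second term is a martingale with zero expectation (the integrand is adapted and bounded, so integrability is immediate), hence after taking $\E$ only the first (Lebesgue) integral survives. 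I would then use $\|f(\bX_t)-f(\bX_{t_l})\|\le \|\nabla f\|_\infty\|\bX_t-\bX_{t_l}\|$ and $\|f_\phi(\bX_t)\|\le \sqrt{N}\|f_\phi\|_\infty/\ldots$, i.e., the supremum bound on $f_\phi$, to reduce to the same one-step moment as above.

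The remaining ingredient is the one-step SDE estimate. From $\bX_t - \bX_{t_l} = \int_{t_l}^t f_\phi(\bX_s)\,ds + \sigma(\bB_t-\bB_{t_l})$ and the triangle inequality,
\begin{align*}
\E\|\bX_t-\bX_{t_l}\| \le \|f_\phi\|_\infty(t-t_l) + \sigma\,\E\|\bB_t-\bB_{t_l}\| \le \|f_\phi\|_\infty \Delta t + \sigma\sqrt{dN\Delta t},
\end{align*}
where the Brownian moment follows from Jensen's inequality applied to $\E\|\bB_t-\bB_{t_l}\|^2=dN(t-t_l)$. Dividing by $N$ where appropriate (to match the $1/N$ normalization in the functionals and yield the $\sqrt{d/N}\sigma$ factor in the stated constants) and integrating in $t$ over each subinterval produces a bound of order $\Delta t^{1/2}$ in each summand and, after summing and dividing by $T$, the claimed $C_i\Delta t^{1/2}$ bound. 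I do not anticipate any serious obstacle here: the only point that deserves care is justifying that the Itô-integral contribution in the $\xi$-identity has zero expectation (Novikov/boundedness of the integrand suffices) and correctly tracking the $N$-factors so as to recover the stated constants $C_1,C_2$.
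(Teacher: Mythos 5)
Your proposal is correct and follows essentially the same route as the paper's proof: decompose $d\bX_t$ into drift plus martingale, discard the martingale part in expectation, bound the remaining Lebesgue integrals via the Lipschitz continuity of $f,g$ and the one-step moment estimate $\E\|\bX_t-\bX_{t_l}\|\le \|f_\phi\|_\infty\Delta t+\sigma\sqrt{dN\Delta t}$, then sum over the subintervals. The only cosmetic differences are your global piecewise-constant notation $f^L$ versus the paper's interval-by-interval treatment, and your invocation of Novikov where simple boundedness of the integrand already suffices.
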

\begin{proof}
Note that since $\bX_{[0,T]}$ is a solution to the system \eqref{eq:sod}, we have for each $l$,  
\begin{align*}
 \E \int_{t_l}^{t_{l+1}}\innerp{f(\bX_{t_l}) - f(\bX_s)} { d\bX_s}
=&\, \E \int_{t_l}^{t_{l+1}}\innerp{f(\bX_{t_l}) - f(\bX_s)} { f_\phi(\bX_s)}ds \\
\leq &\,\|\nabla f\|_\infty \|f_\phi\|_\infty \E \int_{t_l}^{t_{l+1}} |\bX_{t_l} - \bX_s|ds \\
\leq &\, \|\nabla f\|_\infty \|f_\phi\|_\infty  \left( \|f_\phi\|_\infty \Delta t ^2 + \sqrt{dN}\sigma \Delta t^{3/2} \right), 
\end{align*}
where in the first inequality we have applied the mean value theorem to bound $f(\bX_{t_l}) - f(\bX_s)$:
\[
|f(\bX_{t_l}) - f(\bX_s)|\leq  \| \nabla f\|_\infty  |\bX_{t_l} - \bX_s|, 
\]
and in the second inequality, we used the fact that
\begin{align*}
  \E |\bX_{t_l} - \bX_s| & = \E \left |\int_{t_l}^s f_\phi(\bX_r))dr +\sigma (\bB_s - \bB_{t_l}) \right | \\
  & \leq  \|f_\phi\|_\infty (s-t_{l}) +  \sqrt{dN}\sigma   (s-t_{l})^{1/2}. 
\end{align*}
Thus, we obtain the bound in \eqref{eq:xi_etaBD} by a summation over $l$: 
\begin{align*}
 \E[\xi(f, \bX_{t_0:t_L}) - \xi(f, \bX_{[0,T]}) ] &= \frac{1}{TN} \sum_{l=1}^{L-1} \E\int_{t_l}^{t_{l+1}}\innerp{f(\bX_{t_l}) - f(\bX_s)} { d\bX_s}, \\
 &\leq \|\nabla f\|_\infty  \|f_\phi\|_\infty  \left( \|f_\phi\|_\infty \Delta t /N + \sqrt{d/N}\sigma \Delta t^{1/2} \right). 
\end{align*}

Similarly, we have 
\[
\verti {\innerp{f(\bX_{t_l})}{ g(\bX_{t_l})} - \innerp{f(\bX_s)}{g(\bX_s) }  } \leq (\|\nabla f\|_\infty \|g\|_\infty + \|\nabla g\|_\infty \|f\|_\infty) \verti{\bX_s-\bX_{t_l}},
\]
and the bound for $\eta$ follows from the fact that 
\begin{align*}
 \E[\eta(f,g, \bX_{t_0:t_L}) &- \eta(f, g, \bX_{[0,T]}) ] \\ 
 =\, &\frac{1}{TN} \sum_{l=1}^{L-1} \E\int_{t_l}^{t_{l+1}}\verti {\innerp{f(\bX_{t_l})}{ g(\bX_{t_l})} - \innerp{f(\bX_s)}{g(\bX_s) }  } ds. 
\end{align*} 
\end{proof}

\subsection{Error bounds for the normal matrix and vector}

\begin{proposition}[Discretization error]\label{prop:numError}
For the normal matrix $A_{\infty,L}$ and vector $b_{\infty,L}$ defined in \eqref{eq:Ab_ML} with $\{ \psi_p \}_{p=1}^n$ satisfying Assumption {\rm \ref{assump_basis}},
we have 
\begin{align*}
\|b_{\infty,L} - b_{\infty}\|  \leq \sqrt{n} C  \Delta t ^{\frac{1}{2}}\,,\quad
\|A_{\infty,L} - A_{\infty}\| \leq \sqrt{n} C \Delta t ^{\frac{1}{2}},
\end{align*}
where the constant $C$ is 
$ C = dN(b_1+ b_0) Rb_0 (Rb_0  \Delta t ^{\frac{1}{2}} + \sqrt{d}\sigma) $. 
\end{proposition}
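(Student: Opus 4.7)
The plan is to reduce both bounds to Lemma \ref{lemma:xi_numError} applied entry-wise, together with uniform $L^\infty$ bounds on the drifts $f_{\psi_p}$ and their gradients coming from Assumption \rmref{assump_basis}. By the definitions in \eqref{eq:Ab_ML},
\begin{align*}
b_\infty(k) - b_{\infty,L}(k) &= \E[\xi(f_{\psi_p}, \bX_{[0,T]}) - \xi(f_{\psi_p}, \bX_{t_0:t_L})],\\
A_\infty(k,k') - A_{\infty,L}(k,k') &= \E[\eta(f_{\psi_p}, f_{\psi_{k'}}, \bX_{[0,T]}) - \eta(f_{\psi_p}, f_{\psi_{k'}}, \bX_{t_0:t_L})],
\end{align*}
so Lemma \ref{lemma:xi_numError} directly yields pointwise bounds of order $\Delta t^{1/2}$ in terms of $\|f_{\psi_p}\|_\infty$, $\|\nabla f_{\psi_p}\|_\infty$, and $\|f_\phi\|_\infty$.

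Next I would establish the uniform drift bounds using Assumption \rmref{assump_basis}(b). Since $\psi_p$ is supported in $[0,R]$ with $\|\psi_p\|_\infty \leq b_0$, a componentwise estimate on $f_{\psi_p,i}(\bX)=\frac{1}{N}\sum_{j\neq i}\psi_p(\|\br_{ij}\|)\br_{ij}$ gives $\|f_{\psi_p}\|_\infty \lesssim R b_0$, and analogously for $f_\phi$. Differentiating and using $\frac{d}{dr}(\psi_p(r)\,r)=\psi_p'(r)r+\psi_p(r)$, which is bounded by $b_0+b_1$ on the support of $\rho_T$, produces $\|\nabla f_{\psi_p}\|_\infty \lesssim dN(b_0+b_1)$. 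Substituting into Lemma \ref{lemma:xi_numError} yields the entry-wise bound
\begin{equation*}
|b_{\infty,L}(k) - b_\infty(k)|\;\vee\;|A_{\infty,L}(k,k') - A_\infty(k,k')| \leq C\,\Delta t^{1/2}
\end{equation*}
with $C$ exactly as in the statement.

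For the vector norm, $\|b_{\infty,L}-b_\infty\|_2 \leq \sqrt{n}\,C\,\Delta t^{1/2}$ is immediate from the entry bound and Cauchy--Schwarz. For the matrix operator norm, I would test against a unit vector $a\in\R^n$ and write $a^\top(A_{\infty,L}-A_\infty)a = \E[\eta(f_\varphi, f_\varphi, \bX_{t_0:t_L}) - \eta(f_\varphi, f_\varphi, \bX_{[0,T]})]$ with $\varphi=\sum_p a_p\psi_p$, then apply Lemma \ref{lemma:xi_numError} again. The bound produced involves $\|f_\varphi\|_\infty$ and $\|\nabla f_\varphi\|_\infty$, which admit estimates of the form $\|\varphi\|_\infty \leq b_0 \sum_p|a_p| \leq b_0\sqrt{n}\,\|a\|_2$; to get the tight $\sqrt{n}$ power (instead of $n$), I would exploit the orthonormality of $\{\psi_p(\cdot)\cdot\}$ in $L^2(\rho_T)$ together with Assumption \rmref{assump_basis}(c), which is precisely the mechanism by which the factor $\sqrt{c_{\rho_T}\min(b_0^2R,b_1R^{3/2})}$ absorbs a $\sqrt{n}$ and appears in the constant $c_3$ of Theorem \ref{thm:error_discreteTime}.

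The main obstacle is the sharp matrix bound: a crude entry-wise argument only yields $\|A_{\infty,L}-A_\infty\|_F \leq n\,C\,\Delta t^{1/2}$, so the stated $\sqrt{n}\,C\,\Delta t^{1/2}$ really requires either the quadratic-form/orthonormality argument above, or equivalently a Bessel-type identity recognizing that row $k$ of $A_{\infty,L}-A_\infty$ represents a single linear functional on $\mathrm{span}\{\psi_{k'}(\cdot)\cdot\}\subset L^2(\rho_T)$, whose $\ell^2$ norm of coefficients is controlled by the $L^2(\rho_T)$ norm of its Riesz representer. Everything else is a mechanical composition of Lemma \ref{lemma:xi_numError} with uniform bounds on the drift and its gradient.
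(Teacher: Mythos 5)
Your entry-wise reduction to Lemma \ref{lemma:xi_numError}, the uniform drift bounds $\|f_{\psi_p}\|_\infty\le \sqrt{N}Rb_0$ and $\|\nabla f_{\psi_p}\|_\infty\le dN(b_0+b_1)$, and the vector estimate $\|b_{\infty,L}-b_\infty\|\le\sqrt{n}\,C\,\Delta t^{1/2}$ are exactly the paper's argument. Where you part ways with the paper is the matrix bound, and there you have put your finger on a genuine weak point: the paper disposes of it in one line by invoking ``$\|A\|\le\sqrt{n}\max_{k,k'}|A(k,k')|$'' as a basic fact, which is false for a general $n\times n$ matrix (the all-ones matrix has operator norm $n$ and maximal entry $1$); the honest consequence of the entry-wise bounds is only $\|A_{\infty,L}-A_\infty\|\le\|A_{\infty,L}-A_\infty\|_F\le n\,C\,\Delta t^{1/2}$, exactly as you say.

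The difficulty is that neither of your proposed repairs closes this gap as sketched. In the quadratic-form route, with $\varphi=\sum_p a_p\psi_p$ and $\|a\|_2=1$, both $\|f_\varphi\|_\infty$ and $\|\nabla f_\varphi\|_\infty$ are controlled only through $\sum_p|a_p|\le\sqrt{n}$, so Lemma \ref{lemma:xi_numError} returns a bound proportional to $\|\nabla f_\varphi\|_\infty\,\|f_\varphi\|_\infty\lesssim n$ --- the same loss. Orthonormality of $\{\psi_p(\cdot)\cdot\}$ in $L^2(\rho_T)$ cannot be injected into Lemma \ref{lemma:xi_numError} as stated, because its constants are expressed purely in sup norms of $f$, $g$ and their gradients; for the same reason the Bessel/Riesz-representer idea does not apply, since the row functional is not being bounded in the $L^2(\rho_T)$ operator norm. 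Assumption \ref{assump_basis}(c) plays no role at this stage either: it is used only afterwards, in Theorem \ref{thm:error_discreteTime}, to absorb the factor $\sqrt{n}$ into $c_3$, and it does not upgrade $n$ to $\sqrt{n}$. To genuinely obtain $\sqrt{n}$ one would have to reprove the $\eta$-discretization estimate keeping one factor of $f_\varphi$ in $L^2$ along the trajectory (where orthonormality gives $\vertiii{\varphi}=1$) and only the gradient factor in sup norm --- an argument present in neither your sketch nor the paper. The damage is limited, since carrying $n$ instead of $\sqrt{n}$ only worsens the constant downstream, but as written both your proof and the paper's deliver $\sqrt{n}$ for the vector and only $n$ for the matrix.
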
 
\begin{proof} Applying Lemma \ref{lemma:xi_numError}, in combination of the basic fact that $\|b\| \leq \sqrt{n} \max_{k=1,\dots,n} |b(k)|$ for any $b\in \R^n$, and  $\|A\| \leq \sqrt{n} \max_{k,k'=1,\dots,n} |A(k,k')|$ for any $A\in \R^{n\times n}$, we obtain
\begin{align*}
\|b_{\infty,L} - b_{\infty}\|  \leq \sqrt{n} C_1  \Delta t ^{\frac{1}{2}}\,,\quad
\|A_{\infty,L} - A_{\infty}\| \leq \sqrt{n} C_2 \Delta t ^{\frac{1}{2}},
\end{align*}
  with constants $C_1$ and  $C_2$ in  form of
\begin{align*}
C_1&=\|f_\phi\|_\infty  \left( \|f_\phi\|_\infty \Delta t^{\frac{1}{2}}/N  + \sqrt{d/N}\sigma\right)  \max_{k=1,\cdots, n} \|\nabla f_{\psi_p}\|_\infty , \\
C_2&=   \left( \|f_\phi\|_\infty \Delta t^{\frac{1}{2}}/N  + \sqrt{d/N}\sigma\right)  \max_{k,k'=1,\cdots, n} ( \|\nabla f_{\psi_p}\|_\infty \|f_{\psi_p'}\|_\infty + \|\nabla f_{\psi_{k'} }\|_\infty \|f_{\psi_p}\|_\infty ) . 
\end{align*}
To complete the proof, we are left to estimate $\norm{  f_{\psi_p} }_\infty $ and $ \norm{  \nabla f_{\psi_p} }_\infty $. From the definition of $f_\cdot$, we have
\begin{equation}\label{eq:f_phiBD}
\norm{  f_{\psi_p} }_\infty^2 = \sup_x\sum_{i=1}^N \verti{\frac{1}{N}\sum_{j=1}^N \psi_p(|\bX_j-\bX_i|)(\bX_j-\bX_i)}^2 \leq R^2 b_0^2 N,
\end{equation}
and $\norm{  f_{\phi} }_\infty \leq R b_0\sqrt{N}$ as well. Note that for each $i,i'\in \{1,\cdots, N\}$,  with notation $\br_{ji}= \bx_j-\bx_i$ and $r_{ji}= \verti{\br_{ji} }$,  we have,
\begin{align*} 
\nabla_{\bx_i'} \left(f_{\psi_p}(\bx) \right)_i &  = \delta_{ii'} \frac{1}{N} \sum_{j=1, j\neq i}^N   \left(\psi_p(r_{ji})\mathbf{I}_d+\psi_p'(r_{ji}) \frac{\br_{ji}\otimes\br_{ji} }{r_{ji}}\right) \\
& \quad  + \delta_{i'\neq i} \frac{1}{N}\left(\psi_p(r_{ii'})\mathbf{I}_d+\psi_p'(r_{ii'}) \frac{\br_{ii'}\otimes\br_{ii'} }{r_{ii'}}\right) . 
\end{align*}
Thus,  the norm this $d\times d$ matrix is uniformly bounded,
\[
\sup_x \norm{ \nabla_{\bx_i'} \left(f_{\psi_p}(\bx) \right)_i  } \leq d (b_1 + b_0), 
\]
and as a result, the norm of the $dN \times dN$ matrix is uniformly bounded, 
\[
\norm{ \nabla f_{\psi_p}   }_\infty \leq dN (b_1  + b_0). 
\]
Combining the above estimates with $\| f_\phi\|_\infty\le Rb_0N$ (the same as $\| f_{\psi_p}\|_\infty $), we obtain that $C_1$ and $C_2$ are both bounded by $C$. 
\end{proof}

It follows directly that the matrix $A_{\infty,L}$ is invertible:
\begin{corollary}\label{cor_AmatL}

The smallest eigenvalue of the matrix $A_{\infty,L}$ defined in \eqref{eq:Ab_ML} is bounded below by $c_\hypspace- c_3\Delta t^{1/2}$ when $c_3 \Delta t^{1/2}< c_\hypspace$, with $c_3$ defined in \eqref{eq:constants_c123}.
\end{corollary}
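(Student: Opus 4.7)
The statement is a direct perturbation bound, so the plan is to combine three ingredients already established in the paper and then invoke Weyl's inequality for symmetric matrices. First, I would recall from Proposition \ref{prop:A_infty} that, under the coercivity condition together with the orthonormality in Assumption \ref{assump_basis}(a), the symmetric matrix $A_\infty$ satisfies $\lambda_{\min}(A_\infty)=c_{\hypspace}$. Second, I would invoke Proposition \ref{prop:numError} to obtain the operator-norm bound
\[
\|A_{\infty,L}-A_\infty\| \;\le\; \sqrt{n}\, C\, \Delta t^{1/2},
\]
where $C = dN(b_1+b_0)Rb_0(Rb_0\Delta t^{1/2}+\sqrt{d}\sigma)$.

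The only genuine step is to recognize that this product is precisely $c_3\Delta t^{1/2}$. For this I would use Assumption \ref{assump_basis}(c), which gives $n\le c_{\rho_T}\min(b_0^2 R, b_1 R^{3/2})$, so that
\[
\sqrt{n}\,C \;\le\; \sqrt{c_{\rho_T}\min(b_0^2 R,\, b_1 R^{3/2})}\; dN(b_1+b_0)Rb_0\bigl(Rb_0\Delta t^{1/2}+\sqrt{d}\sigma\bigr) \;=\; c_3,
\]
matching the definition of $c_3$ in \eqref{eq:constants_c123}. Therefore $\|A_{\infty,L}-A_\infty\|\le c_3\Delta t^{1/2}$.

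Finally, since $A_\infty$ and $A_{\infty,L}$ are both symmetric (the bilinear form $\dbinnerp{\cdot,\cdot}$ and its discrete-time analog are symmetric in their arguments), Weyl's inequality gives
\[
\lambda_{\min}(A_{\infty,L}) \;\ge\; \lambda_{\min}(A_\infty) - \|A_{\infty,L}-A_\infty\| \;\ge\; c_{\hypspace} - c_3\Delta t^{1/2},
\]
which is strictly positive under the stated hypothesis $c_3\Delta t^{1/2}<c_{\hypspace}$, yielding in particular the invertibility of $A_{\infty,L}$. There is no real obstacle here; the only point requiring care is the bookkeeping that turns $\sqrt{n}\,C$ into exactly $c_3$, which is why Assumption \ref{assump_basis}(c) was formulated with the factor $\min(b_0^2R,b_1R^{3/2})$.
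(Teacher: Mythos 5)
Your proposal is correct and follows essentially the same route as the paper: the paper also combines Proposition \ref{prop:A_infty}, the bound $\|A_{\infty,L}-A_\infty\|\le\sqrt{n}\,C\,\Delta t^{1/2}$ from Proposition \ref{prop:numError}, and Assumption \ref{assump_basis}(c) to identify $\sqrt{n}\,C\le c_3$, writing the eigenvalue perturbation step directly as $a^{T}A_{\infty,L}a = a^{T}(A_{\infty,L}-A_\infty)a + a^{T}A_\infty a$ rather than citing Weyl's inequality, which is the same argument. Your bookkeeping of $\sqrt{n}\,C$ into $c_3$ is, if anything, spelled out more explicitly than in the paper.
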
 
\begin{proof} Recall that from Proposition \ref{prop:A_infty}, we have $a^T A_{\infty} a \geq c_{\hypspace} |a|^2$ for an arbitrary $a\in \R^n$. Then,
\[
a^T A_{\infty,L} a = a^T (A_{\infty,L}-A_\infty) a + a^T A_{\infty} a \geq (c_{\hypspace} - c_2\Delta t^{1/2}) \|a\|^2
\]
by Proposition \ref{prop:numError} with the bound of $\sqrt{n}$ in Assumption \ref{assump_basis}. 
\end{proof}

We prove next that the matrix $A_{M,L}$ is invertible, and concentrates around  $A_{\infty, L}$. 
\begin{proposition}[Concentration of the normal matrix and vector] \label{prop:CI}
Suppose that the coercivity condition holds on $\mathcal{H}=\mathrm{span} \{\psi_i\}_{i=1}^n$ with a constant $c_{\hypspace}>0$, where $\{ \psi_p \}_{p=1}^n$ satisfying Assumption {\rm \ref{assump_basis}}. 
Then, the normal matrix $A_{M,L}$ and vector $b_{M,L}$ defined in \eqref{eq:Ab_ML} satisfy concentration inequalities in the sense that for any $\epsilon >0$, 
\begin{equation}  
\begin{aligned}
\Prob{\norm{ A_{M,L} - A_{\infty,L} }> \epsilon} &\leq 2n^2 e^{-\frac{M\epsilon^2 }{2nC^2}} \\
\Prob{\norm{ b_{M,L} - b_{\infty,L} }> \epsilon} &\leq 4ne^{-\frac{M\epsilon^2 }{8nC^2}},  \\
\Prob{\norm{ A_{M,L} - A_{\infty,L} }< \epsilon, \, \norm{ b_{M,L} < b_{\infty,L} } < \epsilon} &\geq 1- (4n+2n^2) e^{-\frac{M\epsilon^2 }{8nC^2}}\,, 
\end{aligned}
\end{equation}
where the constant $C$ is $C= R b_0 (R S_0+2\sigma/\sqrt{T})$.  
\end{proposition}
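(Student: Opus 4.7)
The plan is to reduce the matrix and vector concentration to the scalar concentration inequalities of Lemma \ref{lemma:CI} applied entrywise, and then convert entrywise control to operator-norm control via the elementary bounds $\|A\|\le \sqrt{n}\max_{k,k'}|A(k,k')|$ for $A\in\mathbb{R}^{n\times n}$ and $\|b\|\le \sqrt{n}\max_k |b(k)|$ for $b\in\mathbb{R}^n$. Consequently, to obtain deviation $\epsilon$ in operator/vector norm it suffices to demand deviation $\epsilon/\sqrt{n}$ in each entry, and then take a union bound over the $n^2$ matrix entries and the $n$ vector entries.

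First I would rewrite the normal equations in the form required by Lemma \ref{lemma:CI}: by definition \eqref{eq:Ab_ML} we have $A_{M,L}(k,k') = \eta_{M,L}(f_{\psi_k}, f_{\psi_{k'}})$, $b_{M,L}(k) = \xi_{M,L}(f_{\psi_k})$, with means $A_{\infty,L}(k,k') = \mathbb{E}[\eta_{M,L}(f_{\psi_k}, f_{\psi_{k'}})]$ and $b_{\infty,L}(k) = \mathbb{E}[\xi_{M,L}(f_{\psi_k})]$, so the entrywise deviations are precisely those controlled by Lemma \ref{lemma:CI}.

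Second, I would uniformly bound the constants appearing in Lemma \ref{lemma:CI}. From the calculation in the proof of Proposition \ref{prop:numError} (Eq.\eqref{eq:f_phiBD}) and Assumption \ref{assump_basis}(b), we have $\|f_{\psi_k}\|_\infty \le R b_0 \sqrt{N}$ for every $k$, and because $\phi\in\mathcal{K}_{R,S}$ with $S\le S_0$, also $\|f_\phi\|_\infty \le R S_0 \sqrt{N}$. Therefore the constants of Lemma \ref{lemma:CI} applied with $f=f_{\psi_k}$, $g=f_{\psi_{k'}}$ satisfy
\begin{align*}
\tfrac{1}{N}\|f_{\psi_k}\|_\infty \max\!\left(\tfrac{2\sigma\sqrt{N}}{\sqrt{T}},\,\|f_\phi\|_\infty\right) &\le R b_0\,\Big(R S_0 + \tfrac{2\sigma}{\sqrt{T}}\Big) = C,\\
\tfrac{1}{N}\|f_{\psi_k}\|_\infty\|f_{\psi_{k'}}\|_\infty &\le R^2 b_0^2 \le C,
\end{align*}
so both $C_1$ and $C_2$ of Lemma \ref{lemma:CI} are at most $C$.

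Third, applying Lemma \ref{lemma:CI} with threshold $\epsilon/\sqrt{n}$ to every entry and taking union bounds would give
\begin{align*}
\mathbb{P}\!\left(\|A_{M,L}-A_{\infty,L}\|>\epsilon\right) &\le n^2\cdot 2\exp\!\Big(-\tfrac{M\epsilon^2}{2nC^2}\Big),\\
\mathbb{P}\!\left(\|b_{M,L}-b_{\infty,L}\|>\epsilon\right) &\le n\cdot 4\exp\!\Big(-\tfrac{M\epsilon^2}{8nC^2}\Big),
\end{align*}
and the third, joint bound follows by a further union bound (complementing and adding the probabilities, noting that $\tfrac{M\epsilon^2}{2nC^2}\ge \tfrac{M\epsilon^2}{8nC^2}$). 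This is a direct verification with no real obstacle beyond careful bookkeeping of the constants; the coercivity hypothesis is not needed for the concentration itself, and is only recorded to make the invertibility of $A_{M,L}$ (used elsewhere, e.g.\ Corollary \ref{corollary:eignvalue}) a meaningful consequence of the concentration established here.
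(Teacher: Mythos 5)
Your proposal is correct and follows essentially the same route as the paper's own proof: entrywise application of Lemma \ref{lemma:CI} at threshold $\epsilon/\sqrt{n}$, the bounds $\|A\|\le\sqrt{n}\max_{k,k'}|A(k,k')|$ and $\|b\|\le\sqrt{n}\max_k|b(k)|$, and union bounds over the $n^2$ and $n$ entries. Your explicit tracking of how $C_1$, $C_2$ from Lemma \ref{lemma:CI} are absorbed into $C$ via \eqref{eq:f_phiBD}, and your remark that coercivity is not actually used in the concentration step, are both accurate and slightly more careful than the paper's terse treatment.
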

\begin{proof} Recall that by definition in \eqref{eq:Ab_ML}, $b_{M,L}(k)  = \xi_{M,L}(f_{\psi_p}) $ with $  \E[b_{M,L}] =b_{\infty,L}$ and $A_{M,L}(k,k') =  \eta_{M,L}(f_{\psi_p}, f_{\psi_{k'} })$ with $ \E[A_{M,L}]=A_{\infty,L} $.  
 Lemma \ref{lemma:CI} implies that that each of these entries concentrates around there mean:
 \begin{align*}
\Prob{|\xi_{M,L}(f_{\psi_p}) - b_{\infty,L}(k)|> \frac{\epsilon}{\sqrt{n}}} &\leq 4  e^{-\frac{M\epsilon^2 }{8nC^2}} , \\
 \Prob{|\eta_{M,L}(f_{\psi_p},f_{\psi_p'}) - A_{M,L}(k,k')|>\frac{\epsilon}{\sqrt{n}}} &\leq 2  e^{-\frac{M\epsilon^2 }{2nC^2}}. 
\end{align*}
where the constant $C$ is obtained from \eqref {eq:f_phiBD}. In combination of the basic fact that $\|b\| \leq \sqrt{n} \max_{k=1,\dots,n} |b(k)|$ for any $b\in \R^n$, and  $\|A\| \leq \sqrt{n} \max_{k,k'=1,\dots,n} |A(k,k')|$ for any $A\in \R^{n\times n}$, we obtain
\begin{align*}
& \Prob{\norm{ b_{M,L} - b_{\infty,L} }> \epsilon} \leq \sum_k \Prob(|\xi_{M,L}(f_{\psi_p}) - b_{\infty,L}(k)|> \frac{\epsilon}{\sqrt{n}}) \leq 4 n e^{-\frac{M\epsilon^2 }{8nC^2}} , \\
& \Prob{\norm{ A_{M,L} - A_{\infty,L} }> \epsilon}
\leq  \sum_{k,k'} \Prob{|\eta_{M,L}(f_{\psi_p},f_{\psi_p'}) - A_{\infty,L}(k,k')|>\frac{\epsilon}{\sqrt{n}}} \leq 2 n^2 e^{-\frac{M\epsilon^2 }{2nC^2}}. 
\end{align*}
The third exponential inequality follows directly by combining the first two.  
\end{proof}

\begin{corollary}\label{corollary:eignvalue} 
 
Denote $\lambda_{\mathrm{min}}(A_{ML})$ the smallest eigenvalue of the normal matrix $A_{ML}$ defined in \eqref{eq:Ab_ML}. We have 
\[
\Prob{\lambda_{\mathrm{min}}(A_{ML}) > c_{\hypspace} -\epsilon} >1-\delta
\]
 with $\delta =2n^2 \exp{\left(-\frac{M\epsilon_1^2 }{2nc_1^2} \right)} $, for any $\epsilon_1>0$ and any $\Delta t = T/L$ such  that $ \epsilon_1 +  c_3 \Delta t ^{\frac{1}{2}}= \epsilon < c_{\hypspace} $, where $c_1 $ and $c_3$ are defined in \eqref{eq:constants_c123}. 
\end{corollary}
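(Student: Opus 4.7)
The plan is to combine the three prior results — Proposition \ref{prop:A_infty} (which identifies $\lambda_{\min}(A_\infty)=c_\hypspace$), Proposition \ref{prop:numError} (the deterministic discretization bound for $A_{\infty,L}-A_\infty$), and Proposition \ref{prop:CI} (the Bernstein-type concentration of $A_{M,L}$ around $A_{\infty,L}$) — via a single Weyl-type perturbation argument. Since $A_{M,L}$, $A_{\infty,L}$ and $A_\infty$ are all symmetric, the min-max characterization yields, for every symmetric $B$,
\[
\lambda_{\min}(A_{M,L}) \;\geq\; \lambda_{\min}(A_\infty) \;-\; \|A_{M,L}-A_\infty\|,
\]
so the whole task reduces to bounding the operator norm of the perturbation $A_{M,L}-A_\infty$ on a high-probability event.

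First I would decompose the perturbation by the triangle inequality,
\[
\|A_{M,L}-A_\infty\| \;\leq\; \|A_{M,L}-A_{\infty,L}\| + \|A_{\infty,L}-A_\infty\|,
\]
and control each term separately. The second term is purely deterministic: Proposition \ref{prop:numError} gives $\|A_{\infty,L}-A_\infty\|\leq \sqrt{n}\,C\,\Delta t^{1/2}$ with $C=dN(b_1+b_0)Rb_0(Rb_0\Delta t^{1/2}+\sqrt{d}\sigma)$. Using Assumption \ref{assump_basis}(c), which asserts $n\leq c_{\rho_T}\min(b_0^2R,b_1R^{3/2})$, the factor $\sqrt{n}$ is absorbed into $\sqrt{c_{\rho_T}\min(b_0^2R,b_1R^{3/2})}$, and the resulting bound is exactly $c_3\Delta t^{1/2}$ with $c_3$ as in \eqref{eq:constants_c123}.

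For the stochastic term, I would apply Proposition \ref{prop:CI} with threshold $\epsilon_1$: with probability at least $1-2n^2\exp\!\bigl(-M\epsilon_1^2/(2nc_1^2)\bigr) = 1-\delta$, we have $\|A_{M,L}-A_{\infty,L}\|\leq\epsilon_1$. Combining on that event yields $\|A_{M,L}-A_\infty\|\leq \epsilon_1+c_3\Delta t^{1/2}=\epsilon$, and plugging into the Weyl bound gives $\lambda_{\min}(A_{M,L})\geq c_\hypspace-\epsilon$. The hypothesis $\epsilon<c_\hypspace$ is only needed to guarantee $\lambda_{\min}(A_{M,L})>0$, i.e.\ invertibility of $A_{M,L}$ on the same event.

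There is no substantive obstacle; the corollary is a straightforward assembly of the three earlier propositions. The only care required is constant bookkeeping: verifying that the $\sqrt{n}\,C$ emerging from Proposition \ref{prop:numError} collapses to the stated $c_3$ via Assumption \ref{assump_basis}(c), and that the concentration tail in Proposition \ref{prop:CI} is invoked with threshold $\epsilon_1$ rather than $\epsilon$, so that the exponent in $\delta$ reads $M\epsilon_1^2/(2nc_1^2)$ and not $M\epsilon^2/(2nc_1^2)$.
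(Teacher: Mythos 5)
Your proposal is correct and follows essentially the same route as the paper: the paper also anchors at $\lambda_{\min}(A_\infty)=c_\hypspace$ (Proposition \ref{prop:A_infty}), absorbs the deterministic discretization term $c_3\Delta t^{1/2}$ via Proposition \ref{prop:numError} (packaged as Corollary \ref{cor_AmatL}), and then applies Proposition \ref{prop:CI} with threshold $\epsilon_1$ to control $\|A_{M,L}-A_{\infty,L}\|$ on an event of probability $1-\delta$; its quadratic-form argument $a^TA_{M,L}a\geq a^TA_{\infty,L}a-\epsilon_1$ for unit $a$ is just the Weyl perturbation bound you invoke, written out. Your constant bookkeeping (threshold $\epsilon_1$ in the tail, $\sqrt{n}$ absorbed via Assumption \ref{assump_basis}(c)) matches the paper's.
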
 
\begin{proof} Note that for any $a\in \R^n$ such that $\|a\|=1$, we have,  by Corollary \ref{cor_AmatL}, $a^T A_{\infty, L}a \geq  c_{\hypspace} - c_3  \Delta t ^{\frac{1}{2}} $.  

Meanwhile, Proposition \ref{prop:CI} implies that
\begin{align*} 
\|a^TA_{M,L} a - a^T A_{\infty, L}a  \| &\leq \|A_{M,L} - A_{\infty, L}\|  \leq \epsilon_1
\end{align*}
with probability at least $1-\delta$. Thus, 
\[  \|a^TA_{M,L} a \| \geq \|a^T A_{\infty, L}a  \| -\epsilon_1 \ge c_{ \hypspace}- c_2 \Delta t ^{\frac{1}{2}} -\epsilon_1, \]
and  the corollary follows.  
% $\Prob(\lambda_{\mathrm{min}}(A_{ML}) > c_{\hypspace} -\epsilon) $
\end{proof}
\begin{remark} The above corollary requires $\epsilon >   c_3 \Delta t ^{\frac{1}{2}}$. This condition can be removed if the coercivity holds for the discrete-time observations on $\hypspace$ with a constant $c_{\hypspace,T,L}$, which can be tested numerically from a date set with a large $M$. In fact, we obtain directly from the above proof that  $\Prob{\lambda_{\mathrm{min}}(A_{ML}) > c_{\hypspace,T,L} -\epsilon} >1-\delta$ with $\delta =2n^2 \exp{\left(-\frac{M\epsilon_1^2 }{2nc_1^2} \right)} $, for any $\epsilon>0$. 
\end{remark}
\begin{remark} In practice, the minimum eigenvalue of $A_\infty$ may be small due to the redundancy of the local basis functions or due to the coercivity constant on $\hypspace$ being small. Thus, the smallest eigenvalue of $A_{M,L}$ may be zero.  On the other hand, these matrices are always symmetric and nonnegative, so it is advisable to regularize the matrix by pseudo-inverse. 
\end{remark}

\section{Examples and numerical simulation results}
\label{main:numericalexamples}
In this section, we performed numerical experiment to validate that our estimator defined in \eqref{MLE}, and implemented by Algorithm \ref{alg:main}, behaves in practice as predicted by the theory. We consider two examples: a stochastic opinion dynamical system and a stochastic Lennard-Jones system, using observations from simulated data. 

The setup for the numerical simulations is as follows. We simulate sample paths on the time interval $[0, T]$ with the standard Euler-Maruyama scheme (see \eqref{EM}), with a sufficiently small time step length $dt$. When observations are made at every time-step, i.e. $\Delta t = t_{l+1}-t_l = dt$ for each $l$,  we view  $\bX_{\mathrm{train},M}:=\{\bX^{(m)}_{t_0:t_L}\}_{m=1}^M$ as continuous-time trajectories. When observations occur spaced in time with observation gap $\Delta t$ equal to an integer multiple of $dt$, we refer to them here as discrete-time observations. 
%We will investigate the effects of the observation gap $\Delta t$ on the estimator by comparing the estimators from these continuous-time and discrete-time observations. 

%\paragraph{Numerical setup}   To simulate sample paths on the time interval $[0, T]$, we use the standard Euler-Maruyama  method (see XXX)  and choose a sufficiently small step length $dt$.  We define the observation gap $\Delta t:=kdt$

%The continuous observations of the dynamics are then approximated by  $\{\bX^{(m)}(ldt)\}_{l=0,m=1}^{L,M}$ with $\bX^{(m)}(0)\sim \mu_0$. We also introduce the observation gap $\Delta t=kdt$ when we only observe the data on a coarser uniform time grid with gap 

From the observations we construct the empirical probability measure $\rho^{L,M}_T$ (defined in \eqref {e:rhoLM}), and let $[R_{\min},R_{\max}]$ be its support.
We choose the hypothesis spaces $\hypspace$ consisting of piecewise constant or piecewise linear polynomials on interval-based partitions of $[R_{\min},R_{\max}]$. This choice is dictated by the ease of obtaining an orthonormal basis for $\hypspace$, ease and efficiency of computation, and ability to capture local features of the interaction kernel. To avoid discontinuities at the extremes of the intervals in the partition, and to reduce stiffness of the equations of the system with the estimated interaction kernels, we interpolate the estimator linearly on a fine grid and extrapolate it with a constant to the left of $R_{\min}$ and the right of $R_{\max}$. This post-processing procedure ensures  the Lipschitz continuity of the estimators. We use the post-processed estimators to predict and generate the dynamics with the estimated interaction kernels.

  %for each interaction kernel in \eqref{eq:sod}.  

We mainly focus on the case where $T$ is small and report on the results as follows: 
\begin{itemize}
\item {\bf Interaction kernel estimation.} We compare $\intkernel$ and $\widehat\intkernel_{T,M,\hypspace}$,  the true and estimated interaction kernels (after smoothing), by plotting them side-by-side, superimposed with an estimate of $\rhoT$, obtained as in \eqref{e:rhoLM} by using $\smash{M_{\rhoT}}$ ($\smash{M_{\rhoT}}\gg M$) independent trajectories. The estimated kernel is plotted in terms of its mean and standard deviation, computed over $10$ independent learning trials. To demonstrate the dependence of the estimator on the sample size and the scale of the random noise, we report the above for different values of $M$ and $\sigma$.

\item {\bf Trajectory prediction.} In the spirit of Proposition \ref{Trajdiff}, we compare the discrepancy between the true trajectories (evolved using $\intkernel$) and predicted trajectories (evolved using $\widehat\intkernel_{T,M,\hypspace}$)  on both the training time interval $[0,T]$ and on a future time interval $[T, T_{f}]$,  over two different sets of initial conditions -- one taken from the training data, and one consisting of new samples from $\mu_0$. When simulating the trajectories for the systems driven by $\widehat\intkernel_{T,M,\hypspace}$ using the EM scheme, we use the same initial conditions and the same realization of the random noise as in the trajectory of the system driven by $\intkernel$. The mean trajectory error is estimated using $M$ test trajectories (the same number as in the training data).  

\item {\bf Rate of convergence.} We report the convergence rate of $\widehat\intkernel_{T,M,\hypspace}$ to $\intkernel$ in the $\vertiii{\cdot}$ norm on $\smash{L^2(\rhoT)}$ as the sample size $M$ increases, with the dimension of $\hypspace$ growing with $M$ according to Theorem \ref{maintheorem}, for  different scales $\sigma$ of the random noise. We also investigate numerically the convergence rate when both $T$ and $M$ increase, with the dimension of the hypothesis space $\hypspace$ set according to the effective sample size as discussed in Section \ref{sec:hypspace}. % We report the convergence rate of estimators in terms of the effective sample size $MT/\tau$, where $\tau$ is the auto-correlation time of the dynamics, estimated from data.   

\item {\bf Discretization errors from discrete-time observations.} To study the discretization error due to discrete-time observations, we report the convergence rate (in $M$) of estimators $\widehat\intkernel_{L,T,M,\hypspace}$ obtained from data with different observation gaps $\Delta t=T/L$. We also verify numerically that the $\vertiii{\cdot}$ error of the estimators increases with $\Delta t$ as predicted by Theorem \ref{thm:error_discreteTime}. These experiments are carried out for different values of the square root of the diffusion constant  $\sigma$. 
\end{itemize}

We will report the conclusions of our experiments in Section \ref{s:expconclusions}

\iffalse
\paragraph{Overview} For each interaction kernel, we consider two systems with different diffusion constants. For simplicity, we denoted by $\sigma:=\sigma$ in the equations \eqref{eq:sod}. We display 4 plots to summarize our learning results for both systems:
\begin{itemize}
\item In the first plot,  we compare the estimated interaction kernels (after smoothing) to the true interaction kernel , for different values of $M$, with mean and standard deviation computed over a number of  independent learning trials.  The convergence rate of estimators in terms of number of observed trajectories 
 
\item The second plot 
compares the true trajectories (evolved using the true interaction law) and predicted trajectories (evolved using the learned interaction law in the first plot)  over two different set of initial conditions -- one taken from the training data, and one new, randomly generated from $\mu_0$.  The trajectory errors are summarized in a table right after the plot. 

\item  Part (a) of the third plot displays the convergence rate of piecewise polynomial estimators (without smoothing) in terms of number of observed trajectories $M$.  Part (b) of the third plot shows the relation between the $L^2$ estimation error with the observation gap $t_{l+1}-t_l$ and the diffusion constant $\sigma$ for fixed $M$.

\item Part (a) of fourth plot displays  the convergence rate of piecewise polynomial estimators (without smoothing) obtained from observational data with different observation gap. Part (b) display the convergence rate  

\end{itemize}
\fi

\subsection{Example 1: Stochastic opinion dynamics}
We first consider a 1D system of stochastic opinion dynamics with interaction kernel
\[
\intkernel(r) = \left\{
        \begin{array}{ll}
           0.4,    & \quad 0                          \le r < \frac{1}{\sqrt{2}}-0.05, \\
           -0.3\cos(10\pi(r-\frac{1}{\sqrt{2}}+0.05)) + 0.7, & \quad \frac{1}{\sqrt{2}}-0.05 \le r <  \frac{1}{\sqrt{2}}+0.05, \\
           1,    & \quad  \frac{1}{\sqrt{2}}+0.05    \le r <0.95,\\
             0.5  \cos(10\pi (r -0.95)) + 0.5, & \quad 0.95 \le r <  1.05\\
             0,&\quad 1.05 \le r
  \end{array}
    \right.
\] 
It is straightforward to see that $\intkernel$ is in $C_c^{1,1}([0,2])$ and non-negative. Systems of this form are motivated in various applications, from Biology to in social science, where $\intkernel$ models how the opinions of people influence each other (see \cite{Krause2000,BHT2009,MT2014,BT2015,CKFL2005SI} and references therein), with one or a multiplicity of consensuses may be eventually reached. In the system we consider,  each agent tries to align its opinions more with its farther neighbours than with its closer neighbours: such interactions are called heterophilious.  For deterministic systems of this type, \cite{MT2014} shows that the opinions of agents merge into clusters, with the number of clusters significantly smaller than the number of agents. This is natural, as increased alignment with farther neighbors increases mixing and consensus. In our stochastic setting, the random noise prevents the opinions from converging to single opinions. Instead, soft clusters form at large time, that are metastable states for the dynamics, i.e. states where agents dwell for long times, rarely switching between them.

\begin{table}[H]
\centering
\begin{tabular}{| c | c | c | c | c | c | c |c|c|c|}
\hline 
 $d$ &$N$&$M_{\rhoT}$ &  $dt$   & $[0;T;T_f]$   & $\ProbIC$          & deg($\psi$) &$n$ \\ 
\hline 
 $1$ &10 &$5\cdot10^4$ &0.01 & $[0;5;50]$ & $\mathcal{U}([0, 8])^{\otimes N}$ & 0&$ 40(\frac{M}{\log M})^{\frac{1}{3}}$ \\
\hline
\end{tabular}
\caption{\textmd{\footnotesize{(OD) Parameters for the system. }}}
\label{t:OD_params}
\end{table}
We study the performance of our estimators of the interaction kernel, from trajectory data. Table \ref{t:OD_params} summarizes parameters of the setup. 
In this example, we  choose $\hypspace_{n_M}$ to be the function space consisting of piecewise constant functions on $n_M$ uniform partitions of the interval $[0,10]$.

\begin{figure}[tbp]
\centering     %%% not \center
\subfigure[$\sigma=0.1,M=32$]{\label{figOD:1}\includegraphics[width=0.48\textwidth]{./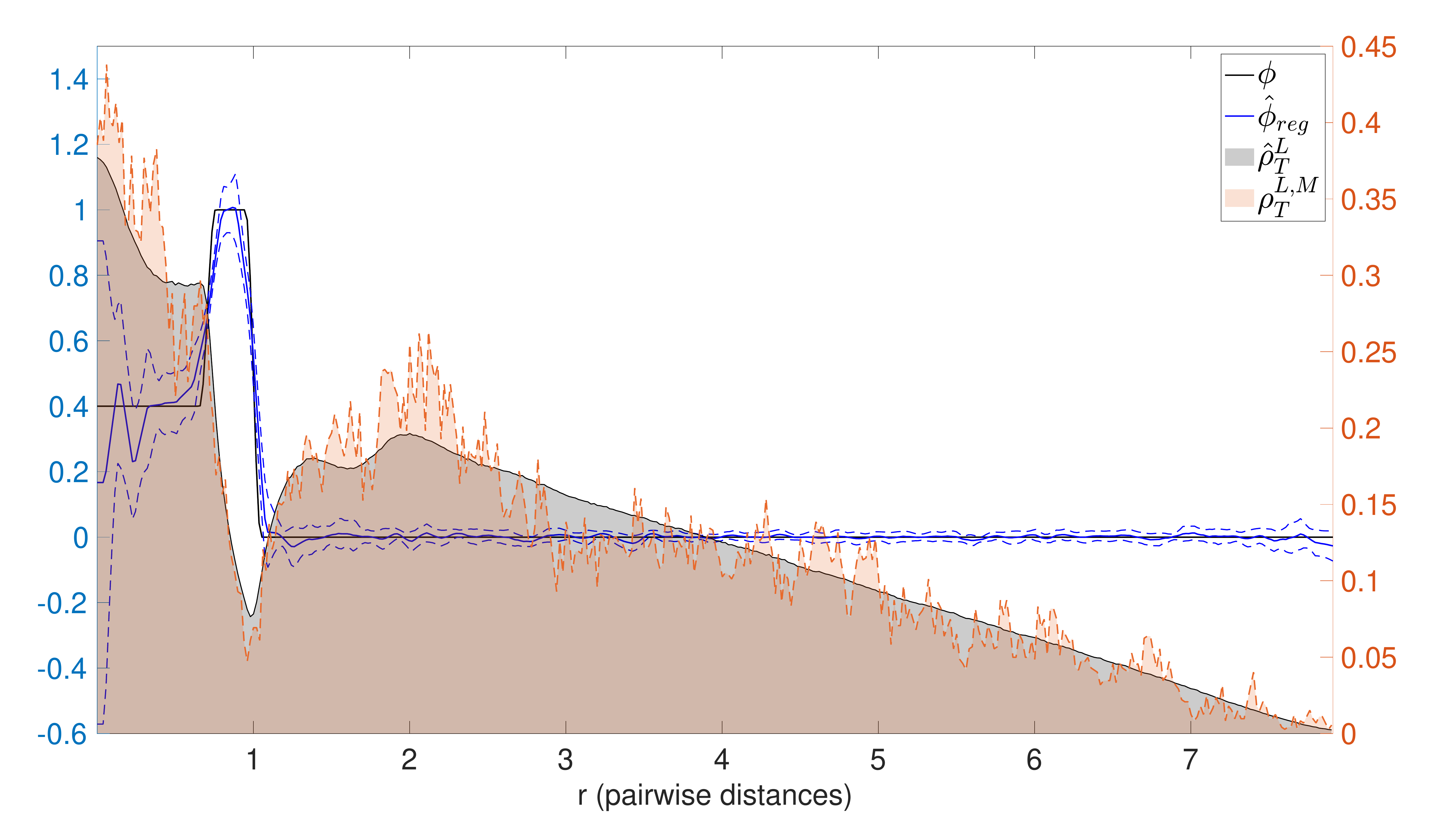}}
\subfigure[$\sigma=0.5,M=32$]{\label{figOD:3}\includegraphics[width=0.48\textwidth]{./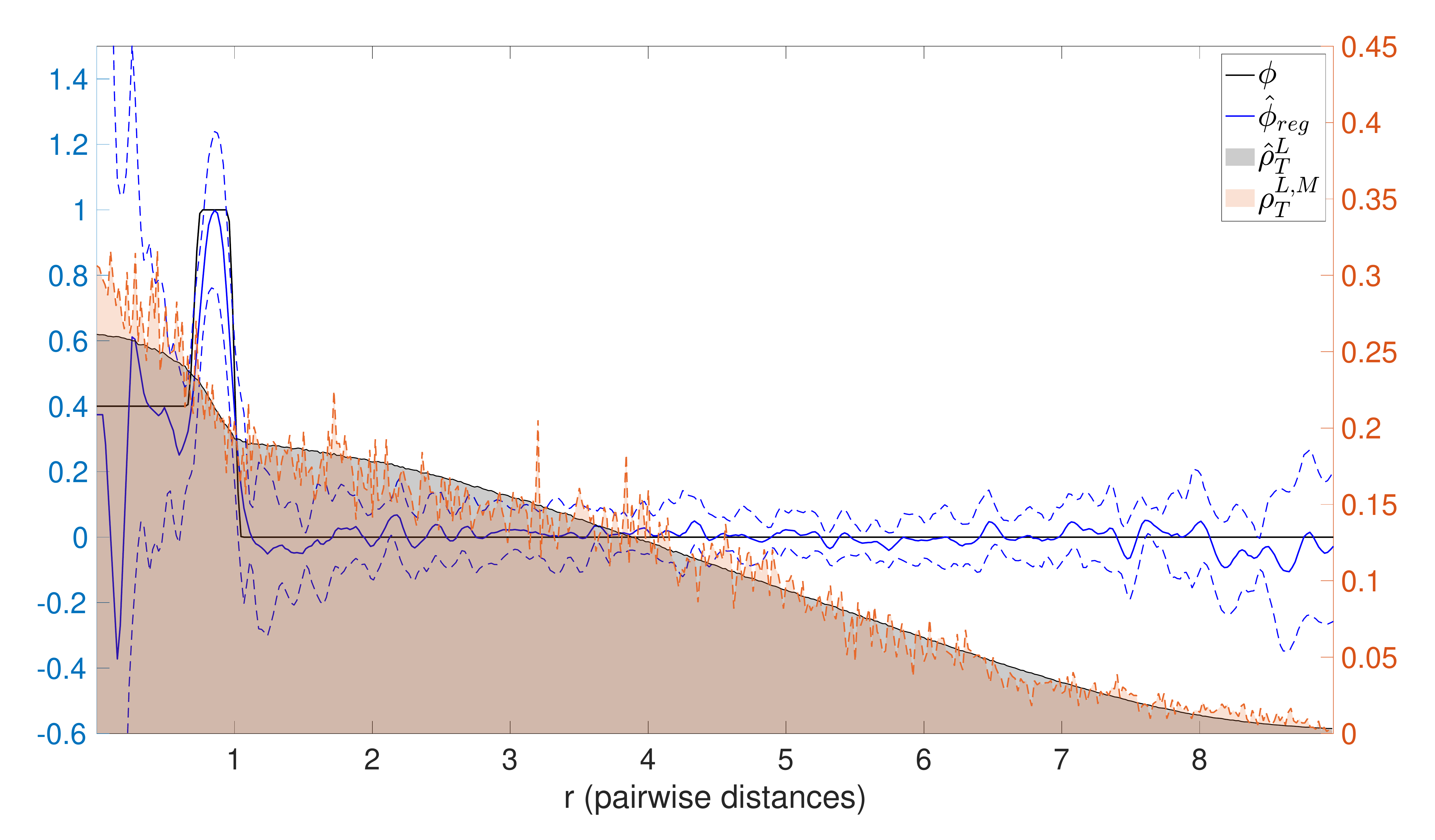}}
\subfigure[$\sigma=0.1,M=4096$]{\label{figOD:2}\includegraphics[width=0.48\textwidth]{./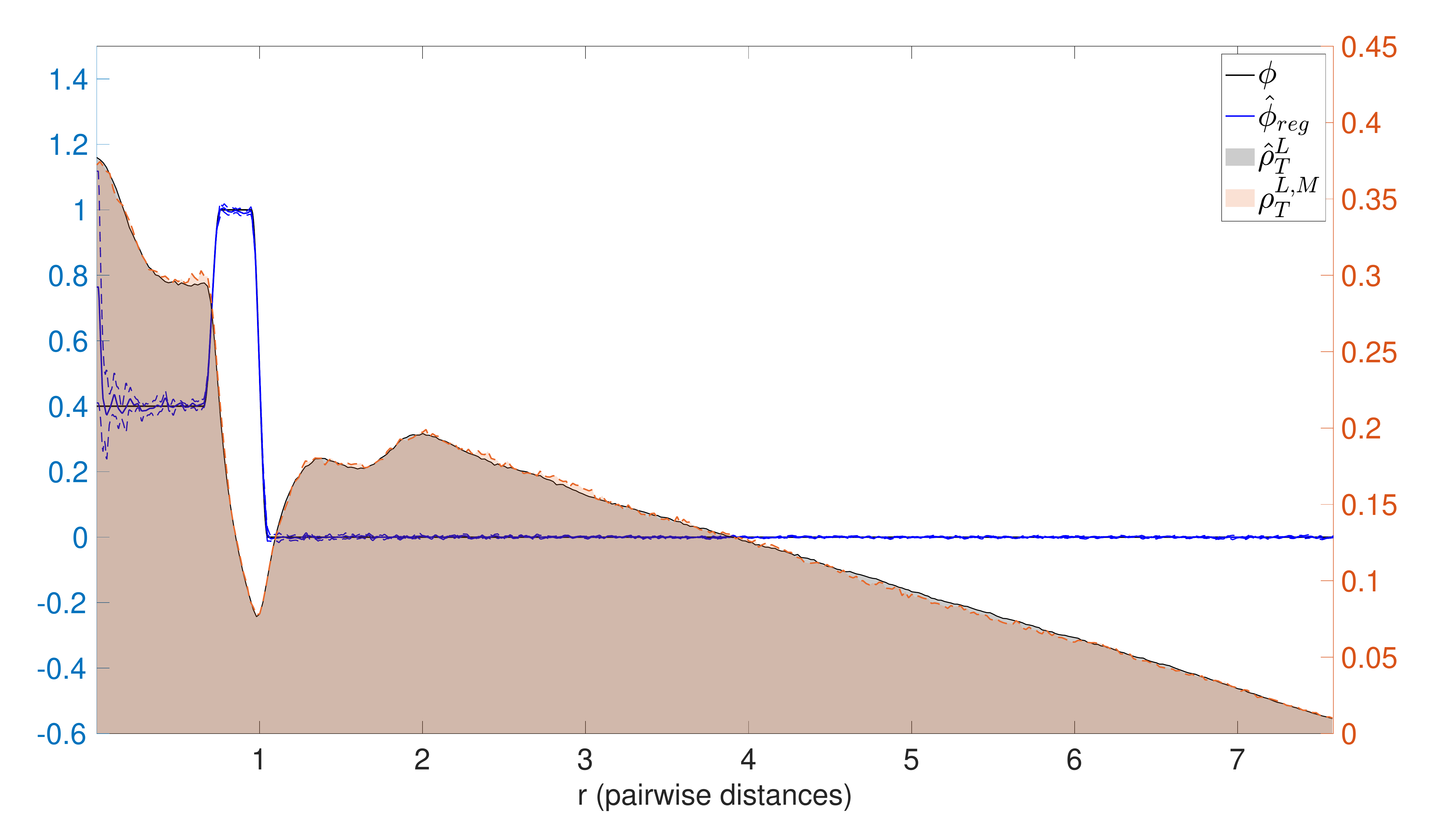}}
\subfigure[$\sigma=0.5,M=4096$]{\label{figOD:4}\includegraphics[width=0.48\textwidth]{./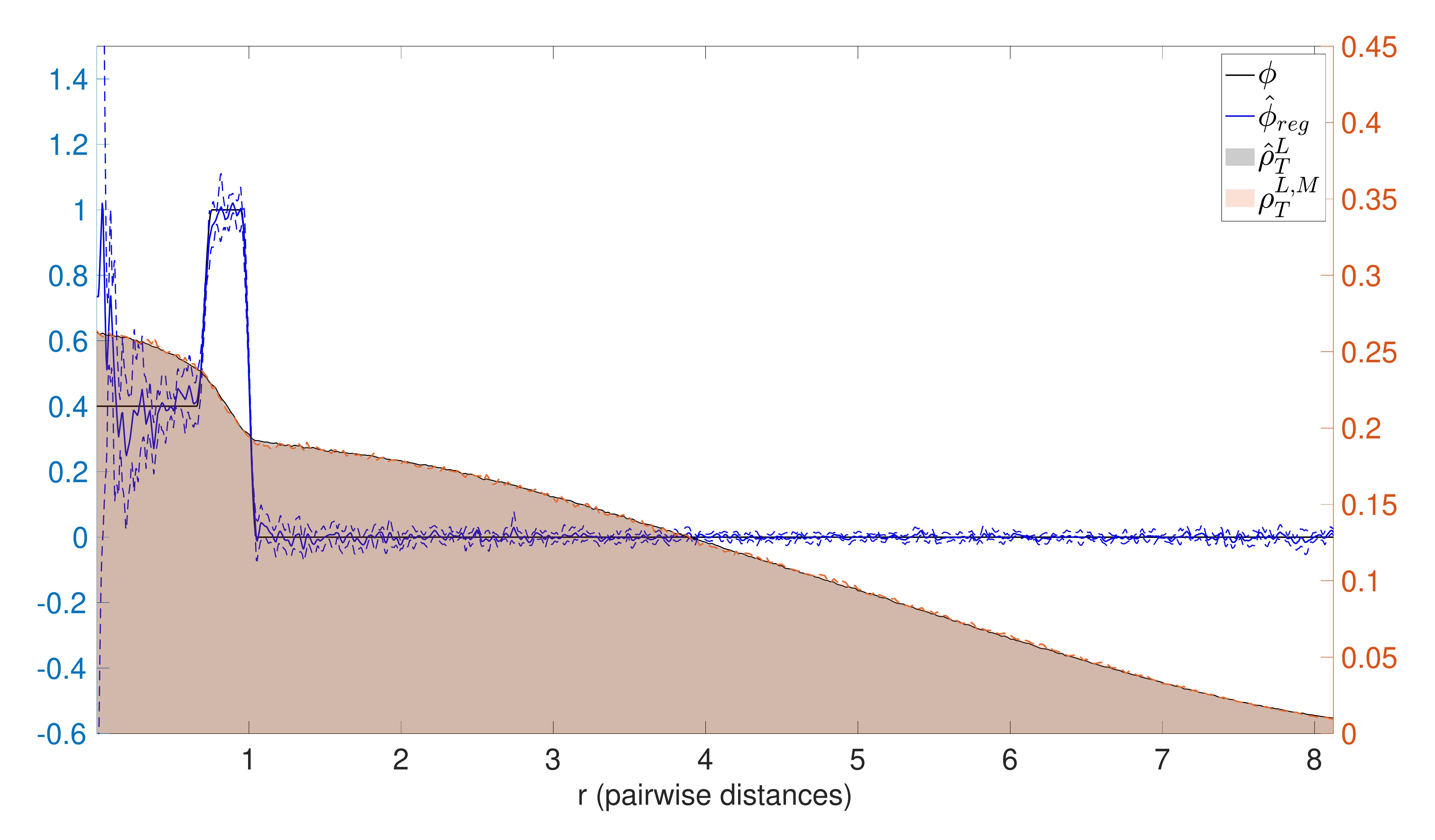}}
\caption{Stochastic opinion dynamics: comparison between true and estimated interaction kernels $\widehat\intkernel_{T,M,\hypspace}$ for different values of $M$ and $\sigma$, together with histograms (shaded regions) for $\rhoT$ and $\rhoT^M$. In black: the true interaction kernel. In blue: the mean of estimators in 10 independent trials, with dash-lines representing the standard deviation.   From top to bottom: learning from $M=2^5,2^{12}$ trajectories for kernels in systems with $\sigma=0.1$ (left) and $\sigma=0.5$ (right). The standard deviation bars on the estimated interaction kernels become smaller if $M$ increases and $\sigma$ decreases. The mean of the estimation error can be found in Figure \ref{fig:SODH1_Convergence}.}
\label{f:SODH1_kernel}
\end{figure}
Figure \ref{f:SODH1_kernel} shows that, as the number of trajectories increases, we obtain increasingly accurate approximations to the true interaction kernel, including at locations with sharp transitions of $\phi$. The lack of artifacts at these locations is an advantage provided by the use of local basis. The estimators oscillate near $0$, with amplitudes scaling with the level of noise. We believe that the reason for this phenomenon is that due to  the structure of the equations, we have terms of the form $\phi(0)\vec{0} = \vec{0}$ at, and near, $0$, with subsequent loss of information about the interaction kernel about $0$. %We see that our regularized least square estimators and smoothing do a good job in improving the accuracy at locations near 0 as long as we have sufficient amount of data. 

%%%%%%%%%%%%%%%%%%%%%%%%%%%%%%%%%%%%%%%%%%%%%

We then use the learned interaction kernels $\widehat\phi$ in Figure \ref{f:SODH1_kernel}  to predict the dynamics, and summarize the results in Figure  \ref{t:SODH1_trajM32_err}  and Table \ref{t:SODH1_traj_err}.   Even with $M=32$, our estimator produces very accurate approximations of the true trajectories both in the training time interval $[0,5]$  and the future time interval $[5, 50]$, including number and location of clusters,  and the time of their formation.   As $M$ increases to 4096, we have more accurate predictions on the locations of clusters. We impute this improvement to the better reconstruction of estimators at locations near 0.

 Next we investigate the convergence rate of estimators.  It is well-known in  approximation theory (see  Theorem 6.1 in \cite{schumaker2007spline}) that 
$\inf_{\varphi\in \hypspace_n}\|\varphi-\intkernel\|_{\infty} \leq \text{Lip}[\intkernel]n^{-1}$. With the dimension $n$ being proportional to $\smash{(\frac{M}{\log M})^{\frac{1}{3}}}$, Figure \ref{f:ODH1_Convergence_Plot} shows that the learning rate in terms of $M$ is around $M^{-0.34}$, which matches the optimal min-max rate  $M^{-\frac{1}{3}}$ stated in  Theorem \ref{maintheorem}  with $s=1$. 

We also study the convergence of the estimator as the length of the trajectory $T$ increases, for the estimator $\widehat\intkernel_{T,M,\hypspace}$ from continuous-time trajectories (i.e. without gaps between observations). The auto-correlation time for this system is estimated to be about $\tau= 10$ time units. Therefore, we use relatively long trajectories up to $T=1500$ time units to test the convergence, contributing up to about 150 effective samples. We set the dimension of the hypothesis space to be $n=4(\frac{MT/dt}{\log(MT/dt)})^{\frac{1}{3}} $ for each pair $(M,T)$, where $dt$ is the time step size of the Euler-Maruyama scheme. The convergence rate of the estimators in terms of $MT$ is about $0.33$, showing the equivalence of learning from a single long trajectory with multiple short trajectories when the underlying process is ergodic.

\begin{figure}[tbp]
\centering     %%% not \center
\subfigure[$M=32,\sigma=0.1$]{\label{figOD:5}\includegraphics[width=0.48\textwidth]{./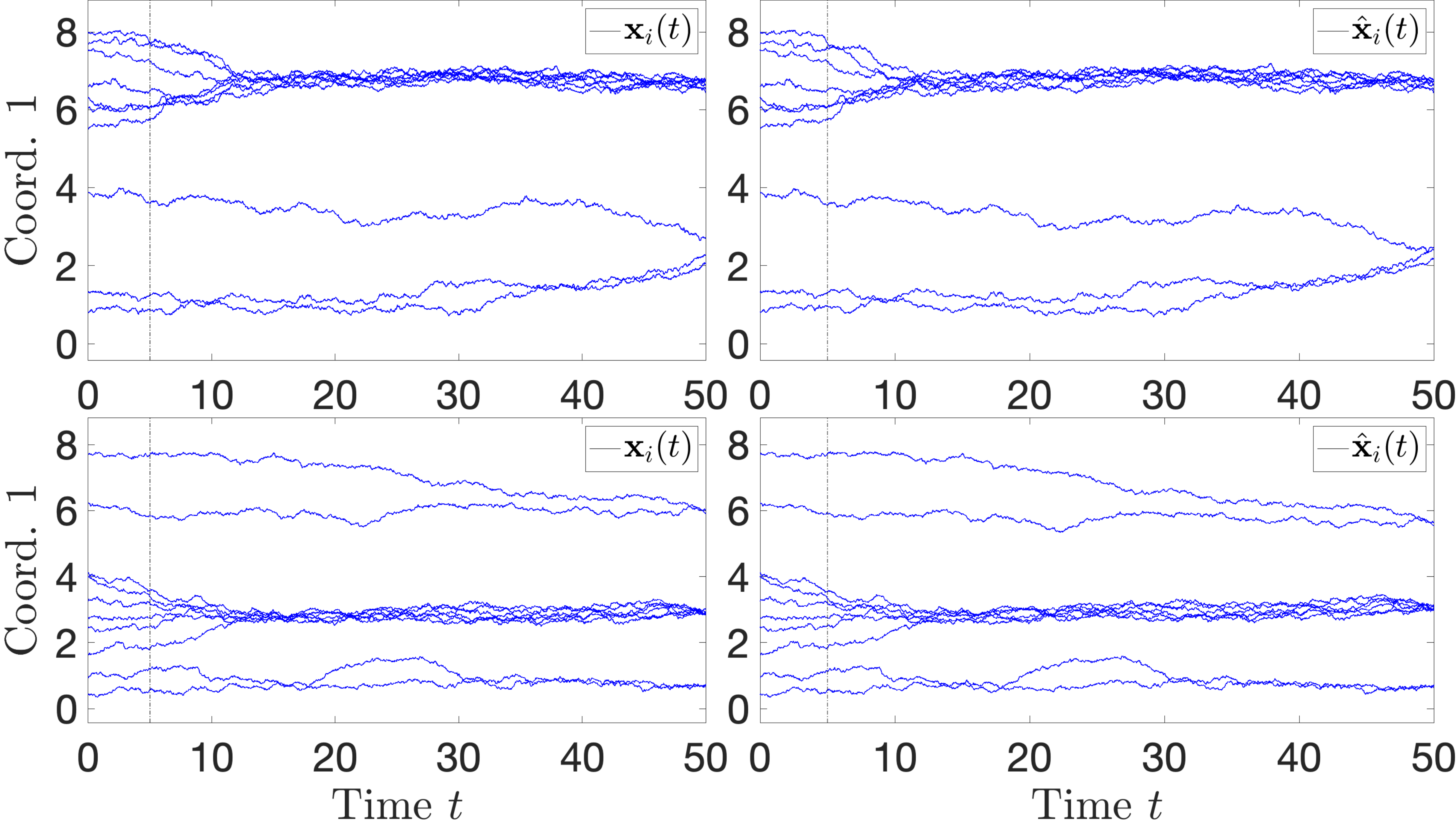}}
\subfigure[$M=32,\sigma=0.5$]{\label{figOD:6}\includegraphics[width=0.48\textwidth]{./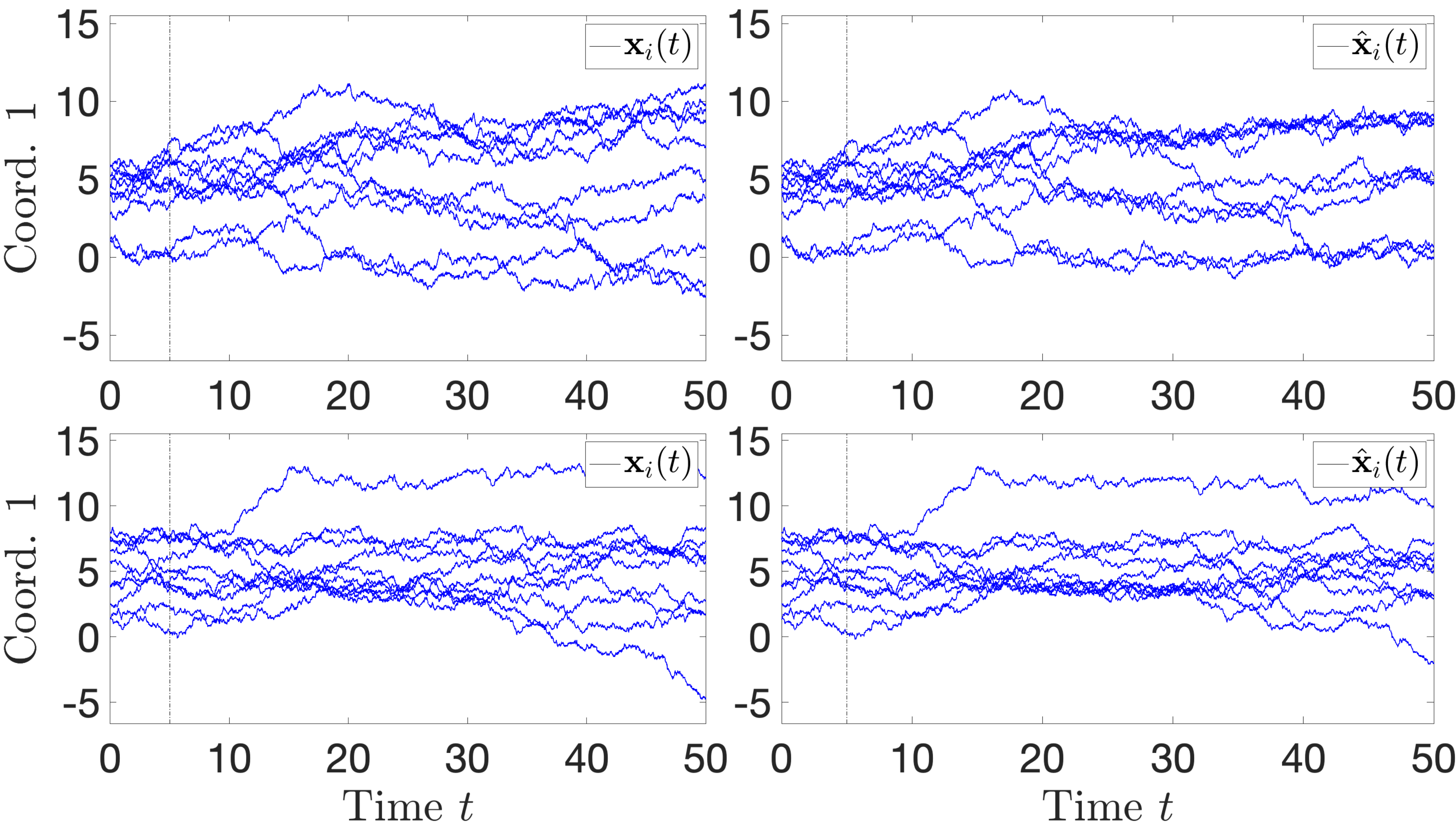}}

\subfigure[$M=4096,\sigma=0.1$]{\label{figOD:5}\includegraphics[width=0.48\textwidth]{./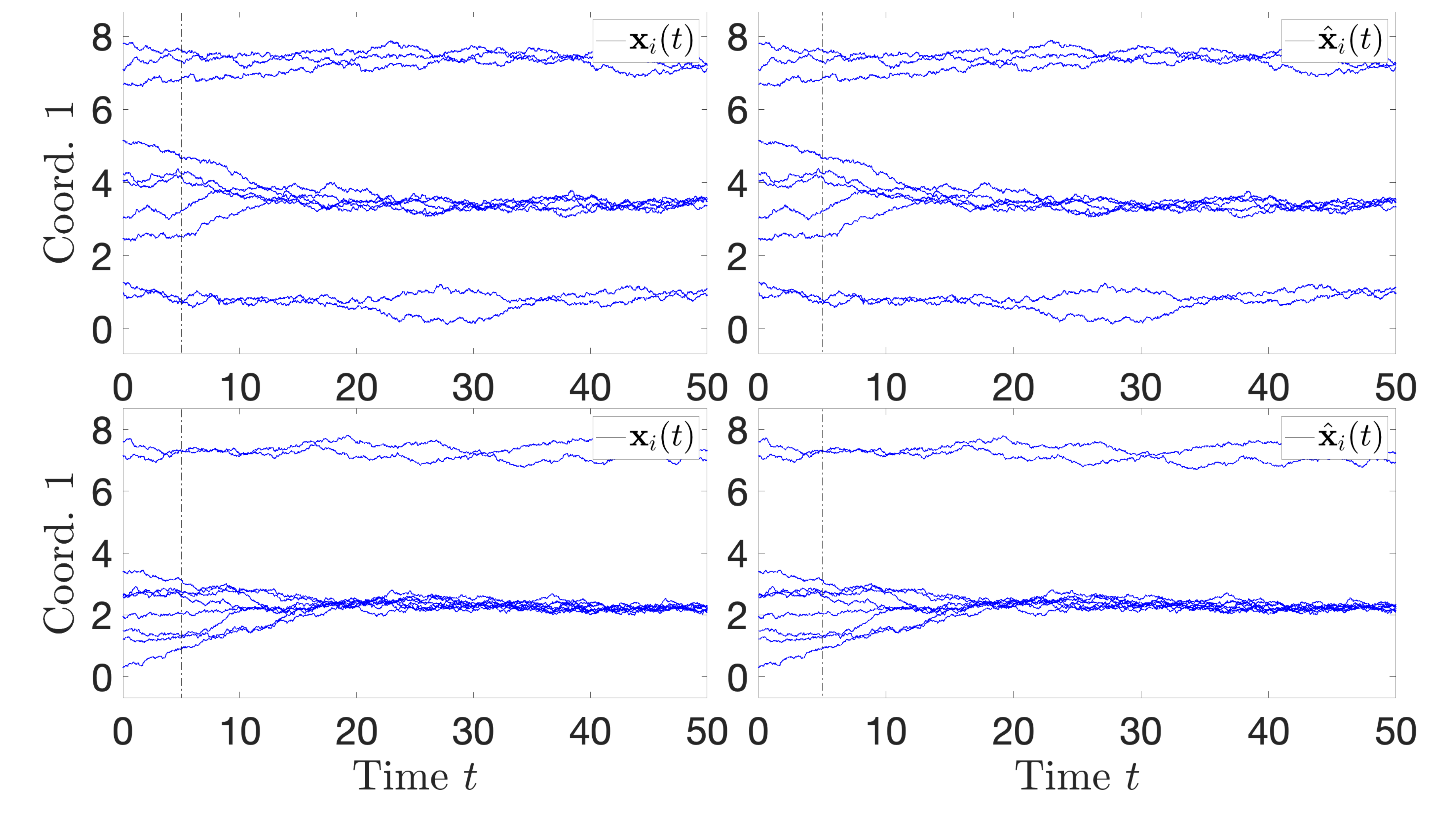}}
\subfigure[$M=4096,\sigma=0.5$]{\label{figOD:6}\includegraphics[width=0.48\textwidth]{./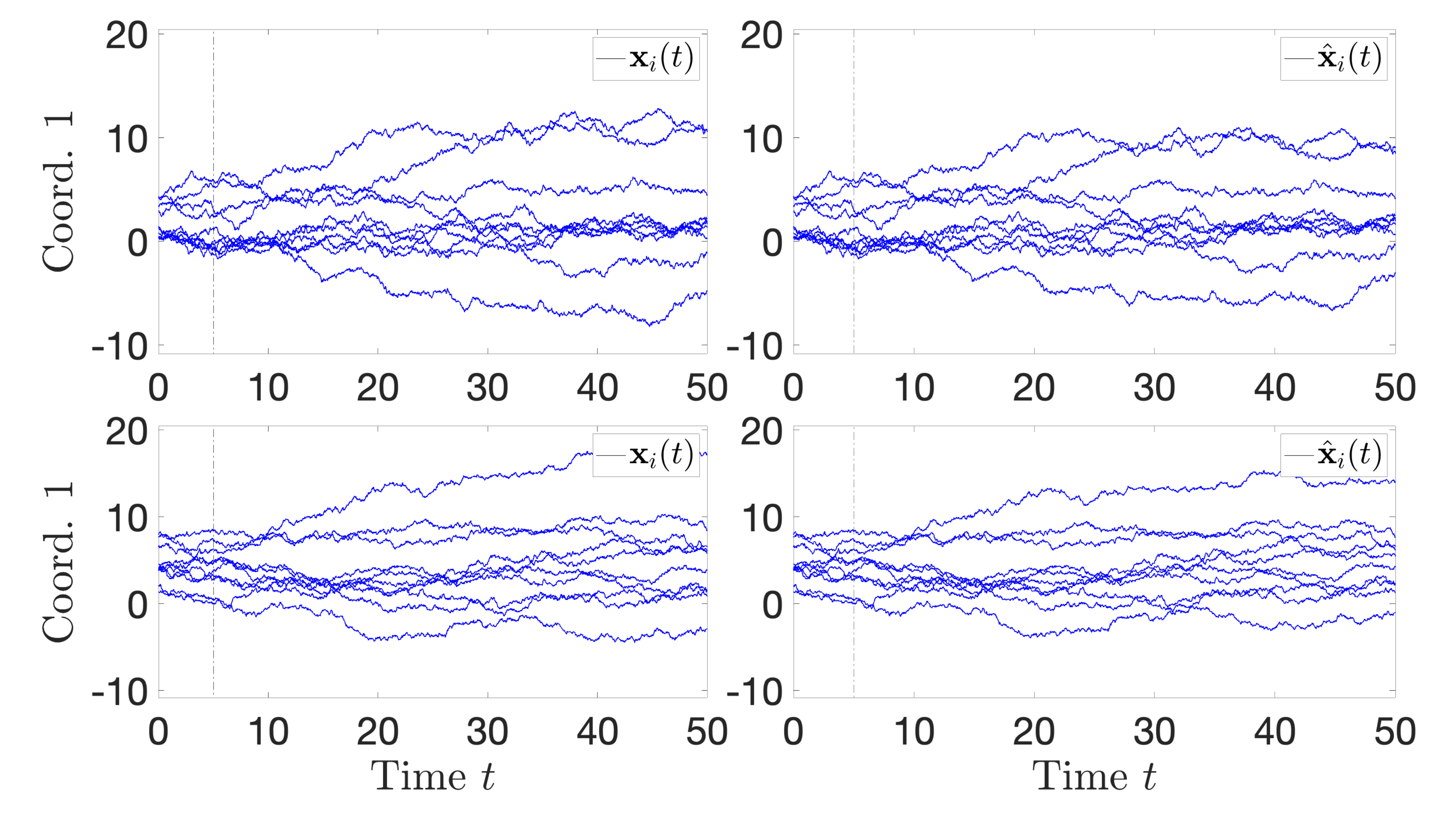}}

\caption{ \textmd{Stochastic opinion dynamics: trajectory prediction. In each panel:  $\bX_t$ (left column) and $\widehat \bX_t$ (right column) obtained with  the true kernel $\intkernel$ and the estimated interaction kernel $\widehat \intkernel_{T,M,\hypspace}$   from $M=32$ (top panel) and 4096 (bottom panel) trajectories, for an initial condition in the training data (top row in each panel) and a (new) initial condition randomly chosen from $\mu_0$ (bottom row in each panel). The black dashed vertical line at $t = T = 5$ divides the  ``training" interval $[0, T]$ from the ``prediction" interval [5,50].  As $M$ increases, our estimators achieve better approximation of the true kernel overall, and at regions near 0 (see Figure \ref{f:SODH1_kernel}). As a result, they produced more faithful prediction of the number and location of clusters for large time.  Statistics of trajectory prediction errors are reported in Table \ref{t:SODH1_traj_err}. }}\label{t:SODH1_trajM32_err}
\end{figure}

\begin{table}[tbp]
%\caption{}
\centering
\begin{tabular}{| c || c | c |} 
\hline
                                                             & $[0, 5]$                                                    & $[5, 50]$\\
\hline
$M=32, \sigma=0.1,\text{mean}_{\text{traj}}$: Training ICs & $2.0 \cdot10^{-1} \pm 1.4 \cdot10^{-1}$  & $6.3 \cdot10^{-1} \pm 5.7 \cdot10^{-1}$\\
\hline            
$M=32,\sigma=0.1,\text{mean}_{\text{traj}}$: Random ICs & $1.7 \cdot10^{-1} \pm 1.2 \cdot10^{-1}$  & $5.7 \cdot10^{-1} \pm 3.9 \cdot10^{-1}$\\
\hline      
$M=32, \sigma=0.5,\text{mean}_{\text{traj}}$: Training ICs & $3.8 \cdot10^{-1} \pm 1.7 \cdot10^{-1}$  & $4.0 \cdot10^{0} \pm 2.3 \cdot10^{0}$\\
\hline            
$M=32, \sigma=0.5,\text{mean}_{\text{traj}}$: Random ICs & $3.6 \cdot10^{-1} \pm 1.1\cdot10^{-1}$  & $3.5 \cdot10^{0} \pm 1.4 \cdot10^{0}$\\
\hline     
\hline
$M=4096, \sigma=0.1,\text{mean}_{\text{traj}}$: Training ICs & $ 2.1\cdot10^{-2} \pm 2.0 \cdot10^{-2}$  & $ 9.3 \cdot10^{-2} \pm 1.6 \cdot10^{-1}$\\
\hline            
$M=4096,\sigma=0.1,\text{mean}_{\text{traj}}$: Random ICs & $2.1 \cdot10^{-2} \pm 2.3 \cdot10^{-2}$  &$ 9.8 \cdot10^{-2} \pm 1.7 \cdot10^{-1}$\\
\hline      
$M=4096, \sigma=0.5,\text{mean}_{\text{traj}}$: Training ICs & $ 5.2 \cdot10^{-2} \pm 3.5 \cdot10^{-2}$  & $ 3.8 \cdot10^{-1} \pm 3.0  \cdot10^{-1}$\\
\hline            
$M=4096, \sigma=0.5,\text{mean}_{\text{traj}}$: Random ICs & $ 5.2 \cdot10^{-2} \pm 3.5 \cdot10^{-2}$  & $ 3.8 \cdot10^{-1} \pm 3.0  \cdot10^{-1}$\\
\hline    
\end{tabular}
\caption{ \textmd{Stochastic opinion dynamics: means and standard deviations of trajectory prediction errors. The tests with ``Training ICs'' use initial conditions from the training data set.  The tests with  ``Random ICs use initial conditions that are randomly drawn from $\mu_0$. Means are taken over $M$ trajectories. There is little difference between errors on training and test ICs, indicating the prediction of trajectories generalizes perfectly to new ICs.}  }
\label{t:SODH1_traj_err}
\end{table}

We also investigate the effects of the scale of the random noise, which is represented by the standard deviation $\sigma$. Figure \ref{f:SODH1_kernel} shows that the estimators for the system with $\sigma=0.5$ have much large oscillations than those with $\sigma=0.1$. The left plot in Figure \ref{f:ODH1_Convergence_Plot}  shows that the scale of the random noise does not affect the learning rate, matching our theory.  We also see that the absolute $L^2(\rhoT)$  error of estimators increase as the system noise increases, this may indicate that the coercivity constant decreases as the level of noise in the system increases. The left  plot in Figure \ref{t:ODH1_Convergence_Plot} shows that the scale of the errors increase linearly in $\sigma$ (in particular, when the observation gap is 1).

\begin{figure}[tbp]
\centering
\subfigure{\label{fig:SODH1_Convergence}\includegraphics[width=0.49\textwidth]{./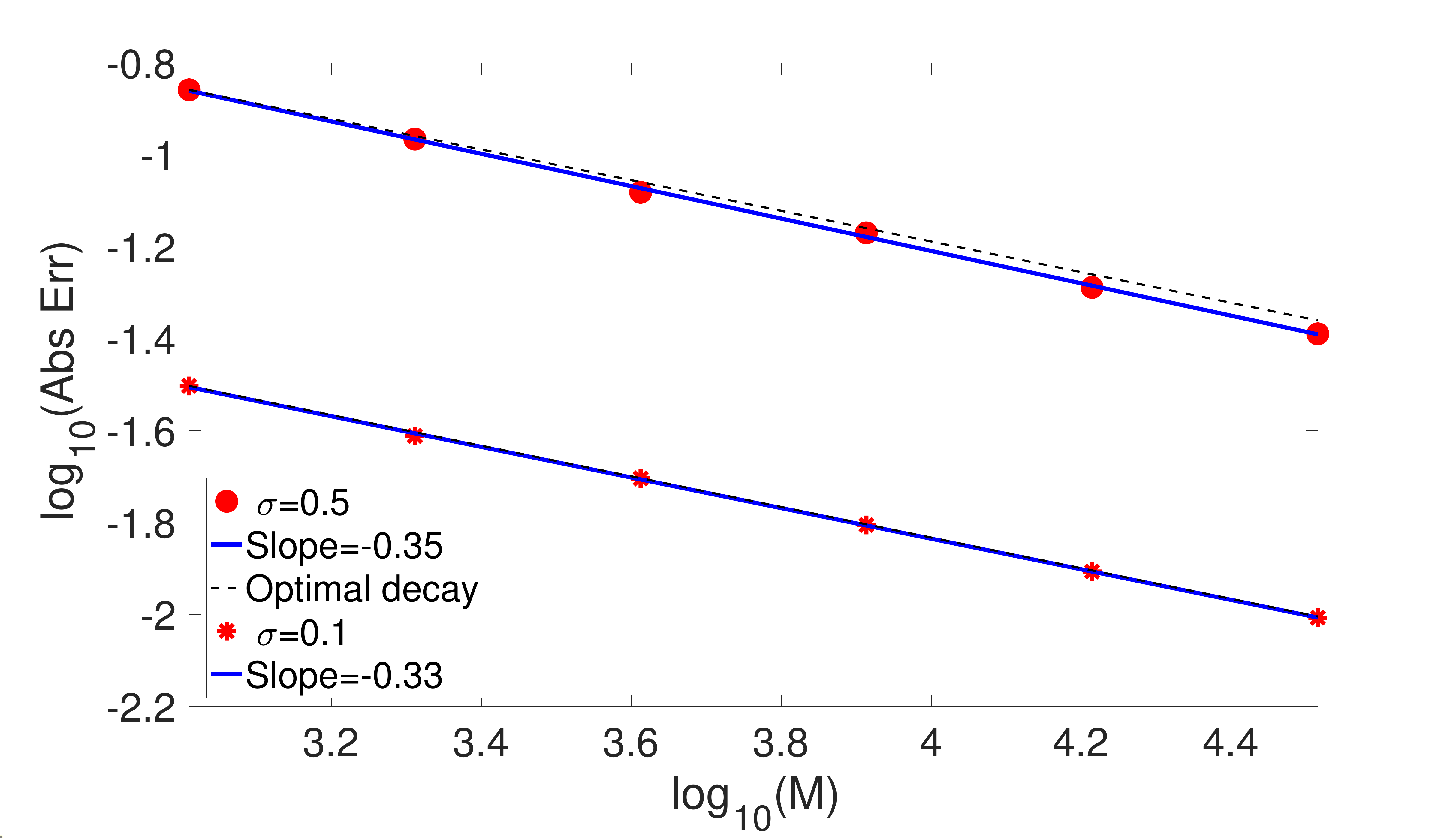}}
\subfigure{\label{t:ODH1_Convergence}\includegraphics[width=0.49\textwidth]{./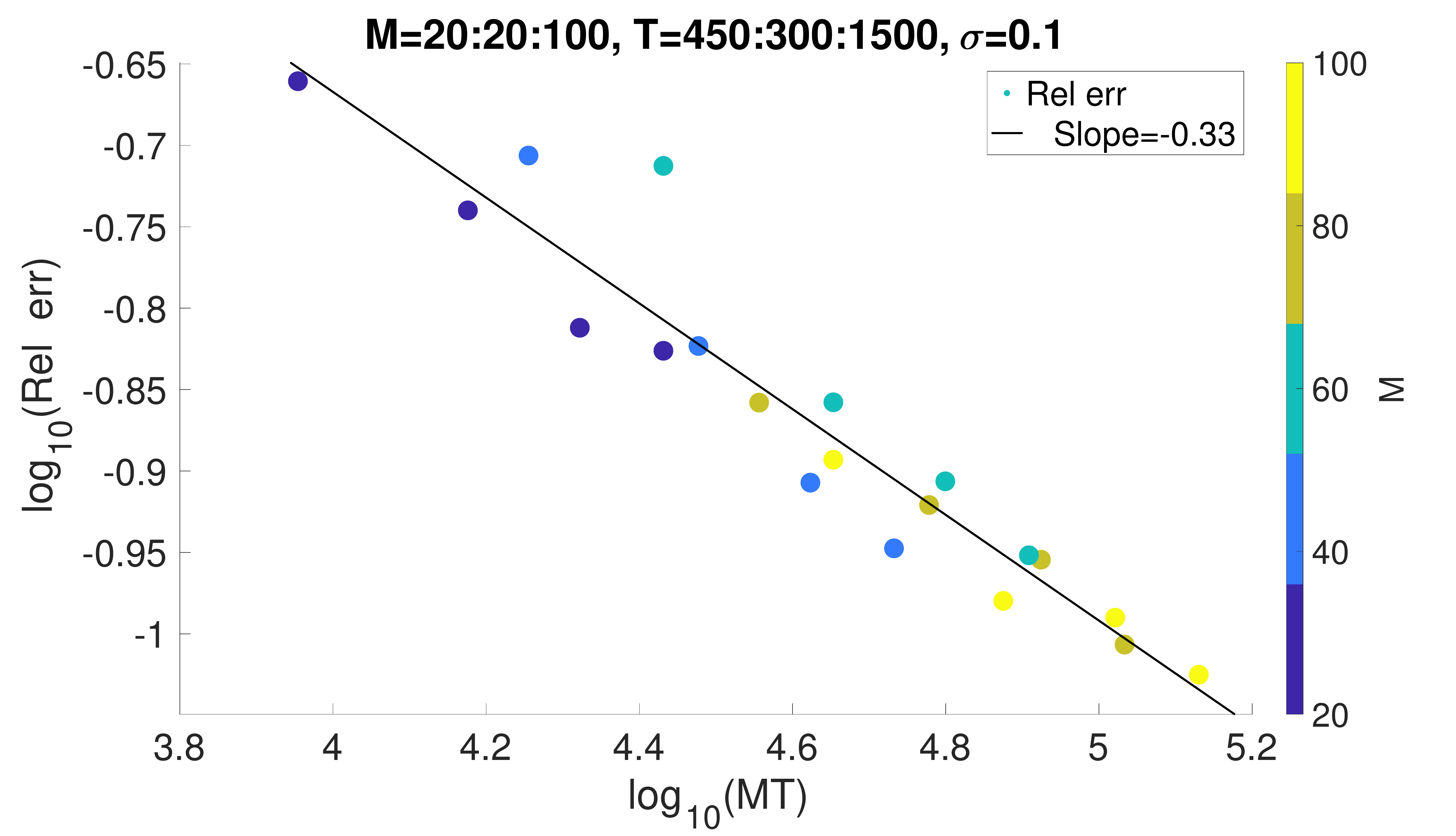}}
\caption{Stochastic opinion dynamics: learning rates for continuous-time observations. Left: the convergence rate of the estimators in terms of $M$ is $0.35$ for $\sigma = 0.5$ and is $0.33$ for $\sigma = 0.1$, close to the theoretical optimal min-max rate ${1}/{3}$ (shown in the black dot line).  Right: the convergence rate of the estimators in terms of $MT$,when both $M$ and $T$ increases,  is about $0.33$.  The colors of points are assigned  according to $M$. The learning rate is still close to the theoretical optimal min-max rate ${1}/{3}$, showing the equivalence of learning from a single long trajectory with multiple short trajectories when the underlying process is ergodic. 
% where the estimators are obtained by choosing the dimension of hypothesis space $ \asymp (\frac{MT}{\log  MT})^{\frac{1}{3}}$ and evaluated by $L^2(\rho_{T_{max}})$ with $T_{max}=1500$. The colors of points are assigned  according to $M$. The rate is still close to the theoretical optimal min-max rate ${1}/{3}$, showing the equivalence of learning from a single long trajectory with multiple short trajectories when the underlying process is ergodic. 
}
\label{f:ODH1_Convergence_Plot} 
\end{figure}

\begin{figure}[tbp]
\centering
\subfigure{\label{t:SODH1_Obsgap}\includegraphics[width=0.49\textwidth]{./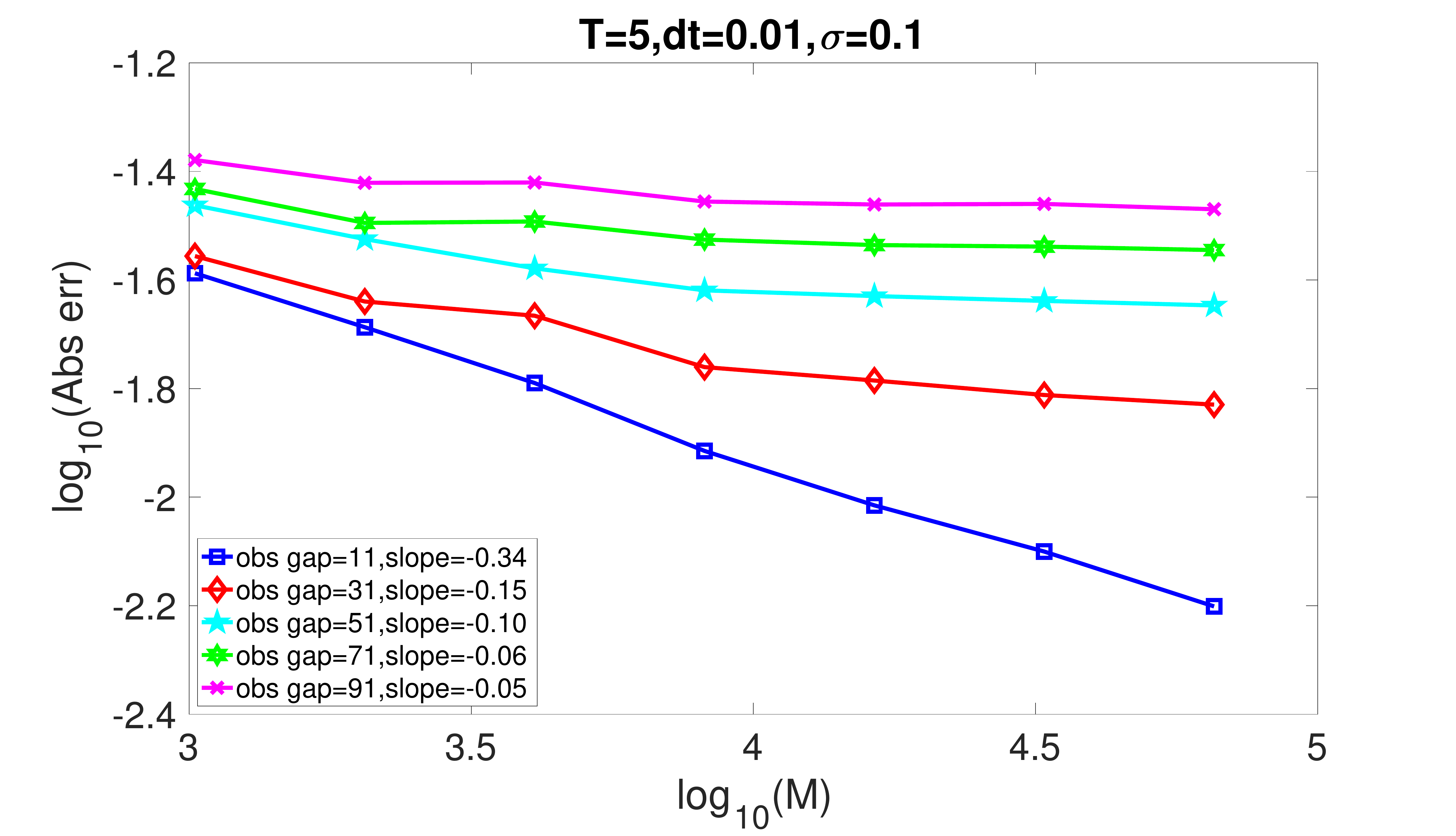}}
\subfigure{\label{fig:SODH1_NoisevsErr}\includegraphics[width=0.49\textwidth]{./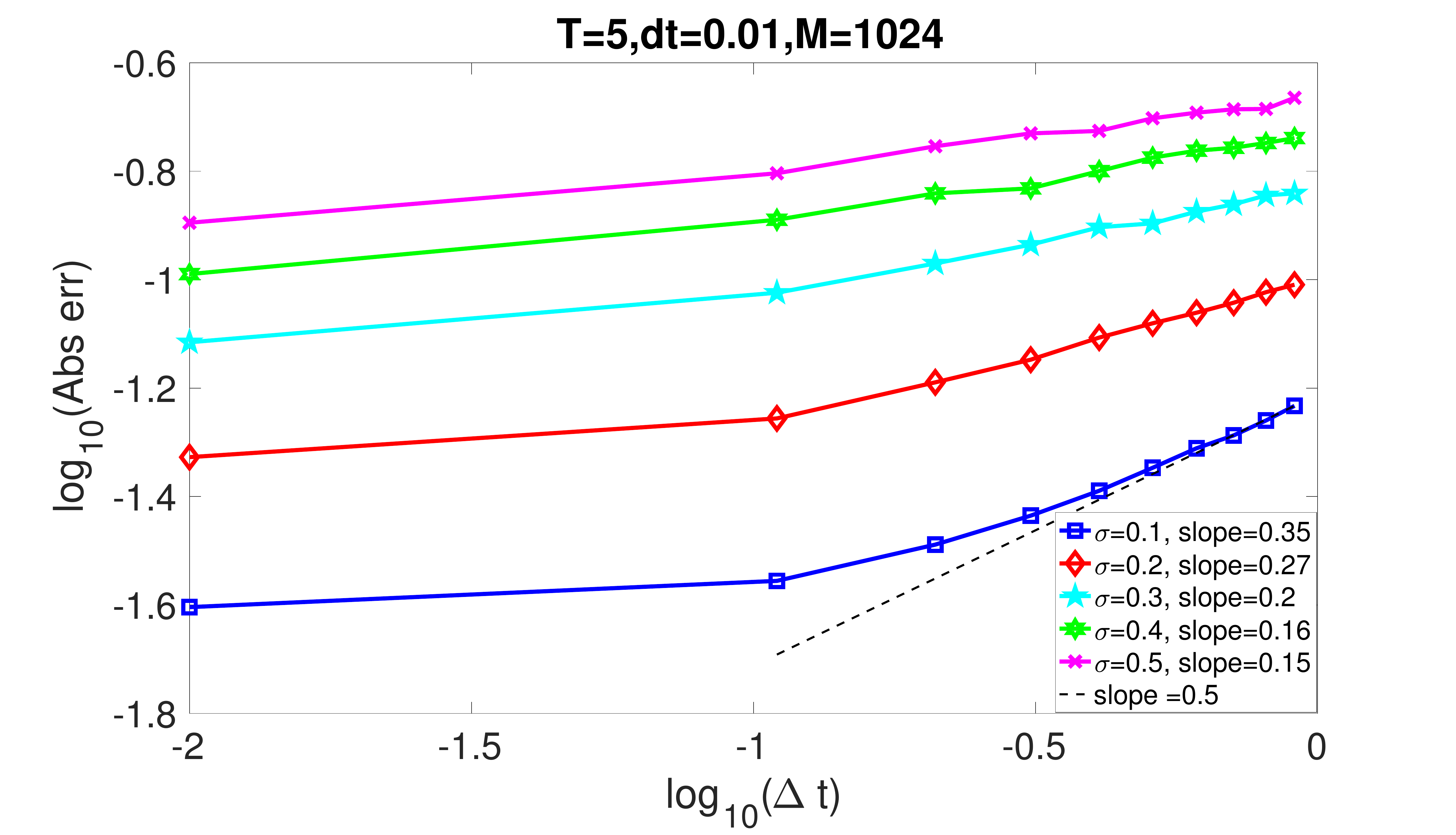}}
\caption{Stochastic opinion dynamics: discretization error due to discrete-time observation. Left: the learning rates of estimators $\widehat\intkernel_{L,T,M,\hypspace}$ obtained from data with different observation gaps $\Delta t= k dt$ for $k$ ranging from $11$ to $100$. Recall that $L=T/\Delta t$. As $k$ increases, the learning rate curves become flat, due to the bias induced by discretization of the likelihood function \eqref{lkhd_cts} on coarse time grids. Right: the log-log plot of the absolute error of the estimator in terms of observation gap $\Delta t= k dt$ for $k$ ranging from $1$ to $100$, for systems with different levels of random noise in terms of $\sigma$, computed with $M=1024$, $T=5$ and $dt = 0.01$ fixed. The orders of the absolute error in both $\sigma$ and $\Delta t$ are bounded by the theoretical order $\sigma O((\Delta t)^{1/2})$, dominating the statistical error due to sampling, finite-dimensional approximation, and noise. The slopes of the lines are calculated using points whose $x$ coordinate fall in the range $[-1,0]$.}  \label{t:ODH1_Convergence_Plot} 
\end{figure}

Finally, we study the discretization error due to approximation of the integral in the likelihood using discrete-time observations. In the left plot of Figure \ref{t:ODH1_Convergence_Plot}, as the observation gap $k$ increases, the learning rate curves become flat, due to the error induced by discretization of the likelihood function \eqref{lkhd_cts}. The right plot shows that the absolute error of the estimator is dominated by $\sigma O((\Delta t)^{1/2})$.

\subsection{Example 2: Stochastic Lennard Jones dynamics}

%\MM{TBD. Sui, perhaps you can make all the changes to mirror the previous section}

In this example, we consider the Lennard-Jones type kernel $\intkernel(r)=\frac{\Phi'(r)}{r}$,  with 
\[
\Phi(r) = \frac{p\epsilon}{(p-q)}\left[\frac{q}{p}\left(\frac{r_m}{r}\right)^{p}-\left(\frac{r_m}{r}\right)^{q} \right]
\] for some $p>q \in \mathbb{N}$. The  system of particles is assumed to be associated with a potential energy function only depending on the pairwise distance and $\Phi$, and the evolution is driven by minimization of the energy function.  In particular,  $\epsilon$ represents the depth of the potential well, $r$ is the distance between the particles, and $r_m$ is the distance at which the potential reaches its minimum. At $r_m$, the potential function has the value $-\epsilon$.  The  $r^{-p}$ term, which is the repulsive term, describes Pauli repulsion at short ranges due to overlapping electron orbitals, and the $r^{-q}$ term, which is the attractive long-range term.   The corresponding system  has wide applications in molecular dynamics and materials sciences  where  $\phi$ models atom-atom interactions.  Note that $\intkernel$ is singular at $r=0$: we truncate it at $r_{\text{trunc}}$ by connecting it with an exponential function of the form $a\exp(-br^{12})$ so that it has a continuous derivative on $\mathbb{R}^+$. 

%Let $\hypspace_n$ be the function space consisting of piecewise  linear functions on $n$ uniform partitions of the learning interval (see the learning algorithm).  Then by well-known results in  approximation theory (see the survey \cite{devore1992wavelets}) that 
%$\inf_{\varphi\in \hypspace_n}\|\varphi-\intkernel\|_{\infty} \leq \text{Lip}[\intkernel']n^{-2}$, therefore the conditions in Theorem \ref{t:firstordersystem:thm_optRate} are satisfied with $s=2$. 

In this system, the particle-particle interactions are all short-range repulsions and long-range attractions. The short-range repulsion force prevents the particles to collide and long-range attractions keep the particles in the flock. In the deterministic setting, 
the system evolves to equilibrium configurations very quickly, which are crystal-like structure, whose pairwise distance corresponds to the local minimizers of the associated energy function.  Table \ref{t:LJ_kernel_params} and \ref{t:LJ_system_params} summarize the system and learning parameters.

 Note that the true kernel $\phi$ is not compactly supported. But in our simulations, we observe the dynamics up to a time $T$ which is a fraction of the equilibrium time. Since the particles only explore a bounded region due to the large-range attraction,  $\rho_T$ is essentially compactly supported on a bounded region (see the histogram background of Figure \ref{t:LJH1_kernel}), on which $\phi$ is in our admission space.

%%%%%%%%%%%%%%%%%%%%%%%%%%%%%%%%%%%%%%%%%

We  use piecewise linear functions on $n$ uniform partitions of the learning interval to approximate the true kernel $\intkernel$.  With $M=32,$ Figure \ref{t:LJH1_kernel} shows that  we  have already obtained  faithful approximations to the true interaction kernel, except for on regions are close 0. Increasing number of observations improves the accuracy of estimators at locations near 0, which seems to be very helpful  for the system with larger noise level. 

%%%%%%%%%%%%%%%%%%%%%%%%%%%%%%%%%%%%%%%%%%%%%

In terms of the trajectory prediction, we use the learned interaction kernels $\widehat\phi$ in Figure \ref{f:SODH1_kernel}. We  summarize the results in Figure  \ref{t:SLJH1_trajM32_err}  and Table \ref{t:SLJH1_traj_err}.   In the experiments, we study two cases, one with small random noise 
 where the particles still form an  equilibrium configuration, and then this configuration have small fluctuation in the space; the other one with medium level of random noise,   where the random noise begins to break the formation of a fixed equilibrium configuration and we see the transition between different  configurations.  We see that in both cases, our estimators produce good prediction of the true dynamics in both training and future time interval. 

We plot the convergence rate of estimators in terms of $M$ in the right plot of Figure \ref{f:LJH1_Convergence_Plot}.  In this case, we have  
$\inf_{\varphi\in \hypspace_n}\|\varphi-\intkernel\|_{\infty} \leq \text{Lip}[\intkernel']n^{-2}$. We choose a choice of dimension $n$  proportional to $(\frac{M}{\log M})^{\frac{1}{5}}$, our numerical results show that the learning rate  is around $M^{-0.39}$, which matches the optimal min-max rate  $M^{-\frac{2}{5}}$ stated in  Theorem \ref{maintheorem}.  

We also study the  convergence of the estimators as the length of the trajectory $T$ increases. In this example with $\sigma= 0.35$,  the estimated auto-correlation time is about $\tau= 10$ time units. Therefore, we use relatively long trajectories up to $T=1200$ time units, contributing up to about 120 effective samples. We set the dimension of the hypothesis space to be $n=4(\frac{MT/dt}{\log(MT/dt)})^{\frac{1}{5}} $ for each pair $(M,T)$, where $dt$ is the time step size of the Euler-Maruyama scheme.
%We set the effective sample size to be $N_{ess} = MT/\tau$, and we set the dimension of the hypothesis space to be $n=4(\frac{MT_{\mathrm{steps}}}{\log MT_{\mathrm{steps}}})^{\frac{1}{5}} $ for each pair $(M,T)$ with $T_{\mathrm{steps}}=\frac{T}{dt}$.  
 The right plot of Figure \ref{f:LJH1_Convergence_Plot} shows that  the rate is 0.39, indicating the equivalence between a single long trajectory and multiple short trajectories for inference.

\begin{table}[H]
\centering
\begin{tabular}{ | c | c | c | c | c |}
\hline 
 $p$ &$q$ & $\epsilon$ & $r_m$ & $r_{\text{trunc}}$  \\ 
\hline 
$8$ &2& $1$ & $1$ &0.95\\
\hline
\end{tabular}
\caption{\textmd{\footnotesize{(Stochastic LJ) Parameters for the Lennard Jones kernel} }}
\label{t:LJ_kernel_params}
\end{table}

\begin{table}[H]
\centering
\begin{tabular}{ |c| c|c|c|c|c|c|c|c|}
\hline 
 $d$ &$N$&$M_{\rhoT}$&  $dt$  & $[0;T;T_{f}]$ & $\ProbIC$  & deg($\psi$) &$n$    \\ 
\hline 
2&10&$5\cdot 10^4$&0.001&[0;0.5;20]&$\mathcal{N}(0,I)$&1&$30(\frac{M}{\log M})^{\frac{1}{5}}$\\
\hline
\end{tabular}
\caption{\textmd{\footnotesize{(Stochastic LJ) Parameters for the system} }}
\label{t:LJ_system_params}
\end{table}

\begin{figure}[tbp]
\centering     %%% not \center
\subfigure[$\sigma=0.05,M=128$]{\label{figLJ:1}\includegraphics[width=0.48\textwidth]{./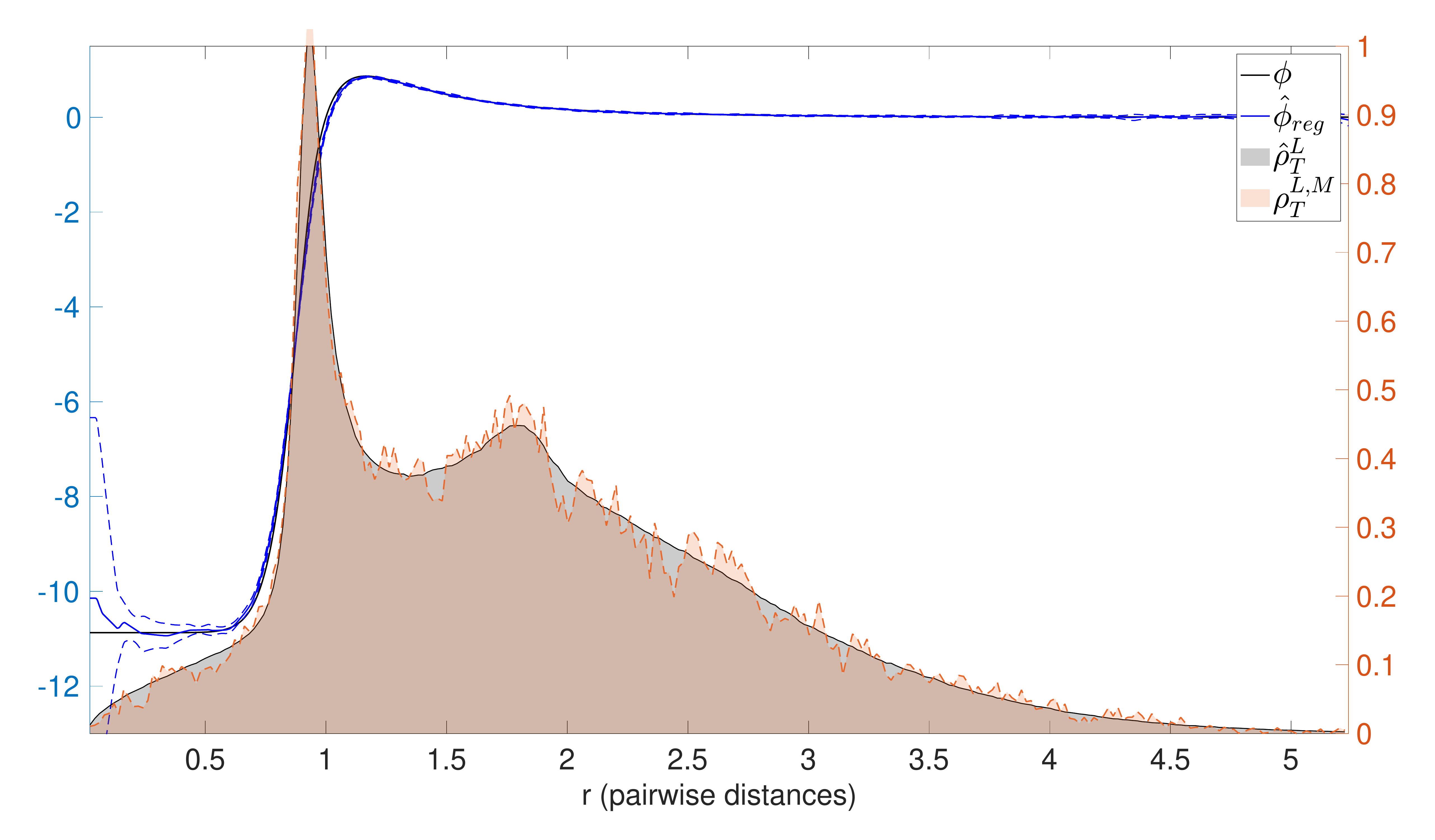}}
\subfigure[$\sigma=0.25,M=128$]{\label{figLJ:3}\includegraphics[width=0.48\textwidth]{./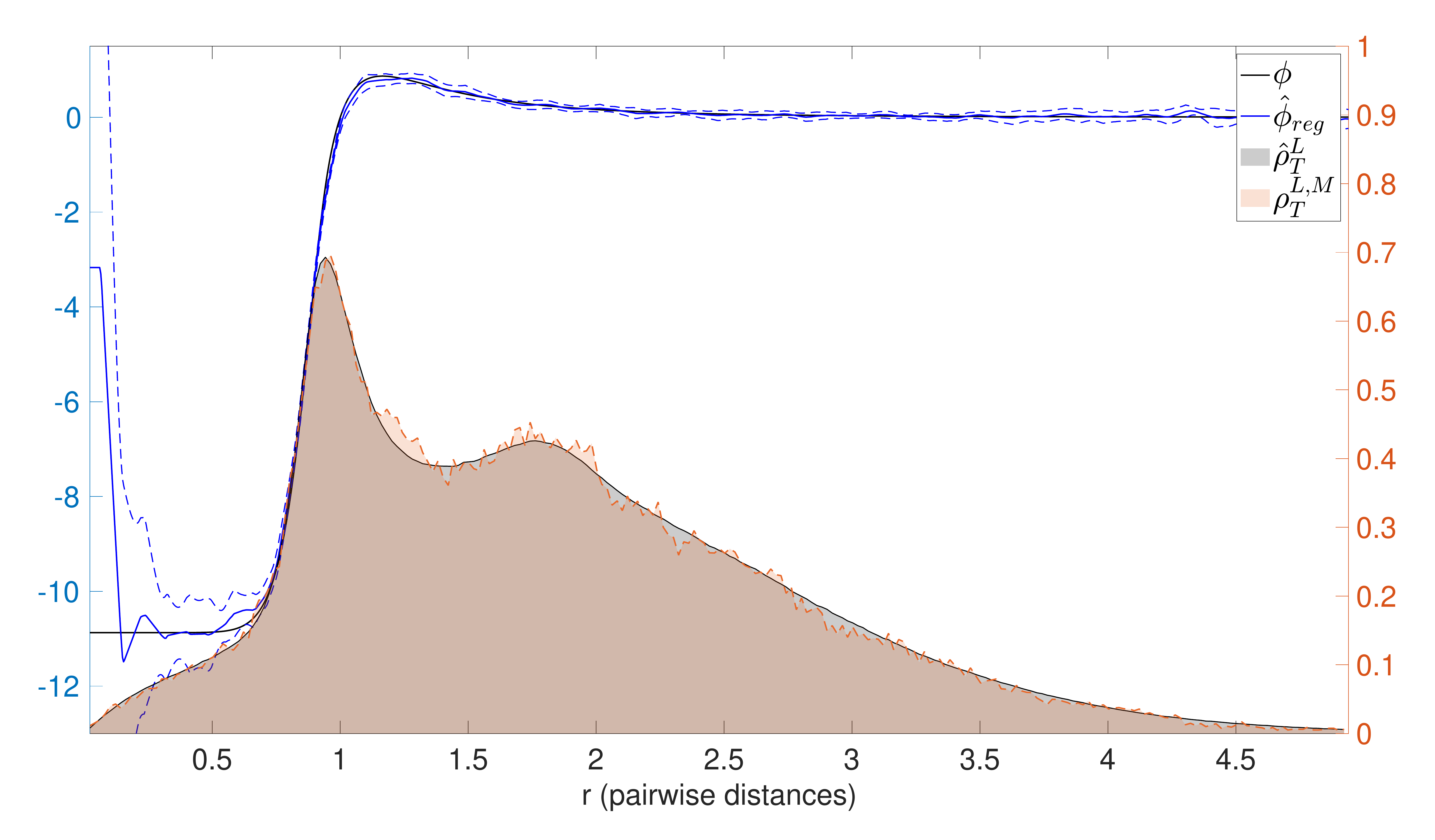}}
\subfigure[$\sigma=0.05,M=1024$]{\label{figLJ:2}\includegraphics[width=0.48\textwidth]{./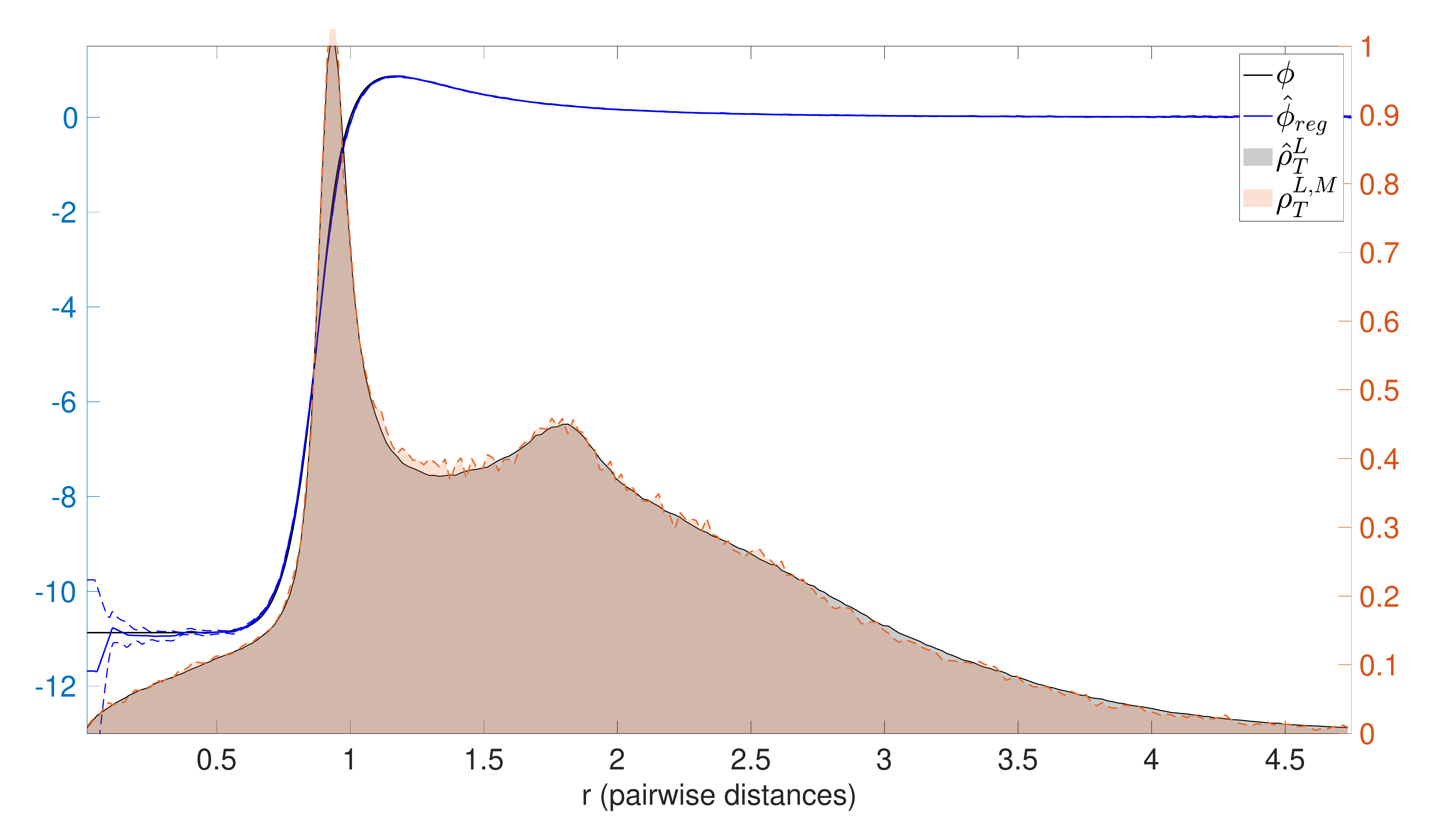}}
\subfigure[$\sigma=0.25,M=1024$]{\label{figLJ:4}\includegraphics[width=0.48\textwidth]{./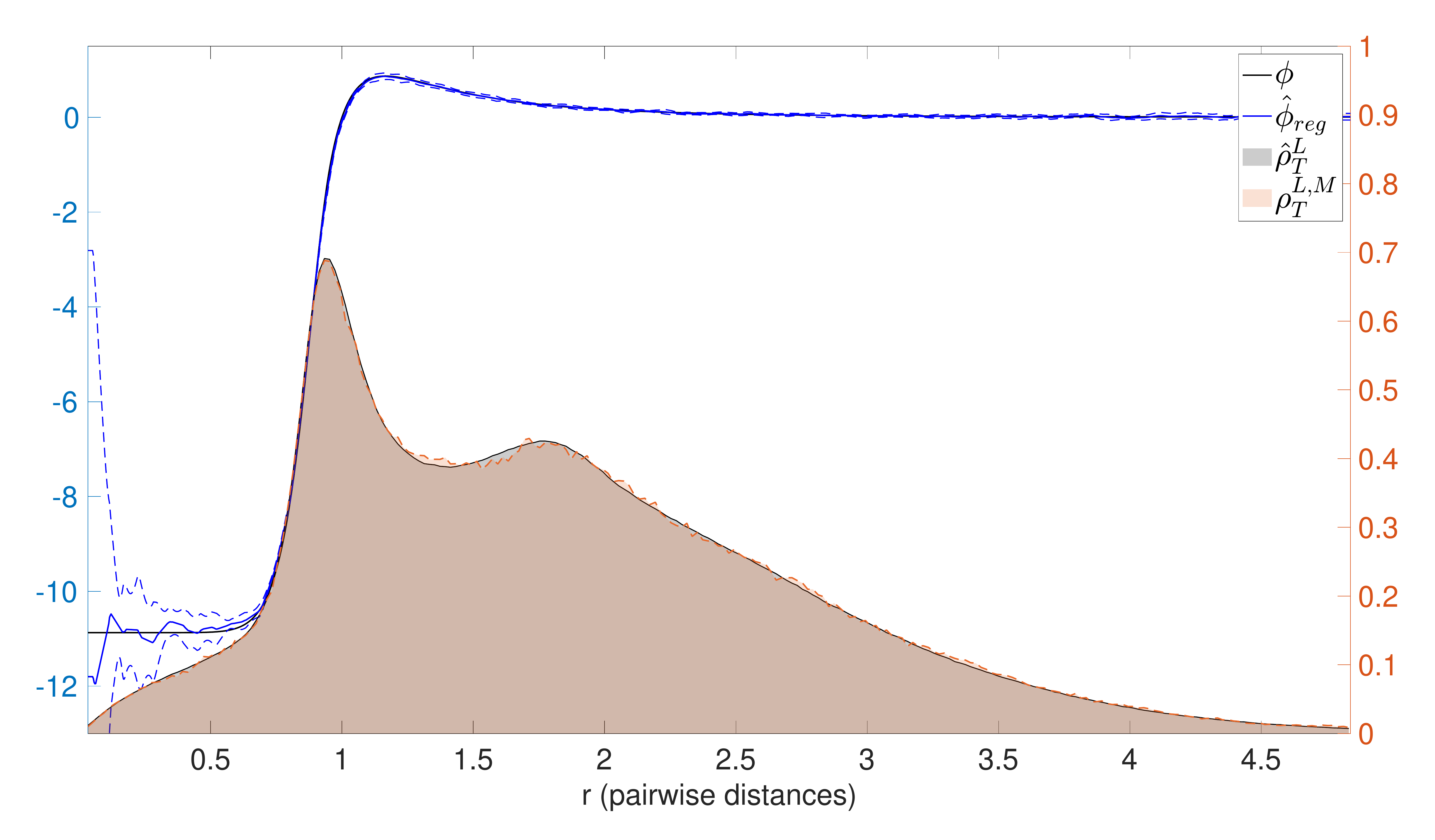}}
\caption{Stochastic Lennard-Jones dynamics: comparison between true and estimated interaction kernels with different values of $M$, together with histograms (shaded regions) for $\rhoT$ and $\rho_T^{M}$. In black: the true interaction kernel. In blue:  the mean of estimators in 10 independent trials, with dash-lines representing the standard deviation.  From top to bottom: learning from $M=2^7,2^{10}$ trajectories for kernels in systems with $\sigma=0.05$ (left) and $\sigma=0.25$ (right). The standard deviation bars on the estimated interaction kernels become smaller if $M$ increases and $\sigma$ decreases.  More details of the estimation errors can be found in Figure \ref{fig:SLJH1_Convergence}.}
\label{t:LJH1_kernel}
\end{figure}

\begin{figure}[tbp]
\centering     %%% not \center
\subfigure[$\sigma=0.05,M=128$]{\label{figLJ:5}\includegraphics[width=0.48\textwidth]{./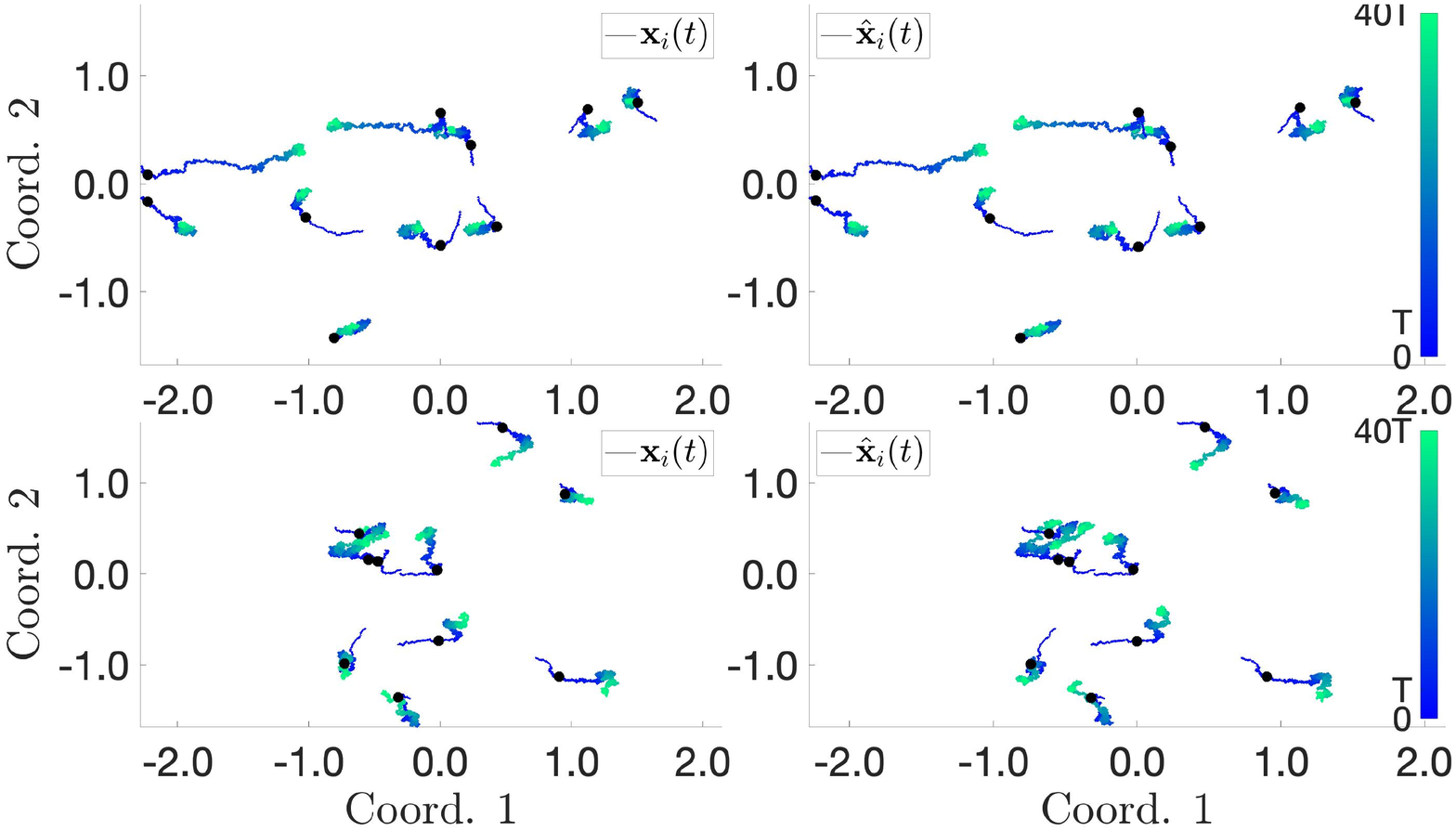}}
\subfigure[$\sigma=0.25, M=128$]{\label{figLJ:6}\includegraphics[width=0.48\textwidth]{./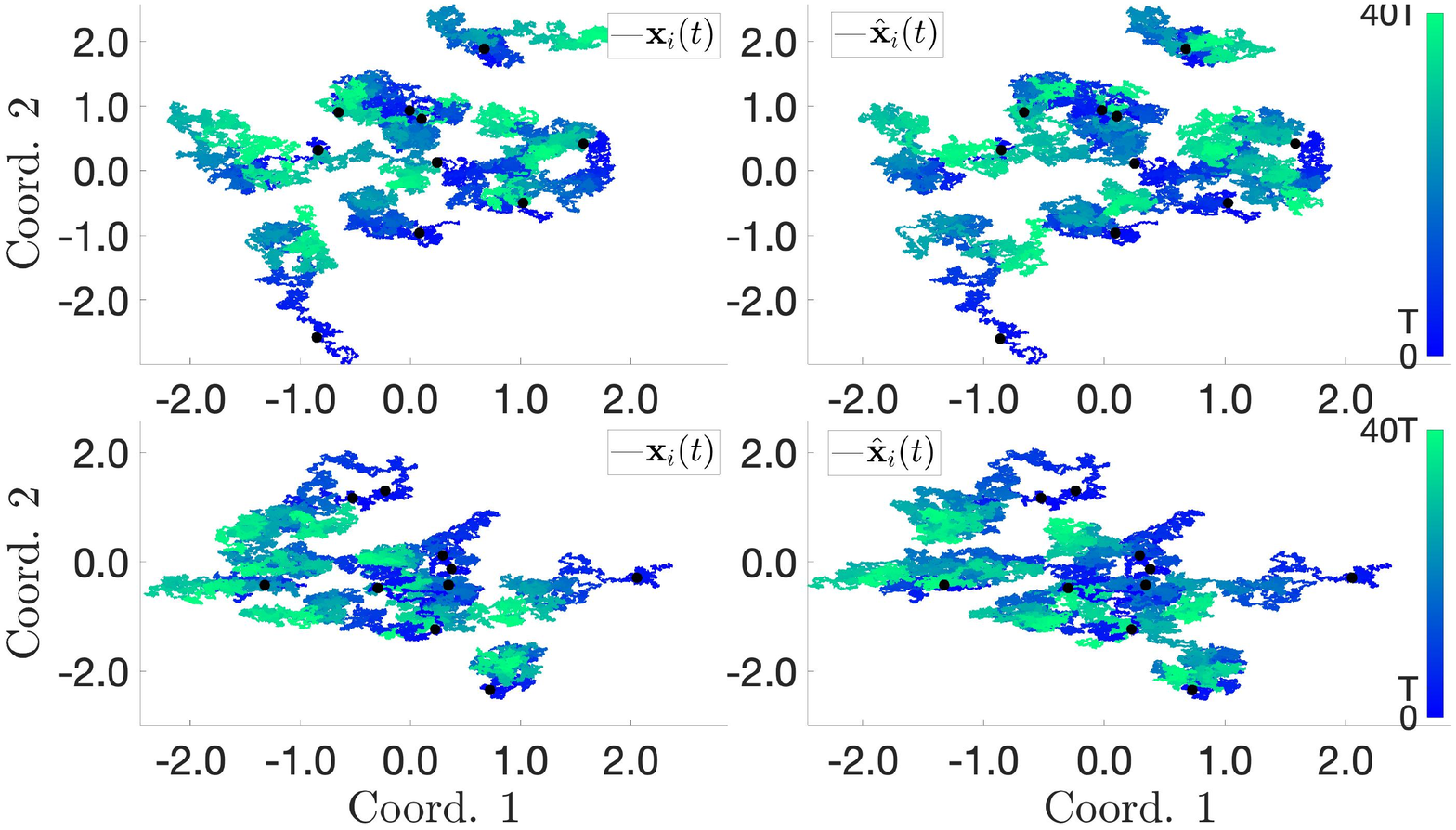}}
\subfigure[$\sigma=0.05,M=1024$]{\label{figLJ:5}\includegraphics[width=0.48\textwidth]{./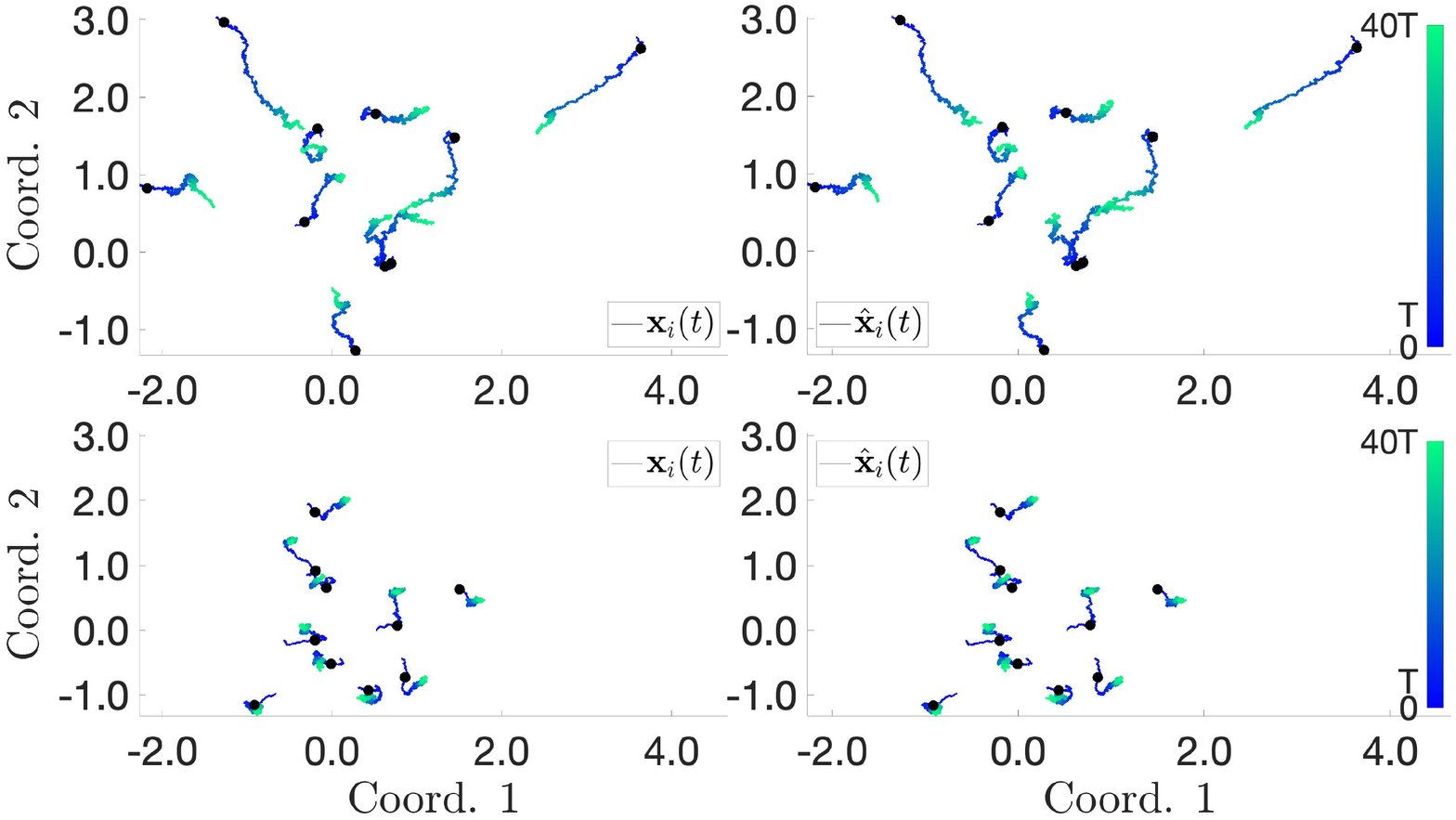}}
\subfigure[$\sigma=0.25, M=1024$]{\label{figLJ:6}\includegraphics[width=0.48\textwidth]{./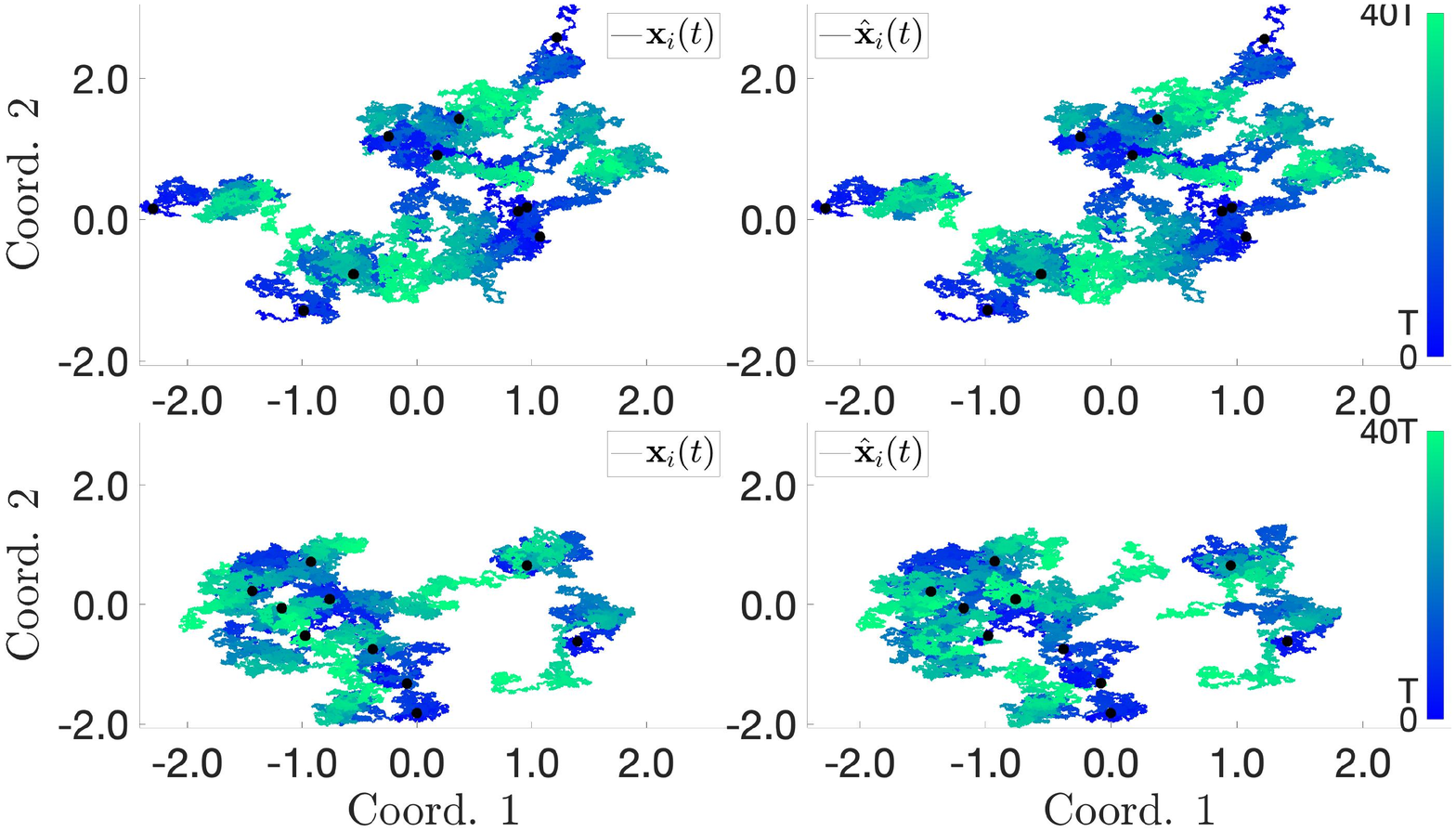}}
\caption{ \textmd{ (Stochastic Lennard Jones Dynamics) In each panel: true trajectory $\bX_t$ (Left column) and learned trajectory $\widehat \bX_t$ (Right column) obtained with  the true kernel $\intkernel$ and the estimated  kernel $\widehat \intkernel$   from $M=128$ and $1024$ trajectories, for an initial condition in the training data (Top row) and an initial condition randomly chosen (bottom row). The black dot at $t = 0.5$ divides the  ``training" interval $[0, 0.5]$ from the ``prediction" interval [0.5,20]. The trajectory prediction errors  are small in all cases. The statistics of the errors are presented in Table \ref{t:SLJH1_traj_err}. }
}\label{t:SLJH1_trajM32_err}
\end{figure}

\begin{table}[tbp]
%\caption{}
\centering
\begin{tabular}{| c || c | c |} 
\hline
                                                             & $[0, 0.5]$                                                    & $[0.5, 20]$\\
\hline
$M=128, \sigma=0.05,\text{mean}_{\text{traj}}$: Training ICs & $3.1 \cdot10^{-2} \pm 8.3 \cdot10^{-3}$  & $3.0 \cdot10^{-1} \pm 3.9 \cdot10^{-1}$\\
\hline            
$M=128, \sigma=0.05,\text{mean}_{\text{traj}}$: Random ICs & $3.1 \cdot10^{-2} \pm 9.3\cdot10^{-3}$  & $3.1 \cdot10^{-1} \pm 4.2 \cdot10^{-1}$\\
\hline      
$M=128, \sigma=0.25,\text{mean}_{\text{traj}}$: Training ICs & $5.5 \cdot10^{-1} \pm 2.4 \cdot10^{-2}$  & $1.3 \cdot10^{0} \pm 7.5 \cdot10^{-1}$\\
\hline            
$M=128, \sigma=0.25,\text{mean}_{\text{traj}}$: Random ICs & $5.8 \cdot10^{-2} \pm 2.3\cdot10^{-2}$  & $1.3 \cdot10^{0} \pm 7.3 \cdot10^{-1}$\\
\hline     
\hline
$M=1024, \sigma=0.05,\text{mean}_{\text{traj}}$: Training ICs & $ 1.2\cdot10^{-2} \pm 3.4 \cdot10^{-3}$  & $ 1.7 \cdot10^{-1} \pm 2.7 \cdot10^{-1}$\\
\hline            
$M=1024,\sigma=0.05,\text{mean}_{\text{traj}}$: Random ICs & $1.2 \cdot10^{-2} \pm 3.6 \cdot10^{-3}$  &$ 1.5  \cdot10^{-1} \pm 2.5 \cdot10^{-1}$\\
\hline      
$M=1024, \sigma=0.25,\text{mean}_{\text{traj}}$: Training ICs & $ 2.2 \cdot10^{-2} \pm 6.2 \cdot10^{-3}$  & $ 3.2 \cdot10^{-1} \pm 3.7  \cdot10^{-1}$\\
\hline            
$M=1024, \sigma=0.25,\text{mean}_{\text{traj}}$: Random ICs & $ 2.2 \cdot10^{-2} \pm 6.4 \cdot10^{-3}$  & $ 3.2 \cdot10^{-1} \pm 3.5  \cdot10^{-1}$\\
\hline    
\end{tabular}
\caption{ \textmd{(Stochastic Lennard Jones Dynamics) Trajectory Errors: ICs used in the training set (first two rows), new ICs randomly drawn from $\mu_0$ (second set of two rows). Means are taken over the same number of trajectories as in the training data set.}}
\label{t:SLJH1_traj_err}
\end{table}

\begin{figure}[tbp]
\centering
\subfigure{\label{fig:SLJH1_Convergence}\includegraphics[width=0.49\textwidth]{./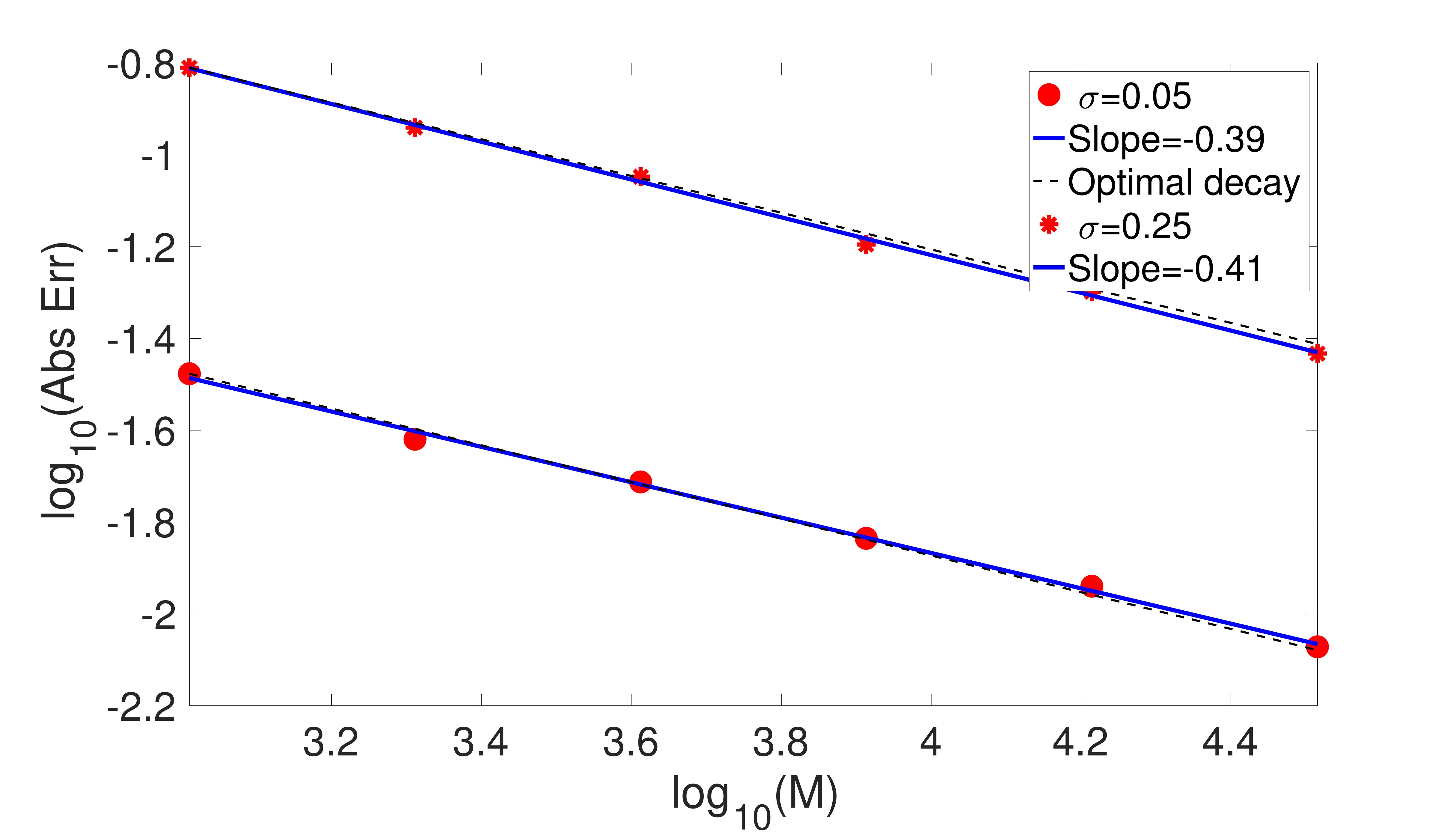}}
\subfigure{\label{fig:SLJH1_NoisevsErr}\includegraphics[width=0.49\textwidth]{./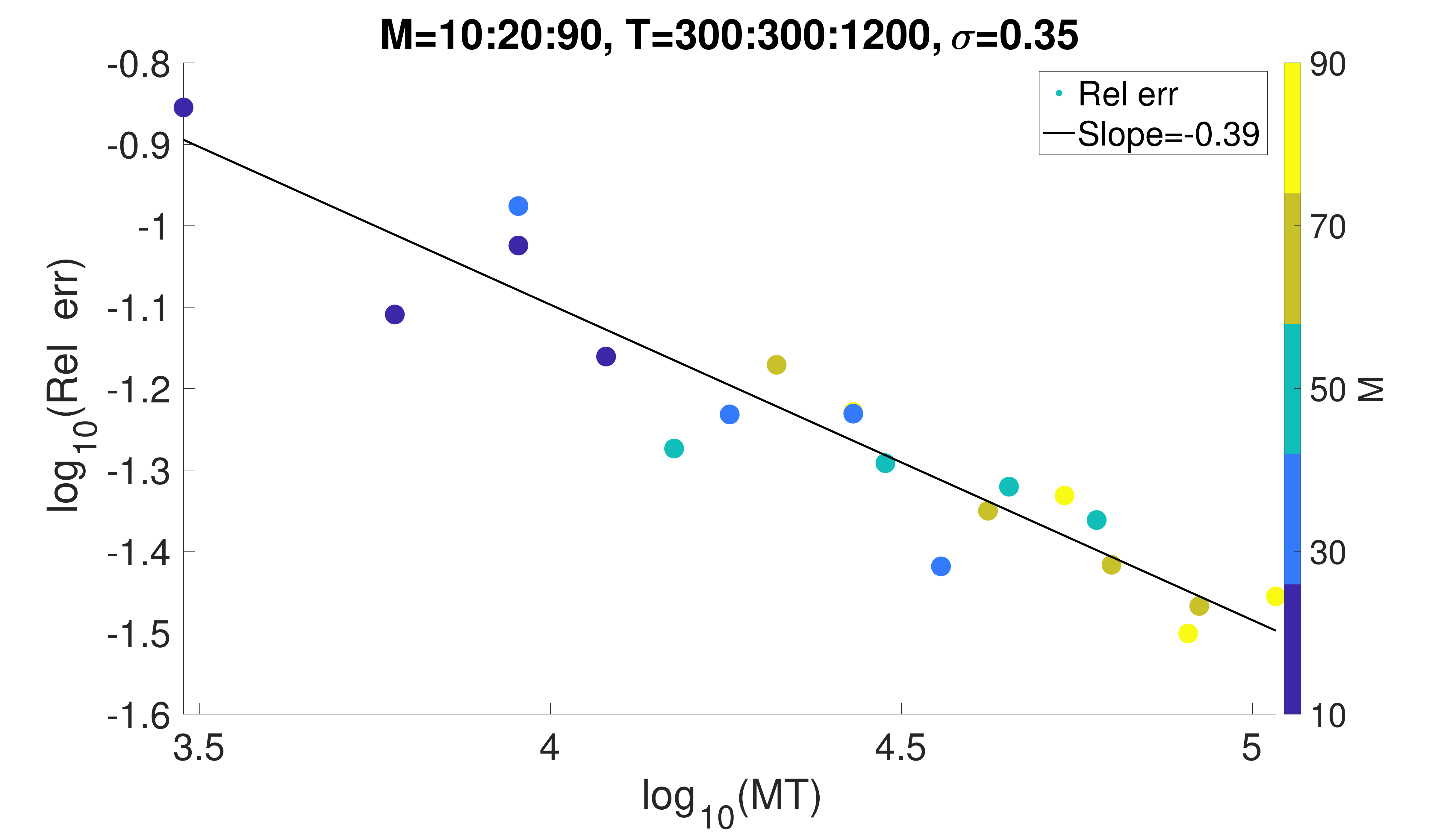}}
\caption{Stochastic Lennard-Jones: learning rates for continuous-time observations. Left: the learning rate of the estimators in terms of $M$ is $0.39$ when $\sigma = 0.05$ and is $0.41$ when $\sigma = 0.25$, close to the theoretical optimal min-max rate ${2}/{5}$ (shown in the black dot line).  Right: the convergence rate of the estimators in terms of $MT$, when both $M$ and $T$ increases. The colors of points are assigned  according to $M$. The rate is still close to the  optimal min-max rate ${2}/{5}$, showing the equivalence of learning from a single long trajectory with multiple short trajectories when the underlying process is ergodic.}
\label{f:LJH1_Convergence_Plot} 
\end{figure}

\begin{figure}[tbp]
\centering
\subfigure{\label{t:SLJH1_Obsgap}\includegraphics[width=0.49\textwidth]{./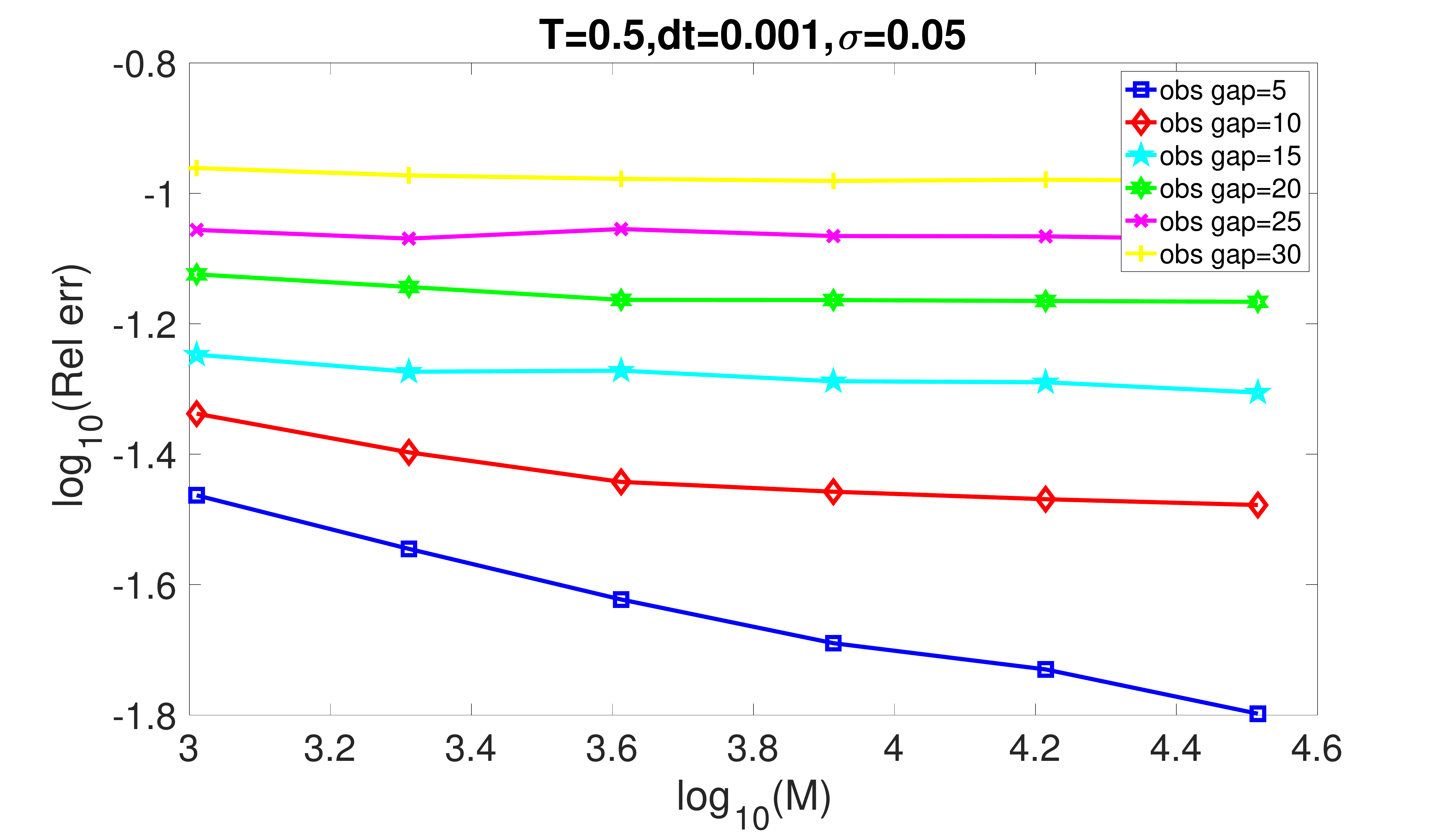}}
\subfigure{\label{fig:SLJH1_NoisevsErr}\includegraphics[width=0.49\textwidth]{./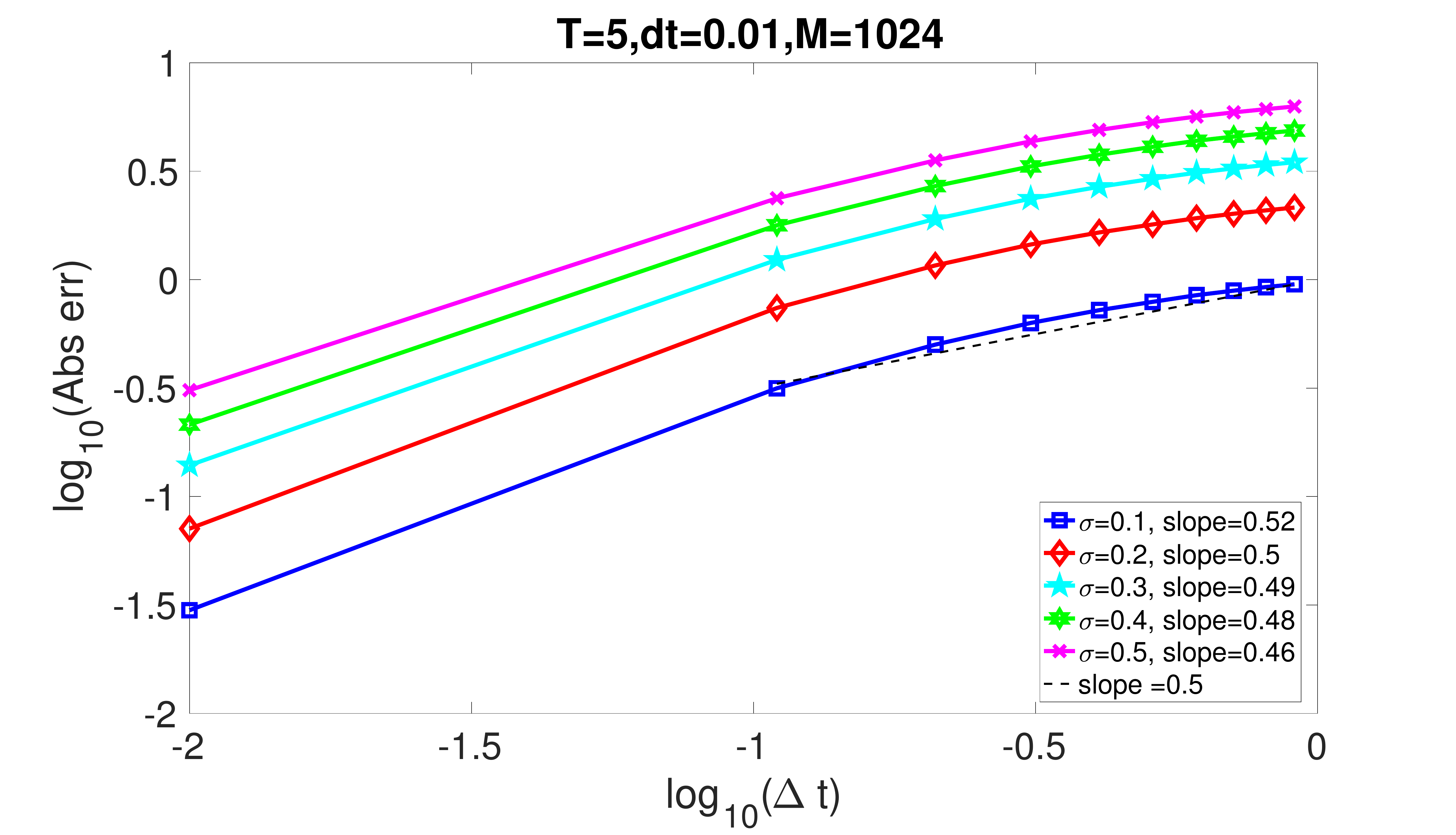}}
\caption{Stochastic Lennard-Jones: discretization error due to discrete-time observation. 
Left: The learning rate of estimators in terms of different observation gap  $\Delta t= k dt$ for $k=5:5:30$. The learning rate becomes flat, due to the bias induced by discretization of the likelihood function on coarse time grids. Right: the log-log plot of the absolute error of the estimator in terms of observation gap $\Delta t= k dt$ for $k=5:5:45$, for systems with different levels of random noises in terms of $\sigma$, computed with $M=1024$, $T=0.5$ and $dt = 0.001$ fixed. The orders of the absolute error in both $\sigma$ and $\Delta t$ are close to the theoretical order $\sigma O((\Delta t)^{1/2})$. The slopes of the lines are calculated using points whose $x$ coordinate fall in the range $[-1,0]$.
 }  \label{t:LJH1_Convergence_Plot} 
\end{figure}

Next, we  investigate the effects of the scale of the random noise on learning.  We observe phenomenon similar to those in Example 1.  Figure \ref{t:LJH1_kernel} shows that the estimators for the system with $\sigma=0.25$ oscillates more than the one with $\sigma=0.05$ at locations near 0. The random noise also did not affect the learning rates, suggested by the left plot of Figure \ref{f:LJH1_Convergence_Plot}.   as the random noise increases, absolute $L^2(\rhoT)$  error of estimators also increases, suggesting that coercivity constant is getting smaller.

At last, we study the effects of discretization error induced by discrete observations.  As the observation gap increases, the discretization errors flatten the learning rate curve of $M$, see left plot of Figure \ref{f:LJH1_Convergence_Plot}.  Similar to Example 1, the right plot of  Figure \ref{f:LJH1_Convergence_Plot} shows that the absolute error of the estimator is of order close to the theoretical order $\sigma O((\Delta t)^{1/2})$.

\subsection{Conclusions from the numerical experiments} 
\label{s:expconclusions}
Numerical results show that in case of continuous-time observations, the algorithm effectively estimates the interaction kernel, achieves the near-optimal learning rate in $M$, is robust to different magnitudes of the random noise, and the system with the estimated kernels accurately predicts trajectories. In case of discrete-time observations, the estimator has an estimation error  of order $\Delta t^{1/2}$, due to the discretization error in the approximation of the likelihood ratio. These numerical results are in full agreement with the learning theory in Section \ref{sec:theory_cts_traj}--\ref{sec:theory_dis_traj}:
\begin{itemize}
\item In case of continuous-time observations, the estimators in 10 trials are faithful approximations of the true interaction kernels, with a mean close to the truth. The standard deviation of the estimators decreases as the sample size increases, and gets larger as the diffusion constant increases.  

\item The estimator from data achieves the min-max learning rate   $(\log{M}/ M)^{s/(2s+1)}$ in Theorem \ref{maintheorem} by the appropriate choice of the hypothesis spaces and their dimension as a function of $M$. For $\phi$ in $C^{k+\alpha}$ with $k+\alpha\ge2$, the learning rate is around $M^{-\frac{1}{3}}$ when using piece-wise constant estimators ($s=1$); and the learning rate is around  $M^{-\frac{2}{5}}$ using the piecewise linear estimators ($s=2$), which is the minmax optimal rate for the case $k+\alpha=2$. %These rates on simulation data match closely those in Theorem \ref{maintheorem}. 

\item The estimators predict transient dynamics well in the training time interval, and the results validate Proposition \ref{Trajdiff}: the trajectory discrepancy is controlled by $L^2({\rhoT})$ error of estimators, demonstrating the effectiveness of distances in $L^2(\rhoT)$ in quantifying the performance of estimators.  In addition, the estimators even predict in a remarkably accurate fashion the collective behaviour of particles in larger future time intervals, indicating that the bound in Proposition \ref{Trajdiff} may be overly pessimistic in some cases. Our intuition is that this benign phenomenon benefits from the large support of $\rhoT$, encouraged by the randomness of the initial conditions and presence of stochastic noise.

\item In case of discrete-time observations with observation gap $\Delta t$, the estimation error of the estimator is of order $\Delta t^{1/2}$ and depends linearly on {$\sigma$, the square root of the diffusion constant.}
 Therefore, as $\Delta t$ increases, the discretization error dominates the estimation error, consistently with the learning theory in Section \ref{sec:theory_dis_traj}, which leads to bounding the esitmation error of the estimator by $M^{-\frac{s}{2s+1}}+\sigma\mathcal{O}(\Delta t^{1/2})$. 

\item When the length $T$ of the trajectories increases, the optimal learning rate (in $M$) is still achieved. The estimation errors of the estimator exhibits a convergence rate around  $(\frac{\log(MT)}{MT})^{s/(2s+1)}$ with $s=1,2$ respectively, demonstrating an equivalence of ``information'' between few long trajectories and many short trajectories initiated at suitably random initial conditions, as discussed above in Section \ref{s:hypspacesnonparamestimators}.

% Our piecewise constant and linear estimators learned from large time trajectories of ergodic systems exhibit convergence rate around  $({\log{N_{ess}/ N_{ess}}})^{s/(2s+1)}$ with $N_{ess}:= M \frac{T}{\tau}$ and $s=0,1$ respectively, demonstrating an equivalence of information  between a large time trajectory $\bX_{0:T}$ and $\{\bX_{0:\tau}^{(m)}:m=1,\cdots,[T/{\tau}]\}$ and our learning theory could be extend to ergodic systems when we use data from a single trajectory. \ST{You may want to revise this part}
\end{itemize}

\section{Final remarks and future work}
There are many venues in which the present work could be extended. 

The first notable extension is to heterogeneous particle systems with multiple types of particles, which arise in many applications.  In this case one assumes that there are different interaction kernels, modeling the non-symmetric interactions between different types of particles. Examples of these systems are considered in \cite{LZTM19} in the deterministic case, with the theoretical analysis achieved in \cite{LMT19}, where the coercivity condition is generalized to the multiple-particle-types setting, and (near-)optimal convergence rates of the estimators where established. We believe a similar extension is possible in the stochastic case, combining the ideas of this work and \cite{LMT19}.

Another notable extension is to second order differential systems of interacting particles, where interaction kernels of more general forms than those considered here arise. In the deterministic case \cite{LZTM19} considers examples of such systems, with a forthcoming theoretical analysis. In the stochastic case the extension would require significant effort, especially if important cases of systems with degenerate diffusion (e.g. stochastic Langevin) were considered.
We also remark again that in this work we do not observe velocities, as done in the works just cited in the case of deterministic systems: here we fully take into account the discretization (in time) error, and if we let $\sigma\rightarrow0$, the results here would imply similar results in the deterministic case. Extending these considerations to second-order systems would be valuable.

Further work is also needed to formalize the considerations we put forward in Section \ref{s:hypspacesnonparamestimators} regarding ergodic systems, and design robust and optimal algorithms in the regimes of observation a long trajectory or many independent trajectories. 

We assume in this work that all particles are observed. A desirable extension is to the case of partial observations of a subset of particles or macroscopic observations of the population density, which is a practical concern when the system is large with millions of particles in high dimension. Since it is an ill-posed inverse problem to recover the missing trajectories of unobserved particles \cite{zhang2020cluster}, a new formulation based on the corresponding mean field equations  \cite{MT2014,jabin2016_MeanField,jabin2018_QuantitativeEstimates} is under investigation. 

In this work we assume that the noise coefficient is a known constant: there has been of course significant work in estimating the noise coefficient, for example in the case of interacting particle systems see the recent work \cite{huang2018_LearningInteracting} and references therein, and for the case of model reduction for Langevin equations with state-dependent diffusion coefficient \cite{CM:ATLAS}.

\appendix

\section{Appendix}\label{sec:Appendix}

\subsection{Preliminaries for SDEs}
Let $\bX_t$ satisfy 
\begin{align}\label{eq:sde}
d\mbf{X}_{t}=V(\mbf{X}_{t}, t) d\tau +\sigma(\mbf{X}_{t},t) d\mbf{B}_{t}.
\end{align}
We first review the existence and uniqueness of strong solutions for SDEs (see Theorem 5.4 in \cite{klebaner2005introduction})

\begin{theorem}[Existence and Uniqueness] If the following conditions are satisfied
\begin{itemize}
\item Coefficients are locally Lipschitz in $\bX$ uniformly in $t$, that is for every $T$ and $K$, there is a constant $C$ depend only on $T$ and $K$ such that for all $\|\bX\|, \|\bY\| \leq K$ and all $0\leq t \leq T$
\begin{align}\label{sde:lip}
\|V(\bX,t)-V(\bY,t)\|+\|\sigma(\bX,t)-\sigma(\bY,t)\|<C\|\bX-\bY\|.
\end{align}
\item Coefficients satisfy the linear growth condition 
\begin{align}\label{sde:lineargrowth}
\|V(\bX,t)\|+\|\sigma(\bX,t)\|<C(1+\|\bX\|).
\end{align}
\item $\bX(0)$ is independent of $(\bB(t),0\leq t\leq T),$ and $\mathbb{E}\|\bX(0)\|^2<\infty$.
Then there exists a unique strong solution $\bX_t$ of the SDEs \eqref{eq:sde}. $\bX_t$ has continuous paths, moreover,
$$\mathbb{E}[\sup_{0\leq t\leq T}\|\bX_t\|^2]<C_1(1+\mathbb{E}[\|\bX(0)\|^2]),$$ 
\end{itemize}
where constants $C_1$ depend only on $C$ and $T$.
\end{theorem}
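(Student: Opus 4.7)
The plan is to establish existence via Picard iteration, combined with a standard localization argument to handle the fact that the coefficients are only assumed locally Lipschitz, and to obtain both the moment bound and uniqueness via Gronwall-type estimates on stochastic integrals.

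First, I would address the case of globally Lipschitz coefficients (i.e., replace the local Lipschitz condition \eqref{sde:lip} by a global one). Define the Picard iterates $\bX^{(0)}_t \equiv \bX(0)$ and
\begin{equation*}
\bX^{(n+1)}_t = \bX(0) + \int_0^t V(\bX^{(n)}_s, s)\,ds + \int_0^t \sigma(\bX^{(n)}_s, s)\,d\bB_s.
\end{equation*}
Using the linear growth condition \eqref{sde:lineargrowth}, together with the It\^o isometry and Doob's martingale inequality applied to the stochastic integral, I would show by induction that each iterate lies in $L^2$ with a bound uniform in $n$. Then, setting $\Delta^{(n)}_t := \E[\sup_{s\le t}\|\bX^{(n+1)}_s - \bX^{(n)}_s\|^2]$ and invoking the global Lipschitz bound together with It\^o isometry/Doob, I would derive a recursion of the form $\Delta^{(n)}_t \le C \int_0^t \Delta^{(n-1)}_s\,ds$; iterating yields $\Delta^{(n)}_t \le (CT)^n/n!$, so the sequence is Cauchy in $L^2(\Omega; C([0,T]))$. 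The limit $\bX_t$ has continuous paths and satisfies the SDE by passing to the limit in the integral formulation.

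For the moment bound, starting from the integral form of the solution, I would apply $(a+b+c)^2 \le 3(a^2+b^2+c^2)$, use \eqref{sde:lineargrowth} for the drift term, and use the Burkholder-Davis-Gundy (or Doob) inequality to bound $\E[\sup_{s\le t}\|\int_0^s \sigma(\bX_r,r)d\bB_r\|^2]$ by $C\E\int_0^t(1+\|\bX_r\|^2)dr$. This yields
\begin{equation*}
\E\bigg[\sup_{s\le t}\|\bX_s\|^2\bigg] \le C'\bigg(1 + \E[\|\bX(0)\|^2] + \int_0^t \E\bigg[\sup_{r\le s}\|\bX_r\|^2\bigg]ds\bigg),
\end{equation*}
and Gronwall's inequality gives the claimed bound. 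Uniqueness follows analogously: if $\bX_t, \bY_t$ are two solutions starting from the same initial condition, then $\phi(t) := \E[\sup_{s\le t}\|\bX_s-\bY_s\|^2]$ satisfies $\phi(t) \le C\int_0^t \phi(s)ds$ by the Lipschitz condition, so $\phi \equiv 0$.

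The main obstacle is upgrading from global to local Lipschitz. I would use a localization argument: define truncated coefficients $V_K, \sigma_K$ that agree with $V, \sigma$ on $\{\|x\|\le K\}$ and are globally Lipschitz, obtain a unique solution $\bX^{K}_t$, and set $\tau_K := \inf\{t: \|\bX^K_t\|>K\}$. By uniqueness of the globally Lipschitz case, the solutions $\bX^K$ and $\bX^{K'}$ agree on $[0,\tau_K \wedge \tau_{K'}]$, so they piece together consistently. The key step is showing $\tau_K \to \infty$ almost surely as $K \to \infty$, which follows from the a priori moment bound derived above (applied to $\bX^K$ before stopping, using only the linear growth, which does hold globally), combined with Chebyshev's inequality. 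This yields a global-in-time strong solution, and uniqueness extends from the localized version in the same manner.
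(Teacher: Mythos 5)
Your proof is correct and is the classical Picard-iteration-plus-localization argument; the paper itself gives no proof of this theorem, simply citing Theorem 5.4 of Klebaner, where essentially this same argument appears. The only point worth making explicit is that your truncated coefficients $V_K,\sigma_K$ (e.g.\ via radial projection onto the ball of radius $K$) inherit the linear growth bound with a constant independent of $K$, which is exactly what makes the a priori moment estimate uniform in $K$ and hence $\tau_K\to\infty$ almost surely — you already gesture at this, so the argument is complete.
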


 It is straightforward to show that $\rhsfo_{\intkernel}$ satisfy \eqref{sde:lip} and  \eqref{sde:lineargrowth}. Therefore, suppose $\mu_0$ is independent of the underlying Brownian motion and has finite second moment, then there exists a unique strong solution up to time $T$ for the system \eqref{eq:sod} for any $\bX_0$ drawn from $\mu_0$. 

\begin{theorem}[Girsonov Theorem]\label{Girsonov}
Let $P_{\sigma}$ be the probability measure induced by the solution of the  SDEs \eqref{eq:sde} for $t \in [T_0, T]$ and a fixed starting value at time $T_0$, and let $W_{\sigma}$ be the law of the respective driftless process. Suppose that $\Sigma=\sigma \sigma'$ is invertible and $V$ fulfills the Novikov condition
$$\mathbb{E}_{P_{\sigma}}\bigg[\mathrm{exp} \bigg (\frac{1}{2} \int_{T_0}^T \|V(\mbf{X}_{t}, t)\|^2 dt \bigg)\bigg] < \infty. $$

Then $P_{\sigma}$ and $W_{\sigma}$ are equivalent measures with Radon-Nikodym derivative given by Girsonov's formula
$$\frac{dP_{\sigma}}{dW_{\sigma}} \big(\mbf{X}_{[T_0, s]}\big)=\mathrm{exp}\bigg(\int_{T_0}^{s} V^T \Sigma^{-1} d\mbf{X}_{t}-\frac{1}{2} \int_{T_0}^{s} V^T\Sigma^{-1} V d t \bigg)$$

for all $s \in [T_0, t]$ and $\mbf{X}_{[T_0,s]}=(\mbf{X}_{t})_{t \in [T_0, s]}.$
\end{theorem}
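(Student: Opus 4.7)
The plan is to reduce the claim to the classical exponential martingale transformation for Brownian motion, then translate the formula back into the form stated in terms of $\bX_t$ itself. Under the driftless law $W_{\sigma}$, the process $\bX_t$ satisfies $d\bX_t = \sigma(\bX_t,t)\, d\widetilde{\bB}_t$ for some $W_{\sigma}$-Brownian motion $\widetilde{\bB}$. Since $\Sigma = \sigma\sigma^{T}$ is invertible, the matrix $\sigma$ admits a left inverse (for square $\sigma$, an ordinary inverse), so I would define the adapted process $\theta_t := \sigma^{-1}(\bX_t,t)\,V(\bX_t,t)$. Using $\Sigma^{-1} = (\sigma^{-1})^{T}\sigma^{-1}$, one gets $\|\theta_t\|^{2} = V^{T}\Sigma^{-1}V$, which is exactly the quantity controlled by the stated Novikov hypothesis.

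Next, I would introduce the exponential local martingale
$$Z_s := \exp\!\left( \int_{T_0}^{s} \theta_t^{T}\, d\widetilde{\bB}_t - \tfrac{1}{2}\int_{T_0}^{s}\|\theta_t\|^{2}\, dt \right),$$
and apply Novikov's criterion to upgrade it to a true $W_{\sigma}$-martingale on $[T_0,T]$. Define $\mathbb{Q}$ on $\mathcal{F}_T$ by $d\mathbb{Q}/dW_{\sigma} = Z_T$. By the classical Girsanov transformation, $\bB_t := \widetilde{\bB}_t - \int_{T_0}^{t}\theta_u\, du$ is a $\mathbb{Q}$-Brownian motion, and under $\mathbb{Q}$
$$d\bX_t = \sigma\, d\widetilde{\bB}_t = \sigma\,d\bB_t + \sigma\theta_t\, dt = V(\bX_t,t)\, dt + \sigma(\bX_t,t)\, d\bB_t.$$
Pathwise uniqueness of \eqref{eq:sde} (guaranteed by conditions \eqref{sde:lip} and \eqref{sde:lineargrowth}) then forces $\mathbb{Q} = P_\sigma$ on $\mathcal{F}_T$, and equivalence of $P_{\sigma}$ and $W_\sigma$ follows from $Z_T > 0$ a.s. The statement for general $s \in [T_0,T]$ follows because $Z$ is the density process of $\mathbb{Q}$ with respect to $W_\sigma$.

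To match the exact formula in the statement, I would rewrite $Z_s$ using the identity $d\widetilde{\bB}_t = \sigma^{-1}\,d\bX_t$ valid under $W_\sigma$. This yields
$$\int_{T_0}^{s}\theta_t^{T}\,d\widetilde{\bB}_t = \int_{T_0}^{s} V^{T}(\sigma^{-1})^{T}\sigma^{-1}\,d\bX_t = \int_{T_0}^{s} V^{T}\Sigma^{-1}\,d\bX_t,$$
while the finite-variation term is already $\tfrac{1}{2}\int_{T_0}^{s}V^{T}\Sigma^{-1}V\, dt$, reproducing the formula verbatim.

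The principal technical obstacle is verifying that $Z_s$ is a true martingale rather than merely a local one. Novikov's condition is the standard sufficient criterion; the classical proof passes through Kazamaki's criterion combined with a Cauchy--Schwarz argument on the exponential. A delicate secondary issue is that the hypothesis is phrased with $\mathbb{E}_{P_\sigma}$ rather than $\mathbb{E}_{W_\sigma}$: I would handle this by localising $Z$ along stopping times $\tau_n\uparrow T$ where both integrals are bounded, obtaining $\mathbb{Q}$ on each $\mathcal{F}_{\tau_n}$, identifying it with $P_\sigma$ there, transferring the Novikov bound across the (already equivalent) restricted measures, and then passing to the limit via uniform integrability of $(Z_{\tau_n})$. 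A minor ancillary point is that invertibility of $\Sigma$ rather than $\sigma$ itself suffices throughout, since one can always replace $\sigma^{-1}$ by the left inverse $\Sigma^{-1}\sigma^{T}$ without changing $\theta_t^{T}d\widetilde{\bB}_t$ in distribution.
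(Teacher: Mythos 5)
Your proposal is correct, and it is essentially the same argument the paper relies on: the paper gives no proof of this theorem, deferring to Karatzas--Shreve (Ch.~3.5) and {\O}ksendal (Ch.~8.6), whose proofs follow exactly your route --- form the exponential local martingale $Z$, upgrade it via Novikov, apply the classical Girsanov transformation, identify the new measure with $P_\sigma$ by uniqueness in law, and rewrite the stochastic integral in terms of $d\mathbf{X}_t$, including the localization device you describe for handling a Novikov condition phrased under $\mathbb{E}_{P_\sigma}$. Two small corrections worth noting: the pseudo-inverse should be $\sigma^{T}\Sigma^{-1}$ (a right inverse of $\sigma$), not $\Sigma^{-1}\sigma^{T}$ (which is not even dimensionally composable when $\sigma$ is rectangular); and Novikov is genuinely needed for $\tfrac12\int_{T_0}^T V^{T}\Sigma^{-1}V\,dt$ rather than $\tfrac12\int_{T_0}^T\|V\|^{2}\,dt$, so your claim that the stated hypothesis controls ``exactly'' $\|\theta_t\|^2$ glosses over a constant --- a discrepancy inherited from the paper's own statement, and harmless in its application where $\sigma$ is a constant scalar multiple of the identity.
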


The proof of Theorem \ref{Girsonov} can be found in  \cite[Chapter 3.5]{karatzas1998brownian},\cite[Chapter 8.6]{oksendal2013sde}.

\begin{theorem}[The It\^o formula, see Theorem 4.1.2 in \cite{oksendal2013sde}]\label{Itoformula} Let $g(\bX)=(g_1(\bX),\cdots, g_p(\bX))$ be a $C^{2}$ map from $ \R^{dN}$ into $\R^p$. Then the process 
$$Y(t)=g(\bX_t)$$
is  an It\^o process with components given by 
$$dY_k=% \frac{\partial g_k}{\partial t}(t,\bX)dt+
\sum_{i=1}^{Nd} \frac{\partial g_k}{\partial x_i}(\bX_t)d\bX_i+\frac{1}{2}\sum_{i,j} \frac{\partial^2 g_k}{\partial x_i\partial x_j}(\bX_t)d\bX_id\bX_j$$ where $d\bB_id\bB_j=\delta_{ij}dt, d\bB_idt=dtd\bB_i=0$.
\end{theorem}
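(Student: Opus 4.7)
The plan is to reduce to the scalar case $p=1$ by applying the formula component-wise to each $g_k$, localize via stopping times so that $g$ and its first two derivatives may be treated as bounded, and then establish the formula as the limit of Riemann-type sums arising from a second-order Taylor expansion along a refining partition of $[0,t]$. Since the statement is the classical It\^o formula (cited from Oksendal), I do not expect any idea beyond the textbook strategy; the steps simply need to be executed with the coefficients $V$ and $\sigma$ appearing in the ambient SDE $d\bX_t=V(\bX_t,t)\,dt+\sigma(\bX_t,t)\,d\bB_t$.

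First I would fix a scalar component $g$ and a partition $\Pi_n:0=t_0^n<t_1^n<\cdots<t_{N_n}^n=t$ with mesh $|\Pi_n|\to 0$, and write the telescoping identity
\begin{equation*}
g(\bX_t)-g(\bX_0)=\sum_{\ell}\bigl[g(\bX_{t_{\ell+1}^n})-g(\bX_{t_\ell^n})\bigr].
\end{equation*}
Applying Taylor's theorem to each bracket yields
\begin{equation*}
g(\bX_{t_{\ell+1}^n})-g(\bX_{t_\ell^n})=\sum_{i}\partial_ig(\bX_{t_\ell^n})\Delta_\ell\bX_i+\tfrac{1}{2}\sum_{i,j}\partial_{ij}g(\bX_{t_\ell^n})\Delta_\ell\bX_i\Delta_\ell\bX_j+R_\ell^n,
\end{equation*}
where $\Delta_\ell\bX_i=\bX_i(t_{\ell+1}^n)-\bX_i(t_\ell^n)$ and $R_\ell^n$ is the Taylor remainder governed by the modulus of continuity of $\partial^2 g$. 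A standard localization at $\tau_n=\inf\{s:\|\bX_s\|\ge n\}$ and a truncation of $g$ outside a large ball reduces the problem to bounded $g$, $\partial g$, $\partial^2 g$ up to $\tau_n\wedge t$; sending $n\to\infty$ at the end recovers the general conclusion by path continuity of $\bX$.

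The core of the argument is to identify the three sums in the limit $|\Pi_n|\to 0$. The first-order sum converges in probability to $\sum_i\int_0^t\partial_ig(\bX_s)\,d\bX_i(s)$ directly from the definition of the It\^o integral against $\bX$. For the second-order sum, substituting $\Delta_\ell\bX=V(\bX_{t_\ell^n},t_\ell^n)(t_{\ell+1}^n-t_\ell^n)+\sigma(\bX_{t_\ell^n},t_\ell^n)\Delta_\ell\bB+(\text{remainder})$ produces $V\cdot V$ and $V\cdot\sigma\Delta\bB$ cross-terms that are $O((t_{\ell+1}^n-t_\ell^n)^2)$ and $O((t_{\ell+1}^n-t_\ell^n)^{3/2})$ in $L^2$, so their aggregate sums vanish. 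The surviving $\sigma\Delta_\ell\bB\cdot\sigma\Delta_\ell\bB$ contributions equal $(\sigma\sigma^\top)_{ij}(\bX_{t_\ell^n},t_\ell^n)(t_{\ell+1}^n-t_\ell^n)$ plus a mean-zero piece whose $L^2$ norm vanishes thanks to the concentration of quadratic variation of Brownian motion: $\sum_\ell\Delta_\ell\bB_i\Delta_\ell\bB_j\to\delta_{ij}t$ in $L^2$. Collecting the surviving contributions yields $\tfrac12\sum_{i,j}\int_0^t\partial_{ij}g(\bX_s)(\sigma\sigma^\top)_{ij}(\bX_s,s)\,ds$, which is precisely the $\tfrac12\partial_{ij}g\,d\bX_id\bX_j$ term under the convention $d\bB_id\bB_j=\delta_{ij}dt$.

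The main technical obstacle will be the $L^2$ control of the Brownian cross-product sums $\sum_\ell\Delta_\ell\bB_i\Delta_\ell\bB_j$; this is where independence of increments and a variance computation (of order $|\Pi_n|t$) genuinely enter, and where the stochastic-calculus correction term is born. The Taylor remainders $\sum_\ell R_\ell^n$ are then dispatched via uniform continuity of $\partial^2g$ on the localized range of $\bX$, combined with $\E[|\Delta_\ell\bX|^3]=O((t_{\ell+1}^n-t_\ell^n)^{3/2})$, giving $\sum_\ell|R_\ell^n|\to 0$ in probability. Undoing the localization and reassembling the $p$ components delivers the vector-valued formula exactly as stated.
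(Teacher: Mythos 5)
Your proposal is the standard and correct proof of the It\^o formula: component-wise reduction, localization, second-order Taylor expansion along a refining partition, identification of the first-order sums with the It\^o integral, and the $L^2$ concentration of $\sum_\ell \Delta_\ell\bB_i\Delta_\ell\bB_j$ around $\delta_{ij}t$ to produce the correction term. The paper itself gives no proof of this statement --- it is quoted verbatim as Theorem 4.1.2 of \cite{oksendal2013sde} --- and your argument is essentially the one found in that reference, so there is nothing to reconcile.
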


\subsection{Useful  inequalities}

\begin{theorem}[Bernstein inequality for unbounded random variables] \label{bernstein:unbound} Let $X_1, X_2,\cdots,X_M$ be independent random variables with $\mathbb{E}(X_i)=0$.  If for some constants $K_1, v_1>0$, the bound $\mathbb{E}|X_i|^p \leq \frac{1}{2} p! K_1^{p-2}v_1$ holds for every $2 \leq p \in \mathbb{N}$, then 
\begin{align}
\mathrm{Prob} \bigg\{ \sum_{i=1}^{M} X_i \geq \epsilon\bigg\}  \leq e^ {-\frac{\epsilon^2}{2}(Mv_1+K_1\epsilon)^{-1}}.
\end{align}
\end{theorem}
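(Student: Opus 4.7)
The plan is to use the standard Chernoff–Cramér approach: bound the moment generating function of each $X_i$ via the given moment control, apply Markov's inequality to the exponential of $\sum_i X_i$, and optimize over the free parameter.

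First, I will fix $\lambda \in (0, 1/K_1)$ and write
\begin{equation*}
\mathbb{E}[e^{\lambda X_i}] = 1 + \lambda \mathbb{E}[X_i] + \sum_{p=2}^{\infty} \frac{\lambda^p}{p!}\mathbb{E}[X_i^p].
\end{equation*}
Since $\mathbb{E}[X_i] = 0$ and $|\mathbb{E}[X_i^p]| \le \mathbb{E}|X_i|^p \le \tfrac{1}{2} p!\, K_1^{p-2}\, v_1$, the series is bounded by a geometric sum:
\begin{equation*}
\mathbb{E}[e^{\lambda X_i}] \le 1 + \frac{v_1}{2}\sum_{p=2}^{\infty} \lambda^p K_1^{p-2} = 1 + \frac{v_1 \lambda^2}{2(1 - \lambda K_1)}.
\end{equation*}
Using $1+x \le e^x$, I get $\mathbb{E}[e^{\lambda X_i}] \le \exp\bigl(\tfrac{v_1 \lambda^2}{2(1-\lambda K_1)}\bigr)$.

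Second, by independence and Markov's inequality,
\begin{equation*}
\mathrm{Prob}\Big\{\sum_{i=1}^M X_i \ge \epsilon\Big\} \le e^{-\lambda \epsilon}\prod_{i=1}^M \mathbb{E}[e^{\lambda X_i}] \le \exp\!\left(-\lambda \epsilon + \frac{M v_1 \lambda^2}{2(1-\lambda K_1)}\right).
\end{equation*}

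Finally, I will optimize the right-hand side over $\lambda \in (0, 1/K_1)$. The choice
\begin{equation*}
\lambda^\star = \frac{\epsilon}{M v_1 + K_1 \epsilon} \in (0,1/K_1)
\end{equation*}
is convenient (it is the minimizer of a standard auxiliary convex upper bound obtained by noting $\tfrac{\lambda^2}{1-\lambda K_1} \le \tfrac{\lambda^2}{1-\lambda^\star K_1}$ at the optimizer, or equivalently by differentiating directly). Substituting $\lambda^\star$ yields
\begin{equation*}
-\lambda^\star \epsilon + \frac{M v_1 (\lambda^\star)^2}{2(1-\lambda^\star K_1)} = -\frac{\epsilon^2}{2(M v_1 + K_1 \epsilon)},
\end{equation*}
which gives the claimed bound.

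There is no real obstacle here; the only mild care needed is verifying the geometric series converges (handled by restricting $\lambda < 1/K_1$) and checking that the optimizer $\lambda^\star$ indeed lies in the admissible range and produces the clean denominator $M v_1 + K_1 \epsilon$ after algebraic simplification.
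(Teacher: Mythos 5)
Your proof is correct and is the standard Chernoff--Cram\'er argument for Bernstein's inequality; the computation with $\lambda^\star=\epsilon/(Mv_1+K_1\epsilon)$ does yield exactly the exponent $-\tfrac{\epsilon^2}{2}(Mv_1+K_1\epsilon)^{-1}$, and the term-by-term expansion of the moment generating function is justified for $\lambda<1/K_1$ by the assumed moment bounds together with Tonelli's theorem. The paper does not prove this theorem itself but cites Bennett (1962) and Pollard's notes, where essentially this same argument appears, so there is nothing to reconcile.
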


 For the proof of  Theorem \ref{bernstein:unbound} , we refer to \cite{bennett1962probability} and David Pollard's book notes \cite{Pollard2000}(page 14).

\begin{corollary}\label{berstein1} Denote $\mathcal{E}_{M}(g)=\frac{1}{M} \sum_{m=1}^{M}g(X_m)$ for a measurable function $g$. If for some $K_2, v_2 >0$, the bound $$\mathbb{E}| g-\mathbb{E}g|^p \leq \frac{1}{2} p!K_2^{p-2}v_2$$ holds for $2\leq p \in \mathbb{N}$, then there holds
\begin{align} \label{berstein:unbound2}
\mathrm{Prob}\bigg \{\mathbb{E}g-\mathcal{E}_{M}(g) \geq \epsilon \bigg\}  \leq e^ {-\frac{M \epsilon^2}{2(v_2+K_2\epsilon)}}, \forall \epsilon>0.
\end{align}
\end{corollary}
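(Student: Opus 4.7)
The plan is to apply Theorem \ref{bernstein:unbound} directly to a suitably centered and scaled sequence of random variables built from $g$.

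First I would introduce the centered variables $Y_m := \mathbb{E}g - g(X_m)$ for $m = 1,\dots,M$. Since the $X_m$ are independent, so are the $Y_m$, and by construction $\mathbb{E} Y_m = 0$. Moreover, the event $\{\mathbb{E}g - \mathcal{E}_M(g) \geq \epsilon\}$ is precisely $\{\sum_{m=1}^M Y_m \geq M\epsilon\}$.

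Next I would verify that the hypotheses of Theorem \ref{bernstein:unbound} are satisfied for the $Y_m$ with constants $K_1 = K_2$ and $v_1 = v_2$. This is immediate, because $|Y_m| = |g(X_m) - \mathbb{E}g|$ has the same distribution as $|g - \mathbb{E}g|$, so the assumed moment bound gives $\mathbb{E}|Y_m|^p \leq \tfrac{1}{2} p! K_2^{p-2} v_2$ for every integer $p \geq 2$.

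Finally, I would invoke Theorem \ref{bernstein:unbound} with $\epsilon$ replaced by $M\epsilon$ to obtain
\begin{equation*}
\mathrm{Prob}\bigl\{\mathbb{E}g - \mathcal{E}_M(g) \geq \epsilon\bigr\}
= \mathrm{Prob}\!\left\{\sum_{m=1}^M Y_m \geq M\epsilon\right\}
\leq \exp\!\left(-\frac{(M\epsilon)^2/2}{M v_2 + K_2 \cdot M\epsilon}\right),
\end{equation*}
and cancel a factor of $M$ from numerator and denominator to recover the stated bound $\exp\!\bigl(-\tfrac{M\epsilon^2}{2(v_2 + K_2\epsilon)}\bigr)$. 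There is no real obstacle here: the corollary is a direct specialization of the preceding theorem to empirical means, and the only bookkeeping is the rescaling by $M$ when passing from sums to averages.
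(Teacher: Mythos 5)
Your proposal is correct and is essentially the paper's own argument: the paper likewise just applies Theorem \ref{bernstein:unbound} to the centered variables $\mathbb{E}g - g(X_m)$, and your rescaling of $\epsilon$ to $M\epsilon$ with the cancellation of $M$ is exactly the bookkeeping that turns the sum bound into the stated bound for the empirical mean. Nothing is missing.
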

\begin{proof}
Applying Theorem  \ref{bernstein:unbound} on the random variable $\mathbb{E}g-g$, we immediately obtain the desired bound. 
\end{proof}

\begin{corollary}\label{berstein2}  If for some $K_3, v_3>0$, the bound 
$$\mathbb{E}|g-\mathbb{E}g|^p \leq \frac{1}{2} p! K_3^{p-2} v_3|\mathbb{E}g|$$ holds for $2\leq p \in \mathbb{N}$, then 
$$\mathrm{Prob}( \mathbb{E}g-\mathbb{E}_{M}(g) \geq \sqrt{\epsilon} \sqrt{\epsilon+|\mathbb{E}g|}) \leq e^ { -\frac{M\epsilon}{2(v_3+K_3)}}, \forall \epsilon>0$$
\end{corollary}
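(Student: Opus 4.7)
The plan is to derive this corollary as a direct consequence of Corollary~\ref{berstein1}, by specializing the constants and then simplifying the resulting exponent via a standard AM--GM estimate. The structure of the hypothesis differs from Corollary~\ref{berstein1} only in that the variance-like constant $v_2$ is replaced by $v_3|\mathbb{E}g|$, so the bulk of the work is algebraic rearrangement rather than probabilistic.

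First I would apply Corollary~\ref{berstein1} with $K_2 = K_3$ and $v_2 = v_3|\mathbb{E}g|$. The moment hypothesis $\mathbb{E}|g-\mathbb{E}g|^p \leq \tfrac{1}{2}p!K_3^{p-2}(v_3|\mathbb{E}g|)$ is exactly the assumption of Corollary~\ref{berstein2}, so this yields, for every $t>0$,
\begin{equation*}
\mathrm{Prob}\{\mathbb{E}g-\mathcal{E}_M(g)\geq t\} \leq \exp\!\left(-\frac{Mt^2}{2(v_3|\mathbb{E}g|+K_3 t)}\right).
\end{equation*}
Next I would make the substitution $t:=\sqrt{\epsilon}\sqrt{\epsilon+|\mathbb{E}g|}$, so that $t^2=\epsilon(\epsilon+|\mathbb{E}g|)$. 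This is the substitution that is designed to convert the self-referential exponent (which still depends on $t$ through the denominator) into the clean $M\epsilon$-type decay stated in the corollary.

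The remaining task is to verify the inequality
\begin{equation*}
\frac{Mt^2}{2(v_3|\mathbb{E}g|+K_3 t)} \geq \frac{M\epsilon}{2(v_3+K_3)},
\end{equation*}
which, after clearing denominators and cancelling $M/2$ and one factor of $\epsilon$, reduces to
\begin{equation*}
(v_3+K_3)(\epsilon+|\mathbb{E}g|)\;\geq\;v_3|\mathbb{E}g|+K_3\sqrt{\epsilon(\epsilon+|\mathbb{E}g|)}.
\end{equation*}
Cancelling the $v_3|\mathbb{E}g|$ term on both sides, this is equivalent to $v_3\epsilon + K_3(\epsilon+|\mathbb{E}g|) \geq K_3\sqrt{\epsilon(\epsilon+|\mathbb{E}g|)}$. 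I would conclude this via AM--GM: $\sqrt{\epsilon(\epsilon+|\mathbb{E}g|)} \leq \tfrac{1}{2}\bigl(\epsilon+(\epsilon+|\mathbb{E}g|)\bigr) \leq \epsilon+|\mathbb{E}g|$, so the right-hand side is at most $K_3(\epsilon+|\mathbb{E}g|)$, which is dominated by $v_3\epsilon + K_3(\epsilon+|\mathbb{E}g|)$ since $v_3,\epsilon\geq 0$.

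There is no real obstacle here — the only mildly nontrivial point is recognizing the substitution $t=\sqrt{\epsilon(\epsilon+|\mathbb{E}g|)}$, which is the standard trick for turning a $t^2/(a+bt)$-type exponent in Bernstein bounds into a linear-in-$\epsilon$ exponent. Everything else is a short algebraic computation invoking Corollary~\ref{berstein1} as a black box.
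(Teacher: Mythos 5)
Your proposal is correct and follows essentially the same route as the paper: both invoke Corollary~\ref{berstein1} with $K_2=K_3$, $v_2=v_3|\mathbb{E}g|$, substitute $\epsilon\mapsto\sqrt{\epsilon(\epsilon+|\mathbb{E}g|)}$, and reduce the comparison of exponents to the elementary inequality $v_3\epsilon+K_3(\epsilon+|\mathbb{E}g|)\geq K_3\sqrt{\epsilon(\epsilon+|\mathbb{E}g|)}$. Your AM--GM justification of that last step is a minor cosmetic variant of the paper's direct observation that $\sqrt{\epsilon(\epsilon+|\mathbb{E}g|)}\leq\epsilon+|\mathbb{E}g|$.
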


\begin{proof}

If we replace $\epsilon$ with $\sqrt{\epsilon(\epsilon+|\mathbb{E}g|)}$ in \eqref{berstein:unbound2}, and let $K_2=K_3$, $v_2=v_3|\mathbb{E}g|$, the desired bound follows from the inequality 
\begin{align*}
  e^ {-\frac{M \epsilon(\epsilon+|\mathbb{E}g|)} {2(v_2+K_2\sqrt{\epsilon(\epsilon+|\mathbb{E}g|)}})}& \leq e^ { -\frac{M\epsilon}{2(v_3+K_3)}}\\ \Leftrightarrow v_3\epsilon+K_3(\epsilon+|\E g|) &\geq K_3\sqrt{\epsilon(\epsilon+|\mathbb{E}g|)},
\end{align*} where the last inequality is true since $\sqrt{\epsilon(\epsilon+|\mathbb{E}g|)}\leq \epsilon+|\mathbb{E}g|$ for all $\epsilon\geq 0$. 
\end{proof}

We also refer to the reader  \cite{wang2011optimal} (see its Lemma 3 and  Lemma 5) for the analog of Corollary \ref{berstein1} and \ref{berstein2}.

\begin{theorem}[Moment inequality for stochastic integrals, see Theorem 7.1 in \cite{mao2007stochastic}]\label{momentinequality} Let $\mathcal{M}^2([0, T]; \mathbb{R}^{n \times m})$ denote the family of all  $n\times m$-matrix-valued measurable $\{\mathcal{F}_t\}_{t\geq t_0}$ -adapted process $f=\{(f_{ij}(t))_{n \times m}\}_{0\leq t\leq T}$ such that
$\mathbb{E}\int_{0}^{T}\|f(t)\|^2dt <\infty.$ If $p \geq 2$, $ f \in \mathcal{M}^2([0, T]; \mathbb{R}^{n \times m} )$ such that  
$$\mathbb{E} \int_{0}^{T}\|f(t)\|^p dt < \infty, $$
then
$$\mathbb{E} \bigg\| \int_{0}^{T} f(s)d\bB(s) \bigg\|^p \leq (\frac{p(p-1)}{2})^{\frac{p}{2}} T^{\frac{p-2}{2}}\mathbb{E}\int_{0}^{T}\|f(s)\|^p ds$$ In particular, for $p=2$, there is equality. 
\end{theorem}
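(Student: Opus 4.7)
The plan is to apply It\^o's formula to $g(x) = \|x\|^p$ along the martingale $M_t := \int_0^t f(s)\,d\bB(s)$, extract an integral inequality for $\E\|M_t\|^p$, and close it by a double application of H\"older's inequality. The case $p=2$ is exactly the It\^o isometry (with equality), so I focus on $p>2$.

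For $x\neq 0$ and $p\geq 2$, $\partial^2_{ij}g(x) = p\|x\|^{p-2}\delta_{ij} + p(p-2)\|x\|^{p-4}x_ix_j$, so It\^o's formula (Theorem \ref{Itoformula}) applied to $g(M_t)$, using $dM_t = f(t)\,d\bB_t$ and $d\langle M_i,M_j\rangle_t = (f\trans{f})_{ij}(t)\,dt$, together with a standard localization $\tau_n := \inf\{t:\|M_t\|\geq n\}$ to kill the local-martingale part, gives
\begin{align*}
\E\|M_{T\wedge\tau_n}\|^p = \tfrac{1}{2}\,\E\int_0^{T\wedge\tau_n}\!\Big[p\|M_t\|^{p-2}\|f(t)\|^2 + p(p-2)\|M_t\|^{p-4}\|\trans{f(t)}M_t\|^2\Big]dt.
\end{align*}
Bounding $\|\trans{f(t)}M_t\|^2 \leq \|f(t)\|^2\|M_t\|^2$ via compatibility of the Frobenius and Euclidean norms collapses the two terms to a single one, producing
\begin{align*}
\E\|M_{T\wedge\tau_n}\|^p \leq \tfrac{p(p-1)}{2}\,\E\int_0^T \|M_{t\wedge\tau_n}\|^{p-2}\|f(t)\|^2\,dt.
\end{align*}

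To close the inequality I apply H\"older with conjugate exponents $p/(p-2)$ and $p/2$ twice: first in $\omega$, giving $\E[\|M_t\|^{p-2}\|f(t)\|^2] \leq (\E\|M_t\|^p)^{(p-2)/p}(\E\|f(t)\|^p)^{2/p}$; then in $t$, bounding the resulting time integral by $\big(\int_0^T\E\|M_t\|^p\,dt\big)^{(p-2)/p}\big(\int_0^T\E\|f(t)\|^p\,dt\big)^{2/p}$. Since $\|M\|$ is a nonnegative submartingale, $t\mapsto \E\|M_t\|^p$ is nondecreasing, so $\int_0^T \E\|M_t\|^p\,dt \leq T\,\E\|M_T\|^p$ (and analogously after localization). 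Substituting yields
\begin{align*}
\E\|M_{T\wedge\tau_n}\|^p \leq \tfrac{p(p-1)}{2}\,T^{(p-2)/p}\big(\E\|M_{T\wedge\tau_n}\|^p\big)^{(p-2)/p}\Big(\int_0^T\E\|f(t)\|^p\,dt\Big)^{2/p},
\end{align*}
after which dividing by $(\E\|M_{T\wedge\tau_n}\|^p)^{(p-2)/p}$ (finite by the localization) and raising to the $p/2$ power produces the stated constant $\bigl(p(p-1)/2\bigr)^{p/2}T^{(p-2)/2}$. Fatou's lemma as $n\to\infty$ transfers the estimate to $M_T$ itself, which is finite because the right-hand side is.

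The principal obstacle is that $g(x) = \|x\|^p$ fails to be $C^2$ at the origin when $2<p<4$, so It\^o's formula does not apply verbatim. I will handle this by running the whole argument on the regularization $g_\varepsilon(x) := (\|x\|^2+\varepsilon)^{p/2}$, whose Hessian satisfies the analogous bound $\mathrm{tr}(\nabla^2 g_\varepsilon(x)\,f\trans{f}) \leq p(p-1)(\|x\|^2+\varepsilon)^{(p-2)/2}\|f\|^2$, and then sending $\varepsilon\downarrow 0$ by dominated convergence (the dominant $(\|x\|^2+1)^{(p-2)/2}\|f\|^2$ is integrable on the localized $[0,T\wedge\tau_n]\times\Omega$ by the hypothesis $\E\int_0^T\|f\|^p\,dt < \infty$). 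The remainder of the argument---Hessian estimate, submartingale monotonicity, and the two H\"older steps---is mechanical once this regularization and the stopping-time localization are in place.
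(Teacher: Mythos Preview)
The paper does not prove this statement: it is cited without proof from Mao's textbook (Theorem 7.1 in \cite{mao2007stochastic}). Your proposal is correct and is in fact the standard textbook argument---It\^o's formula applied to $\|x\|^p$ (regularized near the origin), the Hessian bound $\mathrm{tr}(\nabla^2 g\cdot f\trans{f})\le p(p-1)\|x\|^{p-2}\|f\|^2$, and the two H\"older steps closed via submartingale monotonicity of $t\mapsto\E\|M_t\|^p$---which is precisely how Mao proves it.

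Two minor remarks. First, in passing from $\E\int_0^{T\wedge\tau_n}\|M_t\|^{p-2}\|f(t)\|^2\,dt$ to $\E\int_0^T\|M_{t\wedge\tau_n}\|^{p-2}\|f(t)\|^2\,dt$ you are silently using $M_t=M_{t\wedge\tau_n}$ on $\{t<\tau_n\}$ and then dropping the indicator $\mathbf{1}_{\{t<\tau_n\}}$; this is a genuine inequality and worth stating explicitly. Second, when you divide by $(\E\|M_{T\wedge\tau_n}\|^p)^{(p-2)/p}$ you should dispose of the trivial case $\E\|M_{T\wedge\tau_n}\|^p=0$ separately. Neither affects the validity of the argument.
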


\subsection{Proof of Proposition \ref{Trajdiff}}
\label{s:appendixproofs}

\begin{proof}[of Proposition \ref{Trajdiff}] For ease of notation, in this proof we use $\mathbb{E}$ to represent 
$\E_{\mu_0,\bB}$.  For every $t \in [0,T]$, we have
\begin{align*}
&\mathbb{E}\left[\|\bX_t-\widehat \bX_t\|^2 \right]
=\mathbb{E}\left[\left\| \int_{0}^{t} \rhsfo_{\intkernel}({\bX}(s))- \rhsfo_{\widehat\intkernel}(\widehat{\bX}(s)) ds\right\|^2 \right]\\
 &\leq  \, t  \mathbb{E} \left[\int_{0}^{t} \left\| \rhsfo_{\intkernel}({\bX}(s))- \rhsfo_{\widehat\intkernel}(\widehat{\bX}(s))\right\|^2 ds\right] \\
 & \leq  \, 2T\mathbb{E}\left[\int_0^t  \left\|\rhsfo_{\intkernel}(\bX(s))-\rhsfo_{\widehat\intkernel}(\bX(s))\right\|^2 ds\right]  + 2T\mathbb{E}\left[\int_0^t  \left\| \rhsfo_{\widehat\intkernel}(\bX(s)) - \rhsfo_{\widehat\intkernel}(\widehat\bX(s)) \right\|^2 ds\right]\,. 
% &=t\int_{0}^{t} \frac{1}{N} \sum_{i=1}^{N} \Bigg|\frac{1}{N} \sum_{j=1,j\neq i}^{N}F_{[\phi]}(\bx_{ji}(s))-\frac{1}{N} \sum_{j=1,j\neq i}^{N}F_{[\widehat\intkernel]}(\widehat \bx_{ji}(s))\Bigg|^2 ds
\end{align*}
Letting $ \bx_{ji}(s) := \bx_j(s)- \bx_i(s)$, $ \widehat \bx_{ji}(s) := \widehat \bx_j(s)-\widehat \bx_i(s)$, and $F_{\intkernelvar}(\bx)=\intkernelvar(\|\bx\|)\bx$, for $\intkernelvar \in \mathcal{K}_{R,S}$ and $\bx \in \mathbb{R}^d$,
\begin{align*}
 \left\| \rhsfo_{\widehat\intkernel}(\bX(s)) - \rhsfo_{\widehat \intkernel}(\widehat\bX(s)) \right\|^2 &= \sum_{i=1}^{N}\Bigg\| \frac{1}{N}\sum_{j=1}^{N}\big(F_{[\widehat\intkernel]}(\bx_{ji}(s))-F_{[\widehat \intkernel]}(\widehat \bx_{ji}(s))\big)\bigg\|^2 \\
  &\leq  4 \text{Lip}^2(F_{[\widehat\intkernel]})\left\| \bX(s)-\widehat \bX(s)\right\|^2\,,\quad \text{almost surely}.
\end{align*}
Then an application of Gronwall's inequality yields the estimate
\begin{align*}
\mathbb{E}\left[\left\|\bX_t-\widehat \bX_t \right\|^2\right] &\leq  2T e^{8T^2 \text{Lip}^2(F_{[\widehat{\intkernel}]})}\mathbb{E}\left[\int_{0}^{T}   \left\|\rhsfo_{\intkernel}(\bX(s))-\rhsfo_{\widehat\intkernel}(\bX(s)) \right\|^2ds\right].
\end{align*} 
Note that by Jensen's inequality,  
 \begin{align*}
\frac{1}{T}\int_{0}^{T}  \E \left[ \left\| \rhsfo_{\intkernel}(\bX(s))-\rhsfo_{\widehat\intkernel}(\bX(s)) \right\|^2\right]ds < N \vertiii{\widehat{\intkernel}-\intkernel}^2.
\end{align*}Then the conclusion follows by combining with the estimate
$\text{Lip}(F_{[\widehat{\intkernel}]})\leq (R+1)S.$
%if $\mu_0$ is not  compactly supported. The condition $g, \widehat g \in \mK$ can only imply   local Lipschitz property of  $F_{[\widehat g]}$. If $\mu_0$   is not  compactly supported, then  we can not make use of the local Lipschitz property of  $F_{[\widehat g]}$ to get an upper bound on the  estimate of the second integral.  In this case, if we require $F_{[\widehat g]}$ to have global Lipschitz property, for example, we  require $\widehat \widehat\intkernelg$ and $g$ to be compactly supported on a interval,   then the conclusion still follows by replacing $\text{Lip}_{E}(F_{[\widehat g]})$ with $\text{Lip}(F_{[\widehat g]})$ in the estimate. 
\end{proof}

\bibliographystyle{abbrv}
\bibliography{learning_dynamics,ref_FeiLU,LearningTheory,ref_stochasticPS,OpinionDS_ref,SDE_Inference}

\begin{thebibliography}{10}

\bibitem{almi2019datadriven}
S.~Almi, M.~Fornasier, and R.~Huber.
\newblock Data-driven evolutions of critical points, 2019.

\bibitem{baumgarten2019_GeneralConstitutive}
A.~S. Baumgarten and K.~Kamrin.
\newblock A general constitutive model for dense, fine-particle suspensions
  validated in many geometries.
\newblock {\em Proc Natl Acad Sci USA}, 116(42):20828--20836, 2019.

\bibitem{bell2005_ParticlebasedSimulation}
N.~Bell, Y.~Yu, and P.~J. Mucha.
\newblock Particle-based simulation of granular materials.
\newblock In {\em Proceedings of the 2005 {{ACM SIGGRAPH}}/{{Eurographics}}
  Symposium on {{Computer}} Animation - {{SCA}} '05}, page~77, {Los Angeles,
  California}, 2005. {ACM Press}.

\bibitem{benachour1998_NonlinearSelfstabilizing}
S.~Benachour, B.~Roynette, D.~Talay, and P.~Vallois.
\newblock Nonlinear self-stabilizing processes \textendash{} {{I Existence}},
  invariant probability, propagation of chaos.
\newblock {\em Stochastic Processes and their Applications}, 75(2):173--201,
  1998.

\bibitem{bennett1962probability}
G.~Bennett.
\newblock Probability inequalities for the sum of independent random variables.
\newblock {\em Journal of the American Statistical Association},
  57(297):33--45, 1962.

\bibitem{bernstein1912ordre}
S.~Bernstein.
\newblock {\em Sur l'ordre de la meilleure approximation des fonctions
  continues par des polyn{\^o}mes de degr{\'e} donn{\'e}}, volume~4.
\newblock Hayez, imprimeur des acad{\'e}mies royales, 1912.

\bibitem{binev2005universal}
P.~Binev, A.~Cohen, W.~Dahmen, R.~DeVore, and V.~Temlyakov.
\newblock Universal algorithms for learning theory part i: piecewise constant
  functions.
\newblock {\em J. Mach. Learn. Res.}, 6(Sep):1297--1321, 2005.

\bibitem{BHT2009}
V.~D. Blodel, J.~M. Hendricks, and J.~N. Tsitsiklis.
\newblock {On Krause's multi-agent consensus model with state-dependent
  connectivity}.
\newblock {\em Automatic Control, IEEE Transactions on}, 54(11):2586 -- 2597,
  2009.

\bibitem{bolley2013_UniformConvergence}
F.~Bolley, I.~Gentil, and A.~Guillin.
\newblock Uniform {{Convergence}} to {{Equilibrium}} for {{Granular Media}}.
\newblock {\em Arch Rational Mech Anal}, 208(2):429--445, 2013.

\bibitem{BFHM17}
M.~Bongini, M.~Fornasier, M.~Hansen, and M.~Maggioni.
\newblock Inferring interaction rules from observations of evolutive systems
  {I}: The variational approach.
\newblock {\em Math. Models Methods Appl. Sci.}, 27(05):909--951, 2017.

\bibitem{brillinger2012learning}
D.~R. Brillinger.
\newblock Learning a potential function from a trajectory.
\newblock In {\em Selected Works of David Brillinger}, pages 361--364.
  Springer, 2012.

\bibitem{BT2015}
C.~{Brugna} and G.~{Toscani}.
\newblock {Kinetic models of opinion formation in the presence of personal
  conviction}.
\newblock {\em Phys. Rev. E}, 92(5):052818, 2015.

\bibitem{carrillo2003_KineticEquilibration}
J.~Carrillo, R.~McCann, and C.~Villani.
\newblock Kinetic equilibration rates for granular media and related equations:
  Entropy dissipation and mass transportation estimates.
\newblock {\em Rev. Mat. Iberoamericana}, pages 971--1018, 2003.

\bibitem{cattiaux2007_ProbabilisticApproach}
P.~Cattiaux, A.~Guillin, and F.~Malrieu.
\newblock Probabilistic approach for granular media equations in the
  non-uniformly convex case.
\newblock {\em Probab. Theory Relat. Fields}, 140(1-2):19--40, 2007.

\bibitem{chen2019inferring}
D.~Chen, Y.~Wang, G.~Wu, M.~Kang, Y.~Sun, and W.~Yu.
\newblock Inferring causal relationship in coordinated flight of pigeon flocks.
\newblock {\em Chaos: An Interdisciplinary Journal of Nonlinear Science},
  29(11):113118, 2019.

\bibitem{chen2020maximum}
X.~Chen.
\newblock Maximum likelihood estimation of potential energy in interacting
  particle systems from single-trajectory data.
\newblock {\em arXiv preprint arXiv:2007.11048}, 2020.

\bibitem{cohen2013stability}
A.~Cohen, M.~A. Davenport, and D.~Leviatan.
\newblock On the stability and accuracy of least squares approximations.
\newblock {\em Foundations of computational mathematics}, 13(5):819--834, 2013.

\bibitem{comte_NonparametricDrift}
F.~Comte and V.~{Genon-Catalot}.
\newblock Nonparametric drift estimation for i.i.d. paths of stochastic
  differential equations.
\newblock {\em accepted for publication in The Annals of Statistics}, 2019.

\bibitem{CKFL2005SI}
I.~D. Couzin, J.~Krause, N.~R. Franks, and S.~A. Levin.
\newblock {Effective leadership and decision-making in animal groups on the
  move}.
\newblock {\em Nature}, 433(7025):513 -- 516, 2005.

\bibitem{CM:ATLAS}
M.~C. Crosskey and M.~Maggioni.
\newblock Atlas: A geometric approach to learning high-dimensional stochastic
  systems near manifolds.
\newblock {\em Journal of Multiscale Modeling and Simulation}, 15(1):110--156,
  2017.
\newblock arxiv: 1404.0667.

\bibitem{CS02}
F.~Cucker and S.~Smale.
\newblock On the mathematical foundations of learning.
\newblock {\em Bulletin of the American mathematical society}, 39(1):1--49,
  2002.

\bibitem{cucker2007learning}
F.~Cucker and D.~X. Zhou.
\newblock {\em Learning theory: an approximation theory viewpoint}, volume~24.
\newblock Cambridge University Press, 2007.

\bibitem{DOCBC2006}
M.~R. D'Orsogna, Y.-L. Chuang, A.~L. Bertozzi, and L.~Chayes.
\newblock {Self-propelled particles with soft-core interactions: patterns,
  stability, and collapse.}
\newblock {\em Phys. Rev. Lett.}, 96:104 -- 302, 2006.

\bibitem{Gyorfi06}
L.~Gy{\"o}rfi, M.~Kohler, A.~Krzyzak, and H.~Walk.
\newblock {\em A distribution-free theory of nonparametric regression}.
\newblock Springer, New York, 2002.

\bibitem{hegselmann2002_OpinionDynamics}
R.~Hegselmann and U.~Krause.
\newblock Opinion dynamics and bounded confidence models, analysis, and
  simulation.
\newblock {\em JASSS}, 5(3):33, 2002.

\bibitem{higham2008functions}
N.~J. Higham.
\newblock {\em Functions of matrices: theory and computation}, volume 104.
\newblock Siam, 2008.

\bibitem{huang2018_LearningInteracting}
H.~Huang, J.-G. Liu, and J.~Lu.
\newblock Learning interacting particle systems: Diffusion parameter estimation
  for aggregation equations.
\newblock {\em ArXiv180202267 Math}, 2018.

\bibitem{jabin2016_MeanField}
P.-E. Jabin and Z.~Wang.
\newblock Mean field limit and propagation of chaos for {{Vlasov}} systems with
  bounded forces.
\newblock {\em Journal of Functional Analysis}, 271(12):3588--3627, 2016.

\bibitem{jabin2018_QuantitativeEstimates}
P.-E. Jabin and Z.~Wang.
\newblock Quantitative estimates of propagation of chaos for stochastic systems
  with ${W}^{-1,\infty}$ kernels.
\newblock {\em Invent. math.}, 214(1):523--591, 2018.

\bibitem{kaipio_StatisticalComputational2005}
J.~Kaipio and E.~Somersalo.
\newblock {\em Statistical and Computational Inverse Problems}.
\newblock Number v. 160 in Applied Mathematical Sciences. {Springer}, {New
  York}, 2005.

\bibitem{kalmykov2017bernstein}
S.~Kalmykov, B.~Nagy, V.~Totik, et~al.
\newblock Bernstein-and {M}arkov-type inequalities for rational functions.
\newblock {\em Acta Mathematica}, 219(1):21--63, 2017.

\bibitem{karatzas1998brownian}
I.~Karatzas and S.~E. Shreve.
\newblock Brownian motion.
\newblock In {\em Brownian Motion and Stochastic Calculus}, pages 47--127.
  Springer, 1998.

\bibitem{klebaner2005introduction}
F.~C. Klebaner.
\newblock {\em Introduction to stochastic calculus with applications}.
\newblock World Scientific Publishing Company, 2005.

\bibitem{Krause2000}
U.~Krause.
\newblock A discrete nonlinear and non-autonomous model of consensus formation.
\newblock {\em Communications in difference equations}, 2000:227--236, 2000.

\bibitem{kutoyants_StatisticalInference2004}
Y.~A. Kutoyants.
\newblock {\em Statistical {{Inference}} for {{Ergodic Diffusion Processes}}}.
\newblock {Springer London}, {London}, 2004.

\bibitem{levin2017markov}
D.~A. Levin and Y.~Peres.
\newblock {\em {M}arkov chains and mixing times}, volume 107.
\newblock American Mathematical Soc., 2017.

\bibitem{li2020_StochasticVersion}
L.~Li, Y.~Li, J.-G. Liu, Z.~Liu, and J.~Lu.
\newblock A stochastic version of {{Stein Variational Gradient Descent}} for
  efficient sampling.
\newblock {\em Commun. Appl. Math. Comput. Sci.}, 15(1):37--63, 2020.

\bibitem{li2019identifiability}
Z.~Li, F.~Lu, M.~Maggioni, S.~Tang, and C.~Zhang.
\newblock On the identifiability of interaction functions in systems of
  interacting particles.
\newblock {\em arXiv preprint arXiv:1912.11965}, 2019.

\bibitem{liu2019_SteinVariational}
Q.~Liu and D.~Wang.
\newblock Stein {{Variational Gradient Descent}}: {{A General Purpose Bayesian
  Inference Algorithm}}.
\newblock {\em ArXiv160804471 Cs Stat}, 2019.

\bibitem{LMT19}
F.~Lu, M.~Maggioni, and S.~Tang.
\newblock Learning interaction kernels in heterogeneous systems of agents from
  multiple trajectories.
\newblock {\em arXiv preprint arXiv:1910.04832}, 2019.

\bibitem{LZTM19}
F.~Lu, M.~Zhong, S.~Tang, and M.~Maggioni.
\newblock Nonparametric inference of interaction laws in systems of agents from
  trajectory data.
\newblock {\em Proc Natl Acad Sci USA}, 116(29):14424--14433, 2019.

\bibitem{MMM19}
M.~Maggioni, J.~Miller, and M.~Zhong.
\newblock Data-driven discovery of emergent behaviors in collective dynamics.
\newblock {\em arXiv preprint arXiv:1912.11123}, 2019.

\bibitem{mao2007stochastic}
X.~Mao.
\newblock {\em Stochastic differential equations and applications}.
\newblock Elsevier, 2007.

\bibitem{MT2014}
S.~{Motsch} and E.~{Tadmor}.
\newblock {Heterophilious Dynamics Enhances Consensus}.
\newblock {\em SIAM Rev.}, 56(4):577 -- 621, 2014.

\bibitem{nickl2019_NonparametricStatistical}
R.~Nickl and K.~Ray.
\newblock Nonparametric statistical inference for drift vector fields of
  multi-dimensional diffusions.
\newblock {\em ArXiv181001702 Math Stat}, 2019.

\bibitem{oksendal2013sde}
B.~{\O}ksendal.
\newblock {\em Stochastic differential equations: an introduction with
  applications}.
\newblock Springer Science \& Business Media, 6th edition, 2013.

\bibitem{olfati2004consensus}
R.~Olfati-Saber and R.~M. Murray.
\newblock Consensus problems in networks of agents with switching topology and
  time-delays.
\newblock {\em IEEE Transactions on automatic control}, 49(9):1520--1533, 2004.

\bibitem{Pollard2000}
D.~Pollard.
\newblock {\em Mini Book notes}.
\newblock 2000.
\newblock \url{http://www.stat.yale.edu/~pollard/Books/Mini/Basic.pdf}.

\bibitem{schumaker2007spline}
L.~Schumaker.
\newblock {\em Spline functions: basic theory}.
\newblock Cambridge University Press, 2007.

\bibitem{skorokhod1996_RegularityManyparticle}
A.~V. Skorokhod.
\newblock On the regularity of many-particle dynamical systems perturbed by
  white noise.
\newblock {\em Journal of Applied Mathematics and Stochastic Analysis},
  9(4):427--437, 1996.

\bibitem{thompson2010comparison}
M.~B. Thompson.
\newblock A comparison of methods for computing autocorrelation time.
\newblock {\em arXiv preprint arXiv:1011.0175}, 2010.

\bibitem{wang2011optimal}
C.~Wang and D.-X. Zhou.
\newblock Optimal learning rates for least squares regularized regression with
  unbounded sampling.
\newblock {\em Journal of Complexity}, 27(1):55--67, 2011.

\bibitem{ward2012lp}
J.~P. Ward.
\newblock $l^p$ {B}ernstein inequalities and inverse theorems for {RBF}
  approximation on $r^d$.
\newblock {\em Journal of Approximation Theory}, 164(12):1577--1593, 2012.

\bibitem{zhang2020cluster}
Z.~Zhang and F.~Lu.
\newblock Cluster prediction for opinion dynamics from partial observations.
\newblock {\em arXiv preprint arXiv:2007.02006}, 2020.

\end{thebibliography}

\end{document}